\documentclass[11pt]{amsart}

\usepackage{amssymb}
\usepackage{mathtools}
\usepackage{microtype}
\usepackage{enumitem}
\usepackage{mathrsfs}
\usepackage{booktabs}
\usepackage{array}
\usepackage{graphicx}
\usepackage{xcolor}
\usepackage{tikz}
\usetikzlibrary{decorations.pathreplacing}
\usepackage[hidelinks]{hyperref}
\hypersetup{pdftitle={Resolution of the ENO-TV conjecture: a parity dichotomy}, pdfauthor={Zhuoyun Li and Kailiang Wu}, pdfsubject={Sharp parity classification and mesh dependence for ENO entropy-source coercivity}, pdfkeywords={ENO reconstruction, ENO-TV conjecture, entropy stability, parity dichotomy, discrete coercivity}}
\usepackage[
paperwidth=7in,
paperheight=10in,
left=1in,
right=1in,
top=1in,
bottom=0.9in,
headheight=8pt,
headsep=14pt
]{geometry}

\allowdisplaybreaks
\theoremstyle{plain}
\newtheorem{theorem}{Theorem}[section]
\newtheorem{lemma}[theorem]{Lemma}
\newtheorem{proposition}[theorem]{Proposition}
\newtheorem{corollary}[theorem]{Corollary}
\theoremstyle{definition}
\newtheorem{definition}[theorem]{Definition}
\theoremstyle{remark}

\numberwithin{equation}{section}

\theoremstyle{plain}
\newtheorem{lettertheorem}{Theorem}

\title[Resolution of the ENO--TV conjecture]{Resolution of the ENO--TV conjecture:\\ a parity dichotomy}

\author[Z. Li]{Zhuoyun Li}
\address[Zhuoyun Li]{Department of Mathematics, Southern University of Science and Technology, Shenzhen, Guangdong 518055, China}

\author[K. Wu]{Kailiang Wu}
\address[Kailiang Wu]{Department of Mathematics and Shenzhen International Center for Mathematics, Southern University of Science and Technology, Shenzhen, Guangdong 518055, China}
\email[Kailiang Wu]{wukl@sustech.edu.cn}
\thanks{Kailiang Wu is the corresponding author. The authors were supported, in part, by the Science Challenge Project (No.~TZ2025007) and the Shenzhen Science and Technology Program (Nos.~JCYJ20250604144300001 and RCJC20221008092757098).}

\subjclass[2020]{Primary 35L65, 65M12, 41A15; Secondary 26D10, 65M08}
\date{}

\begin{document}

\begin{abstract}
	We resolve the ENO--TV conjecture, a discrete
	coercivity problem in compactness theory for entropy-stable
	approximations of hyperbolic conservation laws.  For order-$k$
	essentially non-oscillatory (ENO) reconstruction from compactly
	supported cell averages, it asks whether the nonnegative ENO source
	times the $(k-1)$st power of the amplitude uniformly controls the
	$(k+1)$st absolute-jump moment.  We prove a parity dichotomy: the
	estimate holds for odd $k\ge3$ and fails for even $k\ge4$; the known
	second-order case completes the classification.  Localization gives
	a selection-independent finite-difference functional uniformly
	comparable to the source and reduces the conjecture to discrete
	interpolation.  For odd orders, summation by parts reveals a hidden
	square; a discrete Gagliardo--Nirenberg inequality yields coercivity.
	For even orders, Euler-polynomial blocks from the functional's
	polynomial kernel yield counterexamples that persist under
	arbitrarily small perturbations making all affected ENO comparisons
	strict.  We also prove two coercive estimates for every $k\ge2$:
	control of jumps larger than a fixed fraction of the amplitude and of
	local blocks modulo sampled polynomials of degree at most $k-2$.
	Via the Cayley--Sylvester decomposition, we compute the dimensions of
	homogeneous first-cohomology spaces for the lattice shift on
	polynomial jump profiles.  At fourth order, for a cubic flux and a
	globally strictly convex entropy, a total-degree-seven component of a
	reduced entropy-flux mismatch represents a nonzero class on profiles
	of degree at most two and hence has no translation-invariant
	finite-stencil $C^7$ local primitive at the zero constant state.
	Odd-order coercivity persists on globally quasi-uniform meshes, whereas for
	each $k\ge2$ it fails on a fixed irregular mesh even though every interface
	contribution remains nonnegative.  This failure is due to the mesh geometry.
\end{abstract}

\maketitle

\tableofcontents

\section{Introduction}\label{sec:intro}

Once shocks form in nonlinear hyperbolic conservation laws, classical
regularity is lost, and weak solutions are often constructed as limits
of regularized approximations.  A priori
estimates, however, may yield only weak compactness, and weak
convergence neither identifies the nonlinear flux in the limit nor
records the fine-scale oscillations of the approximating family. 
The entropy structure recovers what weak convergence loses.  For example,
consider the Cauchy problem for the scalar conservation law
\begin{equation}\label{eq:intro-conservation-law}
	u_t+f(u)_x=0
\end{equation}
with $u(\cdot,0)=u_0\in L^\infty(\mathbb R)$ and
$f\in C^1(\mathbb R)$.  The Kruzhkov entropy inequalities identify
its unique bounded entropy solution \cite{Kruzkov}. 
Under suitable nondegeneracy assumptions on the flux, the kinetic
formulation of Lions, Perthame, and Tadmor \cite{LPT} and the velocity-averaging
theory of Tadmor and Tao \cite{TadmorTao} show how quantitative control of entropy
production yields compactness and fractional regularity. 
 When strong compactness is
unavailable, Young measures record the lost oscillations, and
DiPerna's entropy measure-valued framework incorporates them into the
entropy theory \cite{DiPernaMV}.  
In the scalar case, a bounded entropy measure-valued solution whose
initial Young measure is $\delta_{u_0(x)}$ concentrates on the
Kruzhkov solution \cite{DiPernaMV}. 
 For systems, such concentration need not
occur in general.  For a hyperbolic system endowed with an entropy
that is uniformly strictly convex on the relevant state range,
however, every bounded admissible measure-valued solution with the
same initial data as a Lipschitz solution concentrates on that
solution for as long as the latter exists \cite{BLS}.  
A numerical
convergence argument must therefore recover, at the discrete level, 
enough quantitative control either to identify the scalar entropy
solution or, for systems, to pass consistently to an entropy
measure-valued limit.

The discrete question is the passage from entropy stability to
quantitative coercivity.  
The convergence framework of Fjordholm, K\"appeli, Mishra, and Tadmor
\cite{FKMT} makes this requirement concrete: among its compactness
inputs is a weak BV estimate controlling jumps between neighboring
cell averages.  A discrete
entropy inequality supplies the sign of the dissipation, whereas the
weak BV estimate requires quantitative control of its size. 
A natural candidate for this control is supplied by classical
essentially non-oscillatory (ENO) reconstruction, 
 introduced by Harten, Engquist, Osher, and
Chakravarthy \cite{Harten} in 1987 to retain any prescribed order of accuracy on
smooth data while suppressing spurious oscillations near shocks. On the unit grid $x_{i+1/2}=i+1/2$, let
$\bar u=(\bar u_i)_{i\in\mathbb Z}$ be a compactly supported sequence
of cell averages, with forward difference $(\mathrm Db)_i:=b_{i+1}-b_i$
and jumps $a_i:=(\mathrm D\bar u)_i$; the ENO procedure of order $k$
constructs in each cell a polynomial $p_i^{(k)}$ of degree at most
$k-1$.  Its relevance to the coercivity problem rests on a sign
property proved by Fjordholm, Mishra, and Tadmor (FMT) \cite{FMT}: writing
$\Delta p_{i+1/2}^{(k)}:=p_{i+1}^{(k)}(x_{i+1/2})-p_i^{(k)}(x_{i+1/2})$
for the jump of the reconstructed traces,
\begin{equation}\label{eq:intro-sign-property}
a_i\,\Delta p_{i+1/2}^{(k)}\ge0
\end{equation}
at every interface.  Summing these products yields the ENO source
\begin{equation}\label{eq:intro-Qk}
Q_k(\bar u):=\sum_{i\in\mathbb Z} a_i\,\Delta p_{i+1/2}^{(k)},
\end{equation}
which is therefore nonnegative.  The ENO--TV conjecture \cite{FjordholmHandbook} asserts that
this sign-definite source is quantitatively coercive: for each fixed
$k\ge2$, there is a constant $C_k$ such that every compactly supported
sequence of cell averages satisfies
\begin{equation}\label{eq:full-enotv-intro}
\sum_{i\in\mathbb Z}|a_i|^{k+1}
\le C_k\,\|\bar u\|_{\ell^\infty}^{k-1}\,Q_k(\bar u).
\end{equation}
Despite the historical name ENO--TV, the left-hand side is a
superlinear moment of the cell-average jumps rather than the total
variation itself.  The amplitude factor is forced by homogeneity: the
left-hand side has degree $k+1$ in the data, whereas $Q_k$ has degree
two.  Prior to the present work, the estimate was known only for
$k=2$ \cite[Section~4.4]{FjordholmHandbook}; see also 
\cite[(2.16)]{CFM2AN}.  All cases $k\ge3$ remained
open.  

In this paper, we resolve the conjecture \eqref{eq:full-enotv-intro}
and prove a parity dichotomy above the classical $k=2$ case: the
estimate holds for every odd $k\ge3$, whereas it fails for every even
$k\ge4$.  Together with the known second-order estimate, this
completes the classification.

The conjecture \eqref{eq:full-enotv-intro} remained difficult for the same reason that ENO is
effective near a discontinuity: the stencil is selected adaptively
from the data.  At each stage, ENO compares the absolute values of two
neighboring divided differences, an \emph{ENO comparison}, and extends
the stencil toward the smaller one; the comparison is \emph{strict}
when the two values are unequal.  This adaptivity has made ENO and its
weighted variants (WENO) standard high-resolution methods for
hyperbolic conservation laws \cite{LiuOsherChan,JiangShu,Shu}. The adaptivity is
also precisely what the analysis must confront.  Once all stencil
choices are fixed, the reconstruction is linear in the data and $Q_k$
is a quadratic form.  The comparisons, however, partition the data
space into regions, and crossing a boundary changes both the selected
stencil and the quadratic form.  
Even the local sign property
\eqref{eq:intro-sign-property}, which concerns each interface
separately, was not proved until twenty-six years after ENO was
introduced.  The conjecture \eqref{eq:full-enotv-intro}, by contrast,
seeks a global coercive estimate and requires two further forms of
control: 
the bound must be uniform across all regions and their boundaries, and
it must be global, assembling the interface terms
$a_i\,\Delta p_{i+1/2}^{(k)}$ into a single estimate for
$\sum_i|a_i|^{k+1}$.  It is therefore a global, nonlinear coercivity
problem for an adaptively switching family of discrete dissipations,
rather than a spectral estimate for one fixed operator.

One main new observation is that, despite the adaptive switching, the ENO
source admits a selection-independent localization.  To make this precise, set
\[
A_j^{(k)}:=\max_{0\le s<k}|a_{j+s}|,
\qquad
\mathcal E_k(a):=
\sum_{j\in\mathbb Z}A_j^{(k)}
|\mathrm D^{k-1}a_j|.
\]
We prove, uniformly over all stencil patterns selected by the ENO
rule, that
\begin{equation}\label{eq:intro-selection-free-source}
	Q_k(\bar u)\asymp_k\mathcal E_k(a),
\end{equation}
where the comparison constants depend only on $k$.  This comparison
reduces the conjecture to a discrete interpolation problem on the
integer lattice.  The passage from the local comparison to the global
estimate depends on the parity of $k$.  For odd orders, summation by
parts reveals a hidden square: the localized source controls a
discrete energy, and a discrete Gagliardo--Nirenberg inequality
converts this energy, together with the boundedness of the data, into
the conjectured global estimate.  For even orders, the highest
difference has odd order, so the preceding square identity is
unavailable.  The polynomial kernel of
$\mathrm D^{k-1}$ is present at every order; what is special in even
orders is that its maximal-degree profiles can be concatenated into
long Euler-polynomial blocks for which the jump moment accumulates in
the interiors while the localized source is confined to the junctions
and the two outer endpoints. 
These counterexamples can be chosen so that every ENO comparison
affected by the jump support is strict.  The
selected stencil pattern is then locally constant, and the
quantitative failure persists on an open neighborhood in the
corresponding finite-dimensional data space.  Thus the even-order
failure is structural, not an artifact of tie-breaking.   

This even-order failure, however, does not end the convergence program for high-order 
entropy-stable ENO schemes, such as the TeCNO schemes of Fjordholm,
Mishra, and Tadmor \cite{FMTtecno,FjordholmThesis}, because we
find two weaker coercive estimates that remain valid in every
order.  One controls the jumps larger than a fixed fraction of the
cell-average amplitude; the other controls local jump blocks modulo
the polynomial kernel, the same space from which the counterexamples
are built.  In the companion paper \cite{LiWuCompanion}, we use these estimates to establish the compactness and convergence theory for arbitrarily high-order entropy-stable ENO schemes, proving convergence to the entropy solution in the scalar case and subsequential convergence to entropy measure-valued solutions for systems.

The polynomial profiles used in the even-order counterexamples also
lead to an algebraic obstruction to a local entropy-flux correction.  The
discrete entropy identity leads to a local quantity that we call the
\emph{entropy-flux mismatch}.
Relative to a canonical centered numerical entropy flux, the actual
semidiscrete entropy residual is the average of the mismatches at the two
adjacent interfaces.  We show 
that, degree by degree on polynomial jump profiles, this residual can be
removed by a translation-invariant finite-stencil correction to the numerical
entropy flux if and only if the corresponding mismatch component is a
forward difference.
On polynomial jump profiles, the existence of such a correction is an
exactness problem for the lattice shift.  The Cayley--Sylvester
decomposition \cite{CayleySecondQuantics} yields an explicit dimension formula for the
homogeneous first-cohomology spaces of the lattice-shift action on
polynomial jump profiles.  At fourth order, for a cubic physical flux and an explicit globally
strictly convex entropy, a total-degree-seven component of the
reduced entropy-flux mismatch of the corresponding FMT flux
represents a nonzero class on polynomial jump profiles of degree at
most two.  Consequently, that component admits no
translation-invariant finite-stencil $C^7$ local primitive at the
zero constant state and, equivalently, the corresponding entropy residual
component cannot be canceled by a correction in the stated local and
smoothness class.

We finally turn to nonuniform meshes.  The odd-order coercive estimate
persists on globally quasi-uniform meshes, with constants depending
on the mesh-ratio bound.  Without a global ratio bound, coercivity can
fail in every order: for each $k\ge2$, we construct a fixed irregular
mesh and a sequence of compactly supported data for which the ENO
source tends to zero while the jump moment remains bounded below.
The interfacewise FMT sign property remains valid on nonuniform
meshes, so
\[
Q_{k,h}(\bar u)
=
\sum_i |a_i|\,|\Delta p_{i+1/2}^{(k,h)}|
\]
identically.  Thus no cancellation occurs between interface
contributions.  The loss of coercivity is caused by the mesh
geometry.

We now state the results summarized above.

\subsection{Uniform-grid coercivity}\label{sec:intro-main-results}

Throughout the uniform-grid statements, ENO reconstruction means the standard
recursion fixed in Section~\ref{sec:eno-source}; all cell-average sequences are
compactly supported.  We write
\[
	U:=\|\bar u\|_{\ell^\infty}.
\]
Constants never depend on $U$, on the size of the support, or on the selected
stencils.

\begin{lettertheorem}[Parity classification on the uniform grid]\label{thm:A}
	For each integer $k\ge2$, the estimate
	\eqref{eq:full-enotv-intro} holds for every compactly supported sequence of
	cell averages on the unit grid if and only if $k=2$ or $k$ is odd.  More
	precisely, for every odd $k\ge3$,
	\begin{equation}\label{eq:intro-odd-main}
		\sum_i|a_i|^{k+1}\le C_k U^{k-1}Q_k(\bar u),
	\end{equation}
	whereas for every even $k\ge4$ no constant $C_k$ makes
	\eqref{eq:full-enotv-intro} valid for all such data.  Indeed, there are
	compactly supported sequences $\bar u^{(L)}$, with
	$a^{(L)}:=\mathrm D\bar u^{(L)}$ and
	$U_L:=\|\bar u^{(L)}\|_{\ell^\infty}>0$, such that
	\begin{equation}\label{eq:intro-even-ratio}
		\frac{\displaystyle\sum_i|a_i^{(L)}|^{k+1}}
		{\displaystyle U_L^{k-1}Q_k(\bar u^{(L)})}
		\longrightarrow\infty.
	\end{equation}
\end{lettertheorem}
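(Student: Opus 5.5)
The argument has three stages: localize the source, prove the odd case by interpolation, and build the even counterexamples. First I would establish the selection-independent comparison \eqref{eq:intro-selection-free-source}, $Q_k(\bar u)\asymp_k\mathcal E_k(a)$, uniformly in the stencil pattern. The FMT sign property makes $Q_k=\sum_i|a_i|\,|\Delta p^{(k)}_{i+1/2}|$ termwise. The interface jump $\Delta p^{(k)}_{i+1/2}$ can be written via Newton's form as a telescoping sum of divided differences of $\bar u$ over the two selected stencils. The ENO rule always picks the smaller neighbouring divided difference at each stage, so each such term is bounded above and below by $|\mathrm D^{k-1}a_j|$ for $j$ in a window of width $\sim k$ around $i$. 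The factor $|a_i|$ is then traded for $A_j^{(k)}$: the upper bound is immediate. For the lower bound I would use that $|\mathrm D^{k-1}a_j|\lesssim A^{(k)}_j$ and a case split according to which $a_{j+s}$ attains the maximum, so that some interface in the window carries both a large jump and a comparable $k$th-order trace difference.

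For odd $k=2m+1$, the highest difference $\mathrm D^{k-1}=\mathrm D^{2m}$ has even order. Summation by parts gives
\[
\sum_j a_j\,\mathrm D^{2m}a_{j-m}=(-1)^m\sum_j|\mathrm D^m a_j|^2 .
\]
I would aim to bound a weighted energy $\sum_j |a_j|^{k-1}|\mathrm D^m a_j|^2$, or a similar quantity, by $C\,U^{k-2}\mathcal E_k(a)$, using $|a|\le 2U$ and $A_j^{(k)}$ to absorb the weights. The key tool is then a discrete Gagliardo--Nirenberg inequality of the form
\[
\sum|a_j|^{k+1}\lesssim \|\bar u\|_\infty^{k-1}\sum|\mathrm D^{m}a|\cdot(\cdots),
\]
since $a=\mathrm D\bar u$, which is integration-by-parts interpolation between $\bar u\in\ell^\infty$ and the $m$th difference. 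Making the weights match is where the hidden square must be exploited nonlinearly. Concretely, I would first try to prove $\sum|a|^{k+1}\lesssim U^{k-1}\sum A^{(k)}|\mathrm D^{k-1}a|$ directly by induction on $m$, each step performing one summation by parts and one Cauchy--Schwarz. I expect this step to be the main obstacle.

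For even $k\ge4$, $\mathrm D^{k-1}$ has odd order and no square appears. Here I would take $a$ on long blocks of length $L$ to be the sampled polynomial of degree $k-2$ (the kernel of $\mathrm D^{k-1}$), rescaled to $O(1)$ amplitude, using an Euler polynomial $E_{k-2}(j/L)$. The symmetry $E_n(1-x)=(-1)^nE_n(x)$ lets consecutive blocks be glued with alternating sign while keeping $\bar u=\sum a$ bounded, since the Euler polynomial averages to zero against the alternation. In each block, $\sum|a|^{k+1}\sim L$, whereas $\mathcal E_k$ only sees $O(1)$ junction windows per block, each contributing $O(L^{-(k-2)})$ smoothness defects. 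Concatenating $N$ blocks then makes the ratio grow like $L^{k-2}$, giving \eqref{eq:intro-even-ratio} via the comparison. Finally, a small generic perturbation would make all affected ENO comparisons strict, so the stencil pattern is locally constant and the failure persists on an open neighbourhood. The case $k=2$ is the known estimate.
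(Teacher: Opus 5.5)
\paragraph{Odd orders and the source comparison.} The odd-order half is not closed, and the step you flag as the main obstacle is where the proposal goes wrong. With $k=2m+1$ as you write it, no weights need to be matched. Your own identity together with $|a_j|\le A^{(k)}_{j-m}$ already gives the unweighted bound $\sum_j|\mathrm D^m a_j|^2\le\mathcal E_k(a)$. The discrete Gagliardo--Nirenberg inequality $\sum_i|\mathrm D v_i|^{2m+2}\le C_m\|v\|_{\ell^\infty}^{2m}\sum_i|\mathrm D^{m+1}v_i|^2$, applied to $v=\bar u$ (so that $\mathrm D^{m+1}v=\mathrm D^m a$), then gives $\sum_i|a_i|^{k+1}\le C_kU^{k-1}\mathcal E_k(a)$ directly.

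The weighted estimate you propose, $\sum_j|a_j|^{k-1}|\mathrm D^m a_j|^2\lesssim U^{k-2}\mathcal E_k(a)$, cannot hold, because its two sides have homogeneity $k+1$ and $k$. Your induction by ``one summation by parts and one Cauchy--Schwarz per step'' is only carried out for $k=3$. For general $m$ you need an actual proof of the discrete inequality. The paper lifts $v$ to the cardinal spline $S_v=\sum_jv_jB_{m+1}(\cdot-j)$, for which $\|S_v\|_{L^\infty}\le\|v\|_{\ell^\infty}$, $\|S_v^{(m+1)}\|_{L^2}=\|\mathrm D^{m+1}v\|_{\ell^2}$, and $\|\mathrm Dv\|_{\ell^{2m+2}}\lesssim\|S_v'\|_{L^{2m+2}}$ by stability of B-spline translates. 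It then applies the continuous inequality.

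The chain also needs $\mathcal E_k\lesssim Q_k$, and your case split onto ``some interface in the window'' cannot supply it. The intervals $B_i^{(k)}=[r_i^{(k)},r_{i+1}^{(k)})$ partition $\mathbb Z$, so $\mathrm D^{k-1}a_j$ enters $Q_k$ only at the single interface $i$ with $j\in B_i^{(k)}$; moreover $\Delta p$ vanishes wherever $B_i^{(k)}=\varnothing$. What is required is the local amplitude bound $A^{(k)}_j\le C_k|a_i|$ for that particular $i$. It is proved by induction over ENO levels: when the stencil moves left, the comparison $|\mathrm D^{\ell-1}a_{L-1}|<|\mathrm D^{\ell-1}a_L|$ controls the single new jump $a_{L-1}$, and the right end is handled symmetrically through the tie-breaking branch. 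Separately, the comparability $|\Delta p_{i+1/2}|\asymp\sum_{j\in B_i^{(k)}}|\mathrm D^{k-1}a_j|$ comes from the FMT termwise sign, not from ENO choosing the smaller difference.

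\paragraph{Even orders.} The Euler-block construction is the right one, but your bookkeeping does not establish divergence.
\begin{enumerate}[label=\textup{(\roman*)},leftmargin=2.2em]
\item In your normalization (amplitude $O(1)$, block length $L$) one has $U\asymp L$, not $O(1)$, and $A^{(k)}_j\asymp L^{-1}$ near every junction.
\item At the two outer ends the zero extension is matched by no identity, so there $|\mathrm D^{k-1}a_j|\asymp L^{-1}$ rather than $L^{-(k-2)}$.
\item Hence $\mathcal E_k\asymp NL^{-(k-1)}+L^{-2}$, and the ratio is of order $NL/(N+L^{k-3})$. It diverges only if $N\gg L^{k-3}$ (the paper takes $N=L^{k-2}$), and then it grows like $L$, not $L^{k-2}$. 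For bounded $N$ it stays bounded.
\item The $O(L^{-(k-2)})$ junction defect comes from the functional equation $E_d(x+1)+E_d(x)=2x^d$ with $d=k-2$. This makes the negated next block differ from the polynomial continuation of the previous one by the truncated monomial $\pm2(h/L)^d\mathbf 1_{h\ge0}$. The symmetry $E_d(1-x)=E_d(x)$ that you cite does not suffice: for $k=6$, the symmetric bump $x^2(1-x)^2$ glued in the same alternating way gives a bounded ratio.
\end{enumerate}
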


The counterexamples can be chosen so that every ENO comparison affected by
their jump support is strict.  The selected stencil pattern is then unchanged
under sufficiently small perturbations.  For every $C>0$, the quotient in
\eqref{eq:intro-even-ratio} exceeds $C$ on a nonempty relatively open subset
of a finite-dimensional space of compactly supported data.  Thus the
even-order obstruction persists where the stencil pattern is locally
constant.  The precise statement is given in
Proposition~\ref{prop:even-open-set-failure}.

Even when the full estimate fails, two quantitative bounds remain.  Put
$r=k-1$, and let $\mathcal P_{k-2}$ denote the restrictions to
$\{-r,\ldots,r\}$ of real polynomials of degree at most $k-2$.  For the
centered jump block
\[
	\mathbf a_i=(a_{i-r},\ldots,a_{i+r}),
\]
define
\begin{equation}\label{eq:intro-rho}
	\rho_i:=\inf_{P\in\mathcal P_{k-2}}
	\left(\sum_{\ell=-r}^{r}|a_{i+\ell}-P(\ell)|^2\right)^{1/2}.
\end{equation}
Thus $\rho_i$ is the Euclidean distance from the local jump block to the
sampled polynomial subspace; the fitting polynomial may depend on $i$.

\begin{lettertheorem}[Large-jump and quotient coercivity]\label{thm:B}
	For every $k\ge2$, every $\varepsilon>0$, and every compactly supported
	sequence of cell averages on the unit grid,
	\begin{equation}\label{eq:intro-large-jump-main}
		\sum_{|a_i|>\varepsilon U}|a_i|^{k+1}
		\le C_{k,\varepsilon}U^{k-1}Q_k(\bar u),
	\end{equation}
	and
	\begin{equation}\label{eq:intro-quotient-main}
		\sum_i\rho_i^{k+1}
		\le C_k U^{k-1}Q_k(\bar u).
	\end{equation}
\end{lettertheorem}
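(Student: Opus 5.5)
We would prove Theorem~\ref{thm:B} entirely through the selection-independent localization \eqref{eq:intro-selection-free-source}, which we take as established: $Q_k(\bar u)\asymp_k\mathcal E_k(a)=\sum_j A_j^{(k)}|\mathrm D^{k-1}a_j|$. Both estimates then become inequalities for a compactly supported integer sequence $a=\mathrm D\bar u$ with $|a_i|\le 2U$. They no longer involve ENO stencils at all. The common mechanism is a local finite-dimensional one. On a window of $m$ consecutive jumps, the quantities $|\mathrm D^{k-1}a_j|$ for $j$ in the window vanish exactly when the block is a sampled polynomial of degree at most $k-2$. Hence, by equivalence of norms on the finite-dimensional quotient, the Euclidean distance of the block to $\mathcal P_{k-2}$ on that window is bounded by $C\sum_j|\mathrm D^{k-1}a_j|$ over the window.

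For \eqref{eq:intro-quotient-main}, we fix $i$ and use the block $\{i-r,\dots,i+r\}$ of length $2k-1$. We take the $k$ differences $\mathrm D^{k-1}a_j$ for $j=i-r,\dots,i$, which are exactly those whose stencils lie in the block. They determine the block modulo $\mathcal P_{k-2}$, so
\[
\rho_i\le C_k\sum_{j=i-r}^{i}|\mathrm D^{k-1}a_j|.
\]
We also have $\rho_i\le|\mathbf a_i|\le C_kU$, since $P=0$ is admissible. Consequently
\[
\rho_i^{k+1}\le (C U)^{k-1}\rho_i\cdot\rho_i\le C U^{k-1}\,\rho_i\sum_j|\mathrm D^{k-1}a_j|.
\]
Here the remaining factor $\rho_i$ must be bounded by $A_j^{(k)}$ rather than by $U$, because the target is $U^{k-1}\mathcal E_k$ and $\mathcal E_k$ carries the weight $A_j^{(k)}$. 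The key observation is that a block whose $\rho_i$ is large cannot have all its jumps small at every stencil position $j$. More precisely, we will show $\rho_i\le C_k\max_{j=i-r}^{i}A_j^{(k)}$, since $\rho_i\le|\mathbf a_i|$ and the stencils $\{j,\dots,j+k-1\}$ for $j=i-r,\dots,i$ cover the block. This alone is not enough, because we need $A_j$ for the specific $j$ carrying $|\mathrm D^{k-1}a_j|$.

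We would therefore refine the argument by homogeneity and compactness. Consider the function $F(\mathbf a)=\sum_{j}A_j^{(k)}|\mathrm D^{k-1}a_j|$ on the block. It is positively $2$-homogeneous, continuous, and vanishes only when each $j$ has $A_j=0$ or $\mathrm D^{k-1}a_j=0$. If some $A_j=0$, then $k$ consecutive jumps vanish, while the remaining $\mathrm D^{k-1}a_{j'}=0$ conditions propagate the polynomial structure across overlapping stencils. Checking this by induction along $j$, we expect $F=0$ to force $\rho_i=0$. Compactness on the unit sphere of the quotient then gives $\rho_i^2\le C F$, which yields
\[
\rho_i^{k+1}\le C U^{k-1}\rho_i^2\le CU^{k-1}\sum_{j=i-r}^{i}A_j|\mathrm D^{k-1}a_j|.
\]
Summing over $i$ counts each $j$ at most $k$ times. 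The main obstacle is the zero-set claim. A block equal to a polynomial on one part and zero on an adjacent $k$-run needs care, but any zero $k$-run forces the connecting differences to keep the block polynomial, since that polynomial must then be $0$. The fallback is a small case analysis on where the zero runs sit.

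For \eqref{eq:intro-large-jump-main}, we take $|a_i|>\varepsilon U$. We apply the same compactness argument on a larger window of length $M=M(k,\varepsilon)$ around $i$. The point is that sampled polynomials of degree at most $k-2$ bounded by $2U$ on a long window, with $|a_i|>\varepsilon U$ at its center, can occur only if the window is short. More accurately, a polynomial profile that stays large cannot persist, since $\sum_\ell a_\ell=\bar u_{\rm right}-\bar u_{\rm left}$ over any interval is bounded by $2U$. A polynomial block of length $M$ with a value exceeding $\varepsilon U$ has partial sums of size about $\varepsilon U M$ by Markov-type inequalities, which is impossible for $M\gg 1/\varepsilon$. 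So on a window of length $M$, we use the quantity
\[
\operatorname{dist}\bigl(\mathbf a,\{\text{polynomials with controlled partial sums}\}\bigr),
\]
and bound $\varepsilon U\lesssim\rho^{(M)}_i$, the distance on the long window. The quotient estimate on long windows, proved as above with constants depending on $M$, then gives $|a_i|^{k+1}\le C U^{k-1}\sum_{\text{window}}A_j|\mathrm D^{k-1}a_j|$. Bounded overlap of the windows then completes the proof of \eqref{eq:intro-large-jump-main}.
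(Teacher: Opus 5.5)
Your reduction to $\mathcal E_k$, the local norm equivalence modulo $\mathcal P_{k-2}$, and the bounded-overlap summation are the right architecture. The quotient half, however, has a gap at the step ``compactness on the unit sphere of the quotient then gives $\rho_i^2\le CF$.''

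The function $F(\mathbf a)=\sum_{j=i-r}^{i}A_j^{(k)}|\mathrm D^{k-1}a_j|$ does not descend to $X_r/\mathcal P_{k-2}$. The factors $|\mathrm D^{k-1}a_j|$ are invariant under adding a sampled polynomial, but the weights $A_j^{(k)}$ are not. Minimizing $F$ subject to $\rho_i=1$ is therefore a problem on the unbounded cylinder $\{z+p:\ z\perp\mathcal P_{k-2},\ \|z\|_2=1,\ p\in\mathcal P_{k-2}\}$. Positivity of a continuous function on that set does not give a positive infimum. You need control as the polynomial component $p$ tends to infinity, and your argument never supplies it.

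Conversely, the zero-set question you single out as the main obstacle is immediate. $A_j^{(k)}$ and $\mathrm D^{k-1}a_j$ involve the same $k$ entries, so $A_j^{(k)}=0$ forces $\mathrm D^{k-1}a_j=0$. Hence $F=0$ is equivalent to $\mathbf a_i\in\mathcal P_{k-2}$ by Lemma~\ref{lem:discrete-polynomial-kernel}, and no case analysis on zero runs is needed.

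The missing idea is to put $A_j^{(k)}$ on the other factor. You correctly saw that the leftover $\rho_i$ cannot be bounded by the particular $A_j^{(k)}$. But the binomial expansion gives $|\mathrm D^{k-1}a_j|\le 2^{k-1}A_j^{(k)}$, hence $|\mathrm D^{k-1}a_j|^2\le 2^{k-1}A_j^{(k)}|\mathrm D^{k-1}a_j|$. Square your bound $\rho_i\le C_k\sum_{j=i-r}^{i}|\mathrm D^{k-1}a_j|$ and apply Cauchy--Schwarz over the $k$ terms. This yields $\rho_i^2\le C_k\sum_{j=i-r}^{i}A_j^{(k)}|\mathrm D^{k-1}a_j|$ with no further compactness. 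Then $\rho_i^{k+1}\le C_kU^{k-1}\rho_i^2$ finishes exactly as in the paper (Lemma~\ref{lem:local-discrete-BH} and \eqref{eq:difference-square-source-bound}). The same inequality, $F\ge 2^{1-k}\sum_j|\mathrm D^{k-1}a_j|^2$, is precisely the uniform control at infinity that your compactness argument lacked.

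For the large-jump estimate your route genuinely differs from the paper's, and it can be made to work. Lemma~\ref{lem:large-jump-local} normalizes by $a_i$ and chooses $R$ by contradiction, using that a nonzero polynomial has unbounded two-sided partial sums. It then minimizes the weighted local source over the compact set $K_{R,\varepsilon}$ cut out by the partial-sum constraints. There the weights cause no trouble, because those constraints bound every coordinate.

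You instead prove a lower bound $\rho_i^{(M)}\gtrsim_{k,\varepsilon}|a_i|$ for the distance to $\mathcal P_{k-2}$ on $\{i-M,\ldots,i+M\}$, with $M\sim C_k/\varepsilon$, and then invoke the repaired quotient estimate on that window. The lower bound needs the step you omitted:
\begin{itemize}
\item Suppose $\rho_i^{(M)}\le\eta|a_i|$. Then the best-fit polynomial $P$ has $|P(0)|\ge(1-\eta)|a_i|$.
\item Let $e$ be the residual on the window. Since $\|e\|_{\ell^1}\le\sqrt{2M+1}\,\|e\|_{\ell^2}$, the subinterval sums of $P$ differ from $\bar u_n-\bar u_m$ by at most $\sqrt{2M+1}\,\eta|a_i|$.
\item So these sums are $O\bigl((1/\varepsilon+1)|a_i|\bigr)$ once $\eta\le(2M+1)^{-1/2}$.
\item Apply Markov's inequality to the degree-$(k-1)$ partial-sum polynomial, after comparing its integer and continuous sup norms. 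It forces some such sum to be $\gtrsim_k M|P(0)|$, which is a contradiction for $M$ large.
\end{itemize}
Your ``distance to polynomials with controlled partial sums'' should simply be the distance to all of $\mathcal P_{k-2}$ on the long window.

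What your route buys is an explicit $C_{k,\varepsilon}$, polynomial in $1/\varepsilon$. It also presents the large-jump bound as a long-window quotient bound. The paper's compactness argument is shorter and avoids Markov-type estimates, but says nothing about how $C_{k,\varepsilon}$ depends on $\varepsilon$.
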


The first estimate gives uniform control of jumps larger than a fixed fraction
of the amplitude.  The quotient estimate controls the distance of each local
jump block from the polynomial kernel.  If $P_i$ is the least-squares
minimizer in
\eqref{eq:intro-rho}, then in particular
\[
	\sum_i|a_i-P_i(0)|^{k+1}\le C_kU^{k-1}Q_k(\bar u).
\]

\subsection{Polynomial profiles and shift cohomology}
\label{sec:intro-shift-result}

The polynomial profiles isolated in Theorem~\ref{thm:B} also provide the
natural setting for a separate local exactness problem.  The entropy-flux
mismatch defined below is not itself the cellwise entropy residual.  Relative
to a canonical centered numerical entropy flux, that residual is the average
of the mismatches at the two adjacent interfaces.
Lemma~\ref{lem:mismatch-entropy-flux-correction} shows that, at each
homogeneous Taylor degree, this residual can be canceled by a
translation-invariant finite-stencil correction to the numerical entropy flux
if and only if the mismatch component is a coboundary for the lattice shift.

For a scalar flux $f$, let $(\eta,q)$ be a smooth entropy pair, so that
$q'=\eta'f'$, and set $v=\eta'$ and $\psi=vf-q$.  Let
$\widetilde F^k_{i+1/2}$ be the finite-stencil FMT numerical flux defined in
Subsection~\ref{sec:fmt-mismatch}.  We define its entropy-flux mismatch by
\begin{equation}\label{eq:intro-entropy-flux-mismatch}
	g_{\eta,i+1/2}
	:=\bigl(v(\bar u_{i+1})-v(\bar u_i)\bigr)
	  \widetilde F^k_{i+1/2}
	 -\bigl(\psi(\bar u_{i+1})-\psi(\bar u_i)\bigr).
\end{equation}
Theorem~\ref{thm:C} concerns the homogeneous Taylor components of this
quantity after restriction to polynomial jump profiles.

Fix a constant background $u_*$.  A polynomial jump profile of degree at most $m$ is a
bi-infinite sequence satisfying $\mathrm D^{m+1}a=0$.  Equivalently, for every
$\ell\in\mathbb Z$ it has the Newton representation
\begin{equation}\label{eq:intro-polynomial-profile}
	 a_{i+\ell}=\sum_{s=0}^{m}\binom{\ell}{s}c_s,
	\qquad c_s=(\mathrm D^s a)_i.
\end{equation}
Here $\binom{\ell}{0}:=1$ and, for $s\ge1$,
\[
	\binom{\ell}{s}
	:=\frac{\ell(\ell-1)\cdots(\ell-s+1)}{s!}.
\]
Thus the same formula applies when $\ell<0$.  We use the centered base value
$u:=\bar u_i-u_*$.  
Moving the base index from $i$ to $i+1$ acts on these coordinates by the
linear map
\begin{equation}\label{eq:intro-profile-shift}
	T_m(u,c_0,\ldots,c_m)
	=(u+c_0,c_0+c_1,\ldots,c_{m-1}+c_m,c_m).
\end{equation}
Let $\mathscr P_{m,D}$ be the space of homogeneous polynomials of total degree
$D$ in $(u,c_0,\ldots,c_m)$.  On this space set
\begin{equation}\label{eq:intro-shift-cohomology}
	\delta_{m,D}H:=H\circ T_m-H,
	\qquad H^1_{m,D}:=\operatorname{coker}\delta_{m,D}.
\end{equation}
By a translation-invariant finite-stencil local primitive we mean a single
$C^D$ function of finitely many consecutive cell values, used with the same
formula at every base index, whose forward difference yields the prescribed
homogeneous Taylor term on these profiles.  The
precise analytic formulation is given in
Definition~\ref{def:local-primitive}.  In part~\textup{(ii)} below,
\emph{jump degree} counts only the variables $c_0,\ldots,c_m$, whereas
\emph{total degree} counts $u$ as well.

\begin{lettertheorem}[Shift cohomology and a fourth-order entropy-flux
obstruction]\label{thm:C}
	Let $m,D\ge0$ and fix $u_*\in\mathbb R$.
	\begin{enumerate}[label=\textup{(\roman*)},leftmargin=2.2em]
		\item A polynomial $G\in\mathscr P_{m,D}$ admits a
		translation-invariant finite-stencil local primitive at $u_*$, in the
		sense of Definition~\ref{def:local-primitive}, if and only if
		$[G]=0$ in $H^1_{m,D}$.  With $n:=m+1$,
		\begin{equation}\label{eq:intro-cohomology-dimension}
			\dim H^1_{m,D}
			=[z^{\lfloor nD/2\rfloor}]
			\begin{bmatrix}n+D\\ n\end{bmatrix}_z,
		\end{equation}
		where $[z^r]F(z)$ denotes the coefficient of $z^r$ in $F(z)$ and
		\[
			\begin{bmatrix}n+D\\ n\end{bmatrix}_z
			:=\prod_{j=1}^{n}\frac{1-z^{D+j}}{1-z^j}.
		\]
		In particular, $\dim H^1_{2,7}=10$.

		\item For the cubic flux \(f(u)=u^3/3\), let
		\(\widetilde F^4\) be the fourth-order finite-stencil FMT flux built,
		as in Subsection~\ref{sec:fmt-mismatch}, from the two-point
		entropy-conservative flux associated with the quadratic entropy.
		There exists a globally strictly convex entropy such that, when this
		fixed flux is tested against the corresponding entropy pair, the
		following holds.  At $u_*=0$, consider the restriction of the entropy-flux
		mismatch to jump profiles of degree at most two.  There is a
		translation-invariant finite-stencil local quantity $H$ whose coboundary
		has the same jump-degree-two component as this restricted mismatch.  Let
		$G_{2,7}$ be the total-degree-seven component of the difference between the
		mismatch and the full coboundary of $H$.  Then
		\[
			[G_{2,7}]\ne0\qquad\text{in }H^1_{2,7}.
		\]
		Consequently, $G_{2,7}$ has no translation-invariant finite-stencil $C^7$
		local primitive at $u_*=0$.
		Equivalently, the associated degree-seven term in the
		semidiscrete entropy residual cannot be canceled by any
		translation-invariant finite-stencil $C^7$ correction to the canonical
		local numerical entropy flux on these profiles.
	\end{enumerate}
\end{lettertheorem}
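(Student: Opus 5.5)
The plan is to treat Theorem~\ref{thm:C} as a statement about a single unipotent operator and its symmetric powers. Part~(ii) then reduces to evaluating one explicit invariant functional.

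\emph{Part (i), equivalence.} First I would check the easy direction. If $G=H\circ T_m-H$ with $H\in\mathscr P_{m,D}$, then $H$ is itself a local primitive. Indeed, the Newton coordinates satisfy $c_s=(\mathrm D^{s}a)_i=(\mathrm D^{s+1}\bar u)_i$ and $u=\bar u_i-u_*$. Hence $H$ is a polynomial, in particular $C^D$, function of the consecutive cell values $\bar u_i,\ldots,\bar u_{i+m+1}$. It is used with the same formula at every base index, and its forward difference is $G$ by \eqref{eq:intro-profile-shift}.

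For the converse, I would start from a $C^D$ primitive $h$ of finitely many consecutive cell values. I take its homogeneous degree-$D$ Taylor component at the constant state $u_*$. On a polynomial profile each $\bar u_{i+\ell}-u_*$ is a linear form in $(u,c_0,\ldots,c_m)$, given by a Newton sum. The Taylor component therefore restricts to some $H\in\mathscr P_{m,D}$. Because $T_m$ is linear, extracting the degree-$D$ component commutes with composing with $T_m$. So $G$ equals the degree-$D$ part of $h\circ T_m-h$, which is $\delta_{m,D}H$, and $[G]=0$. I would verify that this matches the precise wording of Definition~\ref{def:local-primitive}.

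\emph{Part (i), dimension.} Write $V=\mathbb R^{m+2}$. Then $T_m=I+N$, where $N$ is a single nilpotent Jordan block of size $m+2=n+1$. Consequently $X:=\log T_m$ is regular nilpotent, and $V$ becomes the irreducible $\mathfrak{sl}_2$-module $\mathrm{Sym}^n\mathbb C^2$ with $X$ acting as the raising operator $e$.

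On $\mathscr P_{m,D}\cong\mathrm{Sym}^D(V^*)$, the map $H\mapsto H\circ T_m$ equals $\exp$ of the induced derivation $X_D$. Since $X_D$ is nilpotent, $\ker(\exp X_D-I)=\ker X_D$, and the two maps have equal rank. This gives
\[
\dim H^1_{m,D}=\dim\ker\delta_{m,D}=\dim\ker e\big|_{\mathrm{Sym}^D(\mathrm{Sym}^n\mathbb C^2)},
\]
which is the number of irreducible summands. Each summand meets the weight space of weight $0$ or $\pm1$, of the parity of $nD$, exactly once. That middle weight space has dimension equal to the number of partitions fitting in a $D\times n$ box with size $\lfloor nD/2\rfloor$. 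By the Cayley--Sylvester generating function this is $[z^{\lfloor nD/2\rfloor}]\left[\begin{smallmatrix}n+D\\ n\end{smallmatrix}\right]_z$, which proves \eqref{eq:intro-cohomology-dimension}. A direct coefficient extraction with $n=3$, $D=7$ should give $10$.

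\emph{Part (ii).} I would detect nontriviality by a $T_m$-invariant linear functional $\lambda$ on $\mathscr P_{2,7}$, that is, $\lambda\circ\delta_{2,7}=0$. Such functionals span the dual of $H^1_{2,7}$ and can be produced as lowest-weight vectors of the dual action. It then suffices to show $\lambda(G_{2,7})\ne0$ for one such $\lambda$.

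The steps are as follows.
\begin{enumerate}
\item Choose an explicit globally strictly convex entropy, for instance $\eta=u^2/2+\beta u^4$ with a free $\beta>0$. This makes $v=\eta'$ and $\psi=vf-q$ explicit polynomials.
\item Expand the fourth-order FMT flux $\widetilde F^4$ for $f=u^3/3$ on degree-$\le2$ profiles near $u_*=0$.
\item Remove the jump-degree-two part by an explicit $H$, which exists by the statement.
\item Extract the total-degree-seven component and evaluate $\lambda$ on it, ideally obtaining a nonzero multiple of $\beta$ (or of a fixed polynomial in $\beta$).
\end{enumerate}

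The main obstacle is the ENO stencil selection inside $\widetilde F^4$. The flux is only piecewise polynomial, and on polynomial profiles the ENO comparisons can tie. I would first restrict to an open cone of profile coordinates $(c_0,c_1,c_2)$ in which all relevant ENO comparisons are strict, so that the stencils are constant and the mismatch is a genuine polynomial there. The Taylor and cohomology argument then applies on this cone. Because $G$ is polynomial there, the cone determines $G$ and its class uniquely. The remaining difficulty is the size of the symbolic computation: the degree-seven expansion and the construction of $\lambda$ involve a 10-dimensional cohomology, and I would carry out that bookkeeping with computer algebra over exact rationals.
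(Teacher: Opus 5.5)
Part (i) of your plan is sound and is essentially the paper's argument: polynomial templates in $\bar u_i-u_*,\mathrm D\bar u_i,\ldots,\mathrm D^{m+1}\bar u_i$, then $\ker\delta_{m,D}=\ker\log T_m^*$, then a count of the irreducible summands of $\operatorname{Sym}^D(\operatorname{Sym}^n\mathbb C^2)$. Counting through the weight-$0$ or weight-$1$ space is a slightly more direct version of the paper's telescoping of Cayley--Sylvester multiplicities.

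Part (ii), however, fails at step~1. For $f=u^3/3$, both $F^*$ and $\widetilde F^4$ are homogeneous cubic in the cell values. Hence an entropy monomial $u^p/p$ contributes to the mismatch only in total degree $p+2$: the $v$-jump has degree $p-1$, it multiplies a cubic flux, and $\psi_p$ has degree $p+2$. The same holds for the jump-degree-two correction $\frac12\eta''f'\sum_t\beta_tK_i^{(t)}$. With $\eta=u^2/2+\beta u^4$, every contribution lies in total degrees $4$ and $6$. So the total-degree-seven component is identically zero, its class is trivially zero, and no invariant functional can return a nonzero multiple of $\beta$.

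The only monomial that feeds degree seven is $u^5/5$. But $u^2/2+u^5/5$ is not convex, since $\eta''=1+4u^3<0$ at $u=-1$. The missing idea is to add a higher term that restores global convexity without touching degree seven. The paper takes $\eta_\star=u^2/2+u^5/5+u^8/8$, with $\eta_\star''=7(u^3+\frac27)^2+\frac37$; its $u^8$ part enters only in degree ten. The problem then reduces to $v_5=u^4$, $\psi_5=\frac4{21}u^7$ and $G_{\mathrm{raw}}=((Tu)^4-u^4)\mathcal F^{[2]}-\frac4{21}((Tu)^7-u^7)$. The substance of (ii) is an explicit $\mathfrak L$ with $\mathfrak L\circ\mathcal N_7=0$ and $\mathfrak L(G_{\mathrm{raw}})=-32$. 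Your proposal defers this to an unspecified computer calculation.

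There are two further points. First, the ``main obstacle'' you identify does not exist. The flux $\widetilde F^4_{i+1/2}=\frac43F^*(u_i,u_{i+1})-\frac16\{F^*(u_i,u_{i+2})+F^*(u_{i-1},u_{i+1})\}$ is the LeFloch--Mercier--Rohde combination of two-point fluxes. It involves no ENO stencil selection and, for cubic $f$, is a polynomial in the cell values. Your cone workaround would not fit the framework anyway: at the constant state $u_*=0$ every ENO comparison ties, so a switching mismatch would not be $C^7$ there and its Taylor components would be undefined.

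Second, the existence of $H$ is part of what (ii) asserts, so saying it ``exists by the statement'' is circular. You must construct it, for example as $H_i=\frac12\eta''(u_i)f'(u_i)\sum_t\beta_tK_i^{(t)}$ with $K_i^{(t)}=\sum_{s=1}^ta_{i-s}a_{i+t-s}$, using $K_{i+1}^{(t)}-K_i^{(t)}=a_i(a_{i+t}-a_{i-t})$. For $k=4$ and $\eta_5$ this gives $H^{(7)}=-\frac13u^5c_0(c_0-c_1+c_2)$. To prove the class is nonzero you may bypass $H$, because an invariant functional annihilates its full coboundary; that is why evaluating $\mathfrak L$ on $G_{\mathrm{raw}}$ suffices.
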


The conclusion in part~\textup{(ii)} concerns translation-invariant
finite-stencil local primitives and the corresponding finite-stencil
numerical entropy-flux corrections.  It does not exclude nonlocal,
index-dependent, or non-$C^7$ corrections. 
The dimension formula in
part~\textup{(i)} follows from the classical Cayley--Sylvester decomposition \cite{CayleySecondQuantics},
while the fourth-order assertion follows from an explicit finite-dimensional
cokernel calculation.

\subsection{Dependence on mesh geometry}
\label{sec:intro-mesh-result}

We finally state the nonuniform-mesh result.  Let
\[
	I_i=(x_{i-1/2},x_{i+1/2}),\qquad h_i:=|I_i|,
\]
and retain the notation
\[
	a_i=\bar u_{i+1}-\bar u_i,
	\qquad U=\|\bar u\|_{\ell^\infty}.
\]
Let $p_i^{(k,h)}$ denote the ENO reconstruction polynomial in $I_i$.
At an interface, set
\[
	\Delta p_{i+1/2}^{(k,h)}
	:=p_{i+1}^{(k,h)}(x_{i+1/2})
	  -p_i^{(k,h)}(x_{i+1/2}),
\]
and define
\[
	Q_{k,h}(\bar u):=\sum_i a_i\Delta p_{i+1/2}^{(k,h)}.
\]
We also use the absolute source
\begin{equation}\label{eq:intro-absolute-source}
Q_{k,h}^{\mathrm{abs}}(\bar u)
	:=\sum_i |a_i|\,|\Delta p_{i+1/2}^{(k,h)}|.
\end{equation}
For $\Lambda\ge1$, call the mesh $\Lambda$-quasi-uniform if there is an
$h_*>0$ such that
\begin{equation}\label{eq:intro-quasi-uniform}
	0<h_*\le h_i\le\Lambda h_*.
\end{equation}

\begin{lettertheorem}[Quasi-uniform coercivity and irregular-mesh
failure]\label{thm:D}
	\leavevmode
	\begin{enumerate}[label=\textup{(\roman*)},leftmargin=2.2em]
		\item Let $k\ge3$ be odd, and let the mesh be $\Lambda$-quasi-uniform.
		Then every compactly supported sequence of cell averages satisfies the estimate
		\begin{equation}\label{eq:intro-quasi-uniform-main}
			\sum_i|a_i|^{k+1}
			\le C_{k,\Lambda}U^{k-1}Q_{k,h}(\bar u).
		\end{equation}
		Here $C_{k,\Lambda}$ depends only on $k$ and $\Lambda$.

		\item For every integer $k\ge2$, there exist an infinite
		one-dimensional mesh satisfying
		\[
			\sup_{i,j\in\mathbb Z}\frac{h_i}{h_j}=\infty
		\]
		and compactly supported cell-average sequences $\bar u^{(n)}$ on that
		mesh.  With $a^{(n)}:=\mathrm D\bar u^{(n)}$, they satisfy
		\[
			\sup_n\|\bar u^{(n)}\|_{\ell^\infty}<\infty,
			\qquad
			\inf_n\sum_i|a_i^{(n)}|^{k+1}>0,
			\qquad
			Q_{k,h}^{\mathrm{abs}}(\bar u^{(n)})\longrightarrow0.
		\]
		Consequently, no finite constant makes the nonuniform-mesh analogue of
		\eqref{eq:full-enotv-intro} valid for all compactly supported data on this
		fixed mesh, even when $Q_{k,h}$ is
		replaced by $Q_{k,h}^{\mathrm{abs}}$.
	\end{enumerate}
\end{lettertheorem}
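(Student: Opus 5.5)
The statement to prove is Theorem~\ref{thm:D}. Its two parts need different arguments, so I plan them separately.

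\emph{Part (i).} I would follow the uniform-grid odd-order argument, keeping track of how the constants depend on $\Lambda$. The first step is a nonuniform version of the localization \eqref{eq:intro-selection-free-source}. With all stencil choices fixed, $\Delta p^{(k,h)}_{i+1/2}$ is a linear combination of $k$th-order divided differences of the primitive. Their coefficients are products of distances between nodes, and these are bounded above and below by powers of $h_*$ times constants depending only on $(k,\Lambda)$. Using the FMT sign property interface by interface, together with the ENO selection rule (the selected divided difference is no larger than its competitor), I would show
\[
Q_{k,h}(\bar u)\asymp_{k,\Lambda}\sum_j A_j^{(k)}\,|\mathrm D_h^{k-1}a_j|,
\]
where $\mathrm D_h^{k-1}$ denotes suitably normalized divided differences of the jumps. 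After rescaling by $h_*$, these are comparable to the uniform differences up to lower-order combinations with bounded coefficients.

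The second step repeats the odd-order summation by parts that produces the hidden square, followed by the discrete Gagliardo--Nirenberg inequality. The aim is to show that every constant depends only on the local mesh ratios, which are bounded by $\Lambda$. I expect the main obstacle here. On a nonuniform mesh, the higher divided differences are not powers of a single translation-invariant operator, so the square identity picks up commutator terms. I would first try to absorb these into the localized source using the quasi-uniform bounds; if that fails, I would argue by compactness over the finitely parametrized family of local mesh geometries.

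\emph{Part (ii).} Here I would construct the mesh as an infinite concatenation of blocks. Block $n$ is a ``bad'' configuration with rapidly increasing ratios, say a very long cell adjacent to cells of size about $\epsilon_n\to0$, and the blocks are placed far apart. On block $n$ the data $\bar u^{(n)}$ is a single unit step supported near the small cells, arranged so that every ENO stencil that sees the jump extends across the tiny cells. The intended effect is that the reconstruction on each side nearly reproduces the neighbouring average at the interface, giving $|\Delta p|=O(\epsilon_n)$ at the jump interface and $O(\epsilon_n)$ elsewhere. Meanwhile $|a_i|=1$ for one $i$, so $\sum_i|a_i|^{k+1}\ge1$, and $\|\bar u^{(n)}\|_{\ell^\infty}=1$.

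The work in part (ii) is to check, for general $k$, which way the ENO comparisons go: divided differences scale with inverse cell widths, so a suitable choice of the geometric ratios should force the selected stencils. I would then bound $\Delta p$ explicitly through Newton-form trace formulas. Since every term of $Q^{\mathrm{abs}}_{k,h}$ is nonnegative, it suffices to bound each of the finitely many nonzero contributions, and each one tends to $0$. Finally, $\sup_{i,j}h_i/h_j=\infty$ holds automatically because $\epsilon_n\to0$.
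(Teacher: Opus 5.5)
Both parts have a genuine gap.

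\emph{Part (i).} Your source equivalence is essentially right. It should, however, be stated with $\widehat D_j^{(k)}=(H_j^{(k)})^k|D_j^{(k)}|$ itself: on a nonuniform mesh this is not comparable to $|\mathrm D^{k-1}a_j|$, and the ``lower-order combinations'' you mention are exactly the difficulty.

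The step you flag as the main obstacle is where the whole proof lies, and neither fallback closes it. The commutator terms from discrete summation by parts involve lower-order divided differences, with mesh-dependent coefficients of size $O(1)$; quasi-uniformity bounds the ratios but does not make them close to $1$. In general these terms do not vanish on data that are locally cell averages of a polynomial of degree at most $k-1$, where every $\widehat D_j^{(k)}$ vanishes. So no local inequality can absorb them into $\sum_j A_j^{(k)}\widehat D_j^{(k)}$. Compactness over local geometries only makes the constants of a \emph{valid} local inequality uniform; it cannot rescue one that fails. The estimate is intrinsically global, since parity enters only through an exact global integration by parts.

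What you need is an exact identity in which only $k$th divided differences appear, with coefficients bounded by $A_j^{(k)}$. The paper obtains it by leaving the lattice:
\begin{enumerate}[label=\textup{(\alph*)}]
\item It patches the moving cell-average interpolants $P_j\in\mathbb P_{k-1}$ on the windows $I_j,\dots,I_{j+k-1}$ with a smooth partition of unity into $S_h\in C_c^\infty(\mathbb R)$.
\item Since $D_j^{(k)}=0$ forces $P_{j+1}=P_j$, one has $P_{j+1}-P_j=D_j^{(k)}\mathcal R_j$ with $\mathcal R_j$ uniformly bounded.
\item Hence $S_h^{(k)}$ vanishes where $S_h=P_j$ and is $O(\widehat D_j^{(k)})$ on the transition intervals, where also $|S_h'|\lesssim A_j^{(k)}$.
\item Exact integration by parts, $\|S_h^{(m)}\|_2^2=(-1)^{m-1}\int S_h'S_h^{(2m-1)}$ with $2m-1=k$, then closes the argument. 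It is combined with the continuous Gagliardo--Nirenberg inequality, $\|S_h\|_\infty\lesssim U$, and $\sum_i|a_i|^{2m}\lesssim\|S_h'\|_{L^{2m}}^{2m}$.
\end{enumerate}
The same lift also replaces the spline lift behind the discrete Gagliardo--Nirenberg inequality, which you would otherwise have to re-prove on a nonuniform mesh.

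\emph{Part (ii).} The proposed data cannot work. Suppose $\bar u$ steps from $0$ to $1$ at $x_{i+1/2}$ and is constant on $k$ cells on each side. At every ENO level, the candidate containing only constant values has zero divided difference, while the one crossing the step does not, because its coefficient on the step datum is nonzero. So cell $i$ keeps an all-zero stencil and cell $i+1$ an all-one stencil. Hence $p_i\equiv0$, $p_{i+1}\equiv1$, and $|a_i|\,|\Delta p^{(k,h)}_{i+1/2}|=1$ on \emph{every} mesh. ENO is designed precisely not to extend stencils across such a jump, so no choice of geometric ratios forces the crossing you rely on. Compact support also forces $\sum_ia_i=0$, so there is at least one further jump to control.

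The missing idea is that the large cell-average jumps must be artifacts of a change in cell size inside a region where the underlying function is affine. The paper takes cell averages of a tent $\phi$, with one macroscopic cell on each affine piece and $\varepsilon$-cells around the three kinks.
\begin{enumerate}[label=\textup{(\alph*)}]
\item At the four macroscopic--microscopic interfaces, $|a_i|\ge1/8$. Yet every possible $k$-cell stencil there sees only affine data, so reconstruction is exact and $\Delta p^{(k,h)}_{i+1/2}=0$.
\item Near the kinks all cells are tiny, so $|a_i|$ and $|\Delta p^{(k,h)}_{i+1/2}|$ are $O(\varepsilon)$ on $O_k(1)$ interfaces, giving $Q^{\mathrm{abs}}_{k,h}=O(\varepsilon^2)$.
\end{enumerate}
Because these bounds hold for every consecutive stencil, no ENO comparison needs to be tracked. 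Disjoint copies at scales $\varepsilon_n\to0$ then give the fixed mesh.
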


The use of $Q_{k,h}^{\mathrm{abs}}$ in part~\textup{(ii)} shows that the
failure is not caused by cancellation between different interfaces.

\subsection{Previous work and scope}\label{sec:intro-previous-work}

The present results concern classical ENO reconstruction; WENO reconstruction
is not considered here.  The sign property \eqref{eq:intro-sign-property} and
the entropy-stable ENO framework are due to Fjordholm, Mishra, and Tadmor
\cite{FMT,FMTtecno}.  The sign property holds at every reconstruction order,
but it neither distinguishes parity nor implies the coercive estimate
\eqref{eq:full-enotv-intro}.  Fjordholm formulated the fixed-mesh version of
\eqref{eq:full-enotv-intro} as Conjecture~5.16 in his thesis
\cite{FjordholmThesis}: the data are compactly supported, and the constant is
allowed to depend on the mesh.  The same estimate is discussed as the ENO--TV
conjecture in \cite{FjordholmHandbook}.  The second-order estimate yields the
weak-BV bound used in the convergence analysis of Chatterjee and Fjordholm
\cite{CFM2AN}.

For general nonlinear systems, related convergence results for
arbitrary-order shock-capturing space--time discontinuous Galerkin schemes
were proved by Hiltebrand and Mishra and by Zakerzadeh and May
\cite{HiltebrandMishra,ZakerzadehMay}.  Both analyses use artificial-viscosity shock-capturing terms in their
compactness arguments.  For the TeCNO schemes \cite{FMTtecno} that motivate the
present reconstruction problem, no separate artificial-viscosity
shock-capturing term is added; their numerical dissipation is instead
built from ENO-reconstructed jumps of scaled entropy variables
\cite{FMTtecno}.

\subsection{Outline of the proofs}\label{sec:intro-proof-strategy}

\leavevmode
For nonnegative quantities $X$ and $Y$, we use
$X\lesssim_\theta Y$ to mean $X\le C_\theta Y$, where $C_\theta>0$ depends
only on $\theta$, which may denote one parameter or a tuple of parameters.
The notation $X\asymp_\theta Y$ means that both $X\lesssim_\theta Y$ and
$Y\lesssim_\theta X$ hold.

The starting point for Theorems~\ref{thm:A} and \ref{thm:B} is a localized
form of the FMT reconstruction identity.  The endpoints of the final stencils
in adjacent cells determine half-open intervals
$B_i^{(k)}\subset\mathbb Z$ that partition the integers, and the termwise sign
in the FMT expansion gives
\begin{equation}\label{eq:intro-source-equivalence}
	Q_k(\bar u)\asymp_k
	\sum_i |a_i|\sum_{j\in B_i^{(k)}}
	|\mathrm D^{k-1}a_j|,
\end{equation}
with constants depending only on $k$.  Recall
$A_j^{(k)}=\max_{0\le s<k}|a_{j+s}|$.  A local amplitude bound gives
$A_j^{(k)}\le C_k|a_i|$ whenever $j\in B_i^{(k)}$.  The partition
$\{B_i^{(k)}\}$ encodes the selected stencil configuration; once the partition is fixed,
the right-hand side is a finite-difference functional.  This reduction is used
throughout Sections~\ref{sec:odd-coercivity}--
\ref{sec:replacement-estimates}.

If $k=2m-1\ge3$, the $(k-1)$st difference in
\eqref{eq:intro-source-equivalence} has order $2m-2$.  Discrete summation by parts
gives the identity
\[
	\sum_j|\mathrm D^{m-1}a_j|^2
	=(-1)^{m-1}\sum_j a_{j+m-1}\mathrm D^{2m-2}a_j.
\]
The local amplitude bound together with
\eqref{eq:intro-source-equivalence} bounds the absolute value of the pairing
on the right by $C_kQ_k(\bar u)$.  The identity therefore places the square
energy on the left under the control of the ENO source.
A cardinal B-spline lifting and a
one-dimensional Gagliardo--Nirenberg inequality then control the required
$\ell^{2m}$ norm by this energy and the bounded cell-average sequence
$\bar u$.  This proves \eqref{eq:intro-odd-main}.  For even $k\ge4$,
$\mathrm D^{k-1}$ annihilates polynomials of degree at most $k-2$.
Euler-polynomial identities produce long blocks that follow these local
profiles.  The sum of $|a_i|^{k+1}$ accumulates over the block interiors,
while only terms within a fixed number of lattice sites of the joins between
successive blocks and the two ends of the construction contribute to the
localized source.  A finite-dimensional perturbation can be chosen to make
the two absolute values unequal in every ENO comparison affected by the
jump support while preserving the scale
bounds and hence the divergence in \eqref{eq:intro-even-ratio}.

For the large-jump estimate, we normalize by $U$ and apply compactness to the
resulting finite-dimensional family of local jump blocks.  For the quotient
estimate, finite-dimensional norm equivalence modulo $\mathcal P_{k-2}$
controls the distance to the polynomial
subspace.  Bounded overlap and the local amplitude bound then allow both local
statements to be summed using \eqref{eq:intro-source-equivalence}.

For Theorem~\ref{thm:C}, let $T_m^*:H\mapsto H\circ T_m$ be the pullback
induced by moving the base index one lattice site.  This operator is
unipotent, and $T_m^*-I$ has the same range as its nilpotent logarithm.
Rank--nullity therefore reduces the computation of the cokernel dimension to
that of the kernel of a raising operator on
$\operatorname{Sym}^D(\operatorname{Sym}^n(\mathbb C^2))$, where $n=m+1$.
The Cayley--Sylvester decomposition \cite{CayleySecondQuantics} gives
\eqref{eq:intro-cohomology-dimension}.  In the
fourth-order calculation, an explicit rational linear functional $\mathfrak L$
vanishes on $\operatorname{Range}\delta_{2,7}$ and satisfies
$\mathfrak L(G_{2,7})\ne0$.  The coefficients of this functional are listed in
Appendix~\ref{app:fourth-order-functional}.

On a quasi-uniform mesh, interpolation polynomials constructed on overlapping
mesh windows replace the translation-invariant spline lifting.  Patching them
into a compactly supported smooth function gives a Gagliardo--Nirenberg
estimate with constants depending only on $k$ and $\Lambda$, which yields
\eqref{eq:intro-quasi-uniform-main}.  The irregular-mesh construction instead
uses cell averages of a tent function, with cells of vanishing width near its
corners and larger cells on its affine pieces.  Reconstruction is exact on the
affine pieces; the remaining absolute source tends to zero while
$\sum_i|a_i|^{k+1}$ stays bounded below.  Disjoint copies at decreasing
scales produce one fixed irregular mesh.

\section{ENO reconstruction and a localized source identity}
\label{sec:eno-source}

The ENO source is defined through reconstruction polynomials, whereas the
proofs below use finite differences of the jump sequence.
We pass between these descriptions by recording the endpoints of the
selected stencils and deriving the source equivalence in
Corollary~\ref{cor:source-equivalence}.

\subsection{Reconstruction from cell averages and stencil selection}
\label{sec:eno-reconstruction}

We work on the unit grid
\[
x_{i+1/2}=i+\frac12,
\qquad
I_i=(x_{i-1/2},x_{i+1/2}),
\qquad i\in\mathbb Z.
\]
Fix a reconstruction order $k\ge2$.  Let
$(\bar u_i)_{i\in\mathbb Z}$ be a compactly supported sequence of cell
averages, understood to vanish outside its finite support.  For a sequence
$v=(v_i)$, set
\begin{equation}\label{eq:forward-difference}
	\mathrm Dv_i:=v_{i+1}-v_i,
	\qquad
	a_i:=\mathrm D\bar u_i,
	\qquad
	U:=\|\bar u\|_{\ell^\infty}.
\end{equation}
Here and below, $\mathrm D$ always denotes a forward difference of a
sequence.

The divided differences appropriate to cell averages are defined by
\[
[\bar u_j]:=\bar u_j
\]
and, for $m>j$, by
\begin{equation}\label{eq:cell-average-divided-difference}
	[\bar u_j,\ldots,\bar u_m]
	:=
	\frac{[\bar u_{j+1},\ldots,\bar u_m]
		-[\bar u_j,\ldots,\bar u_{m-1}]}
	{x_{m+1/2}-x_{j-1/2}}.
\end{equation}
The unit spacing gives the exact normalization
\begin{equation}\label{eq:divided-difference-normalization}
	[\bar u_j,\ldots,\bar u_{j+\ell}]
	=\frac{\mathrm D^\ell\bar u_j}{(\ell+1)!}
	=\frac{\mathrm D^{\ell-1}a_j}{(\ell+1)!},
	\qquad \ell\ge1.
\end{equation}
Thus, at each level, the ENO comparison is a comparison of two adjacent
finite differences of the jump sequence; the common factorial in
\eqref{eq:divided-difference-normalization} does not affect the decision.

We now fix the standard ENO recursion, following \cite{FMT}.  Let $r_i^{(\ell)}$ denote the left
endpoint of the $\ell$-cell stencil associated with cell $i$.  Starting with
\begin{equation}\label{eq:eno-initial-endpoint}
	r_i^{(1)}=i,
\end{equation}
suppose that $r_i^{(\ell)}=r$ has been chosen.  Define the two candidate
divided differences
\begin{equation}\label{eq:eno-candidates}
	L_i^{(\ell)}
	:=[\bar u_{r-1},\ldots,\bar u_{r+\ell-1}],
	\qquad
	R_i^{(\ell)}
	:=[\bar u_r,\ldots,\bar u_{r+\ell}].
\end{equation}
For $1\le \ell<k$, set
\begin{equation}\label{eq:eno-endpoint-recursion}
	r_i^{(\ell+1)}
	=
	\begin{cases}
		r_i^{(\ell)}-1,
		& |L_i^{(\ell)}|<|R_i^{(\ell)}|,\\[2mm]
		r_i^{(\ell)},
		& |L_i^{(\ell)}|\ge |R_i^{(\ell)}|.
	\end{cases}
\end{equation}

The final stencil is
\begin{equation}\label{eq:final-eno-stencil}
	\mathcal S_i^{(k)}
	:=\{r_i^{(k)},r_i^{(k)}+1,\ldots,r_i^{(k)}+k-1\}.
\end{equation}

Let $p_i^{(k)}$ be the unique polynomial of degree at most $k-1$ whose cell
averages on the cells indexed by $\mathcal S_i^{(k)}$ are the prescribed
values:
\begin{equation}\label{eq:reconstruction-cell-averages}
	\int_{I_j}p_i^{(k)}(x)\,dx=\bar u_j,
	\qquad j\in\mathcal S_i^{(k)}.
\end{equation}
We pass from cell-average reconstruction to point-value interpolation by
introducing cumulative interface values.  Since $\bar u$ is compactly
supported, define
\begin{equation}\label{eq:discrete-primitive}
	V_{n+1/2}:=\sum_{m=-\infty}^{n}\bar u_m
\end{equation}
These values are well defined, eventually constant at both ends, and are
the interface values of an antiderivative of the piecewise-constant function
equal to $\bar u_n$ on $I_n$.  In particular,
\[
V_{n+1/2}-V_{n-1/2}=\bar u_n.
\]
Let $P_i^{(k)}$ be the polynomial of degree at most $k$ interpolating $V$ at
the $k+1$ boundary points
\begin{equation}\label{eq:primitive-stencil}
	x_{r_i^{(k)}-1/2},x_{r_i^{(k)}+1/2},\ldots,
	x_{r_i^{(k)}+k-1/2}.
\end{equation}
We call this boundary-point set the \emph{interpolation stencil for
$V$}.  On each selected cell, the difference of the two interpolated
interface values is its prescribed cell average.  Thus $(P_i^{(k)})'$ has the
same cell averages as $p_i^{(k)}$, and uniqueness gives
\begin{equation}\label{eq:reconstruction-as-derivative}
	p_i^{(k)}=(P_i^{(k)})'.
\end{equation}

At the interface $x_{i+1/2}$, write
\[
p_{i+1/2}^{(k),-}:=p_i^{(k)}(x_{i+1/2}),
\qquad
p_{i+1/2}^{(k),+}:=p_{i+1}^{(k)}(x_{i+1/2})
\]
and set
\begin{equation}\label{eq:reconstructed-interface-jump}
	\Delta p_{i+1/2}^{(k)}
	:=p_{i+1/2}^{(k),+}-p_{i+1/2}^{(k),-}.
\end{equation}
The symbol $\Delta p$ is reserved for this reconstructed interface jump.  The
sign theorem of Fjordholm, Mishra, and Tadmor in \cite{FMT} states that
\begin{equation}\label{eq:fmt-sign-recalled}
	a_i\,\Delta p_{i+1/2}^{(k)}\ge0
	\qquad\text{for every }i\in\mathbb Z.
\end{equation}
Hence
\begin{equation}\label{eq:eno-source-section2}
	Q_k(\bar u)
	:=\sum_{i\in\mathbb Z}a_i\,\Delta p_{i+1/2}^{(k)}
\end{equation}
is nonnegative.

For any fixed outcome of the ENO comparisons, the reconstruction depends
linearly on the data.  Globally, however, the chosen endpoints change with
those comparisons, so the reconstruction map is nonlinear and may be
discontinuous.  To retain the quantitative information lost
in the global sign inequality, we first record how the endpoints selected in
adjacent cells are related.

\subsection{Stencil endpoints and the induced index partition}
\label{sec:eno-index-partition}

For $1\le\ell\le k$, define the half-open index interval
\begin{equation}\label{eq:selected-index-interval}
	B_i^{(\ell)}
	:=[r_i^{(\ell)},r_{i+1}^{(\ell)})\cap\mathbb Z.
\end{equation}
It may be empty.  At the final level, its elements will index the consecutive
interpolation stencils for $V$ encountered when one passes from the
reconstruction in cell $i$ to that in cell $i+1$.

\begin{lemma}[Monotonicity and nesting of stencil endpoints]
	\label{lem:endpoint-partition}
	For every $1\le\ell\le k$, the map
	$i\mapsto r_i^{(\ell)}$ is nondecreasing, and
	the intervals $B_i^{(\ell)}$ form a disjoint partition of $\mathbb Z$, with
	empty intervals allowed.  Moreover,
	\begin{equation}\label{eq:endpoint-bounds}
		i-\ell+1\le r_i^{(\ell)}\le i,
	\end{equation}
	and
	\begin{equation}\label{eq:index-interval-locality}
		j\in B_i^{(\ell)}
		\quad\Longrightarrow\quad
		0\le i-j\le\ell-1.
	\end{equation}
	For $1\le\ell<k$, the intervals at consecutive levels satisfy
	\begin{equation}\label{eq:index-interval-nesting}
		B_i^{(\ell+1)}
		\subset B_i^{(\ell)}\cup\{r_i^{(\ell)}-1\}.
	\end{equation}
	For the same range of $\ell$, if $B_i^{(\ell)}$ is empty, then
	$B_i^{(\ell+1)}$ is empty as well.
\end{lemma}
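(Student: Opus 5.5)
The whole lemma rests on one observation: at each level, the ENO update depends on the cell index $i$ only through the current endpoint. By \eqref{eq:eno-candidates}, the two candidates $L_i^{(\ell)}$ and $R_i^{(\ell)}$ are determined by $r=r_i^{(\ell)}$, $\ell$ and the data. So for each $1\le\ell<k$ I will define a single map $\varphi_\ell:\mathbb Z\to\mathbb Z$ by
\[
\varphi_\ell(r)=
\begin{cases}
r-1, & |[\bar u_{r-1},\ldots,\bar u_{r+\ell-1}]|<|[\bar u_r,\ldots,\bar u_{r+\ell}]|,\\
r, & \text{otherwise}.
\end{cases}
\]
Then \eqref{eq:eno-endpoint-recursion} reads $r_i^{(\ell+1)}=\varphi_\ell(r_i^{(\ell)})$ for every $i$. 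By construction $\varphi_\ell(r)\in\{r-1,r\}$, and $\varphi_\ell$ is nondecreasing: if $r<r'$ then $\varphi_\ell(r)\le r\le r'-1\le\varphi_\ell(r')$, and if $r=r'$ the values coincide. I expect isolating this index-independent map to be the only real point; once it is in hand, everything else is bookkeeping.

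I will prove monotonicity and \eqref{eq:endpoint-bounds} together by induction on $\ell$. The base case is $r_i^{(1)}=i$ from \eqref{eq:eno-initial-endpoint}. For the induction step, $r_i^{(\ell+1)}=\varphi_\ell(r_i^{(\ell)})$ is a composition of nondecreasing maps, hence nondecreasing in $i$. The bounds follow from $r_i^{(\ell)}-1\le r_i^{(\ell+1)}\le r_i^{(\ell)}$.

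For the partition, note that $i\mapsto r_i^{(\ell)}$ is nondecreasing. By \eqref{eq:endpoint-bounds} it is also unbounded in both directions, and consecutive values differ by a nonnegative integer. Hence the half-open intervals $[r_i^{(\ell)},r_{i+1}^{(\ell)})$ tile $\mathbb Z$ disjointly, with empty pieces wherever $r_i^{(\ell)}=r_{i+1}^{(\ell)}$.

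For \eqref{eq:index-interval-locality}, take $j\in B_i^{(\ell)}$. Then $i-\ell+1\le r_i^{(\ell)}\le j<r_{i+1}^{(\ell)}\le i+1$, which gives $0\le i-j\le\ell-1$.

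For the nesting \eqref{eq:index-interval-nesting}, use $B_i^{(\ell+1)}=[\varphi_\ell(r_i^{(\ell)}),\varphi_\ell(r_{i+1}^{(\ell)}))$. Combine this with $\varphi_\ell(r_i^{(\ell)})\ge r_i^{(\ell)}-1$ and $\varphi_\ell(r_{i+1}^{(\ell)})\le r_{i+1}^{(\ell)}$. This places $B_i^{(\ell+1)}$ inside $[r_i^{(\ell)}-1,r_{i+1}^{(\ell)})\cap\mathbb Z$, which equals $B_i^{(\ell)}\cup\{r_i^{(\ell)}-1\}$ when $B_i^{(\ell)}$ is nonempty.

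Finally, suppose $B_i^{(\ell)}$ is empty. Then $r_i^{(\ell)}=r_{i+1}^{(\ell)}$, and applying the same map $\varphi_\ell$ gives $r_i^{(\ell+1)}=r_{i+1}^{(\ell+1)}$. Hence $B_i^{(\ell+1)}$ is empty, which also settles the nesting claim in this case.
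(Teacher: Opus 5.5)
Your proof is correct and matches the paper's argument: the paper also uses that each endpoint either stays fixed or moves one step left, and that adjacent cells with equal endpoints face the same candidate pair and so make the same move. Your nondecreasing, index-independent map $\varphi_\ell$ simply packages these two facts, after which the endpoint bounds, partition, locality, nesting and empty-interval claims follow as in the paper.
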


\begin{proof}
	The assertions are immediate at level $\ell=1$, where
	$r_i^{(1)}=i$ and $B_i^{(1)}=\{i\}$.  Suppose that
	$i\mapsto r_i^{(\ell)}$ is nondecreasing.  Each endpoint either stays fixed
	or moves one unit to the left.  If
	$r_i^{(\ell)}<r_{i+1}^{(\ell)}$, these two possible updates cannot reverse
	their order.  If the endpoints agree, the two adjacent cells face the same
	pair of candidates in \eqref{eq:eno-candidates}; the fixed recursion
	\eqref{eq:eno-endpoint-recursion} therefore gives the same update in both
	cells.  Thus $i\mapsto r_i^{(\ell+1)}$ is again nondecreasing.
	
	An endpoint can move to the left at most once at each stage, which proves
	\eqref{eq:endpoint-bounds}.  In particular,
	$r_i^{(\ell)}\to\pm\infty$ as $i\to\pm\infty$.  Monotonicity then shows that
	the consecutive half-open intervals in \eqref{eq:selected-index-interval}
	cover every integer exactly once, with some intervals possibly empty.
	Furthermore, if $j\in B_i^{(\ell)}$, then
	\[
	j\ge r_i^{(\ell)}\ge i-\ell+1,
	\qquad
	j<r_{i+1}^{(\ell)}\le i+1.
	\]
	Since the quantities are integral, this is exactly
	\eqref{eq:index-interval-locality}.
	
	Finally, put $L=r_i^{(\ell)}$ and $R=r_{i+1}^{(\ell)}$.  At the next level,
	the new left endpoint lies in $\{L-1,L\}$ and the new right endpoint lies in
	$\{R-1,R\}$.  Hence every integer in the new half-open interval either
	already belongs to $[L,R)$ or is the single possible new point $L-1$.  This
	proves \eqref{eq:index-interval-nesting}.  If $L=R$, the two endpoints face
	the same comparison and make the same move, so the new interval remains
	empty.
\end{proof}

For $\ell=k$, the interval $B_i^{(k)}$ is therefore precisely the range of
left endpoints between the two final interpolation stencils for $V$ in
cells $i$ and $i+1$.  This observation turns the difference of the two
reconstructed traces into a telescoping sum.

\subsection{A localized form of the FMT identity}
\label{sec:localized-fmt}

The sign property and an accompanying upper bound are stated in
\cite[Theorem~1.1]{FMT}.  More precisely, \cite[Lemma~2.1]{FMT} expands the
reconstructed interface jump as a sum over intermediate interpolation
stencils for the cumulative interface values, while \cite[Lemma~2.2]{FMT} determines the
sign of each summand.  Reindexing those formulas by the
intervals $B_i^{(k)}$ yields both the exact source identity and the comparison
below.

For later use, put
\begin{equation}\label{eq:q-level-differences}
	q_j^{(\ell)}:=\mathrm D^{\ell-1}a_j,
	\qquad 1\le\ell\le k.
\end{equation}

For $0\le d\le k-1$, define
\begin{equation}\label{eq:localized-fmt-coefficient}
	\gamma_{k,d}
	:=\frac1{k!}
	\prod_{\substack{0\le q\le k-1\\q\ne d}}(d-q)
	=\frac{(-1)^{k-1-d}d!(k-1-d)!}{k!}.
\end{equation}

\begin{proposition}[Localized FMT source formula]
	\label{prop:localized-fmt-source}
	For every $k\ge2$ and every interface $i+1/2$,
	\begin{equation}\label{eq:fmt-localized-expansion}
		\Delta p_{i+1/2}^{(k)}
		=\sum_{j\in B_i^{(k)}}
		\gamma_{k,i-j}\,\mathrm D^{k-1}a_j.
	\end{equation}
	Every summand satisfies the termwise sign inequality:
	\begin{equation}\label{eq:fmt-termwise-sign}
		a_i\,\gamma_{k,i-j}\,\mathrm D^{k-1}a_j\ge0,
		\qquad j\in B_i^{(k)}.
	\end{equation}
	Consequently, with
	\begin{equation}\label{eq:fmt-positive-weight}
		w_{i,j}^{(k)}
		:=|\gamma_{k,i-j}|
		=\frac{1}{k\binom{k-1}{i-j}},
		\qquad j\in B_i^{(k)},
	\end{equation}
	the source has the representation
	\begin{equation}\label{eq:fmt-interval-source}
		Q_k(\bar u)
		=\sum_i|a_i|\sum_{j\in B_i^{(k)}}
		w_{i,j}^{(k)}|\mathrm D^{k-1}a_j|.
	\end{equation}
	In particular, if
	\begin{equation}\label{eq:interval-functional}
		S_k(a)
		:=\sum_i|a_i|\sum_{j\in B_i^{(k)}}
		|\mathrm D^{k-1}a_j|,
	\end{equation}
	then
	\begin{equation}\label{eq:interval-source-comparison}
		c_kS_k(a)\le Q_k(\bar u)\le C_kS_k(a)
	\end{equation}
	for positive constants depending only on $k$.
\end{proposition}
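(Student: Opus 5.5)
Write $p_i=P_i'$ and $p_{i+1}=P_{i+1}'$ via \eqref{eq:reconstruction-as-derivative}, where $P_i$ and $P_{i+1}$ interpolate $V$ on the $(k+1)$-point stencils with left endpoints $r_i^{(k)}$ and $r_{i+1}^{(k)}$. By Lemma~\ref{lem:endpoint-partition}, $r_i^{(k)}\le r_{i+1}^{(k)}$, and the left endpoints strictly between them, together with $r_i^{(k)}$, form exactly $B_i^{(k)}$. The plan is to telescope. For each $j$ let $\Pi_j$ be the degree-$k$ interpolant of $V$ on $x_{j-1/2},\dots,x_{j+k-1/2}$. Then
\[
P_{i+1}-P_i=\sum_{j\in B_i^{(k)}}(\Pi_{j+1}-\Pi_j).
\]
Since $\Pi_{j+1}$ and $\Pi_j$ share the $k$ nodes $x_{j+1/2},\dots,x_{j+k-1/2}$, Newton's formula gives
\[
\Pi_{j+1}-\Pi_j=c_j\,\omega_j(x),\qquad \omega_j(x)=\prod_{q=1}^{k}(x-x_{j+q-1/2}),
\]
with $c_j$ equal to the difference of the two leading coefficients, i.e.\ $(k+1)$-point divided difference of $V$ on $x_{j-1/2},\dots,x_{j+k+1/2}$ times the span. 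Concretely, $c_j$ is $(x_{j+k+1/2}-x_{j-1/2})$ times that divided difference, which is a multiple of $\mathrm D^{k+1}V_{j-1/2}/(k+1)!\cdot(k+1)/k!$-type normalization. Since $\mathrm D V=\bar u$ and $\mathrm D\bar u=a$, we have $\mathrm D^{k+1}V=\mathrm D^{k-1}a$, and I will fix the constant so that $c_j=\mathrm D^{k-1}a_j/k!$.

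Next, differentiate and evaluate at $x_{i+1/2}$. Because $x_{i+1/2}$ is one of the nodes $x_{j+q-1/2}$ with $q=i-j+1\in\{1,\dots,k\}$ (this uses \eqref{eq:index-interval-locality}, $0\le i-j\le k-1$), only one term of $\omega_j'$ survives:
\[
\omega_j'(x_{i+1/2})=\prod_{q'\ne q}(i-j-(q'-1)).
\]
With $d=i-j$ this is exactly $k!\,\gamma_{k,d}$, which gives \eqref{eq:fmt-localized-expansion}. The closed form of $\gamma_{k,d}$ and \eqref{eq:fmt-positive-weight} are elementary. The main bookkeeping risk is the normalization of $c_j$ and the sign convention, so I would check the case $k=2$ directly against a hand computation.

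For the termwise sign \eqref{eq:fmt-termwise-sign}, I would invoke \cite[Lemma~2.2]{FMT}, which gives the sign of each telescoping summand. Its content is as follows: $\operatorname{sign}\gamma_{k,d}=(-1)^{k-1-d}$, and the ENO choices force $\mathrm D^{k-1}a_j$ to alternate in sign along $B_i^{(k)}$ in a way matching $a_i$. This is proved by induction on the level $\ell$, using the nesting \eqref{eq:index-interval-nesting}. At each level the newly added point $r_i^{(\ell)}-1$ or the dropped right point arises from a comparison $|L|<|R|$ or $|L|\ge|R|$. This comparison lets one relate the signs of consecutive higher differences with those of lower ones, via $\mathrm D^{\ell}a_j=\mathrm D^{\ell-1}a_{j+1}-\mathrm D^{\ell-1}a_j$ and the inequality on absolute values. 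I expect this inductive sign propagation to be the main obstacle if one re-proves it rather than citing FMT, so I would cite their lemma after matching the indexing via $B_i^{(k)}$.

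Finally, multiplying \eqref{eq:fmt-localized-expansion} by $a_i$ and using the termwise sign gives \eqref{eq:fmt-interval-source}. Since $1/(k\binom{k-1}{d})$ lies between $1/(k\binom{k-1}{\lfloor (k-1)/2\rfloor})$ and $1/k$, this yields \eqref{eq:interval-source-comparison}.
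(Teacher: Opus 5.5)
Your proposal is correct and follows the paper's argument. The telescoping of the degree-$k$ interpolants $\Pi_j$ of $V$ over $j\in B_i^{(k)}$, with $\Pi_{j+1}-\Pi_j=c_j\omega_j$, is exactly the content of \cite[Lemma~2.1]{FMT}: you rederive it, whereas the paper cites and reindexes it, and the termwise sign is taken from \cite[Lemma~2.2]{FMT} in both. One small slip: the divided difference of $V$ on $x_{j-1/2},\ldots,x_{j+k+1/2}$ uses $k+2$ nodes and equals $\mathrm D^{k-1}a_j/(k+1)!$, not a $(k+1)$-point divided difference; your final normalization $c_j=\mathrm D^{k-1}a_j/k!$ is nevertheless correct.
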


\begin{proof}
	Set $d=i-j$.  By \eqref{eq:index-interval-locality}, $0\le d\le k-1$
	whenever $j\in B_i^{(k)}$.  Apply \cite[Lemma~2.1]{FMT} with $p=k$ and
	translate the half-integer stencil offsets used there into the present
	cell indices.  If $r_p$ and $s_p$ are the final offsets in the notation of that
	lemma, then
	\[
		r_p=r_i^{(k)}-i-\frac12,
		\qquad
		s_p=r_{i+1}^{(k)}-i-\frac32,
		\qquad
		j=i+r+\frac12.
	\]
	Thus $r_p\le r\le s_p$ is equivalent to
	$r_i^{(k)}\le j<r_{i+1}^{(k)}$, or $j\in B_i^{(k)}$.  Here
	$V[x_{j-1/2},\ldots,x_{j+k+1/2}]$ denotes the ordinary divided difference
	of the cumulative interface values defined in \eqref{eq:discrete-primitive}.
	On the unit grid, the formula in \cite[Lemma~2.1]{FMT} becomes
	\begin{equation}\label{eq:fmt-focm-translation}
		\Delta p_{i+1/2}^{(k)}
		=
		\sum_{j\in B_i^{(k)}}
		(k+1)
		V[x_{j-1/2},x_{j+1/2},\ldots,x_{j+k+1/2}]
		\prod_{\substack{0\le q\le k-1\\q\ne d}}(d-q).
	\end{equation}
	Since
	$V_{j+3/2}-2V_{j+1/2}+V_{j-1/2}=a_j$ on the unit grid, equally spaced
	divided differences give
	\[
	V[x_{j-1/2},x_{j+1/2},\ldots,x_{j+k+1/2}]
	=\frac{\mathrm D^{k-1}a_j}{(k+1)!}.
	\]
	Thus the coefficient of $\mathrm D^{k-1}a_j$ in
	\eqref{eq:fmt-focm-translation} is
	\[
		\frac1{k!}\prod_{\substack{0\le q\le k-1\\q\ne d}}(d-q)
		=\gamma_{k,d},
	\]
	which proves \eqref{eq:fmt-localized-expansion}.

	Now, \cite[Lemma~2.2]{FMT} applies term by term to
	\eqref{eq:fmt-focm-translation} and gives
	\eqref{eq:fmt-termwise-sign}.  Multiplying
	\eqref{eq:fmt-localized-expansion} by $a_i$ therefore yields
	\[
		a_i\Delta p_{i+1/2}^{(k)}
		=|a_i|\sum_{j\in B_i^{(k)}}
		|\gamma_{k,i-j}|\,|\mathrm D^{k-1}a_j|.
	\]
	Summing in $i$ proves \eqref{eq:fmt-interval-source}.  Finally,
	\[
		|\gamma_{k,d}|
		=\frac{d!(k-1-d)!}{k!}
		=\frac1{k\binom{k-1}{d}},
		\qquad 0\le d\le k-1.
	\]
	The finitely many positive weights are bounded above and below by constants
	depending only on $k$, which proves
	\eqref{eq:interval-source-comparison}.
\end{proof}

The formula above weights each selected difference
$\mathrm D^{k-1}a_j$ by the interface jump $|a_i|$.
To obtain a functional that no longer refers to the selected intervals,
we compare this weight with the amplitude of the length-$k$ block supporting
that difference.

\subsection{Local amplitude bounds and source equivalence}
\label{sec:local-amplitude-source}

For $\ell\ge1$, define
\begin{equation}\label{eq:local-block-amplitude}
	A_j^{(\ell)}:=\max_{0\le s<\ell}|a_{j+s}|.
\end{equation}
The locality statement \eqref{eq:index-interval-locality} places $a_i$ inside
this block whenever $j\in B_i^{(\ell)}$.  The ENO comparisons give the
converse control as well.

\begin{lemma}[Local amplitude bound]
	\label{lem:local-amplitude}
	For every $1\le\ell\le k$, there is a constant $C_\ell$ such that
	\begin{equation}\label{eq:two-sided-local-amplitude}
		j\in B_i^{(\ell)}
		\quad\Longrightarrow\quad
		|a_i|\le A_j^{(\ell)}\le C_\ell|a_i|.
	\end{equation}
	The constant depends only on $\ell$.
\end{lemma}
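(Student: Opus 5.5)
The lower bound needs no work: by \eqref{eq:index-interval-locality}, $j\in B_i^{(\ell)}$ forces $0\le i-j\le \ell-1$. So $a_i$ is one of $a_j,\ldots,a_{j+\ell-1}$, and $|a_i|\le A_j^{(\ell)}$. For the upper bound I would induct on $\ell$, with constants $C_1=1$ and $C_{\ell+1}=C(\ell,C_\ell)$ to be fixed along the way. At level $\ell=1$ we have $B_i^{(1)}=\{i\}$ and $A_i^{(1)}=|a_i|$, so the claim is trivial.

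For the inductive step, fix $j\in B_i^{(\ell+1)}$. By the nesting \eqref{eq:index-interval-nesting}, either $j\in B_i^{(\ell)}$ or $j=r_i^{(\ell)}-1$. We have $A_j^{(\ell+1)}=\max\bigl(A_j^{(\ell)},|a_{j+\ell}|\bigr)$ and also $A_j^{(\ell+1)}=\max\bigl(|a_j|,A_{j+1}^{(\ell)}\bigr)$. In each case the plan is to recognise one length-$\ell$ sub-block as already controlled by the induction hypothesis, and to control the single remaining jump.

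\emph{Case $j\in B_i^{(\ell)}$.} The hypothesis gives $A_j^{(\ell)}\le C_\ell|a_i|$, so only $|a_{j+\ell}|$ remains. I would use the identity
\[
\mathrm D^{\ell-1}a_{j+1}=a_{j+\ell}+\sum_{s=1}^{\ell-1}c_{\ell,s}\,a_{j+s},
\qquad \sum_{s}|c_{\ell,s}|\le 2^{\ell-1},
\]
which gives $|a_{j+\ell}|\le|\mathrm D^{\ell-1}a_{j+1}|+2^{\ell-1}A_j^{(\ell)}$. It then suffices to bound $|\mathrm D^{\ell-1}a_{j+1}|$ by a multiple of $|a_i|$. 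By \eqref{eq:divided-difference-normalization}, $\mathrm D^{\ell-1}a_{j+1}$ is, up to a factorial, one of the two candidates $L^{(\ell)},R^{(\ell)}$ in \eqref{eq:eno-candidates} compared at level $\ell$. This is by cell $i+1$ when $r_{i+1}^{(\ell)}=j+1$, and otherwise by the cell whose level-$\ell$ endpoint is $j$. The fact that $j$ survives into $B_i^{(\ell+1)}$ constrains which way that comparison went. Either the rejected candidate is $\mathrm D^{\ell-1}a_{j+1}$, which is then dominated by the selected candidate $\mathrm D^{\ell-1}a_j$. Or the comparison is otherwise forced. In the first situation, $|\mathrm D^{\ell-1}a_j|\le 2^{\ell-1}A_j^{(\ell)}\le 2^{\ell-1}C_\ell|a_i|$ closes the estimate.

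\emph{Case $j=r_i^{(\ell)}-1$.} Here cell $i$ moved its endpoint left, so $|L_i^{(\ell)}|<|R_i^{(\ell)}|$, that is, $|\mathrm D^{\ell-1}a_j|<|\mathrm D^{\ell-1}a_{j+1}|$. The block $\{j+1,\ldots,j+\ell\}$ contains $i$ and starts at $r_i^{(\ell)}$. Because $r_i^{(\ell)}\in B_{i'}^{(\ell)}$ for some $i'\le i$, I would apply the hypothesis there and relate $|a_{i'}|$ back to $|a_i|$. The missing jump $a_j$ is recovered the same way from $\mathrm D^{\ell-1}a_j=(-1)^{\ell-1}a_j+\cdots$. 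The comparison $|\mathrm D^{\ell-1}a_j|<|\mathrm D^{\ell-1}a_{j+1}|\le 2^{\ell-1}A_{j+1}^{(\ell)}$ then bounds $|a_j|$ by a constant times the already controlled amplitude.

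The main obstacle is the bookkeeping. In each subcase one must identify which cell performed the level-$\ell$ comparison involving the relevant pair of differences, and show that its outcome goes in the direction that bounds the new jump. One must also check that the amplitude the induction hypothesis supplies for that cell, such as $|a_{i'}|$ or $|a_{i+1}|$, is itself comparable to $|a_i|$ rather than merely bounded by it. If the direct route through $|a_{i'}|$ fails, I would strengthen the induction hypothesis. The strengthened claim would be that every jump inside the union of the level-$\ell$ stencils of cells $i$ and $i+1$ is at most $C_\ell|a_i|$ whenever $B_i^{(\ell)}\neq\emptyset$. That statement propagates more directly through the endpoint monotonicity of Lemma~\ref{lem:endpoint-partition}.
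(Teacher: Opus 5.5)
Your strategy is the paper's. You induct on $\ell$, use the nesting \eqref{eq:index-interval-nesting} to reduce to a single new jump, and bound that jump through a level-$\ell$ ENO comparison together with the binomial expansion of $\mathrm D^{\ell-1}$. However, the argument does not close as written, and the step you defer as bookkeeping is exactly the missing idea.

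\textbf{The missing observation.} Since $j\in B_i^{(\ell+1)}$, that interval is nonempty. By the last assertion of Lemma~\ref{lem:endpoint-partition}, $B_i^{(\ell)}\neq\varnothing$ as well, that is, $L:=r_i^{(\ell)}<R:=r_{i+1}^{(\ell)}$. Hence both $L$ and $R-1$ lie in $B_i^{(\ell)}$ for the \emph{same} $i$. Every length-$\ell$ block you need is therefore bounded by $C_\ell|a_i|$ directly from the induction hypothesis. No comparison of $|a_i|$ with $|a_{i'}|$ for $i'\neq i$, or with $|a_{i+1}|$, is ever required, and none is available in general. In your case $j=L-1$, this gives $i'=i$. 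Your comparison $|\mathrm D^{\ell-1}a_{L-1}|<|\mathrm D^{\ell-1}a_L|\le 2^{\ell-1}C_\ell|a_i|$ then yields $|a_{L-1}|\le(2^\ell-1)C_\ell|a_i|$. The fallback strengthening you propose is unnecessary: read as a statement about jumps between consecutive cells of the union, it is just the lemma for all $j\in B_i^{(\ell)}$ at once.

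\textbf{The case $j\in B_i^{(\ell)}$.} Here you must split further, and one branch of your argument is wrong.
\begin{itemize}
\item If $j\le R-2$, then $j+1\in B_i^{(\ell)}$ too. The two overlapping blocks, with $A_j^{(\ell)},A_{j+1}^{(\ell)}\le C_\ell|a_i|$, already cover $a_j,\ldots,a_{j+\ell}$, so no comparison is involved. Your claim that $\mathrm D^{\ell-1}a_{j+1}$ is then compared ``by the cell whose level-$\ell$ endpoint is $j$'' is false. Such a cell compares $\mathrm D^{\ell-1}a_{j-1}$ with $\mathrm D^{\ell-1}a_j$. Moreover, for $L<j<R$ no cell has endpoint $j$ at all, by monotonicity of the endpoints. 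As written, this subcase is unproved.
\item If $j=R-1$, then $r_{i+1}^{(\ell+1)}\in\{R-1,R\}$ and $j<r_{i+1}^{(\ell+1)}$ force $r_{i+1}^{(\ell+1)}=R$. By \eqref{eq:eno-endpoint-recursion} this means $|\mathrm D^{\ell-1}a_R|\le|\mathrm D^{\ell-1}a_{R-1}|$. This is your first situation, and it closes as you describe. Note that $\mathrm D^{\ell-1}a_{j+1}$ is the selected, smaller candidate here, not the rejected one.
\end{itemize}

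With the three cases $j=L-1$, $L\le j\le R-2$ and $j=R-1$, the induction closes with $C_{\ell+1}=(2^\ell-1)C_\ell$. This is precisely the paper's proof.
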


\begin{proof}
	The first inequality follows immediately from
	\eqref{eq:index-interval-locality}, since
	$j\le i\le j+\ell-1$.  We prove the second by induction on $\ell$.  It is an
	equality when $\ell=1$.  Retain the notation
	$q_j^{(\ell)}=\mathrm D^{\ell-1}a_j$ from
	\eqref{eq:q-level-differences}, and suppose that the assertion holds at some
	level $\ell$ with $1\le\ell<k$, with constant $C_\ell$.  Put
	\[
	L=r_i^{(\ell)},
	\qquad
	R=r_{i+1}^{(\ell)}.
	\]
	If $L=R$, the next interval is empty.  We may therefore assume $L<R$ and
	consider the three possible locations of $j\in B_i^{(\ell+1)}$.
	
	Suppose first that $j=L-1$.  The left endpoint has moved, so the first branch
	of \eqref{eq:eno-endpoint-recursion} gives
	\[
	|q_{L-1}^{(\ell)}|<|q_L^{(\ell)}|.
	\]
	Since $L\in B_i^{(\ell)}$, the induction hypothesis controls
	$a_L,\ldots,a_{L+\ell-1}$.  Hence
	\begin{equation}\label{eq:q-left-controlled}
		|q_L^{(\ell)}|
		\le \sum_{s=0}^{\ell-1}\binom{\ell-1}{s}|a_{L+s}|
		\le 2^{\ell-1}C_\ell|a_i|.
	\end{equation}
	In the expansion
	\[
	q_{L-1}^{(\ell)}
	=\sum_{s=0}^{\ell-1}
	(-1)^{\ell-1-s}\binom{\ell-1}{s}a_{L-1+s},
	\]
	only $a_{L-1}$ is not already controlled.  Using the preceding comparison and
	\eqref{eq:q-left-controlled}, we obtain
	\begin{equation}\label{eq:new-left-amplitude}
		|a_{L-1}|
		\le (2^\ell-1)C_\ell|a_i|.
	\end{equation}
	Together with the old block bound, this controls the length-$(\ell+1)$ block
	beginning at $L-1$.
	
	If $L\le j\le R-2$, then $j,j+1\in B_i^{(\ell)}$.  The two overlapping
	blocks controlled by the induction hypothesis cover
	\[
	\{a_j,a_{j+1},\ldots,a_{j+\ell}\},
	\]
	so no new estimate is needed.
	
	It remains to consider $j=R-1$.  This index belongs to the new interval only
	when the endpoint for cell $i+1$ remains at $R$.  The second branch of
	\eqref{eq:eno-endpoint-recursion} then gives
	\[
	|q_R^{(\ell)}|\le|q_{R-1}^{(\ell)}|.
	\]
	The old block beginning at $R-1$ controls
	$a_{R-1},\ldots,a_{R+\ell-2}$ and therefore
	\[
	|q_{R-1}^{(\ell)}|
	\le 2^{\ell-1}C_\ell|a_i|.
	\]
	Expanding $q_R^{(\ell)}$ leaves only $a_{R+\ell-1}$ uncontrolled.  The same
	calculation as at the left endpoint gives
	\begin{equation}\label{eq:new-right-amplitude}
		|a_{R+\ell-1}|
		\le(2^\ell-1)C_\ell|a_i|.
	\end{equation}
	Thus the induction closes; for example, one may take
	\[
	C_1=1,
	\qquad
	C_{\ell+1}=(2^\ell-1)C_\ell.
	\]
\end{proof}

We can now eliminate the selected intervals from the source formula.  Set
\begin{equation}\label{eq:finite-difference-source-functional}
	\mathcal E_k(a)
	:=\sum_{j\in\mathbb Z}
	A_j^{(k)}|\mathrm D^{k-1}a_j|.
\end{equation}

\begin{corollary}[Source equivalence]
	\label{cor:source-equivalence}
	For every fixed $k\ge2$,
	\begin{equation}\label{eq:source-functional-equivalence}
		Q_k(\bar u)
		\asymp_k
		\sum_i|a_i|\sum_{j\in B_i^{(k)}}
		|\mathrm D^{k-1}a_j|
		\asymp_k
		\sum_j A_j^{(k)}|\mathrm D^{k-1}a_j|
		=\mathcal E_k(a).
	\end{equation}
	Here the comparison constants depend only on $k$.
\end{corollary}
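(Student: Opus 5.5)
The first equivalence is exactly \eqref{eq:interval-source-comparison} of Proposition~\ref{prop:localized-fmt-source}, so the only work is the second equivalence, between $S_k(a)$ and $\mathcal E_k(a)$. The plan is to rewrite $S_k(a)$ as a sum over $j$ rather than over $i$. This uses the partition property of Lemma~\ref{lem:endpoint-partition}: every $j\in\mathbb Z$ lies in exactly one interval $B_i^{(k)}$. Writing $i(j)$ for that unique index gives
\[
	S_k(a)=\sum_{j\in\mathbb Z}|a_{i(j)}|\,|\mathrm D^{k-1}a_j|.
\]
All sums are finite, since $a$ is compactly supported; summands vanish whenever $\mathrm D^{k-1}a_j=0$, so the regrouping is harmless.

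Next I would compare the weights termwise using Lemma~\ref{lem:local-amplitude} at level $\ell=k$. For $j\in B_{i(j)}^{(k)}$ it gives
\[
	|a_{i(j)}|\le A_j^{(k)}\le C_k|a_{i(j)}|.
\]
Multiplying by $|\mathrm D^{k-1}a_j|$ and summing over $j$ yields
\[
	S_k(a)\le\mathcal E_k(a)\le C_kS_k(a).
\]
Combining this with \eqref{eq:interval-source-comparison} gives \eqref{eq:source-functional-equivalence}, with constants depending only on $k$.

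There is no real obstacle, because the substantive inputs (termwise signs, locality, and the converse amplitude bound from the ENO comparisons) are already established. The one point needing care is that the constants are independent of the selected stencils. This holds because the partition is used only to reindex, and the amplitude bound is uniform in $j$.
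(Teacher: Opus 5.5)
Your proposal is correct and matches the paper's proof: it takes the first comparison directly from the localized FMT source formula, then reindexes over $j$ using the partition $\{B_i^{(k)}\}_i$ and applies the two-sided local amplitude bound termwise. You also correctly note that the resulting constants are independent of the selected stencils.
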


\begin{proof}
	The first comparison is \eqref{eq:interval-source-comparison}.  By
	Lemma~\ref{lem:endpoint-partition}, the intervals $B_i^{(k)}$ partition
	$\mathbb Z$.  Thus each $j$ belongs to a unique nonempty interval, and
	Lemma~\ref{lem:local-amplitude} gives, for the corresponding $i$,
	\[
	|a_i|\le A_j^{(k)}\le C_k|a_i|.
	\]
	Multiplying by $|\mathrm D^{k-1}a_j|$ and summing over $j$, using the
	partition $\{B_i^{(k)}\}_i$, proves the second comparison.
\end{proof}

Corollary~\ref{cor:source-equivalence} gives
$Q_k(\bar u)\asymp_k\mathcal E_k(a)$.  When $k$ is odd, the highest
difference in $\mathcal E_k$ has even order.  We begin with $k=3$, where the
summation-by-parts argument is already visible.

\section{Odd-order coercivity on the uniform grid}
\label{sec:odd-coercivity}

Corollary~\ref{cor:source-equivalence} has separated the analytic question
from the data-dependent reconstruction.  For an odd order $k=2m-1$, it
remains to prove the purely discrete estimate
\begin{equation}\label{eq:odd-analytic-target}
	\sum_i|\mathrm Dv_i|^{2m}
	\le C_m\|v\|_{\ell^\infty}^{2m-2}
	\mathcal E_{2m-1}(\mathrm Dv),
	\qquad m\ge2,
\end{equation}
for every finitely supported sequence $v$.
We begin with $m=2$, where the two analytic steps are elementary.  We
then prove the interpolation inequality for general $m$ and return to
\eqref{eq:odd-analytic-target}.

\subsection{The case \texorpdfstring{$k=3$}{k=3}}
\label{sec:odd-k3}

The case $k=2$ is the established second-order estimate
\cite[(2.16)]{CFM2AN}; see also
\cite[Section~4.4]{FjordholmHandbook}.  We do not reproduce its proof.
The case $k=3$ is the first case beyond the known second-order result, and it
contains both steps of the general proof without the spline notation
needed below.

For any finitely supported sequence $a$, summation by parts gives
\begin{equation}\label{eq:third-order-energy}
	\sum_j|\mathrm Da_j|^2
	=-\sum_j a_{j+1}\mathrm D^2a_j
	\le\sum_j A_j^{(3)}|\mathrm D^2a_j|
	=\mathcal E_3(a).
\end{equation}
Thus $\mathcal E_3(a)$ controls the square energy
$\sum_j|\mathrm Da_j|^2$.  In particular, if $a=\mathrm D\bar u$, then
Corollary~\ref{cor:source-equivalence} gives
$\mathcal E_3(a)\le C Q_3(\bar u)$.

The corresponding interpolation step also has a direct proof.  Let $v$ be
finitely supported and put $b_i=\mathrm Dv_i$.  A second summation by parts,
the inequality
\[
|x^3-y^3|
\le\frac32(x^2+y^2)|x-y|,
\]
and Cauchy--Schwarz yield
\begin{align*}
	\sum_i|b_i|^4
	&=-\sum_i v_{i+1}\mathrm D(b_i^3)\\
	&\le\frac32\|v\|_{\ell^\infty}
	\sum_i(|b_i|^2+|b_{i+1}|^2)|\mathrm Db_i|\\
	&\le C\|v\|_{\ell^\infty}
	\left(\sum_i|b_i|^4\right)^{1/2}
	\left(\sum_i|\mathrm Db_i|^2\right)^{1/2}.
\end{align*}
If $\sum_i|b_i|^4=0$, there is nothing to prove.  Otherwise, dividing by
its square root and squaring gives
\begin{equation}\label{eq:third-order-interpolation}
	\sum_i|\mathrm Dv_i|^4
	\le C\|v\|_{\ell^\infty}^2
	\sum_i|\mathrm D^2v_i|^2.
\end{equation}

Taking $v=\bar u$ and $a=\mathrm D\bar u$ in
\eqref{eq:third-order-interpolation}, and then using
\eqref{eq:third-order-energy} and Corollary~\ref{cor:source-equivalence}, we
obtain
\begin{equation}\label{eq:third-order-coercivity}
	\sum_i|a_i|^4
	\le C U^2\mathcal E_3(a)
	\le C U^2Q_3(\bar u).
\end{equation}
This proves the third-order assertion of Theorem~\ref{thm:A}.

For general odd $k$, summation by parts again produces the required square
energy.  Only the interpolation estimate
\eqref{eq:third-order-interpolation} needs a higher-order replacement.

\subsection{A discrete Gagliardo--Nirenberg inequality}
\label{sec:discrete-gagliardo}

The interpolation result needed in higher odd orders involves only
sequences and their finite differences; it does not involve ENO
reconstruction.

\begin{theorem}[Discrete Gagliardo--Nirenberg inequality]
	\label{thm:discrete-GN}
	Let $m\ge2$.  Every finitely supported sequence $v=(v_i)_{i\in\mathbb Z}$
	satisfies
	\begin{equation}\label{eq:discrete-GN}
		\|\mathrm Dv\|_{\ell^{2m}}^{2m}
		\le C_m\|v\|_{\ell^\infty}^{2m-2}
		\|\mathrm D^mv\|_{\ell^2}^{2}.
	\end{equation}
\end{theorem}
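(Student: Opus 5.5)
The plan is to transfer the continuous one-dimensional Gagliardo--Nirenberg inequality
\[
\|g'\|_{L^{2m}}^{2m}\le C_m\|g\|_{L^\infty}^{2m-2}\|g^{(m)}\|_{L^2}^{2}
\]
to sequences by a cardinal B-spline lifting, as announced in the outline. Given a finitely supported $v$, set $g:=\sum_i v_i N_m(\cdot-i)$, where $N_m$ is the cardinal B-spline of order $m+1$, that is, of degree $m$ and support length $m+1$. Then $g\in C^{m-1}$ is compactly supported and piecewise polynomial, with $g^{(m)}$ piecewise constant. The standard derivative identity $\frac{d}{dx}\sum_i c_iN_r(x-i)=\sum_i(\mathrm D c)_{i-1}N_{r-1}(x-i)$, up to a fixed index shift, gives
\[
g^{(j)}=\sum_i(\mathrm D^jv)_{i-j}\,N_{m-j}(\cdot-i).
\]
In particular $g^{(m)}$ equals $\mathrm D^m v$ (shifted) on unit intervals, so $\|g^{(m)}\|_{L^2}=\|\mathrm D^mv\|_{\ell^2}$. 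Since the B-splines form a partition of unity and are nonnegative, $\|g\|_{L^\infty}\le\|v\|_{\ell^\infty}$. The continuous inequality itself can be proved for compactly supported $C^{m-1}$ functions with $g^{(m)}\in L^2$ by a standard scaling and interpolation argument; alternatively I would cite the classical Gagliardo--Nirenberg inequality. The only admissibility issue is $g^{(m-1)}\in W^{1,2}$, which holds because $g^{(m-1)}$ is continuous and piecewise linear.

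The remaining step, which I expect to be the main obstacle, is the lower bound
\[
\|\mathrm Dv\|_{\ell^{2m}}^{2m}\lesssim_m\|g'\|_{L^{2m}}^{2m}+\text{(harmless terms)}.
\]
Here $g'=\sum_i b_{i-1}N_{m-1}(\cdot-i)$ with $b=\mathrm Dv$, and B-spline coefficients are only stably, not pointwise, recoverable. I would use the local stability of B-splines. On each window of $m$ consecutive unit intervals, the map from the $2m-1$ or so relevant coefficients to the restriction of the spline is injective, by linear independence of B-splines on an interval. Finite-dimensional norm equivalence then gives $|b_{i}|^{2m}\lesssim_m\int_{J_i}|g'|^{2m}$ for a bounded window $J_i$ around $i$. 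Summing with bounded overlap yields $\|b\|_{\ell^{2m}}^{2m}\lesssim_m\|g'\|_{L^{2m}}^{2m}$.

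Chaining these bounds gives
\[
\|\mathrm Dv\|_{\ell^{2m}}^{2m}\lesssim_m\|g'\|_{L^{2m}}^{2m}\lesssim_m\|g\|_\infty^{2m-2}\|g^{(m)}\|_{L^2}^2\le\|v\|_{\ell^\infty}^{2m-2}\|\mathrm D^mv\|_{\ell^2}^2.
\]
This is exactly \eqref{eq:discrete-GN}. If the injectivity of local B-spline restrictions causes trouble, the fallback is purely discrete. I would iterate the summation-by-parts argument of the case $k=3$, bounding $\sum|b|^{2m}$ by $\|v\|_\infty\sum|b|^{2m-2}|\mathrm D b|$, and then use H\"older and the discrete interpolation $\|\mathrm D^j v\|$ between $\|v\|_\infty$ and $\|\mathrm D^m v\|_{\ell^2}$ inductively in $j$.
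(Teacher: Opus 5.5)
Your proposal is correct and follows essentially the same route as the paper. You lift $v$ by degree-$m$ cardinal B-splines and use nonnegativity and the partition of unity for $\|g\|_{L^\infty}\le\|v\|_{\ell^\infty}$. The derivative identity gives $\|g^{(m)}\|_{L^2}=\|\mathrm D^mv\|_{\ell^2}$. Lower $\ell^{2m}$-stability of the degree-$(m-1)$ translates (local linear independence plus finite-dimensional norm equivalence) gives $\|\mathrm Dv\|_{\ell^{2m}}\lesssim_m\|g'\|_{L^{2m}}$, and the continuous Gagliardo--Nirenberg inequality closes the argument.

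The differences are cosmetic. The paper proves local independence on a single unit interval, where exactly $m$ translates are active, by differentiating and inducting on the degree. It justifies applying the continuous inequality to the piecewise-polynomial lift by mollification. Your discrete fallback is not needed.
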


We use a cardinal spline lift.  Define
\begin{equation}\label{eq:cardinal-bspline-definition}
	B_0:=\mathbf 1_{[0,1)},
	\qquad
	B_M:=B_{M-1}*B_0
	\quad(M\ge1).
\end{equation}
Then $B_M$ is a nonnegative, degree-$M$ piecewise polynomial supported in
$[0,M+1]$.  Its integer translates form a partition of unity, and its weak
derivative satisfies
\begin{equation}\label{eq:bspline-properties}
	\begin{aligned}
		\sum_{j\in\mathbb Z}B_M(x-j)&=1
		&&\text{for a.e. }x\quad(M\ge0),\\
		B_M'(x)&=B_{M-1}(x)-B_{M-1}(x-1)
		&&(M\ge1).
	\end{aligned}
\end{equation}
These standard facts follow directly from the convolution definition; see,
for example, \cite{Schumaker}.

We shall also need the following stability property of the integer
translates.

\begin{lemma}[Stability of cardinal B-spline translates]
	\label{lem:bspline-stability}
	Let $M\ge0$ and $1\le p<\infty$.  There are constants
	$0<c_{M,p}\le C_{M,p}<\infty$ such that every finitely supported sequence
	$c=(c_j)$ satisfies
	\begin{equation}\label{eq:bspline-stability}
		c_{M,p}\|c\|_{\ell^p}
		\le
		\left\|\sum_jc_jB_M(\,\cdot-j)\right\|_{L^p(\mathbb R)}
		\le C_{M,p}\|c\|_{\ell^p}.
	\end{equation}
\end{lemma}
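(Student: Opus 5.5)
The upper bound and the lower bound will be proved separately, and both are standard. Write $s(x):=\sum_jc_jB_M(x-j)$.

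\emph{Upper bound.} For a.e.\ $x\in[n,n+1)$, only the $M+1$ translates with $n-M\le j\le n$ are nonzero, and each satisfies $0\le B_M\le1$. Hölder's inequality over these at most $M+1$ terms gives
\[
|s(x)|^p\le(M+1)^{p-1}\sum_{j=n-M}^{n}|c_j|^p .
\]
Integrate over $[n,n+1)$ and sum over $n$. Each $j$ is counted at most $M+1$ times, so $\|s\|_{L^p}^p\le(M+1)^p\|c\|_{\ell^p}^p$.

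\emph{Lower bound.} We reduce it to a local statement on unit intervals. Fix $n$ and consider the restriction of $s$ to $[n,n+1)$. It depends linearly on the vector $\mathbf c_n:=(c_{n-M},\dots,c_n)\in\mathbb R^{M+1}$ and lies in the space of polynomials of degree at most $M$. Translation invariance means the map $\mathbf c_n\mapsto s|_{[n,n+1)}$, pulled back to $[0,1)$, is one fixed linear map $\Phi_M:\mathbb R^{M+1}\to\Pi_M$.

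The key step is injectivity of $\Phi_M$: the $M+1$ pieces of $B_M$ restricted to a single unit interval are linearly independent polynomials. This is the local linear independence of B-splines (see \cite{Schumaker}). I would check it directly by induction on $M$ using \eqref{eq:bspline-properties}. Suppose $\sum_{j}c_jB_M(x-j)\equiv0$ on $[0,1)$. Differentiating gives $\sum_j(c_j-c_{j-1})B_{M-1}(x-j)\equiv0$ there, where only the indices $j$ with $-M+1\le j\le 0$ contribute on $[0,1)$. The induction hypothesis then forces $c_j=c_{j-1}$ for these $j$, so all $c_j$ on the window are equal, to $c$ say. The partition of unity then gives $s\equiv c$ on $[0,1)$, hence $c=0$. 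The base case $M=0$ is trivial.

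Since $\Phi_M$ is injective and everything is finite dimensional, $\|\Phi_M\mathbf c\|_{L^p(0,1)}$ is a norm on $\mathbb R^{M+1}$. It is therefore equivalent to the $\ell^p$ norm, with constants depending only on $M$ and $p$. In particular $\|s\|_{L^p(n,n+1)}^p\ge c'\,|c_n|^p$, keeping just one coordinate of $\mathbf c_n$. Summing over $n$ gives $\|s\|_{L^p}^p\ge c'\|c\|_{\ell^p}^p$.

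The only nontrivial point is the local injectivity. The induction just sketched handles it, with care needed only in tracking which indices are active on $[0,1)$. Everything else is finite-dimensional norm equivalence.
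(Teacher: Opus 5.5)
Your proof is correct and follows essentially the same route as the paper. The upper bound uses bounded overlap of the $M+1$ active translates. The lower bound uses local linear independence on a unit interval, proved by the same differentiate-and-induct argument closed by the partition of unity, followed by finite-dimensional norm equivalence. The only cosmetic difference is that you keep a single coordinate $c_n$ per interval, whereas the paper keeps all $M+1$ coordinates and counts each $|c_j|^p$ exactly $M+1$ times.
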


\begin{proof}
	At each point, at most $M+1$ translates are nonzero.  Convexity and
	the compact support of $B_M$ therefore give
	\[
	\left|\sum_jc_jB_M(x-j)\right|^p
	\le C_{M,p}\sum_j|c_j|^p|B_M(x-j)|^p.
	\]
	Integration proves the upper bound.
	
	For the lower bound, we first show that
	\begin{equation}\label{eq:local-bspline-family}
		B_M(x),B_M(x+1),\ldots,B_M(x+M)
	\end{equation}
	are linearly independent on $(0,1)$.  The assertion is immediate for $M=0$.
	Suppose that
	\[
	\sum_{r=0}^{M}\alpha_rB_M(x+r)=0,
	\qquad 0<x<1.
	\]
	Differentiating and using \eqref{eq:bspline-properties} gives
	\[
	\sum_{r=0}^{M-1}(\alpha_r-\alpha_{r+1})B_{M-1}(x+r)=0.
	\]
	The induction hypothesis shows that all adjacent coefficients are equal.
	They therefore have a common value $\alpha$, and the partition of unity on
	$(0,1)$ reduces the original relation to $\alpha=0$.  This proves the local
	linear independence.
	
	The linear map from $(\alpha_0,\ldots,\alpha_M)$ to the restriction of the
	corresponding combination in \eqref{eq:local-bspline-family} to $(0,1)$ is
	now injective.  Finite-dimensional norm equivalence yields
	\begin{equation}\label{eq:local-bspline-lower}
		\sum_{r=0}^{M}|\alpha_r|^p
		\le C_{M,p}\int_0^1
		\left|\sum_{r=0}^{M}\alpha_rB_M(x+r)\right|^p\,dx.
	\end{equation}
	Apply \eqref{eq:local-bspline-lower} on $(n,n+1)$ to the coefficients
	$c_n,c_{n-1},\ldots,c_{n-M}$.  After summation, every $|c_j|^p$ is counted
	exactly $M+1$ times, which proves the lower bound in
	\eqref{eq:bspline-stability}.
\end{proof}

\begin{proof}[Proof of Theorem~\ref{thm:discrete-GN}]
	For a finitely supported sequence $v$, define its spline lift by
	\begin{equation}\label{eq:spline-lift}
		S_v(x):=\sum_jv_jB_m(x-j).
	\end{equation}
	Positivity and the partition of unity give
	\begin{equation}\label{eq:spline-lift-linfty}
		\|S_v\|_{L^\infty(\mathbb R)}
		\le\|v\|_{\ell^\infty}.
	\end{equation}
	Repeated use of the derivative identity in
	\eqref{eq:bspline-properties} gives, for $0\le r\le m$,
	\begin{equation}\label{eq:spline-derivative-difference}
		S_v^{(r)}(x)
		=\sum_j\mathrm D^rv_j\,B_{m-r}(x-j-r).
	\end{equation}
	
	Applying the lower estimate in Lemma~\ref{lem:bspline-stability} to
	$S_v'$ yields
	\begin{equation}\label{eq:first-difference-via-spline}
		\|\mathrm Dv\|_{\ell^{2m}}
		\le C_m\|S_v'\|_{L^{2m}(\mathbb R)}.
	\end{equation}
	For $r=m$, the integer translates of $B_0$ have disjoint unit
	supports up to null sets, and hence
	\begin{equation}\label{eq:top-difference-via-spline}
		\|S_v^{(m)}\|_{L^2(\mathbb R)}
		=\|\mathrm D^mv\|_{\ell^2}.
	\end{equation}
	
	The classical one-dimensional Gagliardo--Nirenberg inequality
	\cite{Nirenberg} gives
	\begin{equation}\label{eq:continuous-GN}
		\|S_v'\|_{L^{2m}}
		\le C_m\|S_v\|_{L^\infty}^{1-1/m}
		\|S_v^{(m)}\|_{L^2}^{1/m}.
	\end{equation}
	The spline $S_v$ is compactly supported and belongs to
	$W^{m,2}(\mathbb R)$.  To justify applying
	\eqref{eq:continuous-GN} to this piecewise-polynomial function, let
	$\rho_\varepsilon$ be a standard mollifier and apply the smooth inequality
	to $S_v*\rho_\varepsilon$.  Convolution does not increase the
	$L^\infty$ norm, and
	$(S_v*\rho_\varepsilon)^{(m)}\to S_v^{(m)}$ in $L^2$.  The resulting
	derivatives $(S_v*\rho_\varepsilon)'$ are bounded in $L^{2m}$.
	After passing to a weakly convergent subsequence, distributional convergence
	identifies the weak limit with $S_v'$, and weak lower semicontinuity gives
	\eqref{eq:continuous-GN}. 
	Combining \eqref{eq:spline-lift-linfty},
	\eqref{eq:first-difference-via-spline},
	\eqref{eq:top-difference-via-spline}, and \eqref{eq:continuous-GN}, and then
	raising to the power $2m$, proves \eqref{eq:discrete-GN}.
\end{proof}

It remains to show that $\mathcal E_k$ controls the required square
energy.

\subsection{Energy identities and the general odd-order estimate}
\label{sec:odd-general}

Write
\begin{equation}\label{eq:odd-order-parameters}
	k=2m-1\ge3,
	\qquad m\ge2.
\end{equation}
The highest difference in $\mathcal E_k(a)$ then has the even order
$2m-2$.  This is precisely what allows it to generate a square energy.

\begin{lemma}[Even-difference energy identity]
	\label{lem:even-difference-energy}
	Let $s\ge1$ and let $a$ be finitely supported.  Then
	\begin{equation}\label{eq:even-difference-energy}
		\sum_i|\mathrm D^sa_i|^2
		=(-1)^s\sum_i a_{i+s}\mathrm D^{2s}a_i.
	\end{equation}
	Consequently,
	\begin{equation}\label{eq:energy-controlled-by-source-functional}
		\sum_i|\mathrm D^sa_i|^2
		\le\sum_i A_i^{(2s+1)}|\mathrm D^{2s}a_i|
		=\mathcal E_{2s+1}(a).
	\end{equation}
\end{lemma}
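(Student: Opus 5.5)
The plan is to derive \eqref{eq:even-difference-energy} by repeated discrete summation by parts, and then read off \eqref{eq:energy-controlled-by-source-functional} from a pointwise bound on the resulting pairing.

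\emph{Summation by parts.} For finitely supported sequences $b,c$ the basic identity is
\[
\sum_i b_i\,\mathrm Dc_i=-\sum_i \mathrm Db_{i-1}\,c_i,
\]
obtained by reindexing $\sum_i b_ic_{i+1}=\sum_i b_{i-1}c_i$. Equivalently, moving one forward difference across the pairing turns it into a forward difference with a shift of one. Iterating $s$ times gives
\[
\sum_i b_i\,\mathrm D^sc_i=(-1)^s\sum_i \mathrm D^sb_{i-s}\,c_i .
\]
All boundary terms vanish because the sequences are finitely supported.

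\emph{The identity.} Apply the iterated formula with $b=\mathrm D^sa$ and $c=\mathrm D^sa$, reading it in the other direction so that the differences move off one factor. Concretely, start from $\sum_i \mathrm D^sa_i\,\mathrm D^sa_i$ and transfer the $s$ differences from the first factor onto the second. This produces
\[
(-1)^s\sum_i a_{i}\,\mathrm D^{2s}a_{i-s}=(-1)^s\sum_i a_{i+s}\,\mathrm D^{2s}a_i
\]
after the shift $i\mapsto i+s$. Only the index bookkeeping needs care; the case $s=1$, given in \eqref{eq:third-order-energy}, serves as a check.

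\emph{The inequality.} Take absolute values in \eqref{eq:even-difference-energy}. The index $i+s$ lies in $\{i,\dots,i+2s\}$, the length-$(2s+1)$ block starting at $i$, so $|a_{i+s}|\le A_i^{(2s+1)}$ by \eqref{eq:local-block-amplitude}. Since $\mathcal E_{2s+1}$ involves $\mathrm D^{(2s+1)-1}=\mathrm D^{2s}$, the resulting sum is exactly $\mathcal E_{2s+1}(a)$ from \eqref{eq:finite-difference-source-functional}.

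There is no real obstacle in this argument. The one point to check is that the shift produced by the summation by parts puts $a_{i+s}$ at the center of the block, so that it is indeed inside the window defining $A_i^{(2s+1)}$.
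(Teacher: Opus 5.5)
Your proposal is correct and follows essentially the same route as the paper: iterate discrete summation by parts $s$ times on $\sum_i \mathrm D^sa_i\,\mathrm D^sa_i$, shift the index, and use $|a_{i+s}|\le A_i^{(2s+1)}$, since $i+s$ is the center of the length-$(2s+1)$ window. There is one labeling slip: in your iterated formula the substitution should be $b=\mathrm D^sa$ and $c=a$, not $c=\mathrm D^sa$; your ``concretely'' sentence and the resulting expression already use this correct choice.
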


\begin{proof}
	For finitely supported sequences,
	\begin{equation}\label{eq:discrete-summation-by-parts}
		\sum_i(\mathrm Df_i)g_i
		=-\sum_i f_{i+1}\mathrm Dg_i.
	\end{equation}
	Iterating \eqref{eq:discrete-summation-by-parts} $s$ times gives
	\[
	\sum_i(\mathrm D^sa_i)h_i
	=(-1)^s\sum_i a_{i+s}\mathrm D^sh_i.
	\]
	Taking $h_i=\mathrm D^sa_i$ proves
	\eqref{eq:even-difference-energy}.  The second assertion follows by taking
	absolute values and observing that
	$|a_{i+s}|\le A_i^{(2s+1)}$.
\end{proof}

We can now state the purely discrete estimate that closes the odd-order
argument.

\begin{proposition}[Odd-order discrete interpolation estimate]
	\label{prop:odd-discrete-interpolation}
	Let $m\ge2$, let $v$ be finitely supported, and set $a=\mathrm Dv$.  Then
	\begin{equation}\label{eq:odd-discrete-interpolation}
		\sum_i|a_i|^{2m}
		\le C_m\|v\|_{\ell^\infty}^{2m-2}
		\mathcal E_{2m-1}(a).
	\end{equation}
\end{proposition}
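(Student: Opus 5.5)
The estimate follows by chaining Theorem~\ref{thm:discrete-GN} with Lemma~\ref{lem:even-difference-energy}, in the same pattern as the case $m=2$ treated in Subsection~\ref{sec:odd-k3}. Let $v$ be finitely supported and put $a=\mathrm Dv$, which is again finitely supported.

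First apply Theorem~\ref{thm:discrete-GN} to $v$ with the given $m\ge2$:
\[
\sum_i|a_i|^{2m}=\|\mathrm Dv\|_{\ell^{2m}}^{2m}
\le C_m\|v\|_{\ell^\infty}^{2m-2}\|\mathrm D^mv\|_{\ell^2}^2 .
\]
Since $\mathrm D^mv=\mathrm D^{m-1}a$, the right-hand side involves the energy $\sum_i|\mathrm D^{m-1}a_i|^2$.

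Next apply Lemma~\ref{lem:even-difference-energy} to $a$ with $s=m-1\ge1$. Then $2s+1=2m-1$, and \eqref{eq:energy-controlled-by-source-functional} gives
\[
\sum_i|\mathrm D^{m-1}a_i|^2\le\mathcal E_{2m-1}(a).
\]
Substituting this bound into the previous display yields \eqref{eq:odd-discrete-interpolation} with the same constant $C_m$.

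There is no real obstacle here. The interpolation step, including the spline lift and the mollification argument, is already contained in Theorem~\ref{thm:discrete-GN}. What remains is bookkeeping: the index shift from $\mathrm D^m v$ to $\mathrm D^{m-1}a$, and the check that the order $2s$ of the difference in Lemma~\ref{lem:even-difference-energy} equals $k-1=2m-2$, so that the functional produced there is exactly $\mathcal E_{2m-1}$.
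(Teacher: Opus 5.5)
Your proof is correct and is essentially the paper's: Theorem~\ref{thm:discrete-GN}, the identity $\mathrm D^mv=\mathrm D^{m-1}a$, and Lemma~\ref{lem:even-difference-energy} with $s=m-1$. The only cosmetic difference is that the paper handles $m=2$ by the elementary argument of Subsection~\ref{sec:odd-k3}, whereas you apply Theorem~\ref{thm:discrete-GN} for every $m$; this is legitimate because that theorem is stated for all $m\ge2$.
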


\begin{proof}
	The case $m=2$ is exactly the argument of
	Subsection~\ref{sec:odd-k3}.  Suppose that $m\ge3$.  Applying
	Theorem~\ref{thm:discrete-GN} and using
	$\mathrm D^mv=\mathrm D^{m-1}a$, we obtain
	\begin{equation}\label{eq:odd-intermediate-energy}
		\sum_i|a_i|^{2m}
		\le C_m\|v\|_{\ell^\infty}^{2m-2}
		\sum_i|\mathrm D^{m-1}a_i|^2.
	\end{equation}
	Lemma~\ref{lem:even-difference-energy}, with $s=m-1$, gives
	\[
	\sum_i|\mathrm D^{m-1}a_i|^2
	\le\sum_iA_i^{(2m-1)}|\mathrm D^{2m-2}a_i|
	=\mathcal E_{2m-1}(a),
	\]
	which proves \eqref{eq:odd-discrete-interpolation}.
\end{proof}

\begin{proof}[Proof of the odd-order assertion of
	Theorem~\ref{thm:A}]
	Let $k=2m-1\ge3$ and apply
	Proposition~\ref{prop:odd-discrete-interpolation} with $v=\bar u$.
	Corollary~\ref{cor:source-equivalence} gives
	\[
	\sum_i|a_i|^{k+1}
	\le C_kU^{k-1}\mathcal E_k(a)
	\le C_kU^{k-1}Q_k(\bar u).
	\]
	This proves the full ENO--TV estimate in every odd order.
\end{proof}

The identity $k-1=2(m-1)$ is the exact point at which parity enters the
positive argument: it allows the highest difference in the source functional
to be paired, by summation by parts, into the positive square energy
$\|\mathrm D^{m-1}a\|_{\ell^2}^2$.  When $k$ is even, this reduction is
unavailable.  Section~\ref{sec:even-obstruction} shows that this is not merely
a limitation of the proof: polynomial jump profiles give an 
obstruction to full coercivity.

\section{Polynomial obstructions to coercivity in even orders}
\label{sec:even-obstruction}

Corollary~\ref{cor:source-equivalence} reduces the negative half of
Theorem~\ref{thm:A} to a discrete construction: we seek compactly supported
jump sequences for which $\sum_i|a_i|^{k+1}$ is large but
\[
\mathcal E_k(a)
=\sum_j A_j^{(k)}|\mathrm D^{k-1}a_j|
\]
is small.  The factor $\mathrm D^{k-1}a$ vanishes on sampled polynomials of
degree at most $k-2$.  No nonzero sequence in this global kernel can be
compactly supported, however. 
The problem is therefore quantitative: we must build a compactly
supported sequence that agrees with such polynomials on long intervals while
confining the nonzero high-order differences to neighborhoods of the joins
between successive polynomial blocks and of the two endpoints where the
sequence is extended by zero.  These neighborhoods must have lower-order
total contribution. 
The required matching follows from the Euler-polynomial identity
below.

\subsection{The polynomial kernel and Euler-polynomial blocks}
\label{sec:euler-blocks}

Recall that
\[
a_i=(\mathrm D\bar u)_i=\bar u_{i+1}-\bar u_i.
\]
Thus $a_i$ is the value of the jump sequence at the integer index $i$
(equivalently, at the interface $x_{i+1/2}$); it is not a value of an ENO
reconstruction polynomial.  We shall say that $a$ is a sampled polynomial
profile on an integer interval $I$ if there is an ordinary polynomial $P$
such that $a_i=P(i)$ for every $i\in I$.

\begin{lemma}[Kernel of a forward difference]
	\label{lem:discrete-polynomial-kernel}
	Let $r\ge0$ and let
	$I=\{i_0,i_0+1,\ldots,i_1\}$ contain at least $r+2$ indices.  The
	following are equivalent:
	\begin{enumerate}[label=\textup{(\roman*)}]
		\item $\mathrm D^{r+1}a_j=0$ for every $j$ such that
		$\{j,\ldots,j+r+1\}\subset I$;
		\item the values $(a_i)_{i\in I}$ form a sampled polynomial profile of
		degree at most $r$.
	\end{enumerate}
	When these conditions hold,
	\begin{equation}\label{eq:newton-forward-representation}
		a_i
		=\sum_{q=0}^{r}(\mathrm D^qa)_{i_0}
		\binom{i-i_0}{q},
		\qquad i\in I.
	\end{equation}
\end{lemma}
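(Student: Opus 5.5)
The argument is the discrete analogue of the statement that a function whose $(r+1)$st derivative vanishes is a polynomial of degree at most $r$. I will prove (ii)$\Rightarrow$(i) directly, and (i)$\Rightarrow$(ii) together with the representation \eqref{eq:newton-forward-representation} by Newton's forward-difference expansion.

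For (ii)$\Rightarrow$(i), assume $a_i=P(i)$ on $I$ with $\deg P\le r$. The forward difference of a polynomial of degree $d\ge1$, evaluated at integers, is the sampling of the polynomial $P(x+1)-P(x)$, which has degree at most $d-1$. The forward difference of a constant is zero. Iterating, $\mathrm D^{r+1}a_j$ is the sampling of the zero polynomial. Only the values at $j,\ldots,j+r+1$ enter, so $\mathrm D^{r+1}a_j=0$ whenever this window lies in $I$.

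For (i)$\Rightarrow$(ii), define
\[
P(x):=\sum_{q=0}^{r}(\mathrm D^qa)_{i_0}\binom{x-i_0}{q},
\]
which is a polynomial of degree at most $r$. These differences are well defined because the windows $\{i_0,\ldots,i_0+q\}\subset I$ for $q\le r$; this uses $|I|\ge r+2$.

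I then show $a_i=P(i)$ for $i\in I$ by induction on $n=i-i_0$. The induction statement is the Newton identity
\[
(\mathrm D^p a)_{i_0+n}=\sum_{q}(\mathrm D^{p+q}a)_{i_0}\binom{n}{q},
\]
valid for all $p\ge0$ with $i_0+n+p\le i_1$. It follows from
\[
(\mathrm D^pa)_{i_0+n+1}=(\mathrm D^pa)_{i_0+n}+(\mathrm D^{p+1}a)_{i_0+n}
\]
and Pascal's rule $\binom{n+1}{q}=\binom nq+\binom n{q-1}$. Equivalently, one can write $E=I+\mathrm D$ and expand $E^n=\sum_q\binom nq\mathrm D^q$, applied at $i_0$.

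Every difference $\mathrm D^{q}a_{i_0}$ that appears with $q\ge r+1$ has its window $\{i_0,\ldots,i_0+q\}$ inside $I$. It is therefore a difference of $\mathrm D^{r+1}a$ values on windows contained in $I$, all of which vanish by (i). This truncates the sum at $q=r$ and gives exactly \eqref{eq:newton-forward-representation}.

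The only delicate point is bookkeeping: every difference used must involve indices inside $I$. To keep this clean, I will prove the binomial expansion $a_{i_0+n}=\sum_{q=0}^{n}\binom nq(\mathrm D^qa)_{i_0}$ for $0\le n\le i_1-i_0$, which uses only indices $i_0,\ldots,i_0+n$. The terms with $q>r$ then vanish, since $\mathrm D^{q}a_{i_0}=\mathrm D^{q-r-1}(\mathrm D^{r+1}a)_{i_0}$ is a combination of the vanishing values $\mathrm D^{r+1}a_j$ with $i_0\le j\le i_0+q-r-1$, and each such window lies in $I$. Finally, $\binom nq=0$ for $q>n$, which makes the truncation consistent with the formula for small $n$.
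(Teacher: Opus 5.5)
Your proof is correct and follows essentially the paper's argument: both directions rest on the Newton forward representation. The differences are only cosmetic. You get (ii)$\Rightarrow$(i) from degree reduction under $P(x)\mapsto P(x+1)-P(x)$, where the paper uses $\mathrm D\binom{i-i_0}{q}=\binom{i-i_0}{q-1}$ together with a basis argument. You get (i)$\Rightarrow$(ii) by truncating the exact expansion $a_{i_0+n}=\sum_{q=0}^{n}\binom nq(\mathrm D^qa)_{i_0}$, where the paper matches the first $r+1$ values and invokes uniqueness for the recurrence $\mathrm D^{r+1}a=0$ on $I$.
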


\begin{proof}
	For $q\ge1$, put
	\[
	\binom{x}{q}
	:=\frac{x(x-1)\cdots(x-q+1)}{q!},
	\qquad
	\binom{x}{0}:=1.
	\]
	Pascal's identity gives
	\begin{equation}\label{eq:newton-basis-difference}
		\mathrm D\binom{i-i_0}{q}
		=\binom{i-i_0}{q-1}.
	\end{equation}
	Hence $\mathrm D^{r+1}$ annihilates every sequence on the right-hand side
	of \eqref{eq:newton-forward-representation}.
	
	Conversely, the right-hand side of
	\eqref{eq:newton-forward-representation} has the same forward differences
	of orders $0,\ldots,r$ at $i_0$ as the sequence $a$, and therefore the same
	first $r+1$ values.  Indeed, the numbers
	$(\mathrm D^qa)_{i_0}$, $0\le q\le r$, determine
	$a_{i_0},a_{i_0+1},\ldots,a_{i_0+r}$ successively.
	Both the given sequence and the sequence defined by the right-hand side
	of \eqref{eq:newton-forward-representation} satisfy the same order-$(r+1)$
	recurrence wherever its stencil is contained in $I$; induction from the left
	endpoint shows that they agree throughout $I$.  Finally, the
	functions $\binom{x-i_0}{q}$, $0\le q\le r$, are ordinary polynomials of
	degrees $0,\ldots,r$ and form a basis of the polynomial space of degree at
	most $r$.  This proves both directions.
\end{proof}

Taking $r=k-2$ in Lemma~\ref{lem:discrete-polynomial-kernel} shows that the
local kernel of the highest-difference factor in $\mathcal E_k$ consists
exactly of sampled polynomial profiles of degree at most $k-2$.  This is a
local statement.  A finitely supported sequence satisfying
$\mathrm D^{k-1}a=0$ on all of $\mathbb Z$ must be the zero sequence.

Fix an even order $k\ge4$ and write
\begin{equation}\label{eq:even-order-parameter}
	d:=k-2.
\end{equation}
Thus $d\ge2$ is even and $k-1=d+1$.  A degree-$d$ block of length
$O(L)$ and amplitude $O(L^d)$
can produce a change of order $L^{d+1}$ in the corresponding
cell-average sequence.
Repeating such blocks without cancellation therefore makes
$\|\bar u\|_{\ell^\infty}$ grow with their number; if their total sum is
nonzero, the corresponding cell-average sequence is not compactly
supported.  Abruptly joining the block to zero creates a different problem:
the resulting $(d+1)$st difference near the endpoint may still have the full
interior scale $L^d$.  We shall construct blocks that vanish at their
endpoints and whose sums cancel in adjacent pairs.  When two oppositely signed
blocks are joined, the difference between the second block and the polynomial
continuation of the first will be an $L$-independent polynomial in the
displacement from the joining point.  Consequently, every fixed-order forward
difference whose stencil crosses that point will be bounded independently of
$L$.

On the space of polynomials of degree at most $d$, the map
\[
P(x)\longmapsto P(x+1)+P(x)
\]
is invertible: in the monomial basis its matrix is triangular with diagonal
entries equal to $2$.  Let $E_d$ be the unique polynomial satisfying
\begin{equation}\label{eq:euler-polynomial-identity}
	E_d(x+1)+E_d(x)=2x^d.
\end{equation}
This is the $d$th Euler polynomial.  Applying
\eqref{eq:euler-polynomial-identity} at $-x$ gives
\[
E_d(-x)+E_d(1-x)=2(-x)^d=2x^d.
\]
Thus the polynomial $E_d(1-x)$ satisfies the same identity and hence, by
uniqueness,
\begin{equation}\label{eq:euler-polynomial-symmetry}
	E_d(1-x)=E_d(x).
\end{equation}
Evaluating \eqref{eq:euler-polynomial-identity} at $x=0$ gives
$E_d(1)+E_d(0)=0$, while
\eqref{eq:euler-polynomial-symmetry} gives $E_d(1)=E_d(0)$.  Hence
\begin{equation}\label{eq:euler-polynomial-endpoints}
	E_d(0)=E_d(1)=0.
\end{equation}

For a positive integer $L$, define the rescaled polynomial
\begin{equation}\label{eq:scaled-euler-polynomial}
	P_L(t):=(2L)^dE_d\!\left(\frac{t}{2L}\right).
\end{equation}
It has degree $d$, vanishes at $0$ and $2L$, and has size comparable to
$L^d$ whenever $t/(2L)$ ranges in a fixed compact subset of $(0,1)$ on which
$E_d$ does not vanish.  The following identity gives the exact difference
between a second, oppositely signed block and the polynomial continuation of
the first.

\begin{lemma}[Euler matching identity]
	\label{lem:euler-junction}
	For every positive integer $L$ and every $h\in\mathbb R$,
	\begin{equation}\label{eq:euler-junction}
		P_L(2L+h)+P_L(h)=2h^d.
	\end{equation}
	For $h\in\mathbb Z$, define
	\[
		b_h:=
		\begin{cases}
			P_L(2L+h),&h\le0,\\
			-P_L(h),&h>0.
		\end{cases}
	\]
	Then $b$ is sampled from a polynomial of degree $d$ on each side of $h=0$,
	and
	\begin{equation}\label{eq:euler-junction-difference-support}
		\operatorname{supp}(\mathrm D^{d+1}b)
		\subset\{-d,\ldots,-1\},
		\qquad
		\|\mathrm D^{d+1}b\|_{\ell^\infty}
		\le C_d^{\mathrm{jun}},
	\end{equation}
	where one may take
	$C_d^{\mathrm{jun}}:=2^{d+2}(d+1)^d$; in particular, this constant is
	independent of $L$.
\end{lemma}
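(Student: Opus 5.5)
The identity \eqref{eq:euler-junction} should follow directly from the defining relation \eqref{eq:euler-polynomial-identity}. Put $x=h/(2L)$ in $E_d(x+1)+E_d(x)=2x^d$ and multiply by $(2L)^d$. This gives
\[
P_L(2L+h)+P_L(h)=(2L)^d\Bigl(E_d\bigl(\tfrac{h}{2L}+1\bigr)+E_d\bigl(\tfrac{h}{2L}\bigr)\Bigr)=(2L)^d\cdot 2\Bigl(\tfrac{h}{2L}\Bigr)^d=2h^d .
\]
This holds for all real $h$ as a polynomial identity.

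Next, consider the sequence $b$. For $h\le 0$ it is sampled from the degree-$d$ polynomial $Q(h):=P_L(2L+h)$. For $h>0$ it is sampled from $-P_L(h)$, and by \eqref{eq:euler-junction} we have $-P_L(h)=Q(h)-2h^d$. Hence $b_h=Q(h)-2h^d\mathbf 1_{h>0}$ for all $h\in\mathbb Z$. Since $h^d$ vanishes at $h=0$, we can equally write $b=Q-2\,g$ with $g_h:=h^d\mathbf 1_{h\ge0}$, the one-sided monomial.

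$\mathrm D^{d+1}$ annihilates the sampled polynomial $Q$ by Lemma~\ref{lem:discrete-polynomial-kernel}, so $\mathrm D^{d+1}b=-2\,\mathrm D^{d+1}g$. The difference $(\mathrm D^{d+1}g)_j$ uses the values $g_j,\dots,g_{j+d+1}$. There are two cases:
\begin{itemize}
\item If $j\ge0$, the whole stencil lies where $g_h=h^d$, so the difference vanishes.
\item If $j+d+1\le 0$, all values are zero (including $g_0=0$ because $d\ge1$), so again it vanishes.
\end{itemize}
The only possible nonzero indices are therefore $-d\le j\le -1$, which is the claimed support statement.

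For the uniform bound, take $-d\le j\le-1$. The stencil values satisfy $0\le g_h\le (d+1)^d$, since $0\le h\le j+d+1\le d$. The binomial expansion of $\mathrm D^{d+1}$ has coefficient sum $2^{d+1}$ in absolute value, so
\[
|\mathrm D^{d+1}b_j|\le 2\cdot 2^{d+1}(d+1)^d=2^{d+2}(d+1)^d .
\]
This is independent of $L$, as required.

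The only delicate point is keeping the index bookkeeping consistent between $h\le0$ and $h>0$, in particular that $g_0=0$ lets the step be placed at either side. Otherwise the proof is a direct computation; no step is a real obstacle.
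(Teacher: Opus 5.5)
Your proposal is correct and follows essentially the same route as the paper. Both arguments write $b$ as the sampled continuation $P_L(2L+h)$ minus the truncated monomial $2h^d\mathbf 1_{\{h\ge0\}}$, use that $\mathrm D^{d+1}$ annihilates the continuation, and read off the support $\{-d,\ldots,-1\}$ and the $L$-independent bound $2^{d+2}(d+1)^d$ from the stencil cases. The only difference is that the paper also records an explicit formula for these differences.
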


\begin{proof}
	Substitution in \eqref{eq:euler-polynomial-identity} gives
	\[
	P_L(2L+h)+P_L(h)
	=(2L)^d\left[
	E_d\!\left(1+\frac{h}{2L}\right)
	+E_d\!\left(\frac{h}{2L}\right)
	\right]
	=2h^d.
	\]
	To see the consequence, continue the polynomial $P_L(2L+h)$ across
	$h=0$.
	For $h\ge0$, the profile $-P_L(h)$ differs from that
	continuation by $-2h^d$.  Thus the piecewise profile differs from a single
	degree-$d$ polynomial by the truncated monomial
	$-2h^d\mathbf 1_{\{h\ge0\}}$.  The operator $\mathrm D^{d+1}$ annihilates
	the polynomial continuation, while its value on this truncated monomial is
	bounded on every fixed-width stencil by a constant depending only on $d$.
	More explicitly, if $1\le m\le d$, then
	\begin{equation}\label{eq:euler-junction-explicit-difference}
		\mathrm D^{d+1}b_{-m}
		=-2\sum_{q=m+1}^{d+1}(-1)^{d+1-q}
		\binom{d+1}{q}(q-m)^d.
	\end{equation}
	For $j\le-d-1$ or $j\ge0$, the entire difference stencil lies in one
	degree-$d$ polynomial piece (the common value at $h=0$ is zero), and hence
	$\mathrm D^{d+1}b_j=0$.  Finally, the right-hand side of
	\eqref{eq:euler-junction-explicit-difference} has absolute value at most
	$2^{d+2}(d+1)^d$, proving
	\eqref{eq:euler-junction-difference-support}.
\end{proof}

We now assemble the blocks.  Let $N$ be a positive even integer.  For
$0\le n<N$ and $0\le t<2L$, set
\begin{equation}\label{eq:euler-block-definition}
	a_{2Ln+t}^{L,N}:=(-1)^nP_L(t),
\end{equation}
and set $a_i^{L,N}=0$ outside $0\le i<2LN$.  The half-open convention is
compatible with both adjacent polynomials because $P_L(0)=P_L(2L)=0$.
Set $B_n:=2Ln$ for $0\le n\le N$.  The indices
$B_1,\ldots,B_{N-1}$ are the points where consecutive blocks meet, whereas
$B_0$ and $B_N$ are the two endpoints of the block interval.
The block sums have the same absolute value and alternate in sign.  Since $N$ is even,
\begin{equation}\label{eq:euler-block-zero-sum}
	\sum_i a_i^{L,N}=0.
\end{equation}
We may therefore define the associated compactly supported cell-average
sequence by
\begin{equation}\label{eq:euler-block-primitive}
	\bar u_i^{L,N}:=\sum_{\ell<i}a_\ell^{L,N}.
\end{equation}
Then $\mathrm D\bar u_i^{L,N}=a_i^{L,N}$.

Figure~\ref{fig:euler-blocks} summarizes the geometry.  The numerator in
the ENO--TV estimate accumulates throughout every block.  The cell-average
sequence, however, returns to the same level after each pair of blocks, while
$\mathrm D^{k-1}a^{L,N}$ is confined to fixed-width neighborhoods of the
joining points and the two endpoints.

\begin{figure}[htbp]
	\centering
	\resizebox{0.96\linewidth}{!}{%
		\begin{tikzpicture}[line cap=round]
			\draw[->] (-0.4,0) -- (12.8,0) node[below right] {\footnotesize $i$};
			\draw[->] (0,-1.9) -- (0,1.9) node[left] {\footnotesize $a_i$};
			\foreach \n/\s in {0/-1,1/1,2/-1,3/1} {
				\draw[thick,domain=0:3,smooth,samples=40,variable=\t,shift={(3*\n,0)}]
				plot ({\t},{\s*1.5*\t*(3-\t)/2.25});
			}
			\foreach \x in {3,6,9} \fill (\x,0) circle (2.2pt);
			\fill (0,0) circle (1.6pt);
			\fill (12,0) circle (1.6pt);
			\draw[decorate,decoration={brace,mirror,amplitude=5pt}]
			(6.05,-1.75) -- (8.95,-1.75)
			node[midway,below=5pt] {\footnotesize $2L$};
	\end{tikzpicture}}
	\caption{Schematic alternating Euler-polynomial blocks for $k=4$.
		Each block has length $2L$.  For general $k$,
		$\mathrm D^{k-1}a$ vanishes in the block interiors; only fixed-width
		neighborhoods of the joining points and the two endpoints
		contribute to $\mathcal E_k(a)$.}
	\label{fig:euler-blocks}
\end{figure}

\subsection{Estimates for truncated Euler-polynomial blocks}
\label{sec:euler-block-estimates}

We begin with the bulk size and the cell-average amplitude; the source
estimate will then separate the contributions from the joining points and the
two endpoints.

\begin{lemma}[Bulk size and cell-average amplitude]
	\label{lem:euler-block-scales}
	There exist an integer $L_{\mathrm{bulk}}=L_{\mathrm{bulk}}(k)\ge1$
	and constants $0<c_k\le C_k<\infty$ such that, for every integer
	$L\ge L_{\mathrm{bulk}}$ and every positive even integer $N$,
	\begin{align}
		c_kNL^{1+d(k+1)}
		&\le\sum_i|a_i^{L,N}|^{k+1}
		\le C_kNL^{1+d(k+1)},
		\label{eq:euler-bulk-scale}\\
		c_kL^{d+1}
		&\le\|\bar u^{L,N}\|_{\ell^\infty}
		\le C_kL^{d+1}.
		\label{eq:euler-primitive-scale}
	\end{align}
	In particular, the cell-average amplitude is independent of the
	number of blocks, and all constants above are uniform in $L$ and $N$.
\end{lemma}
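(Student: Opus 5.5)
The plan is to reduce both estimates to the single rescaled polynomial $E_d$ on $[0,1]$. Riemann-sum arguments then give the bulk bound, and the exact zero-sum of each block gives the amplitude bound.

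\emph{Bulk size.} The block structure \eqref{eq:euler-block-definition} gives $\sum_i|a_i^{L,N}|^{k+1}=N\sum_{t=0}^{2L-1}|P_L(t)|^{k+1}$. By \eqref{eq:scaled-euler-polynomial},
\[
\sum_{t=0}^{2L-1}|P_L(t)|^{k+1}=(2L)^{d(k+1)}\sum_{t=0}^{2L-1}\Bigl|E_d\Bigl(\frac{t}{2L}\Bigr)\Bigr|^{k+1}.
\]
The last sum is $2L$ times a Riemann sum for $\int_0^1|E_d|^{k+1}\,dx$. For the upper bound, bound each term by $\|E_d\|_{L^\infty[0,1]}^{k+1}$, which gives the estimate for every $L\ge1$. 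For the lower bound, note that $E_d$ is a nonzero polynomial. Hence there is a closed interval $J=[\alpha,\beta]\subset(0,1)$ on which $|E_d|\ge\mu>0$. Choose $L_{\mathrm{bulk}}$ so that $2L(\beta-\alpha)\ge2$ for $L\ge L_{\mathrm{bulk}}$. Then at least $c\,L$ integers $t$ satisfy $t/(2L)\in J$, and each contributes at least $(2L)^{d(k+1)}\mu^{k+1}$. This yields \eqref{eq:euler-bulk-scale} with constants depending only on $d=k-2$.

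\emph{Amplitude.} Set $\sigma_L:=\sum_{t=0}^{2L-1}P_L(t)$. By \eqref{eq:euler-block-primitive}, for $i=2Ln+t$ with $0\le t\le 2L$,
\[
\bar u_i^{L,N}=\sigma_L\sum_{n'<n}(-1)^{n'}+(-1)^n\sum_{s<t}P_L(s).
\]
The first term is $0$ when $n$ is even and $\sigma_L$ when $n$ is odd, and $\bar u$ vanishes outside the block interval because $N$ is even. Therefore $|\bar u_i|\le 2\max_{0\le t\le 2L}|\sum_{s<t}P_L(s)|\le 2\cdot 2L\cdot (2L)^d\|E_d\|_{L^\infty[0,1]}$, which is the upper bound in \eqref{eq:euler-primitive-scale}, uniform in $N$.

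For the lower bound it suffices to find one partial sum of size $\gtrsim L^{d+1}$. The value $\bar u_{2L}=\sigma_L$ need not help, since $\sigma_L$ may vanish. By the symmetry \eqref{eq:euler-polynomial-symmetry}, $\int_0^1E_d$ might also vanish. Instead, use the partial integrals $F(y):=\int_0^yE_d(x)\,dx$. Since $E_d\not\equiv0$, $F$ is a nonzero polynomial, so there is $y_0\in(0,1)$ with $F(y_0)\neq0$. Take $t=\lfloor 2Ly_0\rfloor$. A Riemann-sum comparison, with error $O(L^d)$ since $E_d$ is Lipschitz on $[0,1]$, gives
\[
\sum_{s<t}P_L(s)=(2L)^{d+1}F(y_0)+O(L^{d}).
\]
Enlarging $L_{\mathrm{bulk}}$ makes this at least $\tfrac12(2L)^{d+1}|F(y_0)|$, and evaluating $\bar u$ in the first block ($n=0$) gives $\|\bar u^{L,N}\|_{\ell^\infty}\ge c_kL^{d+1}$.

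The main obstacle is the amplitude lower bound. One cannot rely on the full block sum $\sigma_L$ being nonzero, which is why a generic interior point $y_0$ and uniform Riemann-sum error bounds are used. Everything else is routine bookkeeping of constants depending only on $k$.
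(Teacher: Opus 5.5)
Your proposal is correct and follows essentially the same route as the paper. For the bulk estimate you use a sup bound together with a fixed interval on which $|E_d|$ is bounded below. For the upper amplitude bound you use the cancellation of consecutive oppositely signed blocks with equal sums, and for the lower bound a Riemann-sum evaluation of $\int_0^{y_0}E_d\neq0$ inside the first block. Your explicit formula for $\bar u_i^{L,N}$ and the $O(L^d)$ error term simply make quantitative the paper's ``returns to its previous value after every pair'' and ``$o(1)$'' steps.

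One side remark is off: the symmetry $E_d(1-x)=E_d(x)$ is a reflection symmetry about $x=\tfrac12$, so it cannot make $\int_0^1E_d$ vanish (for example, $\int_0^1E_2=-\tfrac16$). Since you never use $\sigma_L$, this does not affect the argument.
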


\begin{proof}
	On each block there are $2L$ entries and
	$|P_L(t)|\le C_kL^d$, which gives the upper bound in
	\eqref{eq:euler-bulk-scale}.  Since $E_d$ is not identically zero, there
	is a closed interval $J\subset(0,1)$ and a constant $c_k>0$ such that
	$|E_d|\ge c_k$ on $J$.  For large $L$, a fixed positive proportion of the
	integers $0\le t<2L$ satisfy $t/(2L)\in J$; for these integers,
	$|P_L(t)|\ge c_kL^d$.  This proves the matching lower bound.
	
	During a single block, the partial sums defining $\bar u^{L,N}$ change by at most
	$C_kL\cdot L^d$.  Consecutive blocks have opposite signs and identical
	total sums, so $\bar u^{L,N}$ returns to its previous value after every
	pair.  This proves the upper bound in \eqref{eq:euler-primitive-scale},
	uniformly in $N$.
	
	For the lower bound, choose $\alpha\in(0,1)$ for which
	\[
	\int_0^\alpha E_d(s)\,ds\ne0.
	\]
	Such an $\alpha$ exists because $E_d$ is not the zero polynomial.  For
	each $L$, set $M_L:=\lfloor 2\alpha L\rfloor$.  The partial sum over the
	first $M_L$ entries of the first block satisfies
	\[
	\sum_{t=0}^{M_L-1}P_L(t)
	=(2L)^{d+1}
	\left(\int_0^\alpha E_d(s)\,ds+o(1)\right),
	\]
	by the Riemann-sum approximation. 
	Its absolute value is at least $c_kL^{d+1}$ for all sufficiently large $L$,
	which completes the proof.
\end{proof}

No $L$-independent matching analogous to
Lemma~\ref{lem:euler-junction} is available at the two endpoints of the block
interval.  The endpoint values and
the resulting $(d+1)$st forward differences are nevertheless
$O_k(L^{d-1})$, one power below the block amplitude.

\begin{lemma}[Endpoint estimates]
	\label{lem:euler-endpoints}
	For every integer $R\ge1$, there exists a constant $C_{k,R}>0$ such
	that, for every integer $L\ge R$ and every integer $h$ with $0\le h\le R$,
	\begin{equation}\label{eq:euler-endpoint-values}
		|P_L(h)|+|P_L(2L-h)|\le C_{k,R}L^{d-1}.
	\end{equation}
	Moreover, the absolute value of every $(d+1)$st forward difference
	formed from values of $P_L$ and its zero extension across either endpoint is
	at most $C_{k,R}L^{d-1}$, provided its stencil lies in the $R$-neighborhood
	of that endpoint.  Here a stencil lies in the $R$-neighborhood of an
	endpoint if every index in the stencil has distance at most $R$ from that
	endpoint.  The same constant $C_{k,R}$ works for all such $L$, $h$, and
	stencils.
\end{lemma}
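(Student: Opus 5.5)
The plan is to exploit $E_d(0)=E_d(1)=0$, so that $P_L$ vanishes at $0$ and $2L$, and then expand $P_L$ in Taylor series about each endpoint. Write
\[
P_L(h)=(2L)^dE_d\!\left(\frac{h}{2L}\right)
=\sum_{q=1}^{d}\frac{E_d^{(q)}(0)}{q!}(2L)^{d-q}h^q .
\]
The constant term is absent because $E_d(0)=0$, so every surviving monomial carries at least one factor of $h$. For $|h|\le R\le L$ we have $(2L)^{d-q}|h|^q\le 2^{d}L^{d-1}R^q$ whenever $q\ge1$. Hence
\[
|P_L(h)|\le C_{k,R}L^{d-1},
\]
with $C_{k,R}:=2^d\sum_{q\ge1}|E_d^{(q)}(0)|R^q/q!$. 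This constant depends only on $d=k-2$ and $R$.

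At the right endpoint the symmetry \eqref{eq:euler-polynomial-symmetry} gives $E_d(1-s)=E_d(s)$. Therefore $P_L(2L-h)=P_L(h)$ for all real $h$, and the same bound follows. This proves \eqref{eq:euler-endpoint-values}. The bound in fact holds for all real $|h|\le R$, not only for $0\le h\le R$.

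For the forward differences, take a stencil of $d+2$ consecutive integers within distance $R$ of an endpoint. A $(d+1)$st forward difference is a fixed linear combination of these $d+2$ values, with coefficients bounded by $\binom{d+1}{q}\le 2^{d+1}$. Each value in the stencil is one of two kinds:
\begin{itemize}
\item a zero from the extension, or
\item a value $P_L(h)$, respectively $P_L(2L-h)$, with $|h|\le R$.
\end{itemize}
The second kind is bounded by the real-$h$ version of the Taylor bound above. Near $t=2L$ we use the identity $P_L(2L-h)=P_L(h)$, now with $h$ ranging over $[-R,R]$.

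The triangle inequality then gives the difference bound $2^{d+1}(d+2)C_{k,R}L^{d-1}$. After enlarging $C_{k,R}$ accordingly, this is uniform in $L\ge R$, in $h$, and in the stencil.

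The step needing care is that stencils may include indices on the far side of the endpoint, where the sequence is zero rather than polynomial. This is why the bound must be applied to individual values, not through the annihilation property of $\mathrm D^{d+1}$. With the termwise bound no further obstacle arises.

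In the assembled sequence $a^{L,N}$ the signs $(-1)^n$ only flip signs and do not affect absolute values. The endpoint blocks are $n=0$, where $a_t=P_L(t)$, and $n=N-1$, which with $N$ even gives $a_{2L(N-1)+t}=-P_L(t)$. So the lemma applies verbatim to the assembled sequence.
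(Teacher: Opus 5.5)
Your proof is correct and follows essentially the same route as the paper. Both use $E_d(0)=E_d(1)=0$ to gain one factor of $h/(2L)$ at each endpoint: you do this through the explicit Taylor expansion together with the symmetry $E_d(1-x)=E_d(x)$, while the paper uses the bound $|E_d(x)|,\,|E_d(1-x)|\le C_k|x|$ for $|x|\le 1/2$. Both then bound the $(d+1)$st differences termwise, as fixed linear combinations of stencil values with the zero entries included.
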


\begin{proof}
	By \eqref{eq:euler-polynomial-endpoints} and the fact that $E_d$ is a fixed
	polynomial,
	\[
	|E_d(x)|\le C_k|x|,
	\qquad
	|E_d(1-x)|\le C_k|x|
	\]
	for $|x|\le1/2$.  Inserting $x=h/(2L)$ in
	\eqref{eq:scaled-euler-polynomial} proves
	\eqref{eq:euler-endpoint-values}.  A forward difference of fixed order is a
	fixed linear combination of the values in its stencil.  The assertion for
	the zero extension follows from the same bound.
\end{proof}

We now estimate the contributions from the joining points and the two
endpoints.

\begin{proposition}[Source bound for Euler-polynomial blocks]
	\label{prop:euler-source}
	There exists a constant $C_k>0$ such that, for every
	integer $L\ge d+1$ and every positive even integer $N$,
	\begin{equation}\label{eq:euler-source-functional-bound}
		\mathcal E_k(a^{L,N})
		\le C_k\bigl(NL^{d-1}+L^{2d-2}\bigr).
	\end{equation}
	Consequently,
	\begin{equation}\label{eq:euler-source-bound}
		Q_k(\bar u^{L,N})
		\le C_k\bigl(NL^{d-1}+L^{2d-2}\bigr).
	\end{equation}
\end{proposition}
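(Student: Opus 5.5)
The estimate is local: $\mathrm D^{k-1}a^{L,N}=\mathrm D^{d+1}a^{L,N}$ vanishes whenever its stencil $\{j,\ldots,j+d+1\}$ lies inside a single block interval $[B_n,B_{n+1}]$. The half-open convention is harmless here because $P_L(0)=P_L(2L)=0$, so on the closed interval the values are sampled from $\pm P_L$; Lemma~\ref{lem:discrete-polynomial-kernel} then gives vanishing. It also vanishes when the stencil lies entirely outside $[B_0,B_N]$. Hence the only contributing $j$ are those whose stencil contains some $B_n$ in its interior. These are at most $d+1$ indices per point $B_n$, about $(N+1)(d+1)$ indices in total. I will bound $A_j^{(k)}\,|\mathrm D^{k-1}a_j|$ separately at interior junctions $B_1,\ldots,B_{N-1}$ and at the two endpoints $B_0,B_N$, and then sum.

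At an interior junction $B_n$ with $1\le n\le N-1$, I translate and multiply by the sign $(-1)^{n-1}$. Near $B_n$ the sequence then coincides with the profile $b$ of Lemma~\ref{lem:euler-junction}: the block on the left is $P_L(2L+h)$ for $h\le0$, and the block on the right is $-P_L(h)$ for $h>0$. The stencil has width $d+2\le L$ when $L\ge d+1$, so it does not reach the next junction, and the lemma gives $|\mathrm D^{d+1}a_j|\le C_d^{\mathrm{jun}}$. The amplitude factor $A_j^{(k)}$ involves only values within distance $k$ of $B_n$. These values are of the form $P_L(h)$ or $P_L(2L-h)$ with $0\le h\le k$, so Lemma~\ref{lem:euler-endpoints} with $R=k$ bounds them by $C_kL^{d-1}$; this requires $L\ge k$, and the finitely many remaining $L$ are absorbed into the constant. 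Each junction therefore contributes $O_k(L^{d-1})$, and all interior junctions together contribute $O_k(NL^{d-1})$.

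At the endpoints $B_0$ and $B_N$, the sequence jumps from zero to $\pm P_L$ near $0$ or near $2L$. Lemma~\ref{lem:euler-endpoints} bounds both $A_j^{(k)}$ and $|\mathrm D^{d+1}a_j|$ by $C_kL^{d-1}$, so each endpoint contributes $O_k(L^{2d-2})$. Adding the junction and endpoint contributions gives \eqref{eq:euler-source-functional-bound}, and Corollary~\ref{cor:source-equivalence} then gives \eqref{eq:euler-source-bound}.

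The one real obstacle is the counting at the junctions. Each junction pairs an $O(1)$ difference with an amplitude of size $O(L^{d-1})$, not $O(L^d)$. This hinges on $A_j^{(k)}$ seeing only values within distance $k$ of the junction, where the blocks vanish to first order. So the key check is that the amplitude window $\{j,\ldots,j+k-1\}$ and the difference stencil, both of width at most $k$, stay inside the $k$-neighborhood of $B_n$ for every contributing $j$.
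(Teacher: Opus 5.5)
Your proposal is correct and follows the paper's proof: you show that $\mathrm D^{d+1}a$ vanishes on stencils lying in one closed block or outside the support, bound the junction differences by the Euler matching lemma with amplitudes $O(L^{d-1})$ from the endpoint lemma, bound the two outer neighborhoods by $O(L^{2d-2})$, and conclude via the source-equivalence corollary. One simplification: for every contributing $j$ the window $\{j,\ldots,j+d+1\}$ lies within $d$ sites of the relevant $B_n$, so the paper applies the endpoint lemma with $R=d+1$, which matches the hypothesis $L\ge d+1$ exactly and makes both your ``absorb finitely many $L$'' step and the off-by-one ``$d+2\le L$'' remark unnecessary (the block length $2L\ge 2d+2$ is what keeps the stencil away from the neighboring junction).
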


\begin{proof}
	Set
	\[
	w_j:=\mathrm D^{d+1}a_j^{L,N}.
	\]
	If the forward-difference stencil $\{j,\ldots,j+d+1\}$ is contained in a
	single block, then $w_j=0$ because the block is sampled from a polynomial of
	degree $d$.
	
	Recall that the internal joining points are $B_n=2Ln$,
	$1\le n<N$.  For every such $n$ and every integer $h$ with
	$|h|\le d+1$, the block definition gives the exact local profile
	\begin{equation}\label{eq:euler-internal-junction-profile}
		a_{B_n+h}^{L,N}
		=(-1)^{n-1}
		\begin{cases}
			P_L(2L+h),&h\le0,\\
			-P_L(h),&h>0,
		\end{cases}
		=(-1)^{n-1}b_h.
	\end{equation}
	Consequently,
	\begin{equation}\label{eq:euler-internal-junction-differences}
		w_{B_n+s}=(-1)^{n-1}\mathrm D^{d+1}b_s,
		\qquad -d\le s\le-1,
	\end{equation}
	and Lemma~\ref{lem:euler-junction} gives
	$|w_{B_n+s}|\le C_d^{\mathrm{jun}}\le C_k$ at these indices.  The
	stencil beginning at $B_n-d-1$ merely ends at the common zero endpoint
	and remains in a single degree-$d$ polynomial piece, so it contributes
	zero.  Thus the only indices that can contribute at $B_n$ are exactly
	$B_n-d,\ldots,B_n-1$.
	Every entry in the corresponding $k$-point window lies
	within $d+1$ sites of the joining point, so
	Lemma~\ref{lem:euler-endpoints}, applied with $R=d+1$, gives
	\[
	A_j^{(k)}\le C_kL^{d-1}.
	\]
	There are only $d$ possible indices $j$ at each joining point.
	Their total contribution to $\mathcal E_k$ is therefore at most
	$C_kNL^{d-1}$.
	
	At the two outer endpoints, a stencil beginning one site earlier
	than the ranges below merely ends at the common zero endpoint and has zero
	$(d+1)$st difference; stencils farther away lie wholly in one polynomial
	piece or in the zero extension.  Thus the only indices that can contribute are
	\begin{equation}\label{eq:euler-outer-endpoint-difference-indices}
		\{B_0-d,\ldots,B_0-1\}
		\quad\text{and}\quad
		\{B_N-d,\ldots,B_N-1\},
	\end{equation}
	respectively.  Together with the internal-junction calculation, this gives
	\begin{equation}\label{eq:euler-difference-global-support}
		\operatorname{supp}w
		\subset
		\bigcup_{n=0}^{N}\{B_n-d,\ldots,B_n-1\}.
	\end{equation}
	On the endpoint stencils in
	\eqref{eq:euler-outer-endpoint-difference-indices},
	Lemma~\ref{lem:euler-endpoints}, again with $R=d+1$, gives both
	$|w_j|\le C_kL^{d-1}$ and $A_j^{(k)}\le C_kL^{d-1}$.  The two
	endpoint neighborhoods consequently contribute at most
	$C_kL^{2d-2}$.  This proves
	\eqref{eq:euler-source-functional-bound}.  The upper comparison in
	Corollary~\ref{cor:source-equivalence} then yields
	\eqref{eq:euler-source-bound}.
	The argument does not require identification of the selected stencil
	pattern.
\end{proof}

Thus the three quantities in the coercivity estimate have the scales
\begin{equation}\label{eq:euler-three-scales}
	\begin{aligned}
		\sum_i|a_i^{L,N}|^{k+1}
		&\asymp_k NL^{1+d(k+1)},\\
		\|\bar u^{L,N}\|_{\ell^\infty}
		&\asymp_k L^{d+1},\\
		Q_k(\bar u^{L,N})
		&\lesssim_k NL^{d-1}+L^{2d-2}.
	\end{aligned}
\end{equation}
The bulk term grows with the number of blocks, whereas the cell-average
sequence returns to its previous level after each pair.  Only
neighborhoods of the joining points and the two endpoints contribute to
the source.  We choose the number of blocks below so that the
joining-point contribution dominates the endpoint contribution.

\subsection{Failure of coercivity and strict ENO comparisons}
\label{sec:even-robustness}

We first note that the source is positive on every nonzero compactly
supported datum under consideration; in particular, the denominators below do
not vanish.

\begin{lemma}[Positivity on nonzero compactly supported data]
	\label{lem:source-positivity}
	Let $a$ be a nonzero finitely supported sequence with $\sum_i a_i=0$, and
	let $\bar u_i:=\sum_{q<i}a_q$.  Then $Q_k(\bar u)>0$.
\end{lemma}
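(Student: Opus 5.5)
By Corollary~\ref{cor:source-equivalence} it suffices to show $\mathcal E_k(a)>0$, that is, to find one index $j$ with $A_j^{(k)}>0$ and $\mathrm D^{k-1}a_j\ne0$. The plan exploits the fact that a nonzero finitely supported sequence cannot lie in the kernel of $\mathrm D^{k-1}$, and then checks that the amplitude factor is also nonzero at a suitable index. Note that the hypothesis $\sum_ia_i=0$ only serves to make $\bar u$ compactly supported, so that $Q_k(\bar u)$ is defined; positivity itself will not depend on it.

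The index is chosen at the right edge of the support. Let $i_1:=\max\{i:a_i\ne0\}$, which exists because $a$ is nonzero and finitely supported. Take $j:=i_1-(k-1)$, so that the difference stencil $\{j,\dots,j+k-1\}$ ends exactly at $i_1$. Expanding,
\[
\mathrm D^{k-1}a_j=\sum_{s=0}^{k-1}(-1)^{k-1-s}\binom{k-1}{s}a_{j+s}.
\]
The term with $s=k-1$ is $a_{i_1}$, while the remaining terms involve $a_{j},\dots,a_{i_1-1}$, which need not vanish. Hence this stencil is not obviously nonzero, and I will instead shift the stencil one step further right.

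With $j:=i_1-(k-2)$, the stencil is $\{j,\dots,i_1+1\}$. Here $a_{i_1+1}=0$, but $a_{i_1}$ still appears with coefficient $\pm(k-1)$, and again the other terms could cancel it. The cleaner choice is the stencil starting at $i_1$ itself: $j=i_1$, with stencil $\{i_1,\dots,i_1+k-1\}$. Every entry except $a_{i_1}$ vanishes, so
\[
\mathrm D^{k-1}a_{i_1}=(-1)^{k-1}a_{i_1}\ne0.
\]
Moreover $A_{i_1}^{(k)}\ge|a_{i_1}|>0$. Therefore $\mathcal E_k(a)\ge|a_{i_1}|^2>0$, and the lower comparison $c_k\mathcal E_k(a)\le Q_k(\bar u)$ from Corollary~\ref{cor:source-equivalence} gives $Q_k(\bar u)>0$.

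The only point needing care is that the corollary really yields a lower bound with a positive constant valid for arbitrary finitely supported data, including ties in the ENO comparisons. This is already built in: the partition in Lemma~\ref{lem:endpoint-partition} places $i_1$ in some $B_i^{(k)}$, and Lemma~\ref{lem:local-amplitude} then gives $|a_i|\ge A_{i_1}^{(k)}/C_k>0$. Consequently the term $w_{i,i_1}^{(k)}|a_i||\mathrm D^{k-1}a_{i_1}|$ in \eqref{eq:fmt-interval-source} is strictly positive, which gives the claim directly.
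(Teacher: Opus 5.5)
Your argument is correct, but it takes a more direct route than the paper.

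The paper argues by contradiction. From $Q_k(\bar u)=0$ it gets $\mathcal E_k(a)=0$. Since $A_j^{(k)}$ is built from the same $k$ entries as $\mathrm D^{k-1}a_j$, this forces $\mathrm D^{k-1}a_j=0$ for every $j$. The Newton representation of Lemma~\ref{lem:discrete-polynomial-kernel}, started in the zero region to the left of the support, then gives $a\equiv0$.

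You instead exhibit an explicit witness. At the last nonzero index $i_1$, the stencil $\{i_1,\dots,i_1+k-1\}$ sees only $a_{i_1}$, so $\mathrm D^{k-1}a_{i_1}=(-1)^{k-1}a_{i_1}$ and $A_{i_1}^{(k)}=|a_{i_1}|$. This avoids the kernel lemma entirely. It also gives the quantitative bound $Q_k(\bar u)\ge c_k|a_{i_1}|^2$ rather than bare positivity. The paper's version buys something different: it phrases the lemma as the fact that no nonzero finitely supported sequence lies in the global kernel of $\mathrm D^{k-1}$, which is the theme the even-order construction then works around.

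Your final paragraph is also valid and bypasses $\mathcal E_k$ altogether. There you use the partition and Lemma~\ref{lem:local-amplitude} to locate a strictly positive summand in \eqref{eq:fmt-interval-source}.

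In a clean write-up, drop the two abandoned choices $j=i_1-(k-1)$ and $j=i_1-(k-2)$. They are harmless but obscure what is a one-line argument.
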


\begin{proof}
	Suppose that $Q_k(\bar u)=0$.  By
	Corollary~\ref{cor:source-equivalence}, $\mathcal E_k(a)=0$.  For each $j$,
	the quantity $A_j^{(k)}$ is the maximum of the same $k$ entries of $a$ that
	occur in $\mathrm D^{k-1}a_j$.  Thus
	$A_j^{(k)}=0$ whenever all those entries vanish; otherwise
	$A_j^{(k)}>0$.  Since
	$A_j^{(k)}|\mathrm D^{k-1}a_j|=0$, it follows in either case that
	\begin{equation}\label{eq:compact-kernel-vanishing}
		\mathrm D^{k-1}a_j=0
		\qquad\text{for every }j\in\mathbb Z.
	\end{equation}
	Choose a finite integer interval containing the support of $a$ and whose
	first $k-1$ entries lie strictly to the left of that support.  With $i_0$
	denoting its left endpoint, these zero entries give
	$(\mathrm D^qa)_{i_0}=0$ for $0\le q\le k-2$.  Applying
	\eqref{eq:newton-forward-representation} with $r=k-2$ shows that $a$ vanishes
	on the whole interval, and hence on $\mathbb Z$, a contradiction.
\end{proof}

Let $L$ tend to infinity through the even integers and choose
\begin{equation}\label{eq:euler-number-of-blocks}
	N:=L^d.
\end{equation}
Then $N$ is even, and the joining-point term
$NL^{d-1}=L^{2d-1}$ in the source upper bound dominates the endpoint term
$L^{2d-2}$.  The denominator below is positive by
Lemma~\ref{lem:source-positivity}.  Since $k=d+2$,
\eqref{eq:euler-three-scales} now gives
\begin{align}
	\frac{\displaystyle\sum_i|a_i^{L,N}|^{k+1}}
	{\displaystyle
		\|\bar u^{L,N}\|_{\ell^\infty}^{k-1}
		Q_k(\bar u^{L,N})}
	\ge
	c_k\frac{NL^{1+d(k+1)}}
	{L^{(d+1)(k-1)}\bigl(NL^{d-1}+L^{2d-2}\bigr)}
	\ge c_kL.
	\label{eq:euler-divergent-quotient}
\end{align}
Thus no order-dependent constant can give full coercivity when $k\ge4$ is
even.

The unperturbed Euler blocks may give equality in some of the
divided-difference comparisons used by the ENO recursion.  We now show that
these equalities play no role in the failure of coercivity.

For fixed $L$ and $N$, choose an integer interval $I_{L,N}$ containing the
support of $a^{L,N}$ together with a $2k$-site buffer at both ends.  Set
\begin{equation}\label{eq:zero-sum-perturbation-space}
	\mathcal V_{L,N}
	:=\left\{b\in\mathbb R^{I_{L,N}}:
	\sum_{i\in I_{L,N}}b_i=0\right\}.
\end{equation}
By \eqref{eq:euler-block-zero-sum}, $a^{L,N}\in\mathcal V_{L,N}$.  Every
sequence in this space is extended by zero outside $I_{L,N}$ and determines
an associated compactly supported cell-average sequence
\begin{equation}\label{eq:perturbation-primitive}
	\bar v_i(b):=\sum_{q<i}b_q,
	\qquad \mathrm D\bar v_i(b)=b_i.
\end{equation}
Thus $\bar v(b)$ is the compactly supported cell-average datum whose jump
sequence is $b$.  We next make precise which comparisons are to be made
strict.

\begin{definition}[ENO comparison]
	\label{def:eno-comparison}
	Let $1\le \ell<k$ and $r\in\mathbb Z$.  If, at level $\ell$ of the
	recursion for some cell $i$, the current left endpoint is
	$r_i^{(\ell)}=r$, then the current $\ell$-cell stencil is
	$\{r,\ldots,r+\ell-1\}$.  The two candidate divided differences in
	\eqref{eq:eno-candidates}, applied to the cell-average datum
	\eqref{eq:perturbation-primitive}, are the linear functionals
	\begin{equation}\label{eq:eno-comparison-functionals}
		\begin{aligned}
			\mathcal L_{\ell,r}(b)
			&:=[\bar v_{r-1},\ldots,\bar v_{r+\ell-1}]
			=\frac{\mathrm D^{\ell-1}b_{r-1}}{(\ell+1)!},\\
			\mathcal R_{\ell,r}(b)
			&:=[\bar v_r,\ldots,\bar v_{r+\ell}]
			=\frac{\mathrm D^{\ell-1}b_r}{(\ell+1)!}.
		\end{aligned}
	\end{equation}
	Explicitly,
	\begin{equation}\label{eq:forward-difference-expansion-comparison}
		\mathrm D^{\ell-1}b_s
		=\sum_{q=0}^{\ell-1}(-1)^{\ell-1-q}
		\binom{\ell-1}{q}b_{s+q}.
	\end{equation}
	Thus $\mathcal L_{\ell,r}$ uses
	$b_{r-1},\ldots,b_{r+\ell-2}$, while
	$\mathcal R_{\ell,r}$ uses $b_r,\ldots,b_{r+\ell-1}$.
	The \emph{comparison at $(\ell,r)$} is the comparison of their absolute
	values.  Equivalently, set
	\begin{equation}\label{eq:eno-comparison-value}
		\Phi_{\ell,r}(b)
		:=|\mathcal L_{\ell,r}(b)|-|\mathcal R_{\ell,r}(b)|.
	\end{equation}
	The endpoint moves to $r-1$ when $\Phi_{\ell,r}(b)<0$ and remains at $r$
	when $\Phi_{\ell,r}(b)\ge0$, exactly as in
	\eqref{eq:eno-endpoint-recursion}.  The comparison is called \emph{strict}
	at $b$ when
	\begin{equation}\label{eq:strict-comparison-definition}
		\Phi_{\ell,r}(b)\ne0.
	\end{equation}
	Thus strictness means precisely that the two candidate divided differences
	have unequal absolute values, or equivalently that
	\begin{equation}\label{eq:strict-comparison-finite-differences}
		|\mathrm D^{\ell-1}b_{r-1}|
		\ne |\mathrm D^{\ell-1}b_r|.
	\end{equation}
\end{definition}

The union of the coordinate supports of the two functionals is the interval
\begin{equation}\label{eq:eno-comparison-window}
	J_{\ell,r}:=\{r-1,r,\ldots,r+\ell-1\}.
\end{equation}
Relative to $I_{L,N}$, define the finite set of relevant comparison indices
by
\begin{equation}\label{eq:relevant-comparison-set}
	\Gamma_{L,N}
	:=\left\{(\ell,r):1\le\ell<k,\ 
	J_{\ell,r}\cap I_{L,N}\ne\varnothing\right\}.
\end{equation}
If $I_{L,N}=\{A,A+1,\ldots,B\}$, the intersection condition is simply
$A-\ell+1\le r\le B+1$; hence $\Gamma_{L,N}$ is finite.
For $(\ell,r)\notin\Gamma_{L,N}$, both functionals in
\eqref{eq:eno-comparison-functionals} vanish on $\mathcal V_{L,N}$, so the
two absolute values are equal and the recursion always follows the second
branch of \eqref{eq:eno-endpoint-recursion}.  The cell index $i$ does
not appear in $\Gamma_{L,N}$ because, once $\ell$ and the current endpoint
$r$ are fixed, the two candidate divided differences are independent of
$i$.  Every comparison that can depend on the data in $I_{L,N}$ is included
in \eqref{eq:relevant-comparison-set}.  This set also includes pairs for
which the endpoint $r$ is not actually selected at level $\ell$ for a
particular datum.

Set
\begin{equation}\label{eq:strict-comparison-set}
	\mathcal G_{L,N}
	:=\left\{b\in\mathcal V_{L,N}:
	\Phi_{\ell,r}(b)\ne0
	\text{ for every }(\ell,r)\in\Gamma_{L,N}\right\}.
\end{equation}

\begin{lemma}[Strict-comparison perturbations]
	\label{lem:strict-perturbation}
	$\mathcal G_{L,N}$ is an open dense subset of $\mathcal V_{L,N}$.
	Moreover, there exist an integer $L_*=L_*(k)\ge1$ and constants
	$c_k,C_k>0$ such that, for every even integer $L\ge L_*$, with $N=L^d$,
	one can choose $\widetilde a^{(L)}$ in this
	subset and arbitrarily close to $a^{L,N}$ so that, for its associated
	compactly supported cell-average sequence
	$\widetilde{\bar u}^{(L)}_i:=\sum_{\ell<i}\widetilde a^{(L)}_\ell$,
	\begin{align}
		\sum_i|\widetilde a_i^{(L)}|^{k+1}
		&\ge c_kNL^{1+d(k+1)},
		\label{eq:perturbed-euler-bulk}\\
		\|\widetilde{\bar u}^{(L)}\|_{\ell^\infty}
		&\le C_kL^{d+1},
		\label{eq:perturbed-euler-primitive}\\
		\mathcal E_k(\widetilde a^{(L)})
		&\le C_k\bigl(NL^{d-1}+L^{2d-2}\bigr).
		\label{eq:perturbed-euler-source}
	\end{align}
	The constants $c_k$ and $C_k$ are independent of $L$.
\end{lemma}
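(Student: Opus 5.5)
Openness and density of $\mathcal G_{L,N}$ come first, followed by the quantitative bounds, which follow from continuity at the fixed datum $a^{L,N}$.

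\emph{Openness.} Each $\Phi_{\ell,r}$ is continuous on the finite-dimensional space $\mathcal V_{L,N}$, being a difference of absolute values of linear functionals. The set $\Gamma_{L,N}$ is finite, so $\mathcal G_{L,N}$ is a finite intersection of open sets $\{\Phi_{\ell,r}\ne0\}$ and is therefore open.

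\emph{Density.} The zero set of $\Phi_{\ell,r}$ in $\mathcal V_{L,N}$ is the union of the kernels of the two linear functionals $\mathcal L_{\ell,r}\mp\mathcal R_{\ell,r}$ restricted to $\mathcal V_{L,N}$. It suffices to show that neither restricted functional vanishes identically on $\mathcal V_{L,N}$. Then each is a proper hyperplane, a finite union of proper hyperplanes is closed and nowhere dense, and its complement is dense.

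By \eqref{eq:forward-difference-expansion-comparison}, $\mathcal L_{\ell,r}\mp\mathcal R_{\ell,r}$ is, up to the factor $1/(\ell+1)!$, the functional
\[
b\mapsto \mathrm D^{\ell-1}b_{r-1}\mp\mathrm D^{\ell-1}b_r .
\]
In coordinates it is a nonzero sequence supported on $J_{\ell,r}$: the coefficient of $b_{r-1}$ is $\pm1$ from the first term alone, and the coefficient of $b_{r+\ell-1}$ is $\mp1$ from the second term alone. A linear functional $\lambda$ on $\mathbb R^{I_{L,N}}$ vanishes on the zero-sum hyperplane $\mathcal V_{L,N}$ exactly when its coefficient vector restricted to $I_{L,N}$ is constant. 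I would rule this out in two cases.
\begin{enumerate}[label=\textup{(\roman*)},leftmargin=2.2em]
\item If $J_{\ell,r}\cap I_{L,N}$ is a proper part of $I_{L,N}$, then $I_{L,N}$ has length exceeding $k$ because of the buffer. The restricted coefficient vector then has a zero entry inside $I_{L,N}$ and also some nonzero entry in $J_{\ell,r}\cap I_{L,N}$, so it is not constant.
\item For the nonzero entry in case (i), note the following. If $r-1\in I_{L,N}$, the entry at $r-1$ is nonzero. Otherwise the intersection is an initial segment of $I_{L,N}$ lying within $J_{\ell,r}$. I would then check that the entries at $r,\ldots,r+\ell-1$ are not all zero. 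This holds because an order-$(\ell-1)$ difference combination that vanishes identically on a window forces a nontrivial relation among binomial rows, which is impossible. Alternatively, the right endpoint entry is nonzero whenever it lies in $I_{L,N}$.
\end{enumerate}
These coefficient checks are the only fiddly point.

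\emph{Quantitative bounds.} Fix even $L\ge L_*$, where $L_*\ge\max(L_{\mathrm{bulk}},d+1)$, and set $N=L^d$. The three quantities
\[
b\mapsto\sum_i|b_i|^{k+1},\qquad \|\bar v(b)\|_{\ell^\infty},\qquad \mathcal E_k(b)
\]
are continuous on $\mathcal V_{L,N}$. This holds because they are finite sums and maxima of continuous functions of finitely many coordinates. For $\mathcal E_k$, only indices $j$ with window meeting $I_{L,N}$ contribute. At $b=a^{L,N}$, Lemma~\ref{lem:euler-block-scales} and Proposition~\ref{prop:euler-source} give the three bounds with constants $c_k$, $C_k$.

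By continuity, there is a neighborhood of $a^{L,N}$ on which the same bounds hold with constants $c_k/2$ and $2C_k$. These constants remain independent of $L$, since the neighborhood radius may depend on $L$ but the constants do not. By density, this neighborhood meets $\mathcal G_{L,N}$ arbitrarily close to $a^{L,N}$. Choosing $\widetilde a^{(L)}$ there gives \eqref{eq:perturbed-euler-bulk}--\eqref{eq:perturbed-euler-source}, and the associated sequence $\widetilde{\bar u}^{(L)}$ is compactly supported because $\widetilde a^{(L)}$ has zero sum.

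The main obstacle is only the density bookkeeping, namely that no comparison functional is forced to be degenerate by the zero-sum constraint. The $2k$ buffer is what ensures case (i) always applies, since $|J_{\ell,r}|\le k<|I_{L,N}|$.
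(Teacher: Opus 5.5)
Your proposal is correct and follows essentially the paper's argument. The non-strict set is the union of the kernels of $\mathcal L_{\ell,r}\mp\mathcal R_{\ell,r}$, and neither restricted functional vanishes on the zero-sum space. The reason is that both endpoint coefficients on $J_{\ell,r}$ are nonzero, while $I_{L,N}$, being longer than $J_{\ell,r}$ and meeting it, contains an endpoint of $J_{\ell,r}$ and an index outside it; your ``nonconstant coefficient vector'' criterion is equivalent to the paper's test vector $e_s-e_q$. The three bounds then follow from continuity at $a^{L,N}$, with an $L$-dependent radius but $L$-independent constants.

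The vague ``binomial rows'' justification in your case (ii) is unnecessary. When $r-1\notin I_{L,N}$, the length comparison already forces $r+\ell-1\in I_{L,N}$, and the coefficient there is $\mp1$.
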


\begin{proof}
	Fix $(\ell,r)\in\Gamma_{L,N}$.  The set on which this comparison is not
	strict is
	\[
	\begin{aligned}
	&\left\{b\in\mathcal V_{L,N}:
	|\mathcal L_{\ell,r}(b)|=|\mathcal R_{\ell,r}(b)|\right\}\\
	&\qquad=
	\ker\!\left(
	(\mathcal L_{\ell,r}-\mathcal R_{\ell,r})
	\big|_{\mathcal V_{L,N}}\right)\\
	&\qquad\quad\cup
	\ker\!\left(
	(\mathcal L_{\ell,r}+\mathcal R_{\ell,r})
	\big|_{\mathcal V_{L,N}}\right).
	\end{aligned}
	\]
	Both kernels remain proper after restriction to the zero-sum subspace
	$\mathcal V_{L,N}$.  Each of the two linear functionals
	$\mathcal L_{\ell,r}\pm\mathcal R_{\ell,r}$ is supported in
	$J_{\ell,r}$.  Its coefficients at the two endpoints of
	$J_{\ell,r}$ are nonzero, as is clear from
	\eqref{eq:forward-difference-expansion-comparison}.  The intervals
	$J_{\ell,r}$ and $I_{L,N}$ meet, and $I_{L,N}$ is longer than
	$J_{\ell,r}$; hence $I_{L,N}$ contains at least one endpoint $s$ of
	$J_{\ell,r}$.  It also contains an index $q$ outside $J_{\ell,r}$.  If
	$e_p$ denotes the $p$th coordinate vector, then
	$e_s-e_q\in\mathcal V_{L,N}$, and
	\[
	(\mathcal L_{\ell,r}\pm\mathcal R_{\ell,r})(e_s-e_q)
	\]
	is the nonzero endpoint coefficient at $s$.  Neither functional can
	therefore vanish identically on $\mathcal V_{L,N}$.  Thus each equality set
	above is a union of two proper hyperplanes of that space.
	
	The set $\Gamma_{L,N}$ is finite.  Removing these finitely many equality
	sets leaves precisely $\mathcal G_{L,N}$, which is therefore open and dense
	in $\mathcal V_{L,N}$.
	
	On the fixed finite-dimensional space $\mathcal V_{L,N}$, each of the maps
	\[
	b\longmapsto\sum_i|b_i|^{k+1},
	\qquad
	b\longmapsto\left\|\sum_{\ell<i}b_\ell\right\|_{\ell^\infty},
	\qquad
	b\longmapsto\mathcal E_k(b)
	\]
	is continuous; in the last map only finitely many summands can be nonzero.
	We may therefore choose a strict-comparison perturbation
	in $\mathcal G_{L,N}$ sufficiently close to $a^{L,N}$ that the first
	quantity is at least one half
	of its original value, the second at most twice its original value, and the
	third at most a fixed multiple of the right-hand side of
	\eqref{eq:euler-source-functional-bound}.  Lemma~\ref{lem:euler-block-scales}
	and Proposition~\ref{prop:euler-source} then give
	\eqref{eq:perturbed-euler-bulk}--\eqref{eq:perturbed-euler-source}.
	The radius used to choose this perturbation may depend on
	$L$ (and hence on $N=L^d$); no lower bound for that radius uniform in $L$
	is required.  The fixed factors $1/2$, $2$, and the fixed source multiplier
	above ensure that the constants in
	\eqref{eq:perturbed-euler-bulk}--\eqref{eq:perturbed-euler-source} remain
	independent of $L$.
	This choice uses the continuity of $\mathcal E_k$ and the uniform bound
	$Q_k\lesssim_k\mathcal E_k$, rather than continuity of $Q_k$ across
	comparison equalities.
\end{proof}

\begin{proposition}[Open-set failure with strict ENO comparisons]
	\label{prop:even-open-set-failure}
	Let $k\ge4$ be even and let $d=k-2$.  For every $C>0$, there exist an even
	integer $L\ge L_*(k)$, with $N=L^d$, and a nonempty relatively open set
	$\mathcal O_C\subset\mathcal V_{L,N}$ such that, for every
	$b\in\mathcal O_C$,
	\begin{enumerate}[label=\textup{(\roman*)}]
		\item $\Phi_{\ell,r}(b)\ne0$ for every
		$(\ell,r)\in\Gamma_{L,N}$; equivalently, every ENO comparison whose
		comparison window meets $I_{L,N}$ is strict;
		\item all ENO endpoints $r_i^{(\ell)}(b)$, and hence all selected
		stencils, are independent of $b\in\mathcal O_C$, for every
		$i\in\mathbb Z$ and $1\le\ell\le k$;
		\item $Q_k(\bar v(b))>0$ and
		\begin{equation}\label{eq:open-set-coercivity-failure}
			\sum_i|b_i|^{k+1}
			>C\|\bar v(b)\|_{\ell^\infty}^{k-1}Q_k(\bar v(b)).
		\end{equation}
	\end{enumerate}
	If $I_{L,N}=\{A,\ldots,B\}$, then the cumulative-sum image
	$\bar v(\mathcal O_C)$ is a nonempty relatively open subset of the
	finite-dimensional space of cell-average sequences supported in
	$\{A+1,\ldots,B\}$.  The selected stencil pattern is fixed, and the
	coercive estimate with constant $C$ fails, throughout this subset.
\end{proposition}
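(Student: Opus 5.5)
Given $C>0$, first choose $L$: by Lemma~\ref{lem:strict-perturbation} and the computation \eqref{eq:euler-divergent-quotient}, the perturbed datum $\widetilde a^{(L)}\in\mathcal G_{L,N}$ satisfies
\[
\frac{\sum_i|\widetilde a_i^{(L)}|^{k+1}}{\|\widetilde{\bar u}^{(L)}\|_{\ell^\infty}^{k-1}\,\mathcal E_k(\widetilde a^{(L)})}\ge c_kL .
\]
With the upper comparison $Q_k\le C_k'\mathcal E_k$ from Corollary~\ref{cor:source-equivalence}, the same quotient with $Q_k$ in place of $\mathcal E_k$ is at least $c_kL/C_k'$. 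Fix an even $L\ge L_*(k)$ so large that this exceeds $2C$, and set $b_0:=\widetilde a^{(L)}$. We have $Q_k(\bar v(b_0))>0$ by Lemma~\ref{lem:source-positivity}, since $b_0\ne0$ has zero sum. The inequality is therefore strict at $b_0$ with room to spare.

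Next, build $\mathcal O_C$ as a small relatively open ball around $b_0$ in $\mathcal V_{L,N}$. Since $\Gamma_{L,N}$ is finite and each $\Phi_{\ell,r}$ is continuous, $b_0\in\mathcal G_{L,N}$ has a neighborhood on which every $\Phi_{\ell,r}$, $(\ell,r)\in\Gamma_{L,N}$, keeps its (nonzero) sign; this gives (i). For (ii), argue by induction on $\ell$ for each fixed $i$. We have $r_i^{(1)}=i$. If $r_i^{(\ell)}=r$ is constant on the ball, then either $(\ell,r)\in\Gamma_{L,N}$, in which case the sign of $\Phi_{\ell,r}$ is constant and so the update is constant; or $(\ell,r)\notin\Gamma_{L,N}$, in which case both functionals vanish identically on $\mathcal V_{L,N}$ and the recursion always takes the second branch. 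Either way $r_i^{(\ell+1)}$ is constant. This holds for all $i$ simultaneously with one neighborhood, because only the finitely many comparisons in $\Gamma_{L,N}$ ever depend on $b$.

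For (iii), the key point is that on the ball the stencil pattern is frozen. Each $p_i^{(k)}$ is therefore a fixed linear function of $b$, so $Q_k(\bar v(b))$ is a fixed quadratic form restricted to the ball and hence continuous there; only finitely many interfaces contribute, since outside a neighborhood of $I_{L,N}$ all relevant data vanish. Likewise $\sum_i|b_i|^{k+1}$ and $\|\bar v(b)\|_{\ell^\infty}$ are continuous. So $F(b):=\sum_i|b_i|^{k+1}-C\|\bar v(b)\|^{k-1}_{\ell^\infty}Q_k(\bar v(b))$ is continuous on the ball. Since $F(b_0)>0$, shrinking the ball keeps $F>0$ and, by continuity, also $Q_k>0$; positivity alternatively follows directly from Lemma~\ref{lem:source-positivity} for $b\ne0$. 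Finally, the map $b\mapsto\bar v(b)$ is a linear isomorphism from $\mathcal V_{L,N}$ onto the sequences supported in $\{A+1,\ldots,B\}$, with inverse $\mathrm D$; the zero-sum condition is exactly vanishing beyond $B$. It therefore maps relatively open sets to relatively open sets, which gives the final assertion.

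The main subtlety I expect is (ii)–(iii): $Q_k$ is genuinely discontinuous across comparison equalities. The order of the argument is thus essential. One must first freeze all stencils using the finiteness of $\Gamma_{L,N}$, including the inductive treatment of endpoints $r$ not in $\Gamma_{L,N}$, and only then invoke continuity of $Q_k$.
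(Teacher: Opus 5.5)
Your proposal is correct and follows the paper's proof essentially step by step. You choose $L$ via Lemma~\ref{lem:strict-perturbation} and the upper comparison $Q_k\lesssim_k\mathcal E_k$ so that the quotient exceeds $2C$, then freeze every stencil on a neighborhood where the finitely many $\Phi_{\ell,r}$, $(\ell,r)\in\Gamma_{L,N}$, keep their signs, and only then use continuity of the resulting fixed quadratic form $Q_k$. The paper defines $\mathcal O_C$ as the superlevel set $\{F_C>0\}$ inside that sign-constant neighborhood rather than as a shrunken ball, which is only a cosmetic difference.
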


\begin{proof}
	For every even integer $L\ge L_*(k)$, set $N=L^d$ and choose
	$\widetilde a^{(L)}$ as in Lemma~\ref{lem:strict-perturbation}.
	Lemma~\ref{lem:source-positivity} applies to
	$\widetilde a^{(L)}$.  Combining
	\eqref{eq:perturbed-euler-bulk}--\eqref{eq:perturbed-euler-source} with
	Corollary~\ref{cor:source-equivalence} yields
	\begin{equation}\label{eq:perturbed-euler-divergent-quotient}
		\frac{\displaystyle\sum_i|\widetilde a_i^{(L)}|^{k+1}}
		{\displaystyle
			\|\widetilde{\bar u}^{(L)}\|_{\ell^\infty}^{k-1}
			Q_k(\widetilde{\bar u}^{(L)})}
		\ge c_kL.
	\end{equation}
	Given $C>0$, choose and fix an even $L\ge L_*(k)$ so large that the
	quotient in \eqref{eq:perturbed-euler-divergent-quotient} exceeds $2C$.

	Since $\Gamma_{L,N}$ is finite and
	$\widetilde a^{(L)}\in\mathcal G_{L,N}$, the minimum comparison gap
	\[
		\min_{(\ell,r)\in\Gamma_{L,N}}
		|\Phi_{\ell,r}(\widetilde a^{(L)})|
	\]
	is positive.  Hence there is a relatively open neighborhood
	$\mathcal W_L$ of $\widetilde a^{(L)}$ in $\mathcal V_{L,N}$ on which every
	$\Phi_{\ell,r}$, $(\ell,r)\in\Gamma_{L,N}$, has the same sign as at
	$\widetilde a^{(L)}$.  For indices outside $\Gamma_{L,N}$, both candidates
	vanish throughout $\mathcal V_{L,N}$, so the recursion follows the second
	branch of \eqref{eq:eno-endpoint-recursion}.  Starting from
	$r_i^{(1)}=i$ and applying the recursion level by level, we conclude that
	every endpoint $r_i^{(\ell)}$, and hence every selected stencil, is the same
	throughout $\mathcal W_L$.

	The reconstruction is therefore a fixed linear function of the data on
	$\mathcal W_L$.  More explicitly, each reconstructed interface jump is
	linear in $b$, and
	\[
		Q_k(\bar v(b))
		=\sum_{i\in I_{L,N}}b_i\,
		\Delta p_{i+1/2}^{(k)}(\bar v(b))
	\]
	is a quadratic polynomial in the coordinates of $b$ on this neighborhood.
	Thus $Q_k(\bar v(b))$, $\|\bar v(b)\|_{\ell^\infty}$, and
	$\sum_i|b_i|^{k+1}$ are continuous on $\mathcal W_L$.  Define
	\[
		F_C(b):=\sum_i|b_i|^{k+1}
		-C\|\bar v(b)\|_{\ell^\infty}^{k-1}Q_k(\bar v(b))
	\]
	and set
	\[
		\mathcal O_C:=\{b\in\mathcal W_L:F_C(b)>0\}.
	\]
	The choice of $L$ gives $F_C(\widetilde a^{(L)})>0$, so
	$\mathcal O_C$ is nonempty and relatively open in $\mathcal V_{L,N}$.
	Every $b\in\mathcal O_C$ is nonzero; hence
	Lemma~\ref{lem:source-positivity} gives $Q_k(\bar v(b))>0$, while
	$F_C(b)>0$ is exactly \eqref{eq:open-set-coercivity-failure}.  The sign
	conditions defining $\mathcal W_L$ give (i) and (ii).

	Finally, the cumulative-sum map in \eqref{eq:perturbation-primitive} is a
	linear isomorphism from $\mathcal V_{L,N}$ onto the finite-dimensional
	space of cell-average sequences supported in $\{A+1,\ldots,B\}$.  It maps
	$\mathcal O_C$ to a nonempty relatively open set, completing the proof.
\end{proof}

Nor does the construction exploit signed cancellation.  Every interface
term in $Q_k$ is nonnegative, and
Corollary~\ref{cor:source-equivalence} compares $Q_k$ with the positive
functional $\mathcal E_k$.  Its smallness here comes instead from the
vanishing of $\mathrm D^{k-1}a$ in the block interiors and from the
Euler matching identity at the joining points.  The counterexamples therefore arise from long
degree-$(k-2)$ polynomial jump profiles.
Section~\ref{sec:replacement-estimates} recovers coercivity in two
complementary ways: by restricting to jumps of fixed normalized size and by
measuring jump blocks modulo this polynomial space.

\section{Large-jump and quotient coercivity}
\label{sec:replacement-estimates}

The counterexamples of Section~\ref{sec:even-obstruction} identify two
distinct features of the even-order obstruction.  Their jumps are small
relative to the amplitude of the corresponding cell-average sequence,
and, away from the joining points,
their local jump blocks are sampled polynomials of degree at most $k-2$.
Accordingly, we prove two estimates that hold for every $k\ge2$.  The first
restricts the jump norm to indices at which $|a_i|$ is a fixed fraction of
$U=\|\bar u\|_{\ell^\infty}$; the second measures each local jump block
modulo the polynomial kernel of $\mathrm D^{k-1}$.  We then evaluate both
quantities on the Euler-polynomial construction.

\subsection{Coercivity for large normalized jumps}
\label{sec:large-jump-coercivity}

Fix $\varepsilon>0$ and set
\begin{equation}\label{eq:large-jump-set}
	E_\varepsilon
	:=\{i\in\mathbb Z:|a_i|>\varepsilon U\}.
\end{equation}
We shall prove the first assertion of Theorem~\ref{thm:B}, namely
\begin{equation}\label{eq:large-jump-target-section5}
	\sum_{i\in E_\varepsilon}|a_i|^{k+1}
	\le C_{k,\varepsilon}U^{k-1}Q_k(\bar u).
\end{equation}
The argument is local, but not pointwise.  If a jump has size comparable to
$U$, then on a sufficiently long fixed window the sequence cannot remain in
the polynomial kernel of the highest difference: a nonzero polynomial
sequence has unbounded two-sided partial sums.  Compactness turns this
observation into the quantitative lower bound below.

\begin{lemma}[A local lower bound at a normalized jump]
	\label{lem:large-jump-local}
	Let $r\ge1$ and $0<\varepsilon\le2$.  There exist
	$R=R(r,\varepsilon)\in\mathbb N$ and $c_{r,\varepsilon}>0$ such that the
	following holds.  Suppose that $a=\mathrm D\bar u$,
	$U=\|\bar u\|_{\ell^\infty}>0$, and
	$|a_i|\ge\varepsilon U$.  Then
	\begin{equation}\label{eq:large-jump-local}
		\sum_{j=i-R}^{i+R}
		A_j^{(r+1)}|\mathrm D^ra_j|
		\ge c_{r,\varepsilon}|a_i|^2,
		\qquad
		A_j^{(r+1)}:=\max_{0\le s\le r}|a_{j+s}|.
	\end{equation}
\end{lemma}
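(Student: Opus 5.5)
We argue by homogeneity, translation invariance and compactness, as the remark before the statement suggests.  Both sides of \eqref{eq:large-jump-local} are homogeneous of degree two in $a$, and the statement is invariant under lattice shifts.  We may therefore normalize $i=0$ and $|a_0|=1$, so that $U\le 1/\varepsilon$ and $\|a\|_{\ell^\infty}\le 2U\le 2/\varepsilon$.  Fix a window length $R$, to be chosen, and let $K_R$ be the set of all vectors $(\bar u_{-R},\ldots,\bar u_{R+r+1})$ with entries bounded by $1/\varepsilon$ whose jump $a_0=\bar u_1-\bar u_0$ satisfies $|a_0|=1$.  This is a compact subset of a finite-dimensional space.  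The left side of \eqref{eq:large-jump-local} only involves $a_{-R},\ldots,a_{R+r}$, so it is a continuous function $F_R$ on $K_R$.  If $F_R>0$ on $K_R$, then its minimum $c_{r,\varepsilon}$ is positive and the lemma follows after undoing the normalization.

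We need to choose $R$ so that $F_R$ has no zero on $K_R$.  Suppose $F_R=0$.  Each term $A_j^{(r+1)}|\mathrm D^ra_j|$ vanishes.  As in Lemma~\ref{lem:source-positivity}, $A_j^{(r+1)}$ is zero only when all entries of $\mathrm D^ra_j$ vanish, so $\mathrm D^ra_j=0$ for every $|j|\le R$.  By Lemma~\ref{lem:discrete-polynomial-kernel}, $a$ then agrees on $\{-R,\ldots,R+r\}$ with a sampled polynomial $P$ of degree at most $r-1$ satisfying $|P(0)|=1$.

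The main step is to show that such a $P$ forces $\bar u$ to leave $[-1/\varepsilon,1/\varepsilon]$ somewhere in the window once $R$ is large.  For $0\le n\le R$ we have $\bar u_n-\bar u_0=\sum_{t=0}^{n-1}P(t)$, and by construction $|\bar u_n-\bar u_0|\le 2/\varepsilon$.  Write $P$ in the Newton basis $P(t)=\sum_q c_q\binom{t}{q}$.  Then the partial sums are $\sum_q c_q\binom{n}{q+1}$, which are bounded by $2/\varepsilon$ for $0\le n\le R$.

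For $R\ge r$, evaluating at $n=1,\ldots,r$ already determines the $c_q$ through a fixed invertible triangular system.  That gives only $|c_q|\lesssim 1/\varepsilon$, which is not enough by itself.  Instead, take $R$ large and sample the partial sums at the points $n\approx \theta R$ for $r$ fixed values of $\theta\in(0,1]$.  Equivalence of norms on polynomials of degree at most $r$ that vanish at $0$, applied after rescaling $n=\theta R$, gives
\[
\max_q |c_q|\,R^{q+1}\lesssim_r 1/\varepsilon.
\]
In particular $|c_0|\lesssim_r 1/(\varepsilon R)$.  But $c_0=P(0)$ has modulus $1$, so this is impossible once $R\ge C_r/\varepsilon$.

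So we fix $R=R(r,\varepsilon)$ large enough that $1$ exceeds this bound.  With this choice $F_R$ never vanishes on $K_R$, and compactness gives the constant $c_{r,\varepsilon}$.

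The step we expect to be the main obstacle is this quantitative norm-equivalence step for the partial sums.  It has to be carried out uniformly in $R$, either by the rescaled-sampling argument above or by a direct Markov-type inequality for polynomials.
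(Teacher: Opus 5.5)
Your proof is correct and follows the paper's structure. You normalize at the large jump, use compactness of a finite-dimensional window set to reduce to showing that the local source cannot vanish, and exclude the resulting polynomial profile $P$ (of degree at most $r-1$ with $|P(0)|=1$) because its cumulative sums must stay bounded by $2/\varepsilon$ on a long window.

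The only real difference is how the radius is chosen. The paper obtains $R$ non-constructively: it lets $R_\nu\to\infty$ and passes to a limiting polynomial whose two-sided cumulative sum on all of $\mathbb Z$ would be a bounded nonconstant polynomial. Your rescaling of $S(n)=\sum_q c_q\binom{n}{q+1}$ to $n=\theta R$ instead gives the explicit choice $R\gtrsim_r 1/\varepsilon$, using only forward sums. The uniformity in $R$ that you flag is routine, since $\binom{\theta R}{q+1}R^{-q-1}\to\theta^{q+1}/(q+1)!$ and the rounded nodes $\lfloor iR/r\rfloor/R$ stay in a compact set of distinct positive nodes once $R\ge 2r$.
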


\begin{proof}
	By translation we may take $i=0$.  Since $a_0\ne0$, define
	\begin{equation}\label{eq:large-jump-normalization}
		b_j:=\frac{a_j}{a_0}.
	\end{equation}
	Then $b_0=1$.  For every pair of integers $m<n$, telescoping and the
	hypothesis on $a_0$ give
	\begin{equation}\label{eq:normalized-partial-sums}
		\left|\sum_{\ell=m}^{n-1}b_\ell\right|
		=\frac{|\bar u_n-\bar u_m|}{|a_0|}
		\le\frac{2}{\varepsilon}.
	\end{equation}
	
	We first choose the radius $R$.  We claim that for some
	$R=R(r,\varepsilon)$ there is no polynomial $P$ of degree at most $r-1$
	such that
	\begin{equation}\label{eq:polynomial-exclusion-normalization}
		P(0)=1
	\end{equation}
	and
	\begin{equation}\label{eq:polynomial-exclusion-partial-sums}
		\left|\sum_{\ell=m}^{n-1}P(\ell)\right|
		\le\frac{2}{\varepsilon}
		\qquad
		(-R\le m<n\le R+r+1).
	\end{equation}
	Suppose otherwise.  There would then be $R_\nu\to\infty$ and polynomials
	$P_\nu$ of degree at most $r-1$ satisfying
	\eqref{eq:polynomial-exclusion-normalization}--
	\eqref{eq:polynomial-exclusion-partial-sums} with $R=R_\nu$.
	The length-one cases of
	\eqref{eq:polynomial-exclusion-partial-sums} bound the values
	$P_\nu(0),\ldots,P_\nu(r-1)$.  Evaluation at these $r$ distinct nodes is an
	isomorphism on the space of polynomials of degree at most $r-1$, so the
	coefficients of $P_\nu$ are uniformly bounded.  After passing to a
	subsequence, they converge coefficientwise to a polynomial $P$ with
	$P(0)=1$.  For each fixed pair $m<n$, passage to the limit gives
	\begin{equation}\label{eq:limiting-polynomial-partial-sums}
		\left|\sum_{\ell=m}^{n-1}P(\ell)\right|
		\le\frac{2}{\varepsilon}.
	\end{equation}
	
	To make the contradiction explicit, define the two-sided cumulative
	sum of $P$ by
	\begin{equation}\label{eq:two-sided-polynomial-primitive}
		S(n):=
		\begin{cases}
			\displaystyle\sum_{\ell=0}^{n-1}P(\ell),&n>0,\\[2mm]
			0,&n=0,\\[2mm]
			\displaystyle-\sum_{\ell=n}^{-1}P(\ell),&n<0.
		\end{cases}
	\end{equation}
	Then $S(n+1)-S(n)=P(n)$, and
	\eqref{eq:limiting-polynomial-partial-sums} says that $S$ is bounded on
	$\mathbb Z$.  On the other hand, write the Newton expansion
	\[
	P(n)=\sum_{q=0}^{r-1}c_q\binom nq.
	\]
	Pascal's identity shows that
	\begin{equation}\label{eq:newton-polynomial-primitive}
		S(n)=\sum_{q=0}^{r-1}c_q\binom n{q+1}
	\end{equation}
	for every integer $n$: the expression on the right vanishes at $n=0$ and
	its forward difference is $P(n)$.  Thus $S$ is an ordinary polynomial.  It is
	nonconstant because its forward difference is the nonzero polynomial $P$,
	and hence it is unbounded on $\mathbb Z$, a contradiction.  This proves the
	claim.
	
	Fix such an $R$.  A forward difference $\mathrm D^rb_j$ with
	$-R\le j\le R$ uses the entries $b_j,\ldots,b_{j+r}$, so the required data
	window is
	\begin{equation}\label{eq:large-jump-data-window}
		I_R:=\{-R,-R+1,\ldots,R+r\}.
	\end{equation}
	Let $K_{R,\varepsilon}\subset\mathbb R^{I_R}$ consist of all vectors
	$b=(b_j)_{j\in I_R}$ satisfying $b_0=1$ and
	\begin{equation}\label{eq:compact-partial-sum-constraints}
		\left|\sum_{\ell=m}^{n-1}b_\ell\right|
		\le\frac{2}{\varepsilon}
		\qquad
		(-R\le m<n\le R+r+1).
	\end{equation}
	The restriction of the normalized sequence
	\eqref{eq:large-jump-normalization} belongs to this set.  Moreover,
	$K_{R,\varepsilon}$ is compact: it is closed, and its length-one constraints
	bound every coordinate.  Define on this finite-dimensional set
	\begin{equation}\label{eq:normalized-local-source}
		\Phi(b):=\sum_{j=-R}^{R}
		\left(\max_{0\le s\le r}|b_{j+s}|\right)
		|\mathrm D^rb_j|.
	\end{equation}
	We claim that $\Phi(b)>0$ for every $b\in K_{R,\varepsilon}$.  Indeed,
	if $\Phi(b)=0$, then
	$\mathrm D^rb_j=0$ for every $-R\le j\le R$.  When the maximum in a
	summand is positive, this follows from that summand; when it is zero, all
	$r+1$ entries used by the difference vanish.  Applying
	Lemma~\ref{lem:discrete-polynomial-kernel} with $r-1$ in place of $r$,
	we obtain a polynomial $P$ of degree at most $r-1$ whose values on $I_R$
	are the $b_j$.  Equations
	\eqref{eq:compact-partial-sum-constraints} and $b_0=1$ then contradict the
	choice of $R$.  Therefore
	\begin{equation}\label{eq:positive-local-source-minimum}
		c_{r,\varepsilon}
		:=\min_{b\in K_{R,\varepsilon}}\Phi(b)>0.
	\end{equation}
	
	Finally,
	\[
	\left(\max_{0\le s\le r}|b_{j+s}|\right)|\mathrm D^rb_j|
	=\frac{A_j^{(r+1)}|\mathrm D^ra_j|}{|a_0|^2}.
	\]
	Using \eqref{eq:positive-local-source-minimum} and restoring the original
	scale proves \eqref{eq:large-jump-local} for $i=0$.  Translation gives the
	general case.
\end{proof}

We now apply the lemma with $r=k-1$.  If $U=0$, then $a=0$; if
$\varepsilon\ge2$, the set $E_\varepsilon$ is empty because
$|a_i|\le2U$.  It remains to consider $0<\varepsilon<2$.  For
$i\in E_\varepsilon$, Lemma~\ref{lem:large-jump-local} and
$|a_i|\le2U$ yield
\[
|a_i|^{k+1}
\le (2U)^{k-1}|a_i|^2
\le C_{k,\varepsilon}U^{k-1}
\sum_{j=i-R}^{i+R}
A_j^{(k)}|\mathrm D^{k-1}a_j|.
\]
Each fixed index $j$ occurs in at most $2R+1$ of these sums.  Summing in
$i$ and using Corollary~\ref{cor:source-equivalence}, we obtain
\begin{align}
	\sum_{i\in E_\varepsilon}|a_i|^{k+1}
	&\le C_{k,\varepsilon}U^{k-1}\mathcal E_k(a)
	\notag\\
	&\le C_{k,\varepsilon}U^{k-1}Q_k(\bar u).
	\label{eq:large-jump-coercivity-section5}
\end{align}
This proves \eqref{eq:intro-large-jump-main}.

The estimate just proved rules out this degeneracy at jumps whose
normalized size is bounded below: such a jump cannot persist on a long
polynomial profile.
It gives no information about the structure of the remaining small
normalized jumps.  The Euler blocks show that such jumps can fill arbitrarily
long intervals when they lie on low-degree polynomial profiles.  The next
estimate measures the residual modulo that polynomial space.

\subsection{Coercivity modulo polynomial profiles}
\label{sec:quotient-coercivity}

Put $r=k-1$.  We regard $a=\mathrm D\bar u$ as a sequence on all of
$\mathbb Z$; it is finitely supported.  Thus a window meeting an endpoint
of its support contains the zero entries outside that support and is not
shortened.  Set
\begin{equation}\label{eq:local-block-space}
	X_r:=\mathbb R^{\{-r,\ldots,r\}},
	\qquad
	\mathbf a_i:=(a_{i-r},\ldots,a_{i+r})\in X_r,
\end{equation}
and equip $X_r$ with its Euclidean norm.  Let
\begin{equation}\label{eq:sampled-polynomial-subspace-section5}
	\mathcal P_{r-1}
	:=\left\{(P(-r),\ldots,P(r)):
	P\in\mathbb R[x],\ \deg P\le r-1\right\}
	\subset X_r.
\end{equation}
Because $r-1=k-2$, this is the sampled polynomial space denoted by
$\mathcal P_{k-2}$ in the Introduction and identified in
Lemma~\ref{lem:discrete-polynomial-kernel} as the local kernel of
$\mathrm D^{k-1}$.

For each $i$, define
\begin{equation}\label{eq:rho-section5}
	\rho_i
	:=\operatorname{dist}_{\ell^2}(\mathbf a_i,\mathcal P_{r-1})
	=\min_{\deg P\le r-1}
	\left(\sum_{\ell=-r}^{r}|a_{i+\ell}-P(\ell)|^2\right)^{1/2}.
\end{equation}
There is a unique minimizing polynomial $P_i$.  Indeed, orthogonal
projection onto the linear subspace $\mathcal P_{r-1}$ is unique, and the
values at the $2r+1$ distinct nodes determine a polynomial of degree at most
$r-1$.  Define the centered residual
\begin{equation}\label{eq:centered-polynomial-residual}
	e_i:=a_i-P_i(0).
\end{equation}
Since the term with $\ell=0$ occurs in
\eqref{eq:rho-section5},
\begin{equation}\label{eq:centered-residual-bounded-by-rho}
	|e_i|\le\rho_i.
\end{equation}
The construction is local: the fitting polynomial $P_i$ may vary with $i$,
and no single polynomial is asserted to approximate the entire jump
sequence.

\begin{lemma}[Local norm equivalence modulo polynomial profiles]
	\label{lem:local-discrete-BH}
	There is a constant $C_k$ such that, for every $i\in\mathbb Z$,
	\begin{equation}\label{eq:local-discrete-BH}
		\rho_i
		\le C_k
		\left(\sum_{j=i-r}^{i}|\mathrm D^ra_j|^2\right)^{1/2},
		\qquad
		\rho_i\le2\sqrt{2r+1}\,U.
	\end{equation}
\end{lemma}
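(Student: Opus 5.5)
The first inequality in \eqref{eq:local-discrete-BH} is a finite-dimensional norm equivalence on the quotient $X_r/\mathcal P_{r-1}$. The second is a crude amplitude bound. I would prove them separately, translating to $i=0$ throughout.

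For the first inequality, consider the linear map $T:X_r\to\mathbb R^{r+1}$,
\[
T(\mathbf x):=\bigl(\mathrm D^r x_{-r},\mathrm D^r x_{-r+1},\ldots,\mathrm D^r x_0\bigr),
\]
where $\mathrm D^r x_j$ uses the entries $x_j,\ldots,x_{j+r}$. For $-r\le j\le 0$ these all lie in $\{-r,\ldots,r\}$, so $T$ is well defined. The first step is to identify $\ker T$ via Lemma~\ref{lem:discrete-polynomial-kernel}, applied with $r-1$ in place of $r$ on the interval $I=\{-r,\ldots,r\}$. This interval has $2r+1\ge r+1$ indices, and the stencils $\{j,\ldots,j+r\}\subset I$ correspond exactly to $-r\le j\le0$. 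The lemma then gives $\ker T=\mathcal P_{r-1}$.

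Hence $T$ descends to an injective linear map on the finite-dimensional quotient $X_r/\mathcal P_{r-1}$. Equip the quotient with the quotient Euclidean norm, which is exactly $\operatorname{dist}_{\ell^2}(\cdot,\mathcal P_{r-1})$. Injectivity and finite-dimensional norm equivalence yield a constant $C_k$, depending only on $r=k-1$, with
\[
\operatorname{dist}_{\ell^2}(\mathbf x,\mathcal P_{r-1})\le C_k|T\mathbf x| .
\]
Applying this to $\mathbf x=\mathbf a_i$, translated back from index $0$ to index $i$, gives the first bound. The only point needing care is matching the index range $j=i-r,\ldots,i$ to stencils lying inside the window, and this is exactly what makes the kernel equal to $\mathcal P_{r-1}$ rather than something larger.

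For the second inequality, take $P=0$, which lies in $\mathcal P_{r-1}$. Then
\[
\rho_i\le\|\mathbf a_i\|_{\ell^2}\le\sqrt{2r+1}\,\max_\ell|a_{i+\ell}|\le\sqrt{2r+1}\cdot 2U,
\]
since $|a_j|=|\bar u_{j+1}-\bar u_j|\le 2U$. This is the stated bound.

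I expect no real obstacle. The only subtle point is the kernel identification, and the earlier lemma already handles it.
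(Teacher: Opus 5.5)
Your proof is correct and takes essentially the same route as the paper. Both arguments use the same difference map on the window and identify its kernel as $\ker T=\mathcal P_{r-1}$ via Lemma~\ref{lem:discrete-polynomial-kernel} with parameter $r-1$. Both then apply finite-dimensional norm equivalence; your quotient norm is the same thing as the paper's restriction of $T_r$ to $\mathcal P_{r-1}^{\perp}$ after orthogonal projection. Your second bound matches the paper's as well: the zero polynomial as competitor, together with $|a_j|\le 2U$.
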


\begin{proof}
	For $x=(x_{-r},\ldots,x_r)\in X_r$, define
	\begin{equation}\label{eq:fixed-window-difference-map}
		T_r:X_r\longrightarrow\mathbb R^{\{-r,\ldots,0\}},
		\qquad
		(T_rx)_q
		:=\sum_{s=0}^{r}(-1)^{r-s}\binom rs x_{q+s}.
	\end{equation}
	For $x=\mathbf a_i$,
	\begin{equation}\label{eq:fixed-window-map-on-data}
		(T_r\mathbf a_i)_q=\mathrm D^ra_{i+q},
		\qquad -r\le q\le0.
	\end{equation}
	Moreover,
	\begin{equation}\label{eq:fixed-window-map-kernel}
		\ker T_r=\mathcal P_{r-1}.
	\end{equation}
	To see this, $T_rx=0$ says that every $r$th forward difference whose
	stencil is contained in $\{-r,\ldots,r\}$ vanishes.
	Lemma~\ref{lem:discrete-polynomial-kernel}, applied with parameter $r-1$, shows
	that the entries of $x$ are sampled from a polynomial of degree at most
	$r-1$.  The converse follows because $\mathrm D^r$ annihilates every such
	polynomial.
	
	Let $\Pi_r$ be the orthogonal projection of $X_r$ onto
	$\mathcal P_{r-1}$ and put $z=x-\Pi_rx$.  Then
	\[
	\operatorname{dist}_{\ell^2}(x,\mathcal P_{r-1})=\|z\|_2,
	\qquad
	T_rz=T_rx.
	\]
	The restriction of $T_r$ to $\mathcal P_{r-1}^{\perp}$ is injective.  The
	continuous function $z\mapsto\|T_rz\|_2$ therefore has a positive minimum
	on the unit sphere of this finite-dimensional space, and hence
	\begin{equation}\label{eq:quotient-norm-equivalence}
		\|z\|_2\le C_r\|T_rz\|_2=C_r\|T_rx\|_2.
	\end{equation}
	Equations \eqref{eq:fixed-window-map-on-data} and
	\eqref{eq:quotient-norm-equivalence} prove the first inequality in
	\eqref{eq:local-discrete-BH}.  For the second, the zero polynomial is an
	admissible competitor in \eqref{eq:rho-section5}, and
	$|a_j|\le2U$ gives
	\[
	\rho_i\le\|\mathbf a_i\|_2
	\le2\sqrt{2r+1}\,U.
	\]
	The same argument covers windows meeting an endpoint of the support:
	the zero entries outside the support belong to $\mathbf a_i$, and the
	differences in \eqref{eq:fixed-window-map-on-data} measure the failure of the
	interior polynomial profile to continue across that endpoint.
\end{proof}

We can now prove the second assertion of Theorem~\ref{thm:B}.  The case
$U=0$ is immediate.  Otherwise, since $k+1=r+2$,
Lemma~\ref{lem:local-discrete-BH} gives
\begin{equation}\label{eq:rho-power-reduction}
	\rho_i^{k+1}
	=\rho_i^{r+2}
	\le C_kU^r\rho_i^2
	\le C_kU^{k-1}
	\sum_{j=i-r}^{i}|\mathrm D^ra_j|^2.
\end{equation}
The binomial formula gives
\begin{equation}\label{eq:difference-amplitude-bound-section5}
	\mathrm D^ra_j
	=\sum_{s=0}^{r}(-1)^{r-s}\binom rs a_{j+s},
	\qquad
	|\mathrm D^ra_j|\le2^rA_j^{(k)}.
\end{equation}
Consequently,
\begin{equation}\label{eq:difference-square-source-bound}
	|\mathrm D^ra_j|^2
	\le2^rA_j^{(k)}|\mathrm D^ra_j|,
\end{equation}
and \eqref{eq:rho-power-reduction} becomes
\begin{equation}\label{eq:rho-local-source-bound}
	\rho_i^{k+1}
	\le C_kU^{k-1}
	\sum_{j=i-r}^{i}
	A_j^{(k)}|\mathrm D^{k-1}a_j|.
\end{equation}
For each fixed $j$, the right-hand side contains this source term for exactly
$r+1=k$ values of $i$.  Summing \eqref{eq:rho-local-source-bound} in $i$
and using Corollary~\ref{cor:source-equivalence}, we obtain
\begin{align}
	\sum_i\rho_i^{k+1}
	&\le C_kU^{k-1}\mathcal E_k(a)
	\notag\\
	&\le C_kU^{k-1}Q_k(\bar u).
	\label{eq:quotient-coercivity-section5}
\end{align}
This is \eqref{eq:intro-quotient-main}.  The centered residual estimate
follows directly from
\eqref{eq:centered-residual-bounded-by-rho}:
\begin{equation}\label{eq:centered-residual-coercivity}
	\sum_i|a_i-P_i(0)|^{k+1}
	\le C_kU^{k-1}Q_k(\bar u).
\end{equation}

The two estimates control different parts of the obstruction.
The large-jump estimate selects jumps by amplitude and uses no polynomial
projection.  The
quotient estimate applies at every amplitude but allows its fitting polynomial
to vary from one window to the next.  Neither restores full coercivity in even
order.  We next evaluate both quantities on the Euler-block counterexample.

\subsection{Normalized jumps and quotient distance on Euler-polynomial blocks}
\label{sec:euler-replacement-quantities}

We use here the unperturbed Euler-polynomial blocks of
Section~\ref{sec:euler-blocks}.
The perturbed blocks used in Section~\ref{sec:even-robustness} to make the
ENO comparisons strict may no longer follow exact polynomial profiles in their
interiors.
Let $k\ge4$ be even, set
\begin{equation}\label{eq:euler-replacement-parameters}
	d:=k-2,
	\qquad
	r:=k-1=d+1,
\end{equation}
and let $L$ tend to infinity through the even integers, with
$N=L^d$ as in \eqref{eq:euler-number-of-blocks}.  Write
\begin{equation}\label{eq:euler-boundary-and-amplitude-notation}
	U_L:=\|\bar u^{L,N}\|_{\ell^\infty},
	\qquad
	B_n:=2Ln,
	\quad 0\le n\le N,
\end{equation}
and let $\rho_i^{L,N}$ be the distance
\eqref{eq:rho-section5} for the jump sequence $a^{L,N}$.

\begin{proposition}[Normalized jumps and quotient distance for Euler-polynomial blocks]
	\label{prop:euler-replacement-quantities}
	For all sufficiently large even $L$,
	\begin{equation}\label{eq:euler-normalized-jumps}
		\max_i\frac{|a_i^{L,N}|}{U_L}
		\le\frac{C_k}{L}.
	\end{equation}
	Let
	\[
	W_i:=\{i-r,\ldots,i+r\}.
	\]
	If $W_i\subset\{B_n,B_n+1,\ldots,B_{n+1}\}$ for some
	$0\le n<N$, or if $W_i$ lies entirely outside the support of $a^{L,N}$,
	then
	\begin{equation}\label{eq:euler-rho-interior}
		\rho_i^{L,N}=0.
	\end{equation}
	For the remaining windows,
	\begin{align}
		\rho_i^{L,N}
		&\le C_k
		&&\text{if }|i-B_n|<r
		\text{ for some }1\le n<N,
		\label{eq:euler-rho-junction}\\
		\rho_i^{L,N}
		&\le C_kL^{d-1}
		&&\text{if }|i-B_0|<r
		\text{ or }|i-B_N|<r.
		\label{eq:euler-rho-outer}
	\end{align}
	Consequently,
	\begin{equation}\label{eq:euler-rho-global}
		\sum_i(\rho_i^{L,N})^{k+1}
		\le C_k\left(N+L^{(d-1)(k+1)}\right).
	\end{equation}
\end{proposition}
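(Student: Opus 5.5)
The plan is to treat the four assertions separately and then sum.

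\emph{Normalized jumps.} For \eqref{eq:euler-normalized-jumps}, combine the pointwise bound $|a_i^{L,N}|=|P_L(t)|\le C_kL^d$, which holds because $E_d$ is bounded on $[0,1]$, with the lower bound $U_L\ge c_kL^{d+1}$ from Lemma~\ref{lem:euler-block-scales}. This gives a ratio of at most $C_k/L$ once $L\ge L_{\mathrm{bulk}}$.

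\emph{Windows with $\rho_i^{L,N}=0$.} For \eqref{eq:euler-rho-interior}, suppose $W_i\subset\{B_n,\ldots,B_{n+1}\}$. On $\{B_n,\ldots,B_{n+1}-1\}$ the entries are $(-1)^nP_L(t)$. At $B_{n+1}$ the entry is $0=(-1)^nP_L(2L)$, because $P_L(2L)=0$. So on the whole window the jump sequence is sampled from the single polynomial $\ell\mapsto(-1)^nP_L(i-B_n+\ell)$. That polynomial has degree $d=r-1$, so $\mathbf a_i\in\mathcal P_{r-1}$. If instead $W_i$ misses the support, then $\mathbf a_i=0$. In both cases $\rho_i=0$.

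\emph{Internal junctions.} For \eqref{eq:euler-rho-junction}, take $|i-B_n|<r$ with $1\le n<N$. The whole window then lies within $2r-1\le 2d+1$ of $B_n$. I would not use a competitor polynomial; instead I would use the first inequality of Lemma~\ref{lem:local-discrete-BH}, which bounds $\rho_i$ by $C_k$ times the $\ell^2$ norm of the $k$ values $\mathrm D^ra_j$ for $j\in\{i-r,\ldots,i\}$.

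There is one bookkeeping point. The local profile \eqref{eq:euler-internal-junction-profile} was stated for $|h|\le d+1$, but the stencils here reach up to $2r-1$ from $B_n$. Since $r<L$ for large $L$, the block definition still gives $a_{B_n+h}=(-1)^{n-1}b_h$ for $|h|\le 2L$. Hence \eqref{eq:euler-internal-junction-differences} and Lemma~\ref{lem:euler-junction} give $|\mathrm D^ra_j|\le C_d^{\mathrm{jun}}$ for every relevant $j$, and therefore $\rho_i\le C_k$.

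\emph{Outer endpoints.} For \eqref{eq:euler-rho-outer}, take $|i-B_0|<r$ or $|i-B_N|<r$. The simplest route is the trivial competitor $P=0$, giving $\rho_i\le\|\mathbf a_i\|_2$. Every entry of $\mathbf a_i$ lies within $2r$ of an endpoint, so Lemma~\ref{lem:euler-endpoints} with $R=2r$ bounds each entry by $C_kL^{d-1}$ (the entries in the zero extension vanish). This gives $\rho_i\le C_kL^{d-1}$.

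\emph{Global sum.} For \eqref{eq:euler-rho-global}, first check that every index $i$ falls into exactly one of the classified cases.
\begin{itemize}
\item If $\operatorname{dist}(i,\{B_0,\ldots,B_N\})\ge r$, then $W_i$ either lies in a single closed block interval or lies entirely outside the support.
\item Otherwise $|i-B_n|<r$ for some $n$, and this $n$ is unique because $2L>2r$.
\end{itemize}
Each junction then contributes at most $(2r-1)C_k^{k+1}$, and each of the two outer endpoints contributes at most $(2r-1)(C_kL^{d-1})^{k+1}$. Summing gives $C_k\bigl(N+L^{(d-1)(k+1)}\bigr)$.

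The main obstacle I expect is the junction case. It needs the exact $L$-independent matching from Lemma~\ref{lem:euler-junction} to hold on stencils somewhat wider than those written in \eqref{eq:euler-internal-junction-profile}. Reading the profile identity off the block definition for $|h|\le 2L$ should resolve this. The remaining cases are direct applications of the earlier lemmas.
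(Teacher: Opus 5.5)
Your proof is correct and follows the paper's argument for the normalized jumps, the vanishing windows, the zero competitor with Lemma~\ref{lem:euler-endpoints} ($R=2r$) at the outer endpoints, and the final count. The only difference is at internal junctions: the paper takes the left block's polynomial continued across $B_n$ as an explicit competitor in \eqref{eq:rho-section5}, with residual $\pm2h^d\mathbf 1_{\{h\ge0\}}$ and $|h|<2r$, whereas you pass the same Euler matching identity through the first inequality of Lemma~\ref{lem:local-discrete-BH} and the $L$-independent bound on $\mathrm D^{d+1}b$; this is equally valid but yields a less explicit constant.
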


\begin{proof}
	The bound $\max_i|a_i^{L,N}|\le C_kL^d$ follows from
	\eqref{eq:scaled-euler-polynomial}, while
	\eqref{eq:euler-primitive-scale} gives $U_L\ge c_kL^{d+1}$.  This proves
	\eqref{eq:euler-normalized-jumps}.

	If $W_i$ lies in a single block, the block formula is a polynomial of
	degree $d=r-1$ in the local index; if $W_i$ lies outside the support, it is
	the zero polynomial.  Hence \eqref{eq:euler-rho-interior} holds.

	Suppose next that $W_i$ meets a joining point $B_n$.
	Continue the degree-$d$ polynomial from the block on the left across this
	boundary.  By
	\eqref{eq:euler-junction}, the difference between this continuation and the
	block on the right is, up to sign, $2h^d$ at displacement $h\ge0$ from
	$B_n$.  Since $|i-B_n|<r$ and $|\ell|\le r$ on $W_i$, only
	$|h|<2r$ occurs.  Using the continued polynomial as a competitor in
	\eqref{eq:rho-section5} gives
	$\rho_i^{L,N}\le C_k$, proving
	\eqref{eq:euler-rho-junction}.  At either endpoint of the block interval, the zero
	polynomial is an admissible competitor, and
	Lemma~\ref{lem:euler-endpoints}, with $R=2r$, gives
	\eqref{eq:euler-rho-outer}.

	For large $L$, each centered window meets at most one joining point or
	one of the two endpoints.  There are $O_k(N)$ windows of
	the former type and $O_k(1)$ of
	the latter.  Raising the two bounds to the $(k+1)$st power and summing proves
	\eqref{eq:euler-rho-global}.
\end{proof}

It follows from \eqref{eq:euler-normalized-jumps} that, for every fixed
$\varepsilon>0$,
\begin{equation}\label{eq:euler-large-jump-set-empty}
	\{i:|a_i^{L,N}|>\varepsilon U_L\}=\varnothing
\end{equation}
for all sufficiently large $L$.  Thus every jump in the Euler construction
is small relative to the cell-average amplitude, so the left-hand side of the
large-jump estimate eventually vanishes.  The quotient distance instead
reflects the polynomial structure of the construction: it vanishes in every
block interior and can be nonzero only near the joining points and the two
endpoints, with the bounds established above.

\section{\texorpdfstring{Shift cohomology and local entropy-flux exactness}{Shift cohomology and local entropy-flux exactness}}
\label{sec:shift-cohomology}

For a conservative semidiscretization, the entropy-flux mismatch is not
itself the cellwise entropy residual: relative to the canonical centered
numerical entropy flux, the residual is the average of the mismatches at the
two adjacent interfaces.  Lemma~\ref{lem:mismatch-entropy-flux-correction}
proves that, on every homogeneous polynomial-profile space, a
translation-invariant finite-stencil entropy-flux correction exists if and
only if the mismatch component is a coboundary for the lattice shift.  We
compute this cohomology for arbitrary profile degree and homogeneous Taylor
degree, and then apply the exact criterion to the FMT entropy-flux mismatch.
For fourth-order reconstruction, the degree-seven component $G_{2,7}$ defined
in Theorem~\ref{thm:C} represents a nonzero class and therefore cannot be
removed by such a local entropy-flux correction.

\subsection{Polynomial profile coordinates and the shift operator}
\label{sec:polynomial-profile-coordinates}

Fix an integer $m\ge0$.  Recall that
\[
a_j=\bar u_{j+1}-\bar u_j=\mathrm D\bar u_j.
\]
By a degree-$m$ polynomial jump profile we mean a bi-infinite sequence
$a=(a_j)_{j\in\mathbb Z}$ satisfying $\mathrm D^{m+1}a=0$; its actual
degree may be smaller than $m$.  Relative to a base index $i$, set
\begin{equation}\label{eq:profile-newton-coordinates}
	c_s:=\mathrm D^sa_i,
	\qquad 0\le s\le m.
\end{equation}
Newton's formula gives
\begin{equation}\label{eq:poly-profile-newton}
	a_{i+\ell}
	=\sum_{s=0}^{m}\binom{\ell}{s}c_s,
	\qquad \ell\in\mathbb Z.
\end{equation}
For negative $\ell$, the binomial coefficient is understood through its
polynomial extension
\[
\binom{\ell}{s}
=\frac{\ell(\ell-1)\cdots(\ell-s+1)}{s!}.
\]
Indeed, if $E$ denotes the forward lattice shift, $(Eb)_j:=b_{j+1}$, then
$E=I+\mathrm D$.  On a sequence annihilated by $\mathrm D^{m+1}$, the
binomial expansion of
$E^\ell=(I+\mathrm D)^\ell$ terminates after the $m$th term, also for
negative integer $\ell$.  Thus \eqref{eq:poly-profile-newton} describes the
profile on both sides of the base index.  The word \emph{polynomial} here
refers to the dependence of the jump sequence on its lattice index.  It does
not refer to an ENO reconstruction polynomial.

Put $u:=\bar u_i$.  Since
$\mathrm D^{s+1}\bar u_i=\mathrm D^sa_i=c_s$, the corresponding
cell-average profile is
\begin{equation}\label{eq:poly-profile-cell-averages}
	\bar u_{i+\ell}
	=u+\sum_{s=0}^{m}\binom{\ell}{s+1}c_s,
	\qquad \ell\in\mathbb Z.
\end{equation}
In particular, every finite list of cell averages on a degree-$m$ profile
is a list of linear functions of the $m+2$ coordinates
\begin{equation}\label{eq:profile-coordinate-vector}
	(u,c_0,\ldots,c_m).
\end{equation}

Moving the base index from $i$ to $i+1$ gives
$\bar u_{i+1}=u+c_0$ and, for $0\le j<m$,
\[
\mathrm D^ja_{i+1}
=\mathrm D^ja_i+\mathrm D^{j+1}a_i
=c_j+c_{j+1}.
\]
The last coordinate is unchanged because $\mathrm D^{m+1}a=0$.
Consequently, the lattice shift on the profile coordinates is
\begin{equation}\label{eq:profile-shift}
	T_m(u,c_0,\ldots,c_m)
	=
	(u+c_0,c_0+c_1,\ldots,c_{m-1}+c_m,c_m).
\end{equation}
For every function $P$ of the profile coordinates, write
$T_m^*P:=P\circ T_m$ for pullback by $T_m$.  If $x_0=u$ and
$x_{j+1}=c_j$, its action on the linear coordinate functions is
\begin{equation}\label{eq:profile-shift-linear-pullback}
	T_m^*x_j=x_j+x_{j+1}\quad(0\le j\le m),
	\qquad
	T_m^*x_{m+1}=x_{m+1}.
\end{equation}
Thus $T_m^*$ is unipotent and has a single Jordan block on the space of
linear coordinate functions.

We next make precise the passage between finite-stencil quantities and
profile coordinates.  When a Taylor expansion is taken about a constant
state $u_*$, the centered base coordinate is
$u:=\bar u_i-u_*$; the jump coordinates $c_s$ are unchanged.  In these
centered coordinates, formulas \eqref{eq:poly-profile-cell-averages} and
\eqref{eq:profile-shift} retain the same form.  Throughout this section,
translation invariance means that the same local template and
the same offsets are used at every lattice index.  It does not mean
invariance under adding a constant to all stencil values.  All restrictions
to polynomial profiles are understood as germs at
$(u,c_0,\ldots,c_m)=(0,\ldots,0)$, so only the finitely many profile values
used by the given stencil need lie in the domain of the local template.

\begin{lemma}[Finite stencils on polynomial profiles]
	\label{lem:finite-stencil-polynomial-profile}
	Fix $u_*\in\mathbb R$ and integers $m,D\ge0$.  Let $M\ge0$, let
	$\mathcal U\subset\mathbb R^{2M+1}$ be an open neighborhood of the
	constant vector $(u_*,\ldots,u_*)$, and let
	$\mathcal A:\mathcal U\to\mathbb R$ be a $C^D$ function.  Using the same
	function $\mathcal A$ at every lattice site, define
	\[
	\mathcal A_i[\bar u]
	:=\mathcal A(\bar u_{i-M},\ldots,\bar u_{i+M}).
	\]
	The homogeneous degree-$D$ Taylor term of $\mathcal A_i[\bar u]$ at the
	constant sequence $\bar u_j=u_*$, restricted to degree-$m$ polynomial jump
	profiles, is a homogeneous polynomial of degree $D$ in
	$(u,c_0,\ldots,c_m)$.  Conversely, every such homogeneous polynomial has a
	representation of this form for some $M$ and some polynomial template
	$\mathcal A$.
\end{lemma}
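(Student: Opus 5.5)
The forward direction is a composition of a Taylor expansion with a linear map. By \eqref{eq:poly-profile-cell-averages}, on a degree-$m$ profile with base index $i$, each stencil value satisfies
\[
\bar u_{i+\ell}-u_*=u+\sum_{s=0}^m\binom{\ell}{s+1}c_s ,
\]
a linear form $\lambda_\ell$ in $(u,c_0,\ldots,c_m)$ for $|\ell|\le M$. Expand $\mathcal A$ at the constant vector $(u_*,\ldots,u_*)$ to order $D$ (Taylor's theorem for $C^D$ functions). Its homogeneous degree-$D$ term is a homogeneous polynomial $\mathcal A_D(y_{-M},\ldots,y_M)$ in the increments $y_\ell=\bar u_{i+\ell}-u_*$. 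Substituting $y_\ell=\lambda_\ell(u,c)$ gives a homogeneous degree-$D$ polynomial in $(u,c_0,\ldots,c_m)$. This is the degree-$D$ Taylor term of the germ $(u,c)\mapsto\mathcal A_i[\bar u]$ at $0$, because composing with a linear map preserves homogeneous Taylor degrees. The result is independent of $i$ because the same template and offsets are used at every site. The only care needed is to note that all profile values near $0$ lie in $\mathcal U$, so the germ is defined.

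For the converse, I would show that the linear map $\Lambda:\mathbb R^{2M+1}\to\mathbb R^{m+2}$-dual, $y\mapsto$ the restriction, is surjective for $M$ large. Concretely, I would check that each coordinate function $u,c_0,\ldots,c_m$ is a linear combination of the $\lambda_\ell$:
\[
u=\lambda_0,\qquad c_s=\mathrm D^s a_i=\mathrm D^{s+1}\bar u_i=\sum_{q=0}^{s+1}(-1)^{s+1-q}\binom{s+1}{q}\lambda_q ,
\]
which uses offsets $0\le\ell\le m+1$. Taking $M=m+1$, every coordinate is $\mu_j(y)$ for some explicit linear form $\mu_j$ on $\mathbb R^{2M+1}$. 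A given homogeneous $G(u,c_0,\ldots,c_m)$ of degree $D$ is then represented by the polynomial template
\[
\mathcal A(\bar u_{i-M},\ldots,\bar u_{i+M}):=G\bigl(\bar u_i-u_*,\ \mathrm D\bar u_i,\ \ldots,\ \mathrm D^{m+1}\bar u_i\bigr),
\]
which is defined on all of $\mathbb R^{2M+1}$ and is homogeneous of degree $D$ in the increments. Its degree-$D$ Taylor term at the constant state is therefore itself, and on profiles it restricts to $G$.

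The main subtlety is bookkeeping rather than any real obstacle. The claim is about the Taylor term of the composite germ, and it requires that "restriction then take the degree-$D$ part" agree with "take the degree-$D$ part then restrict". Linearity of the substitution $y=\Lambda(u,c)$ gives this: a homogeneous degree-$j$ polynomial in $y$ pulls back to a homogeneous degree-$j$ polynomial in $(u,c)$. The $C^D$ remainder $o(|y|^D)$ pulls back to $o(|(u,c)|^D)$ because $\Lambda$ is bounded. I would state this explicitly, and note that centering at $u_*$ only shifts $u$, as remarked before the lemma.
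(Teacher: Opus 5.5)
Your proposal is correct and follows the paper's proof. The forward direction substitutes the linear forms from \eqref{eq:poly-profile-cell-averages} into the degree-$D$ Taylor term, and the converse uses the same template $G(\bar u_i-u_*,\mathrm D\bar u_i,\ldots,\mathrm D^{m+1}\bar u_i)$ with $M=m+1$. Your extra bookkeeping only makes explicit what the paper leaves implicit: that linear substitution commutes with taking homogeneous Taylor components, and that the remainder pulls back to $o(|(u,c)|^D)$.
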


\begin{proof}
	After translating the constant state to zero,
	\eqref{eq:poly-profile-cell-averages} expresses each centered stencil value
	$\bar u_{i+\ell}-u_*$ as a linear function of $(u,c_0,\ldots,c_m)$.
	Substitution into the homogeneous degree-$D$ Taylor term of $\mathcal A$
	therefore produces a homogeneous polynomial of the same degree in the
	profile coordinates.
	
	Conversely, let $P(u,c_0,\ldots,c_m)$ be a homogeneous polynomial of degree
	$D$.  Taking $M=m+1$, define
	\[
	\mathcal A_i[\bar u]
	:=P\bigl(\bar u_i-u_*,\mathrm D\bar u_i,
	\ldots,\mathrm D^{m+1}\bar u_i\bigr).
	\]
	Each entry on the right is a fixed finite-stencil linear function of the
	cell averages, and the same formula is used for every $i$.  On a
	degree-$m$ profile its restriction is exactly $P(u,c_0,\ldots,c_m)$ in the
	centered coordinates fixed above.  This proves the converse.
\end{proof}

Under this correspondence, shifting the center of the stencil from $i$ to
$i+1$ acts by $T_m$.  If $P=P(u,c_0,\ldots,c_m)$ represents the homogeneous
Taylor term of a local quantity,
its forward difference therefore restricts to $P\circ T_m-P$.  The
corresponding polynomial space and
coboundary map are defined next.

\subsection{The shift coboundary and its cokernel}
\label{sec:shift-cokernel}

Let
\begin{equation}\label{eq:homogeneous-profile-space}
	\mathscr P_{m,D}
	:=\mathbb R[u,c_0,\ldots,c_m]_D,
\end{equation}
where the subscript $D$ denotes the subspace of homogeneous polynomials of
total degree $D$.
Total degree counts all $m+2$ variables.  This is different from
\emph{jump degree}, which counts only $c_0,\ldots,c_m$ and treats $u$ as a
coefficient.  The two gradings will play different roles in
Subsection~\ref{sec:fmt-mismatch}.

The pullback $T_m^*$ preserves $\mathscr P_{m,D}$.  Define
\begin{equation}\label{eq:profile-coboundary-map}
	\delta_{m,D}
	:=T_m^*-I:
	\mathscr P_{m,D}\longrightarrow\mathscr P_{m,D},
	\qquad
	\delta_{m,D}H=H\circ T_m-H,
\end{equation}
and
\begin{equation}\label{eq:profile-cohomology}
	H^1_{m,D}
	:=\operatorname{coker}\delta_{m,D}
	=\mathscr P_{m,D}/\delta_{m,D}\mathscr P_{m,D}.
\end{equation}
For $G\in\mathscr P_{m,D}$, we write $[G]$ for its coset in $H^1_{m,D}$. 
Write the infinite cyclic group $\mathbb Z$ multiplicatively as
$\langle\gamma\rangle$, where $\gamma$ is the positive generator.  Since
$T_m$ is invertible, $\mathscr P_{m,D}$ is a left $\mathbb Z$-module under the action
\[
	\gamma^r\cdot P:=(T_m^*)^rP=P\circ T_m^r,
	\qquad r\in\mathbb Z,\quad P\in\mathscr P_{m,D}.
\]
We use the inhomogeneous convention for group cohomology: a $1$-cocycle is a
map $b:\mathbb Z\to\mathscr P_{m,D}$ satisfying
\[
	b(\gamma^{r+s})
	=b(\gamma^r)+\gamma^r\cdot b(\gamma^s),
	\qquad r,s\in\mathbb Z,
\]
and a $1$-coboundary has the form
\[
	b_H(\gamma^r)=\gamma^r\cdot H-H,
	\qquad H\in\mathscr P_{m,D}.
\]
Evaluation at $\gamma$ identifies the space of $1$-cocycles with
$\mathscr P_{m,D}$.  Indeed, given $G\in\mathscr P_{m,D}$, define
\[
	b_G(\gamma^r):=
	\begin{cases}
		\displaystyle\sum_{j=0}^{r-1}\gamma^j\cdot G,
			& r>0,\\[6pt]
		0,& r=0,\\[4pt]
		\displaystyle-\sum_{j=r}^{-1}\gamma^j\cdot G,
			& r<0.
	\end{cases}
\]
For every $r\in\mathbb Z$, this definition gives
\[
	b_G(\gamma^{r+1})-b_G(\gamma^r)=\gamma^r\cdot G.
\]
If $s>0$, summing this identity from $r$ to $r+s-1$ gives
\[
	b_G(\gamma^{r+s})-b_G(\gamma^r)
	=\sum_{q=0}^{s-1}\gamma^{r+q}\cdot G
	=\gamma^r\cdot b_G(\gamma^s).
\]
If $s<0$, summing backward gives instead
\[
	b_G(\gamma^{r+s})-b_G(\gamma^r)
	=-\sum_{q=s}^{-1}\gamma^{r+q}\cdot G
	=\gamma^r\cdot b_G(\gamma^s).
\]
The same identity is immediate when $s=0$.
Thus $b_G$ is a $1$-cocycle and $b_G(\gamma)=G$.  Conversely, every
$1$-cocycle satisfies $b(\gamma^0)=0$, as follows by taking $r=s=0$ in
the cocycle identity.  Taking $s=1$ then shows
recursively that the cocycle is uniquely determined by $b(\gamma)$ and is
given by the displayed formula.  Evaluation at $\gamma$
is therefore an isomorphism from the space of $1$-cocycles onto
$\mathscr P_{m,D}$.

Under this isomorphism, a $1$-coboundary is sent to
\[
	b_H(\gamma)=\gamma\cdot H-H
	=H\circ T_m-H
	=\delta_{m,D}H.
\]
Passing to the quotient by $1$-coboundaries gives the identification
\begin{equation}\label{eq:group-cohomology-identification}
	H^1_{m,D}\simeq H^1(\mathbb Z,\mathscr P_{m,D}).
\end{equation}
We therefore call \eqref{eq:profile-cohomology} the shift cohomology.

\begin{definition}[Finite-stencil local primitive]
	\label{def:local-primitive}
	Fix a constant state $u_*\in\mathbb R$ and let
	$G\in\mathscr P_{m,D}$.  A
	translation-invariant finite-stencil local primitive for $G$ at $u_*$
	consists of an integer $M\ge0$, an open neighborhood
	$\mathcal U\subset\mathbb R^{2M+1}$ of $(u_*,\ldots,u_*)$, and a $C^D$
	function $\mathcal H:\mathcal U\to\mathbb R$.  Using the same template
	$\mathcal H$ at every lattice site, set
	\[
	\mathcal H_i[\bar u]
	:=\mathcal H(\bar u_{i-M},\ldots,\bar u_{i+M}).
	\]
	We require the homogeneous degree-$D$ Taylor term of
	$\mathcal H_{i+1}[\bar u]-\mathcal H_i[\bar u]$ at the constant sequence
	$\bar u_j=u_*$, after restriction to degree-$m$ polynomial jump profiles
	and expression in the centered coordinates $(u,c_0,\ldots,c_m)$, to equal
	$G$.
\end{definition}

By Lemma~\ref{lem:finite-stencil-polynomial-profile}, such a local primitive
exists precisely when
\begin{equation}\label{eq:local-primitive-coboundary-equivalence}
	G=\delta_{m,D}H
	\qquad\text{for some }H\in\mathscr P_{m,D}.
\end{equation}
Indeed, let $H_{\mathcal H}^{(D)}\in\mathscr P_{m,D}$ be the restriction of
the homogeneous degree-$D$ Taylor term of $\mathcal H_i[\bar u]$.  Because
the same template is used at every lattice site and the change of profile
coordinates from $i$ to $i+1$ is $T_m$, the corresponding term for
$\mathcal H_{i+1}[\bar u]$ is $H_{\mathcal H}^{(D)}\circ T_m$.  Thus a local
primitive gives
$G=\delta_{m,D}H_{\mathcal H}^{(D)}$.  Conversely, if
$G=\delta_{m,D}H$, the polynomial representative constructed in the
converse part of Lemma~\ref{lem:finite-stencil-polynomial-profile} yields 
a finite-stencil local primitive.  The polynomial $H$ in
\eqref{eq:local-primitive-coboundary-equivalence} is therefore the
profile-coordinate representative of the homogeneous term generated by
$\mathcal H$, not the template itself.

Thus, throughout this section, a local primitive is translation invariant
and uses a fixed finite stencil.  Cumulative sums and index- or
support-dependent formulas lie outside this class.

\begin{lemma}[The nilpotent generator]
	\label{lem:profile-exactness}
	The operator $T_m^*$ is unipotent on $\mathscr P_{m,D}$.  Let
	\[
	N_{m,D}
	:=\log\bigl(T_m^*|_{\mathscr P_{m,D}}\bigr).
	\]
	Then
	\begin{equation}\label{eq:delta-log-factorization}
		\delta_{m,D}
		=N_{m,D}\Phi(N_{m,D}),
		\qquad
		\Phi(z):=\frac{e^z-1}{z}
		=\sum_{r\ge0}\frac{z^r}{(r+1)!}.
	\end{equation}
	Moreover,
	\begin{equation}\label{eq:delta-log-range-kernel}
		\operatorname{Range}\delta_{m,D}
		=\operatorname{Range}N_{m,D},
		\qquad
		\ker\delta_{m,D}=\ker N_{m,D},
	\end{equation}
	and consequently
	\begin{equation}\label{eq:cohomology-kernel-dimension}
		\dim H^1_{m,D}=\dim\ker N_{m,D}.
	\end{equation}
\end{lemma}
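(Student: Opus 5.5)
The plan is to reduce everything to linear algebra of a unipotent operator on the finite-dimensional space $\mathscr P_{m,D}$. First I will establish unipotence. By \eqref{eq:profile-shift-linear-pullback}, $T_m^*$ acts on the linear coordinate functions $x_0,\ldots,x_{m+1}$ as $I+\nu$, where $\nu x_j=x_{j+1}$ and $\nu x_{m+1}=0$; thus $\nu^{m+2}=0$ on linear forms. Since $T_m^*$ is an algebra homomorphism, on $\mathscr P_{m,D}=\operatorname{Sym}^D$ of the linear forms it equals the $D$th symmetric power of $I+\nu$. I will then expand $(T_m^*-I)$ applied to a monomial of degree $D$: each factor $(x_j+\nu x_j)$ contributes either $x_j$ or a strictly higher-index coordinate. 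To make this precise, I will filter by the weight $w(x_0^{e_0}\cdots x_{m+1}^{e_{m+1}})=\sum_j je_j$. This weight is bounded by $(m+1)D$, and $T_m^*-I$ strictly raises it. Hence $T_m^*-I$ is nilpotent on $\mathscr P_{m,D}$, and $T_m^*$ is unipotent.

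Next I will define the logarithm by the terminating series $N_{m,D}:=\sum_{r\ge1}(-1)^{r+1}(T_m^*-I)^r/r$, which is a finite sum. It is itself nilpotent, since it is a polynomial in $T_m^*-I$ without constant term. The formal power-series identity $\exp(\log(1+y))=1+y$ then holds in $\mathbb Q[[y]]$. Substituting the nilpotent $y=T_m^*-I$ is legitimate because all series truncate, so $e^{N_{m,D}}=T_m^*$. This gives $\delta_{m,D}=e^{N_{m,D}}-I=N_{m,D}\Phi(N_{m,D})$ as an identity of polynomials in the commuting nilpotent $N_{m,D}$, which is \eqref{eq:delta-log-factorization}.

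For \eqref{eq:delta-log-range-kernel}, I will note that $\Phi(N_{m,D})=I+(\text{nilpotent})$ is invertible and commutes with $N_{m,D}$. Therefore
\[
\operatorname{Range}\delta_{m,D}=N_{m,D}\Phi(N_{m,D})\mathscr P_{m,D}=N_{m,D}\mathscr P_{m,D}.
\]
Similarly, $\delta_{m,D}H=0$ if and only if $\Phi(N_{m,D})N_{m,D}H=0$, which holds if and only if $N_{m,D}H=0$. Finally, I will apply rank--nullity to the endomorphism $N_{m,D}$ of the finite-dimensional space:
\[
\dim\operatorname{coker}N_{m,D}=\dim\mathscr P_{m,D}-\operatorname{rank}N_{m,D}=\dim\ker N_{m,D}.
\]
Combined with the range equality, this gives \eqref{eq:cohomology-kernel-dimension}.

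The only step needing care is justifying the formal exp/log identity on a nilpotent operator. Working over $\mathbb R$, which contains $\mathbb Q$, this is standard truncated power-series algebra. The weight-filtration argument for nilpotence is routine. I expect no real obstacle here; the substantive difficulty lies later, in identifying $\ker N_{m,D}$ with highest-weight vectors via Cayley--Sylvester.
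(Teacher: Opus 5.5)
Your proposal is correct and follows the paper's argument: unipotence of $T_m^*$, the terminating logarithm with $e^{N_{m,D}}=T_m^*$, invertibility of $\Phi(N_{m,D})$ and its commutation with $N_{m,D}$, and rank--nullity. The only differences are cosmetic. You make unipotence on $\mathscr P_{m,D}$ explicit through the weight filtration, where the paper simply asserts it follows from the action on linear forms. You also apply rank--nullity to $N_{m,D}$ together with the range equality, whereas the paper applies it to $\delta_{m,D}$ together with the kernel equality; these are equivalent.
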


\begin{proof}
	The unipotence on linear coordinate functions follows from
	\eqref{eq:profile-shift-linear-pullback}; hence the induced pullback on each
	finite-dimensional homogeneous polynomial space is also unipotent.  Its
	logarithm is a finite sum and is nilpotent.  Since
	$T_m^*=\exp N_{m,D}$ on $\mathscr P_{m,D}$,
	\eqref{eq:delta-log-factorization} follows.
	
	The series defining $\Phi(N_{m,D})$ is finite.  Its constant term is the
	identity, so it is invertible; it also commutes with $N_{m,D}$.  This proves
	\eqref{eq:delta-log-range-kernel}.  Finally, $\delta_{m,D}$ is an
	endomorphism of a finite-dimensional space, and rank--nullity gives
	\[
	\dim\operatorname{coker}\delta_{m,D}
	=\dim\ker\delta_{m,D}.
	\]
	Together with \eqref{eq:delta-log-range-kernel}, this proves
	\eqref{eq:cohomology-kernel-dimension}.
\end{proof}

For the solvability test, write
$\mathscr P_{m,D}^{\,*}:=\operatorname{Hom}_{\mathbb R}
(\mathscr P_{m,D},\mathbb R)$, and let
\begin{equation}\label{eq:algebraic-dual-coboundary}
	\delta_{m,D}^{\vee}:
	\mathscr P_{m,D}^{\,*}\longrightarrow\mathscr P_{m,D}^{\,*},
	\qquad
	\delta_{m,D}^{\vee}\Lambda:=\Lambda\circ\delta_{m,D},
\end{equation}
be the algebraic dual map.

\begin{proposition}[Cohomological solvability criterion]
	\label{prop:cohomological-test}
	For $G\in\mathscr P_{m,D}$, the following are equivalent:
	\begin{enumerate}[label=\textup{(\roman*)}]
		\item $G\in\operatorname{Range}\delta_{m,D}$;
		\item $[G]=0$ in $H^1_{m,D}$;
		\item $\Lambda(G)=0$ for every
		$\Lambda\in\ker\delta_{m,D}^{\vee}$.
	\end{enumerate}
\end{proposition}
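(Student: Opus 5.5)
The equivalence is pure finite-dimensional linear algebra, so I will argue directly from the definitions of $H^1_{m,D}$ and $\delta_{m,D}^{\vee}$. I will not need the Cayley--Sylvester structure or Lemma~\ref{lem:profile-exactness}.

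The equivalence (i)$\Leftrightarrow$(ii) is the definition of the cokernel in \eqref{eq:profile-cohomology}. The class $[G]$ is the coset $G+\delta_{m,D}\mathscr P_{m,D}$, and it vanishes exactly when $G$ lies in $\operatorname{Range}\delta_{m,D}$.

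For (i)$\Rightarrow$(iii), I write $G=\delta_{m,D}H$. Then for every $\Lambda\in\ker\delta_{m,D}^{\vee}$,
\[
\Lambda(G)=\Lambda(\delta_{m,D}H)=(\delta_{m,D}^{\vee}\Lambda)(H)=0 .
\]
The converse (iii)$\Rightarrow$(i) is the step that needs an argument. I will identify $\ker\delta_{m,D}^{\vee}$ with the annihilator of $\operatorname{Range}\delta_{m,D}$. By \eqref{eq:algebraic-dual-coboundary}, a functional $\Lambda$ satisfies $\Lambda\circ\delta_{m,D}=0$ precisely when $\Lambda$ vanishes on $\operatorname{Range}\delta_{m,D}$. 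Now suppose $G\notin\operatorname{Range}\delta_{m,D}$. Since $\mathscr P_{m,D}$ is finite dimensional, I will choose a basis of $\operatorname{Range}\delta_{m,D}$ and extend it by $G$ to a linearly independent set. I then extend once more to a basis of $\mathscr P_{m,D}$. Let $\Lambda$ be the linear functional that equals $1$ on $G$ and $0$ on every other basis vector. This $\Lambda$ annihilates the range, so $\Lambda\in\ker\delta_{m,D}^{\vee}$, yet $\Lambda(G)=1\ne0$. This contradicts (iii).

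The only point that needs care is this duality statement: a vector lies in a subspace if and only if every functional annihilating the subspace kills it. Because everything is finite dimensional over $\mathbb R$, the basis-extension argument settles it. Equivalently, one can note that the annihilator of $\operatorname{Range}\delta_{m,D}$ is canonically the dual of the cokernel $H^1_{m,D}$, so the functionals in (iii) separate the points of $H^1_{m,D}$. No obstacle beyond this elementary fact is expected. The practical content of (iii) is that a single explicit $\Lambda$ with $\Lambda(G)\ne0$ certifies $[G]\ne0$. This is how the fourth-order assertion of Theorem~\ref{thm:C} will later be verified, using the functional $\mathfrak L$.
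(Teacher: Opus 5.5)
Your proposal is correct and follows the paper's proof. The paper also gets (i)$\Leftrightarrow$(ii) from the definition of the cokernel, and (i)$\Leftrightarrow$(iii) by identifying $\ker\delta_{m,D}^{\vee}$ with the annihilator of $\operatorname{Range}\delta_{m,D}$; your basis-extension argument just makes explicit the finite-dimensional separation step that the paper leaves implicit.
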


\begin{proof}
	The first two assertions are equivalent by definition.  The third is
	equivalent to the first because
	\[
	\ker\delta_{m,D}^{\vee}
	=\left\{
	\Lambda\in\mathscr P_{m,D}^{\,*}:
	\Lambda(Y)=0\ \text{for every }
	Y\in\operatorname{Range}\delta_{m,D}
	\right\}.
	\]
\end{proof}

The criterion reduces local solvability to finitely many linear
conditions.  The dimension of the resulting obstruction space is computed
next.

\subsection{The Gaussian-binomial dimension formula}
\label{sec:gaussian-dimension}

Put $n:=m+1$, and let
\[
V:=\mathscr P_{m,1}\otimes_{\mathbb R}\mathbb C.
\]
Thus $V$ is the space spanned by the linear coordinate functions
$u,c_0,\ldots,c_m$.  We use the same notation for the complex-linear
extensions of $T_m^*$ and $N_{m,1}$ to $V$.  On this
$(n+1)$-dimensional space,
$N_{m,1}$ is a regular nilpotent operator: it has a single Jordan block.
Indeed, if $J:=T_m^*-I$ on $V$, then
$J^{m+2}=0$ and
\[
N_{m,1}=\log(I+J)=J\Psi(J),
\qquad
\Psi(z):=\sum_{r=1}^{m+1}\frac{(-1)^{r+1}}{r}z^{r-1}.
\]
Since $\Psi(0)=1$, the operator $\Psi(J)$ is invertible and commutes with
$J$.  Thus $\ker N_{m,1}^r=\ker J^r$ for every $r\ge1$, and
$N_{m,1}$ has the same single Jordan block as $J$.

\begin{theorem}[Cohomology of the polynomial-profile shift]
	\label{thm:cohomology-dimension}
	Let $m,D\ge0$ be integers and $n=m+1$.  Then
	\begin{equation}\label{eq:profile-cohomology-dimension}
		\dim H^1_{m,D}
		=
		[z^{\lfloor nD/2\rfloor}]
		\begin{bmatrix}n+D\\ n\end{bmatrix}_z,
	\end{equation}
	where $[z^r]P(z)$ denotes the coefficient of $z^r$ in $P(z)$, and
	\begin{equation}\label{eq:gaussian-binomial-product}
		\begin{bmatrix}n+D\\ n\end{bmatrix}_z
		:=\prod_{j=1}^{n}\frac{1-z^{D+j}}{1-z^j}.
	\end{equation}
	Equivalently, $\dim H^1_{m,D}$ is the number of partitions of
	$\lfloor nD/2\rfloor$ whose Ferrers diagrams fit inside an $n\times D$
	rectangle.
\end{theorem}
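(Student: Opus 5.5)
By Lemma~\ref{lem:profile-exactness}, $\dim H^1_{m,D}=\dim\ker N_{m,D}$, and this dimension is unchanged by complexification. So the plan is to compute the kernel of the nilpotent operator $N_{m,D}$ on $\operatorname{Sym}^D(V)$, where $V=\mathscr P_{m,1}\otimes\mathbb C$ has dimension $n+1$. The key observation is that $N_{m,D}$ is the derivation induced on $\operatorname{Sym}^D(V)$ by $N_{m,1}$. Indeed, $T_m^*$ acts on polynomials as an algebra automorphism. Its logarithm is therefore a derivation, namely the derivation extending $N_{m,1}$ on linear forms. Equivalently, $T_m^*|_{\operatorname{Sym}^D V}=\operatorname{Sym}^D(\exp N_{m,1})=\exp(\text{derivation extension of }N_{m,1})$, and uniqueness of the nilpotent logarithm of a unipotent operator identifies the two.

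Next I will realize the regular nilpotent $N_{m,1}$ as the raising operator $e$ of an $\mathfrak{sl}_2$-triple. The preceding discussion shows that $N_{m,1}$ has a single Jordan block of size $n+1$ on $V$. By Jacobson--Morozov, or simply by choosing a Jordan basis, there is an isomorphism $V\simeq\operatorname{Sym}^n(\mathbb C^2)$, the irreducible $\mathfrak{sl}_2$-module of highest weight $n$, that carries $N_{m,1}$ to $e$. The derivation extension of $e$ to $\operatorname{Sym}^D(V)$ is exactly the action of $e$ on the induced $\mathfrak{sl}_2$-module $\operatorname{Sym}^D(\operatorname{Sym}^n\mathbb C^2)$. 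Hence $\dim\ker N_{m,D}$ equals the number of highest weight vectors, which is the number of irreducible summands of this module.

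The final step counts those summands through the weights. The weights of $\operatorname{Sym}^D(\operatorname{Sym}^n\mathbb C^2)$ are $nD-2w$, where $w$ ranges over the sums of $D$-element multisets from $\{0,\dots,n\}$. The multiplicity of weight $nD-2w$ is the number of partitions of $w$ with at most $D$ parts, each of size at most $n$. This multiplicity is $[z^w]\begin{bmatrix}n+D\\ n\end{bmatrix}_z$, by the standard generating function for partitions in an $n\times D$ box. In a finite-dimensional $\mathfrak{sl}_2$-module, each irreducible summand contains exactly one weight vector of weight $0$ or $1$. So the number of summands equals the multiplicity of weight $nD-2\lfloor nD/2\rfloor\in\{0,1\}$. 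This gives \eqref{eq:profile-cohomology-dimension}, and the partition reformulation is the same count.

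The main obstacle is the first step: making rigorous that $\log$ of the pullback on $\mathscr P_{m,D}$ equals the derivation induced by $N_{m,1}$. I would handle it by noting that $t\mapsto\operatorname{Sym}^D(\exp(tN_{m,1}))$ is a one-parameter group whose generator is the derivation. At $t=1$ it equals $T_m^*$, and since this derivation is nilpotent, the logarithm is unique. The remaining steps are classical Cayley--Sylvester. As a check, $n=3$, $D=7$ gives the coefficient of $z^{10}$ in $\begin{bmatrix}10\\3\end{bmatrix}_z$, which should equal $10$.
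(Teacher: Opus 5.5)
Your proposal is correct and follows the paper's argument: you reduce to $\ker N_{m,D}$, identify $N_{m,D}$ with the raising operator on $\operatorname{Sym}^D(\operatorname{Sym}^n\mathbb C^2)$, and count highest-weight vectors using the Gaussian-binomial weight multiplicities; you also justify the derivation identification somewhat more carefully than the paper does. The only difference is in the last count, and it is cosmetic: you count irreducible summands by the dimension of the weight-$0$ or weight-$1$ space, whereas the paper sums the Cayley--Sylvester multiplicities (differences of consecutive weight multiplicities) and telescopes to the same coefficient.
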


\begin{proof}
	By Lemma~\ref{lem:profile-exactness},
	\[
	\dim H^1_{m,D}=\dim\ker N_{m,D}.
	\]
	After a change of basis, the regular nilpotent $N_{m,1}$ can be completed
	to an $\mathfrak{sl}_2$-triple on $V$.  By \cite{FultonHarris}, the resulting
	$(n+1)$-dimensional representation is the irreducible module
	\[
	V_n\simeq\operatorname{Sym}^n(\mathbb C^2).
	\]
	Since the polynomial algebra is generated by its
	linear coordinate functions,
	\[
	\mathscr P_{m,D}\otimes_{\mathbb R}\mathbb C
	\simeq\operatorname{Sym}^D(V_n).
	\]
	Under this identification, the pullback on degree-$D$ polynomials is the
	symmetric-power representation of $\exp N_{m,1}$.  Its logarithm is
	therefore the induced Lie-algebra operator $N_{m,D}$, which is the raising
	operator on $\operatorname{Sym}^D(V_n)$.  Its kernel is the direct
	sum of the one-dimensional highest-weight spaces of the irreducible
	summands.  Thus
	$\dim\ker N_{m,D}$ is the number of irreducible summands, counted with
	multiplicity.
	
	The weights of $V_n$ are
	\[
	n,n-2,\ldots,-n.
	\]
	Writing $\operatorname{ch}$ for the formal character, we therefore have
	\[
	\operatorname{ch}\operatorname{Sym}^D(V_n)
	=\sum_{s=0}^{nD}p_{n,D}(s)x^{nD-2s},
	\]
	where
	\begin{equation}\label{eq:rectangle-partition-generating-function}
		\sum_{s=0}^{nD}p_{n,D}(s)z^s
		=\begin{bmatrix}n+D\\ n\end{bmatrix}_z,
	\end{equation}
	and $p_{n,D}(s)$ counts partitions of $s$ contained in the
	$n\times D$ rectangle \cite{StanleyEC1}.  For
	$0\le s\le\lfloor nD/2\rfloor$, the Cayley--Sylvester formula \cite{CayleySecondQuantics} (see also
	\cite[(4.1)]{MajidTomasic}) gives the
	multiplicity of the irreducible summand with highest weight $nD-2s$ as
	\[
	p_{n,D}(s)-p_{n,D}(s-1),
	\qquad p_{n,D}(-1):=0.
	\]
	In the notation of \cite[(4.1)]{MajidTomasic}, $j=D$ and $m=n$;
	transposition of Ferrers diagrams identifies its $D\times n$ rectangle
	with the $n\times D$ rectangle used here. 
	Summing over all nonnegative highest weights telescopes to
	\[
	p_{n,D}\!\left(\left\lfloor\frac{nD}{2}\right\rfloor\right),
	\]
	which is the coefficient in \eqref{eq:profile-cohomology-dimension}.
	
	Finally, the matrix of $\delta_{m,D}$ in the monomial basis has integer
	entries.  Complexification therefore does not change its rank, so the real
	and complex kernel and cokernel dimensions agree.
\end{proof}

For an order-$k$ reconstruction, $m=k-2$ and $n=k-1$.  Hence
\begin{equation}\label{eq:order-k-cohomology-dimension}
	\dim H^1_{k-2,D}
	=
	[z^{\lfloor (k-1)D/2\rfloor}]
	\begin{bmatrix}k-1+D\\ k-1\end{bmatrix}_z.
\end{equation}
For $k=4$ and total degree $D=7$,
\begin{equation}\label{eq:fourth-order-cohomology-dimension}
	\dim H^1_{2,7}
	=[z^{10}]
	\begin{bmatrix}10\\3\end{bmatrix}_z
	=10.
\end{equation}
This is the dimension of the obstruction space at the specified profile
degree and Taylor degree.  It does not determine whether a particular
entropy-flux mismatch represents a nonzero class; that requires the dual test
in Proposition~\ref{prop:cohomological-test}.

\subsection{Entropy-flux mismatch and a solvability criterion}
\label{sec:fmt-mismatch}

We now apply the preceding calculation to the high-order flux used in the
FMT construction.  Fix an integer $k\ge2$ and a smooth scalar flux
$f:\mathbb R\to\mathbb R$.  In this subsection we write
\begin{equation}\label{eq:fmt-profile-parameters}
	u_i:=\bar u_i,
	\qquad
	a_i=u_{i+1}-u_i,
	\qquad
	m:=k-2,
	\qquad
	\mu_k:=\left\lceil\frac{k}{2}\right\rceil.
\end{equation}
Thus $2\mu_k$ is the smallest even integer not less than $k$.

For the quadratic entropy $\eta_2(u)=u^2/2$, the scalar two-point
entropy-conservative flux is
\begin{equation}\label{eq:two-point-quadratic-entropy-flux}
	F^*(u_L,u_R)
	:=\int_0^1
	f\bigl((1-\theta)u_L+\theta u_R\bigr)\,d\theta.
\end{equation}
Indeed, if $q_2'=uf'$ and $\psi_2:=uf-q_2$, then $\psi_2'=f$ and
\begin{equation}\label{eq:quadratic-entropy-conservative-identity}
	(u_R-u_L)F^*(u_L,u_R)
	=\int_{u_L}^{u_R}f(s)\,ds
	=\psi_2(u_R)-\psi_2(u_L).
\end{equation}
Following \cite{LMR,FMTtecno}, we define the FMT flux with moment index $\mu_k$ by
\begin{equation}\label{eq:FMT-flux-profile}
	\widetilde F^k_{i+1/2}
	:=
	\sum_{r=1}^{\mu_k}\alpha_r^{[\mu_k]}
	\sum_{s=0}^{r-1}F^*(u_{i-s},u_{i-s+r}).
\end{equation}
The associated flux-difference formula has formal order $2\mu_k$, the
least even order not smaller than $k$, and is therefore sufficient for an
order-$k$ reconstruction.  Its coefficients are
characterized by
\begin{equation}\label{eq:FMT-moment-conditions}
	\sum_{r=1}^{\mu_k}\alpha_r^{[\mu_k]}r^{2j-1}
	=
	\begin{cases}
		1,&j=1,\\
		0,&2\le j\le\mu_k.
	\end{cases}
\end{equation}
Equivalently,
\begin{equation}\label{eq:FMT-coefficient-formula}
	\alpha_r^{[\mu_k]}
	=(-1)^{r+1}
	\frac{2(\mu_k!)^2}
	{r(\mu_k-r)!(\mu_k+r)!}.
\end{equation}
Only the normalization
$\sum_{r=1}^{\mu_k}r\alpha_r^{[\mu_k]}=1$ is needed for the quadratic
calculation below.  The remaining moments give formal accuracy; they do not
by themselves imply exactness on polynomial profiles.

Let $(\eta,q)$ be a smooth entropy pair for $f$, and put
\begin{equation}\label{eq:entropy-pair-notation-section6}
	v:=\eta',
	\qquad
	q'=\eta'f',
	\qquad
	\psi:=vf-q.
\end{equation}
The flux $F^*$, and hence $\widetilde F^k$, is fixed by the quadratic
entropy.  We now test this fixed flux against the general entropy pair
$(\eta,q)$.
The local entropy-flux mismatch is
\begin{equation}\label{eq:mismatch-def-profile}
	g_{\eta,i+1/2}
	:=
	\bigl(v(u_{i+1})-v(u_i)\bigr)\widetilde F^k_{i+1/2}
	-\bigl(\psi(u_{i+1})-\psi(u_i)\bigr).
\end{equation}

The precise connection between this mismatch and a local numerical entropy
flux is as follows.

\begin{lemma}[Mismatch primitives and local entropy-flux corrections]
	\label{lem:mismatch-entropy-flux-correction}
	Consider the semidiscrete conservative scheme on a uniform mesh of width
	$h>0$,
	\begin{equation}\label{eq:semidiscrete-FMT-scheme}
		\dot u_i
		+\frac{\widetilde F^k_{i+1/2}-\widetilde F^k_{i-1/2}}{h}
		=0,
	\end{equation}
	and define the canonical centered numerical entropy flux
	\begin{equation}\label{eq:canonical-centered-entropy-flux}
		\mathcal Q^0_{\eta,i+1/2}
		:=
		\frac{v(u_i)+v(u_{i+1})}{2}\,
		\widetilde F^k_{i+1/2}
		-\frac{\psi(u_i)+\psi(u_{i+1})}{2}.
	\end{equation}
	Then the following pointwise identity holds:
	\begin{equation}\label{eq:mismatch-entropy-balance}
		\frac{d}{dt}\eta(u_i)
		+\frac{\mathcal Q^0_{\eta,i+1/2}
		-\mathcal Q^0_{\eta,i-1/2}}{h}
		=
		\frac{g_{\eta,i+1/2}+g_{\eta,i-1/2}}{2h}.
	\end{equation}
	In particular, if
	\begin{equation}\label{eq:full-mismatch-local-primitive}
		g_{\eta,i+1/2}=\mathcal H_{i+1}-\mathcal H_i
	\end{equation}
	for a translation-invariant finite-stencil local quantity $\mathcal H_i$,
	then
	\begin{equation}\label{eq:corrected-local-entropy-flux}
		\mathcal Q_{\eta,i+1/2}
		:=
		\mathcal Q^0_{\eta,i+1/2}
		-\frac{\mathcal H_i+\mathcal H_{i+1}}{2}
	\end{equation}
	satisfies the exact local entropy balance
	\begin{equation}\label{eq:corrected-local-entropy-balance}
		\frac{d}{dt}\eta(u_i)
		+\frac{\mathcal Q_{\eta,i+1/2}
		-\mathcal Q_{\eta,i-1/2}}{h}
		=0.
	\end{equation}

	Conversely, fix $m,D\ge0$ and a constant background $u_*$.  Let
	$G\in\mathscr P_{m,D}$ be the homogeneous degree-$D$ Taylor component of
	the mismatch restricted to degree-$m$ polynomial jump profiles.  A
	translation-invariant finite-stencil $C^D$ correction to
	$\mathcal Q^0_{\eta,i+1/2}$ cancels the corresponding degree-$D$ component
	of the local entropy residual in \eqref{eq:mismatch-entropy-balance} if and
	only if
	\begin{equation}\label{eq:entropy-correction-coboundary-equivalence}
		G\in\operatorname{Range}\delta_{m,D},
		\qquad\text{equivalently}\qquad
		[G]=0\quad\text{in }H^1_{m,D}.
	\end{equation}
\end{lemma}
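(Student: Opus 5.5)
The plan has two parts: a direct computation for the pointwise identity, and then a reduction of the converse to the coboundary criterion of Subsection~\ref{sec:shift-cokernel}.

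\emph{The identity \eqref{eq:mismatch-entropy-balance}.} I will use the chain rule and the scheme \eqref{eq:semidiscrete-FMT-scheme} to write
\[
\frac{d}{dt}\eta(u_i)=v(u_i)\dot u_i=-\frac{v(u_i)}{h}\bigl(\widetilde F^k_{i+1/2}-\widetilde F^k_{i-1/2}\bigr).
\]
Then I expand $\mathcal Q^0_{\eta,i+1/2}-\mathcal Q^0_{\eta,i-1/2}$ from \eqref{eq:canonical-centered-entropy-flux}. Adding $h\,\frac{d}{dt}\eta(u_i)$, the coefficient of $\widetilde F^k_{i+1/2}$ becomes $\tfrac12(v(u_{i+1})-v(u_i))$. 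The coefficient of $\widetilde F^k_{i-1/2}$ becomes $\tfrac12(v(u_i)-v(u_{i-1}))$. The $\psi$ terms telescope into $-\tfrac12(\psi(u_{i+1})-\psi(u_{i-1}))$, which splits as
\[
-\tfrac12\bigl(\psi(u_{i+1})-\psi(u_i)\bigr)-\tfrac12\bigl(\psi(u_i)-\psi(u_{i-1})\bigr).
\]
Regrouping these terms yields $\tfrac12(g_{\eta,i+1/2}+g_{\eta,i-1/2})$. This is pure bookkeeping.

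\emph{The corrected flux.} If $g_{\eta,i+1/2}=\mathcal H_{i+1}-\mathcal H_i$, then the average $\tfrac12(g_{\eta,i+1/2}+g_{\eta,i-1/2})$ equals $\tfrac12(\mathcal H_{i+1}-\mathcal H_{i-1})$. This is exactly the difference of $\tfrac12(\mathcal H_i+\mathcal H_{i+1})$ across the two interfaces. Subtracting it as in \eqref{eq:corrected-local-entropy-flux} therefore gives \eqref{eq:corrected-local-entropy-balance}.

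\emph{The converse.} I first interpret ``a correction $\mathcal C_{i+1/2}$ cancels the degree-$D$ residual'' as the following requirement: the degree-$D$ Taylor term on profiles of
\[
\mathcal C_{i+1/2}-\mathcal C_{i-1/2}-\tfrac12\bigl(g_{\eta,i+1/2}+g_{\eta,i-1/2}\bigr)
\]
vanishes. Here $\mathcal C_{i+1/2}$ is a translation-invariant template, the correction is $\mathcal Q=\mathcal Q^0-\mathcal C$, and the factor $1/h$ has been absorbed.

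Next I index everything by the base cell. Let $C\in\mathscr P_{m,D}$ represent the degree-$D$ term of $\mathcal C_{i+1/2}$, which is possible by Lemma~\ref{lem:finite-stencil-polynomial-profile}. Since $g_{\eta,i-1/2}$ is $g_{\eta,i+1/2}$ evaluated one site to the left, its representative is $G\circ T_m^{-1}$. The condition then reads
\[
\delta_{m,D}\bigl(C\circ T_m^{-1}\bigr)=\tfrac12\bigl(G+G\circ T_m^{-1}\bigr)=\tfrac12\bigl(I+(T_m^*)^{-1}\bigr)G.
\]
The operator $\tfrac12(I+(T_m^*)^{-1})$ is unipotent-plus-identity over $2$: its eigenvalues are all $1$, so it is invertible. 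It also commutes with $\delta_{m,D}$. Hence the range of $\delta_{m,D}$ is invariant under this operator and under its inverse. It follows that the right-hand side lies in $\operatorname{Range}\delta_{m,D}$ if and only if $G$ does.

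Finally, I realize any polynomial solution $H$ by a template, using the converse part of Lemma~\ref{lem:finite-stencil-polynomial-profile}, exactly as in the discussion after Definition~\ref{def:local-primitive}. This gives \eqref{eq:entropy-correction-coboundary-equivalence} via Proposition~\ref{prop:cohomological-test}.

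The main obstacle I anticipate is making the correspondence between corrections at half-integer interfaces and polynomials in $\mathscr P_{m,D}$ precise. This requires fixing base-index conventions, and checking that the Taylor degree-$D$ term of a $C^D$ template is well defined and commutes with shifts. I would handle both points by germ-level Taylor expansion at the constant state $u_*$, as in the proof of Lemma~\ref{lem:finite-stencil-polynomial-profile}.
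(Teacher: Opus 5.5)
Your proposal is correct and follows the paper's proof essentially step for step. It uses the same direct expansion for the balance identity and the same telescoping for the corrected flux. For the converse, it makes the same reduction to a coboundary equation whose right-hand side is acted on by an invertible averaging operator commuting with $\delta_{m,D}$. The only cosmetic difference is that you write the equation as $\delta_{m,D}\bigl((T_m^*)^{-1}C\bigr)=\tfrac12\bigl(I+(T_m^*)^{-1}\bigr)G$. The paper instead multiplies through by $T_m^*$ to get $\delta_{m,D}K=\tfrac12\bigl(I+T_m^*\bigr)G$, where $\tfrac12\bigl(I+T_m^*\bigr)=I+\tfrac12\delta_{m,D}$.
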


\begin{proof}
	Multiplying \eqref{eq:semidiscrete-FMT-scheme} by $v(u_i)$ and using
	$\frac{d}{dt}\eta(u_i)=v(u_i)\dot u_i$, direct expansion of
	\eqref{eq:canonical-centered-entropy-flux} gives
	\begin{align*}
	&h\frac{d}{dt}\eta(u_i)
	+\mathcal Q^0_{\eta,i+1/2}-\mathcal Q^0_{\eta,i-1/2}\notag\\
	&\qquad={}
	\frac12\Bigl[
	\bigl(v(u_{i+1})-v(u_i)\bigr)\widetilde F^k_{i+1/2}
	-\bigl(\psi(u_{i+1})-\psi(u_i)\bigr)
	\Bigr]\\
	&\qquad\quad
	+\frac12\Bigl[
	\bigl(v(u_i)-v(u_{i-1})\bigr)\widetilde F^k_{i-1/2}
	-\bigl(\psi(u_i)-\psi(u_{i-1})\bigr)
	\Bigr],
	\end{align*}
	which is \eqref{eq:mismatch-entropy-balance}.  If
	\eqref{eq:full-mismatch-local-primitive} holds, then
	\[
		\frac{g_{\eta,i+1/2}+g_{\eta,i-1/2}}{2}
		=\frac{\mathcal H_{i+1}-\mathcal H_{i-1}}{2}.
	\]
	The right-hand side is the flux difference of
	$(\mathcal H_i+\mathcal H_{i+1})/2$, so
	\eqref{eq:corrected-local-entropy-balance} follows.

	For the converse, put $\mathsf U:=T_m^*$ on $\mathscr P_{m,D}$, so that
	$\delta_{m,D}=\mathsf U-I$.  Let $K\in\mathscr P_{m,D}$ be the profile
	representative of the homogeneous degree-$D$ term of a correction
	$\mathcal K_{i+1/2}$, where
	$\mathcal Q_{\eta,i+1/2}
	=\mathcal Q^0_{\eta,i+1/2}-\mathcal K_{i+1/2}$.
	At $i-1/2$, the same local template is represented in the profile
	coordinates based at $i$ by $\mathsf U^{-1}K$, and the mismatch is represented by
	$\mathsf U^{-1}G$.  Cancellation of the degree-$D$ term in
	\eqref{eq:mismatch-entropy-balance} is therefore equivalent to
	\begin{equation}\label{eq:profile-entropy-flux-correction-equation}
		K-\mathsf U^{-1}K=\frac{G+\mathsf U^{-1}G}{2}.
	\end{equation}
	Multiplying by $\mathsf U$ gives
	\begin{equation}\label{eq:averaged-coboundary-equation}
		\delta_{m,D}K=A_{m,D}G,
		\qquad
		A_{m,D}:=\frac{I+\mathsf U}{2}.
	\end{equation}
	By Lemma~\ref{lem:profile-exactness}, $\mathsf U$ is unipotent.  Hence
	$A_{m,D}=I+\delta_{m,D}/2$ is invertible, and it commutes with
	$\delta_{m,D}$.  Thus \eqref{eq:averaged-coboundary-equation} is solvable
	if and only if $G\in\operatorname{Range}\delta_{m,D}$.  More explicitly,
	if it is solvable, then
	\[
		G=\delta_{m,D}\bigl(A_{m,D}^{-1}K\bigr),
	\]
	whereas if $G=\delta_{m,D}H$, then $K=A_{m,D}H$ solves
	\eqref{eq:averaged-coboundary-equation}.  By
	Lemma~\ref{lem:finite-stencil-polynomial-profile}, such polynomial
	representatives are exactly the homogeneous Taylor terms of
	translation-invariant finite-stencil local quantities.  This proves the
	converse and \eqref{eq:entropy-correction-coboundary-equivalence}.
\end{proof}

Thus the local-primitive question below is not merely a sufficient mechanism
for global telescoping.  Degree by degree on polynomial jump profiles, it is
exactly the solvability condition for a translation-invariant finite-stencil
correction to the canonical centered numerical entropy flux
\eqref{eq:canonical-centered-entropy-flux}.

Using Subsection~\ref{sec:polynomial-profile-coordinates}, restrict
\eqref{eq:mismatch-def-profile} to
\[
a_{i+\ell}
=\sum_{s=0}^{m}\binom{\ell}{s}c_s.
\]
Let $\Gamma_{\eta,k}(u,c_0,\ldots,c_m)$ denote the resulting smooth
function, which we call the \emph{profile-restricted entropy-flux mismatch},
with $u:=u_i=\bar u_i$.  Fix a constant background $u_*$ and write
$\widehat u:=u-u_*$.  The homogeneous term of total Taylor
degree $D$ lies in
$\mathbb R[\widehat u,c_0,\ldots,c_m]_D$; we identify this space with
$\mathscr P_{m,D}$ by relabeling the centered variable $\widehat u$ as $u$.

We shall use both gradings introduced above.  For any smooth finite-stencil
expression, jump degree is obtained by holding the base value $u_i$ fixed
and scaling every jump in the stencil by the same parameter.  After
restriction to polynomial profiles, a smooth function
$Q(u,c_0,\ldots,c_m)$ has jump-degree-$r$ component
\begin{equation}\label{eq:jump-degree-component}
	Q^{\langle r\rangle}(u,c_0,\ldots,c_m)
	:=\frac1{r!}
	\left.\frac{d^r}{d\lambda^r}
	Q(u,\lambda c_0,\ldots,\lambda c_m)
	\right|_{\lambda=0}.
\end{equation}
We use the notation
\begin{equation}\label{eq:jump-order-notation}
	Q=O_{\mathrm{jump}}(p)
	\quad\Longleftrightarrow\quad
	Q^{\langle r\rangle}=0
	\quad\text{for every }0\le r<p.
\end{equation}
Equivalently, for $u$ and $(c_0,\ldots,c_m)$ in bounded sets,
\[
Q(u,\lambda c_0,\ldots,\lambda c_m)
=O(\lambda^p)
\qquad\text{as }\lambda\to0,
\]
locally uniformly in these variables.  For a smooth finite-stencil
expression before restriction to polynomial profiles, the same notation is
used after writing
\[
u_{i+q}=u_i+X_q
\]
and simultaneously replacing every jump variable $X_q$ by $\lambda X_q$. 
Total Taylor degree at $u_*$,
by contrast, counts all the variables $u-u_*,c_0,\ldots,c_m$.  A term of
jump degree two may therefore contribute to several total Taylor degrees
through its $u$-dependent coefficient.

In these terms, the jump-degree-zero component vanishes because both
differences in \eqref{eq:mismatch-def-profile} vanish for constant data.  By
consistency, the jump-degree-one component is
\[
\eta''(u_i)a_i f(u_i)-\psi'(u_i)a_i=0,
\]
since $\psi'=\eta''f$.  The first term that may remain has jump degree two;
the lemma below identifies it as the leading part of a full local
coboundary.  Before applying the cohomological criterion, we subtract that
full coboundary.

\begin{lemma}[The jump-degree-two mismatch]
	\label{lem:quadratic-mismatch}
	For $1\le t\le\mu_k-1$, define
	\begin{equation}\label{eq:beta-definition}
		\beta_t
		:=\sum_{\ell=t+1}^{\mu_k}
		\alpha_\ell^{[\mu_k]}(\ell-t)
	\end{equation}
	and
	\begin{equation}\label{eq:quadratic-Bi-profile}
		\mathcal B_i
		:=\frac12\sum_{t=1}^{\mu_k-1}
		\beta_t(a_{i+t}-a_{i-t}).
	\end{equation}
	Then the component of $g_{\eta,i+1/2}$ of jump degree two is
	\begin{equation}\label{eq:quadratic-mismatch-term}
		g_{\eta,i+1/2}^{\langle2\rangle}
		=\eta''(u_i)f'(u_i)a_i\mathcal B_i.
	\end{equation}
	For $1\le t\le\mu_k-1$, put
	\begin{equation}\label{eq:K-definition}
		K_i^{(t)}
		:=\sum_{s=1}^{t}a_{i-s}a_{i+t-s},
	\end{equation}
	and set
	\begin{equation}\label{eq:quadratic-primitive}
		H_{\eta,i}^{\langle2\rangle}
		:=\frac12\eta''(u_i)f'(u_i)
		\sum_{t=1}^{\mu_k-1}\beta_tK_i^{(t)}.
	\end{equation}
	If
	\[
	C_\eta(u):=\eta''(u)f'(u),
	\qquad
	S_i:=\sum_{t=1}^{\mu_k-1}\beta_tK_i^{(t)},
	\]
	then the following exact identity holds:
	\begin{align}
		H_{\eta,i+1}^{\langle2\rangle}
		-H_{\eta,i}^{\langle2\rangle}
		&={}
		g_{\eta,i+1/2}^{\langle2\rangle}
		+\frac12
		\bigl(C_\eta(u_{i+1})-C_\eta(u_i)\bigr)S_{i+1}.
		\label{eq:quadratic-coboundary-exact-remainder}
	\end{align}
	The last term has jump degree at least three.  When $\mu_k=1$, all the
	sums above are empty and the statement holds with
		$\mathcal B_i=H_{\eta,i}^{\langle2\rangle}=0$.
\end{lemma}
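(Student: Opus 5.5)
We expand everything to second order in the jumps about the base value $u_i$, writing $u_{i+q}=u_i+X_q$ with $X_q$ a sum of consecutive jumps. The plan has four steps, carried out in order.

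\textbf{Step 1: expand the two ingredients.} For the two-point flux \eqref{eq:two-point-quadratic-entropy-flux}, Taylor expansion gives
\[
F^*(u_i+X,u_i+Y)=f(u_i)+\tfrac12 f'(u_i)(X+Y)+O_{\mathrm{jump}}(2).
\]
Summing with the FMT weights yields
\[
\widetilde F^k_{i+1/2}
=f(u_i)+\tfrac12 f'(u_i)\sum_{r=1}^{\mu_k}\alpha_r^{[\mu_k]}\sum_{s=0}^{r-1}\bigl(X_{-s}+X_{r-s}\bigr)+O_{\mathrm{jump}}(2).
\]
Since $v(u_{i+1})-v(u_i)=\eta''(u_i)a_i+\tfrac12\eta'''(u_i)a_i^2+\dots$, the jump-degree-two part of the product is
\[
\eta''a_i\cdot\bigl[\text{linear part of }\widetilde F\bigr]+\tfrac12\eta'''fa_i^2 .
\]
For the $\psi$ term, use $\psi'=\eta''f$. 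Then $\psi''=\eta'''f+\eta''f'$, so the degree-two part of $\psi(u_{i+1})-\psi(u_i)$ is
\[
\tfrac12(\eta'''f+\eta''f')a_i^2 .
\]
The $\eta'''f$ pieces cancel. We are left with
\[
g^{\langle2\rangle}=\eta''f'a_i\Bigl(\tfrac12\textstyle\sum_r\alpha_r\sum_s(X_{-s}+X_{r-s})-\tfrac12a_i\Bigr).
\]

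\textbf{Step 2: reduce the linear part to $\mathcal B_i$.} This is the combinatorial heart of the proof. Write $X_{r-s}=\sum_{p=0}^{r-s-1}a_{i+p}$ and $X_{-s}=-\sum_{p=1}^{s}a_{i-p}$. Then $\sum_{s=0}^{r-1}(X_{-s}+X_{r-s})$ is a combination of the $a_{i+t}$. Collecting coefficients:
\begin{itemize}
\item $a_i$ appears with coefficient $r$ in block $r$, so the normalization $\sum_r r\alpha_r=1$ produces exactly $a_i$.
\item $a_{i+t}$ with $t\ge1$ appears $r-t$ times for $r>t$.
\item $a_{i-t}$ appears $-(r-t)$ times for $r>t$.
\end{itemize}
Hence the bracket equals $a_i+\sum_t\beta_t(a_{i+t}-a_{i-t})$. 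After the $-\tfrac12a_i$ cancels half of it, this gives $\mathcal B_i$ and hence \eqref{eq:quadratic-mismatch-term}. I expect this bookkeeping of index ranges to be the main obstacle. I would verify it carefully by counting, for fixed $t$, the pairs $(s,p)$ with $s<r$ and $p$ in the appropriate range.

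\textbf{Step 3: the coboundary identity.} This is a direct telescoping check. Note that $K_{i+1}^{(t)}=\sum_{s=0}^{t-1}a_{i-s}a_{i+t-s}$, so
\[
K_{i+1}^{(t)}-K_i^{(t)}=a_ia_{i+t}-a_{i-t}a_i=a_i(a_{i+t}-a_{i-t}),
\]
because the two sums share all middle terms. Therefore
\[
\tfrac12C_\eta(u_i)(S_{i+1}-S_i)=C_\eta(u_i)a_i\mathcal B_i=g^{\langle2\rangle}.
\]
Writing $C_\eta(u_{i+1})S_{i+1}=C_\eta(u_i)S_{i+1}+(C_\eta(u_{i+1})-C_\eta(u_i))S_{i+1}$ gives \eqref{eq:quadratic-coboundary-exact-remainder} exactly.

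\textbf{Step 4: jump degree of the remainder and the trivial case.} The remainder has jump degree at least three, since $C_\eta(u_{i+1})-C_\eta(u_i)$ is $O_{\mathrm{jump}}(1)$ and $S_{i+1}$ is quadratic in the jumps. When $\mu_k=1$, the flux is $\alpha_1F^*(u_i,u_{i+1})$ with $\alpha_1=1$. The bracket in Step 1 is then $a_i$, so $g^{\langle2\rangle}=0$, consistent with all the sums being empty.
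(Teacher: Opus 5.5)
Your proposal is correct and follows essentially the same route as the paper. You expand $F^*$ to first order in the jumps, and you use the same counting identity $\sum_{s=0}^{r-1}(X_{-s}+X_{r-s})=r a_i+\sum_{t=1}^{r-1}(r-t)(a_{i+t}-a_{i-t})$ together with $\sum_r r\alpha_r^{[\mu_k]}=1$, which also fixes the constant term $f(u_i)$ of the flux. Like the paper, you then telescope $K_{i+1}^{(t)}-K_i^{(t)}=a_i(a_{i+t}-a_{i-t})$ and add and subtract $\tfrac12 C_\eta(u_i)S_{i+1}$ to obtain the exact remainder.
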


\begin{proof}
	Put
	\[
	X_\ell:=u_{i+\ell}-u_i,
	\qquad X_0=0.
	\]
	For fixed integers $\ell_1,\ell_2$, expansion of
	\eqref{eq:two-point-quadratic-entropy-flux} gives
	\[
	F^*(u_i+X_{\ell_1},u_i+X_{\ell_2})
	=f(u_i)+\frac12f'(u_i)(X_{\ell_1}+X_{\ell_2})
	+O_{\mathrm{jump}}(2).
	\]
	It follows from \eqref{eq:FMT-flux-profile} and the first moment condition
	that
	\begin{equation}\label{eq:FMT-linear-expansion}
		\widetilde F^k_{i+1/2}
		=f(u_i)+f'(u_i)L_i+O_{\mathrm{jump}}(2),
	\end{equation}
	where
	\[
	L_i
	:=\frac12\sum_{\ell=1}^{\mu_k}
	\alpha_\ell^{[\mu_k]}
	\sum_{s=0}^{\ell-1}(X_{-s}+X_{\ell-s}).
	\]
	The inner sum has the elementary representation
	\begin{equation}\label{eq:X-sum-jump-representation}
		\sum_{s=0}^{\ell-1}(X_{-s}+X_{\ell-s})
		=\ell a_i
		+\sum_{t=1}^{\ell-1}
		(\ell-t)(a_{i+t}-a_{i-t}).
	\end{equation}
	After summing in $\ell$, the normalization in
	\eqref{eq:FMT-moment-conditions} gives
	\begin{equation}\label{eq:Li-Bi-decomposition}
		L_i=\frac12a_i+\mathcal B_i.
	\end{equation}
	
	Since $\psi'=\eta''f$, Taylor expansion at $u_i$ yields
	\begin{align*}
		v(u_{i+1})-v(u_i)
		&={}
		\eta''(u_i)a_i
		+\frac12\eta'''(u_i)a_i^2
		+O_{\mathrm{jump}}(3),\\
		\psi(u_{i+1})-\psi(u_i)
		&={}
		\eta''(u_i)f(u_i)a_i\\
		&\quad
		+\frac12
		\bigl(\eta'''(u_i)f(u_i)
		+\eta''(u_i)f'(u_i)\bigr)a_i^2
		+O_{\mathrm{jump}}(3).
	\end{align*}
	Substitution in \eqref{eq:mismatch-def-profile} gives the jump-degree-two
	term
	\[
	\eta''(u_i)f'(u_i)a_i
	\left(L_i-\frac12a_i\right),
	\]
	and \eqref{eq:quadratic-mismatch-term} follows from
	\eqref{eq:Li-Bi-decomposition}.
	
	It remains to identify this term as the jump-degree-two component of a
	local coboundary.  Reindexing the first sum gives
	\begin{align}
		K_{i+1}^{(t)}-K_i^{(t)}
		&={}
		\sum_{s=0}^{t-1}a_{i-s}a_{i+t-s}
		-\sum_{s=1}^{t}a_{i-s}a_{i+t-s}
		\notag\\
		&=a_i(a_{i+t}-a_{i-t}).
		\label{eq:K-telescoping-identity}
	\end{align}
	Therefore
	\[
	\frac12C_\eta(u_i)(S_{i+1}-S_i)
	=g_{\eta,i+1/2}^{\langle2\rangle}.
	\]
	Adding and subtracting
	$\frac12C_\eta(u_i)S_{i+1}$ in
	$\frac12C_\eta(u_{i+1})S_{i+1}
	-\frac12C_\eta(u_i)S_i$
	proves \eqref{eq:quadratic-coboundary-exact-remainder}.  Finally,
	$C_\eta(u_{i+1})-C_\eta(u_i)$ has jump degree at least one and $S_{i+1}$
	has jump degree two, so their product begins in degree three.
\end{proof}

The local quantity $H_{\eta,i}^{\langle2\rangle}$ is homogeneous of jump
degree two; the superscript records this grading.  Let
\[
\mathscr H_{\eta,k}^{\langle2\rangle}(u,c_0,\ldots,c_m)
\]
denote the restriction of $H_{\eta,i}^{\langle2\rangle}$ to degree-$m$
polynomial jump profiles, expressed in the profile coordinates based at
$i$.  Its dependence on the fixed flux $f$ is suppressed in the notation.
The coefficient $\eta''(u)f'(u)$ is retained as a function of $u$; it is not
frozen at the background state.  Define
\begin{equation}\label{eq:normalized-mismatch}
	\widetilde\Gamma_{\eta,k}
	:=\Gamma_{\eta,k}
	-\bigl(\mathscr H_{\eta,k}^{\langle2\rangle}\circ T_m
	-\mathscr H_{\eta,k}^{\langle2\rangle}\bigr).
\end{equation}
We call $\widetilde\Gamma_{\eta,k}$ the \emph{reduced profile
mismatch}.
Thus we subtract the full coboundary, not only its jump-degree-two
component.  Formula \eqref{eq:quadratic-coboundary-exact-remainder} explains why
the distinction matters: shifting the $u$-dependent coefficient produces
terms of higher jump degree.  Subtracting the full coboundary removes
the jump-degree-two term without changing the cohomology class of any total
homogeneous component.  Indeed, $T_m$ is linear and fixes the centered
origin, so taking a homogeneous Taylor component commutes with pullback by
$T_m$.

This normalization has an exact entropy-flux interpretation.  Define
\begin{equation}\label{eq:reduced-full-mismatch}
	\widetilde g_{\eta,i+1/2}
	:=g_{\eta,i+1/2}
	-\bigl(H_{\eta,i+1}^{\langle2\rangle}
	-H_{\eta,i}^{\langle2\rangle}\bigr).
\end{equation}
Its restriction to polynomial profiles is
$\widetilde\Gamma_{\eta,k}$.  By
Lemma~\ref{lem:mismatch-entropy-flux-correction}, replacing the canonical
entropy flux by
\begin{equation}\label{eq:reduced-canonical-entropy-flux}
	\mathcal Q^{\mathrm{red}}_{\eta,i+1/2}
	:=\mathcal Q^0_{\eta,i+1/2}
	-\frac{H_{\eta,i}^{\langle2\rangle}
	+H_{\eta,i+1}^{\langle2\rangle}}{2}
\end{equation}
changes \eqref{eq:mismatch-entropy-balance} into
\begin{equation}\label{eq:reduced-mismatch-entropy-balance}
	\frac{d}{dt}\eta(u_i)
	+\frac{\mathcal Q^{\mathrm{red}}_{\eta,i+1/2}
	-\mathcal Q^{\mathrm{red}}_{\eta,i-1/2}}{h}
	=
	\frac{\widetilde g_{\eta,i+1/2}
	+\widetilde g_{\eta,i-1/2}}{2h}.
\end{equation}
Thus passing to the reduced mismatch is exactly a preliminary local
entropy-flux correction; it neither creates nor removes the possibility of a
further translation-invariant finite-stencil correction.

For each integer $D\ge0$, let
\begin{equation}\label{eq:normalized-mismatch-homogeneous-part}
	G_{\eta,k}^{(D)},\ 
	\widetilde G_{\eta,k}^{(D)},\ 
	B_{\eta,k}^{(D)}
	\in\mathscr P_{k-2,D}
\end{equation}
denote, respectively, the homogeneous total-degree-$D$ Taylor components of
$\Gamma_{\eta,k}$, $\widetilde\Gamma_{\eta,k}$, and
$\mathscr H_{\eta,k}^{\langle2\rangle}$ at the chosen constant background.
Taking homogeneous components in \eqref{eq:normalized-mismatch} gives
\begin{equation}\label{eq:original-reduced-mismatch-coboundary}
	G_{\eta,k}^{(D)}
	=
	\widetilde G_{\eta,k}^{(D)}
	+\delta_{k-2,D}B_{\eta,k}^{(D)}.
\end{equation}
Thus the original and reduced mismatch components determine the same
cohomology class.  By
Lemma~\ref{lem:mismatch-entropy-flux-correction}, they consequently carry the
same obstruction to a local numerical entropy-flux correction.

\begin{proposition}[Entropy-flux correction criterion for the reduced FMT
mismatch]
	\label{prop:fmt-closure}
	Fix an integer $D\ge0$.  The following are equivalent:
	\begin{enumerate}[label=\textup{(\roman*)}]
		\item the homogeneous degree-$D$ entropy residual associated with
		$G_{\eta,k}^{(D)}$, after restriction to degree-$(k-2)$ polynomial jump
		profiles, can be canceled by a translation-invariant finite-stencil
		$C^D$ correction to $\mathcal Q^0_{\eta,i+1/2}$;
		\item there is an $H^{(D)}\in\mathscr P_{k-2,D}$ such that
		\[
			\widetilde G_{\eta,k}^{(D)}
			=H^{(D)}\circ T_{k-2}-H^{(D)};
		\]
		equivalently, $\widetilde G_{\eta,k}^{(D)}$ has a local primitive at the
		chosen background in the sense of Definition~\ref{def:local-primitive};
		\item $[\widetilde G_{\eta,k}^{(D)}]=0$ in $H^1_{k-2,D}$;
		\item
		\begin{equation}\label{eq:polynomial-closure-criterion}
			\Lambda\bigl(\widetilde G_{\eta,k}^{(D)}\bigr)=0
			\qquad\text{for every }
			\Lambda\in\ker\delta_{k-2,D}^{\vee}.
		\end{equation}
	\end{enumerate}
\end{proposition}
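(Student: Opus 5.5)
The plan is to chain equivalences already established, with $m=k-2$ throughout. The central relation is \eqref{eq:original-reduced-mismatch-coboundary}:
\[
G_{\eta,k}^{(D)}=\widetilde G_{\eta,k}^{(D)}+\delta_{k-2,D}B_{\eta,k}^{(D)}.
\]
It says that $G_{\eta,k}^{(D)}$ and $\widetilde G_{\eta,k}^{(D)}$ differ by an element of $\operatorname{Range}\delta_{k-2,D}$. Consequently $[G_{\eta,k}^{(D)}]=[\widetilde G_{\eta,k}^{(D)}]$ in $H^1_{k-2,D}$. I would record this first.

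To justify the relation, I would note that $T_{k-2}$ is linear and fixes the centered origin. Taking the homogeneous degree-$D$ Taylor component therefore commutes with pullback by $T_{k-2}$. Applying this componentwise to \eqref{eq:normalized-mismatch} gives the relation. The only point needing care is that $\mathscr H^{\langle2\rangle}_{\eta,k}$ is smooth, so its degree-$D$ component $B^{(D)}_{\eta,k}$ is a well-defined element of $\mathscr P_{k-2,D}$.

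\textbf{(i)$\Leftrightarrow$(iii).} I would apply the converse half of Lemma~\ref{lem:mismatch-entropy-flux-correction} with $G=G_{\eta,k}^{(D)}$. That lemma states that a translation-invariant finite-stencil $C^D$ correction cancels the degree-$D$ residual exactly when $[G_{\eta,k}^{(D)}]=0$. By the class equality above, this is the same as $[\widetilde G_{\eta,k}^{(D)}]=0$.

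\textbf{(ii)$\Leftrightarrow$(iii).} The first form of (ii) states $\widetilde G^{(D)}_{\eta,k}=\delta_{k-2,D}H^{(D)}$, which is exactly membership in $\operatorname{Range}\delta_{k-2,D}$. That is (iii) by definition of the cokernel, or by Proposition~\ref{prop:cohomological-test} (i)$\Leftrightarrow$(ii). The "equivalently" clause of (ii), about a local primitive in the sense of Definition~\ref{def:local-primitive}, is the equivalence \eqref{eq:local-primitive-coboundary-equivalence}. That equivalence was derived from Lemma~\ref{lem:finite-stencil-polynomial-profile}, and it applies verbatim to the element $\widetilde G^{(D)}_{\eta,k}\in\mathscr P_{k-2,D}$.

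\textbf{(iii)$\Leftrightarrow$(iv).} This is Proposition~\ref{prop:cohomological-test} (ii)$\Leftrightarrow$(iii), applied to $\widetilde G^{(D)}_{\eta,k}$.

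There is no genuine obstacle, since every step is a citation. The one place deserving explicit comment is the commutation of Taylor-component extraction with $T_{k-2}^*$. I would also note that the identification of $\mathbb R[\widehat u,c_0,\ldots,c_m]_D$ with $\mathscr P_{m,D}$ is compatible with the shift: in centered coordinates $T_m$ retains its form, as observed in Subsection~\ref{sec:polynomial-profile-coordinates}. This compatibility is what makes the cohomology classes of $G$ and $\widetilde G$ comparable inside the same space.
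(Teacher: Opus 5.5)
Your proposal is correct and follows the paper's proof. Both combine the converse half of Lemma~\ref{lem:mismatch-entropy-flux-correction} with the class identity \eqref{eq:original-reduced-mismatch-coboundary} to pass from $G_{\eta,k}^{(D)}$ to $\widetilde G_{\eta,k}^{(D)}$, and then use Proposition~\ref{prop:cohomological-test}, together with \eqref{eq:local-primitive-coboundary-equivalence} for the local-primitive clause; the only differences are that the paper links (i) to (ii) rather than to (iii) and additionally records the explicit correction $K^{(D)}=\tfrac12(I+T_{k-2}^*)(H^{(D)}+B_{\eta,k}^{(D)})$, which are cosmetic.
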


\begin{proof}
	By Lemma~\ref{lem:mismatch-entropy-flux-correction}, condition
	\textup{(i)} is equivalent to
	\[
		G_{\eta,k}^{(D)}
		\in\operatorname{Range}\delta_{k-2,D}.
	\]
	Identity \eqref{eq:original-reduced-mismatch-coboundary} shows that this is
	equivalent to
	$\widetilde G_{\eta,k}^{(D)}
	\in\operatorname{Range}\delta_{k-2,D}$, which is condition~\textup{(ii)}.
	The equivalence of \textup{(ii)}--\textup{(iv)} is
	Proposition~\ref{prop:cohomological-test} with $m=k-2$.

	More explicitly, if
	$\widetilde G_{\eta,k}^{(D)}=\delta_{k-2,D}H^{(D)}$, then
	$H^{(D)}+B_{\eta,k}^{(D)}$ is a primitive for the original mismatch
	component.  The profile representative of the corresponding term
	subtracted from the canonical entropy flux is
	\begin{equation}\label{eq:explicit-homogeneous-entropy-flux-correction}
		K^{(D)}
		=
		\frac{I+T_{k-2}^*}{2}
		\bigl(H^{(D)}+B_{\eta,k}^{(D)}\bigr).
	\end{equation}
\end{proof}

Proposition~\ref{prop:fmt-closure} is a degree-by-degree criterion for the
existence of a translation-invariant finite-stencil local numerical entropy
flux.  Such a flux for the smooth mismatch would have to solve the
corresponding coboundary equation at every Taylor degree. 
The moment conditions \eqref{eq:FMT-moment-conditions} ensure the
prescribed formal accuracy of the flux-difference construction, but they do
not force the classes
\[
[\widetilde G_{\eta,k}^{(D)}]\in H^1_{k-2,D}
\]
to vanish.  We next give a strictly convex entropy for which one of these
classes is nonzero.

\subsection{\texorpdfstring{A fourth-order entropy-flux obstruction}{A fourth-order entropy-flux obstruction}}
\label{sec:fourth-order-obstruction}

We now specialize to reconstruction order $k=4$ and take the constant
background to be $u_*=0$, so that $u$ itself is the Taylor variable.  The
polynomial jump profiles have degree at most two:
\begin{equation}\label{eq:fourth-order-profile}
	a_{i+\ell}
	=c_0+\ell c_1+\binom{\ell}{2}c_2.
\end{equation}
Write $T:=T_2^*$ for the pullback induced by translation of the base index.
Its action on the coordinate functions is
\begin{equation}\label{eq:fourth-order-shift}
	Tu=u+c_0,
	\qquad
	Tc_0=c_0+c_1,
	\qquad
	Tc_1=c_1+c_2,
	\qquad
	Tc_2=c_2.
\end{equation}
Thus $T^\ell u$ is the cell average $\bar u_{i+\ell}$ expressed in the
profile coordinates based at $i$; in particular,
\begin{equation}\label{eq:fourth-order-shifted-base-values}
	T^2u=u+2c_0+c_1,
	\qquad
	T^{-1}u=u-c_0+c_1-c_2.
\end{equation}
The space $\mathscr P_{2,7}$ has dimension
$\binom{7+4-1}{4-1}=120$, while
\eqref{eq:fourth-order-cohomology-dimension} gives
$\dim H^1_{2,7}=10$.
To prove that a particular reduced profile mismatch $G$ is not exact, it
is enough to find $\Lambda\in\ker\delta_{2,7}^{\vee}$ such that
$\Lambda(G)\ne0$.

\begin{proposition}[A fourth-order obstruction]
	\label{prop:fourth-order-obstruction}
	Consider the member of the FMT family with moment index two, the minimal
	choice for a fourth-order reconstruction, for
	\[
	f(u)=\frac{u^3}{3},
	\]
	and the entropy pair
	\begin{equation}\label{eq:fourth-order-entropy-pair}
		\eta_\star(u)
		=\frac{u^2}{2}+\frac{u^5}{5}+\frac{u^8}{8},
		\qquad
		q_\star(u)
		=\frac{u^4}{4}+\frac{u^7}{7}+\frac{u^{10}}{10}.
	\end{equation}
	The entropy $\eta_\star$ is globally strictly convex.
	After forming the reduced profile mismatch
	$\widetilde\Gamma_{\eta_\star,4}$ as in
	\eqref{eq:normalized-mismatch}, its total-degree-seven component satisfies
	\begin{equation}\label{eq:fourth-order-nonzero-class}
		\bigl[\widetilde G_{\eta_\star,4}^{(7)}\bigr]
		\ne0
		\qquad\text{in }H^1_{2,7}.
	\end{equation}
	Consequently, on polynomial jump profiles of degree at most two, this
	component admits no translation-invariant finite-stencil $C^7$ local
	primitive at the background $u_*=0$.
	Equivalently, no translation-invariant finite-stencil
	$C^7$ correction to the canonical centered numerical entropy flux can
	cancel the corresponding total-degree-seven component of the local
	entropy residual.
\end{proposition}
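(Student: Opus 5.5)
The plan is to verify strict convexity directly, reduce the obstruction to evaluating one explicit linear functional, and then compute that functional. Strict convexity is elementary: $\eta_\star''(u)=1+4u^3+7u^6$. As a quadratic in $w=u^3$ this is $7w^2+4w+1$, whose discriminant $16-28<0$ makes it positive for all $u$. One also checks $q_\star'=\eta_\star'f'$ with $f'=u^2$: indeed $(u+u^4+u^7)u^2=u^3+u^6+u^9=q_\star'$. For the class statement I would use the dual test of Proposition~\ref{prop:cohomological-test}. It suffices to exhibit one rational functional $\mathfrak L\in\ker\delta_{2,7}^{\vee}$, that is, $\mathfrak L\circ(T-I)=0$ on the $120$-dimensional space $\mathscr P_{2,7}$, with $\mathfrak L(\widetilde G^{(7)}_{\eta_\star,4})\ne0$. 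Proposition~\ref{prop:fmt-closure} and Lemma~\ref{lem:mismatch-entropy-flux-correction} then give the stated equivalent consequences about local primitives and entropy-flux corrections.

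\textbf{Computing the mismatch.} With $\mu_4=2$, formula \eqref{eq:FMT-coefficient-formula} gives $\alpha_1=4/3$ and $\alpha_2=-1/6$. Hence
\[
\widetilde F^4_{i+1/2}=\tfrac43F^*(u_i,u_{i+1})-\tfrac16\bigl(F^*(u_i,u_{i+2})+F^*(u_{i-1},u_{i+1})\bigr),
\]
where for the cubic flux $F^*(x,y)=(x^3+x^2y+xy^2+y^3)/12$ exactly. On profiles, substitute $u_{i+\ell}=T^\ell u$ using \eqref{eq:fourth-order-shift}--\eqref{eq:fourth-order-shifted-base-values}. This makes $\widetilde F^4$ a homogeneous cubic in $(u,c_0,c_1,c_2)$. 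Next take $v=\eta_\star'=u+u^4+u^7$ and $\psi=vf-q_\star=\tfrac{1}{12}u^4+\tfrac{4}{21}u^7+\tfrac{9}{40}u^{10}$. The degree-seven part of $\Gamma$ is
\[
(T u^4-u^4)\,\widetilde F^4-\tfrac{4}{21}(Tu^7-u^7),
\]
the only pairing of total degree $7$: the $u^1$ part of $v$ pairs with the cubic flux to give degree $4$, and the $u^7$ part gives degree $10$. The second term is already a coboundary, so it can be dropped when testing the class. For the reduction, $\beta_1=\alpha_2=-1/6$, $K^{(1)}_i=a_{i-1}a_i$, and $C_\eta=\eta_\star''u^2=u^2+4u^5+7u^8$. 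The degree-seven component of $\mathscr H^{\langle2\rangle}$ is $B^{(7)}=\tfrac12\cdot4u^5\cdot(-\tfrac16)(c_0-c_1+c_2)c_0$. Its coboundary $\delta_{2,7}B^{(7)}$ is subtracted, although by \eqref{eq:original-reduced-mismatch-coboundary} this subtraction does not change the class. So in practice I would test $\mathfrak L$ directly on $(Tu^4-u^4)\widetilde F^4$ restricted to degree seven.

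\textbf{Building the functional.} I would use Lemma~\ref{lem:profile-exactness}: $\ker\delta^\vee=\ker N^\vee$, so one needs a functional killing the range of the nilpotent logarithm. A convenient choice is a coefficient extraction adapted to the lowest-weight vectors of $\operatorname{Sym}^7(V_3)$. Concretely, write the matrix of $T-I$ on monomials of degree $7$ in $(u,c_0,c_1,c_2)$. Because $T-I$ strictly raises the weight toward $c_2$ (increasing the grading $\sum$ index), the system is triangular in the grading. The left kernel is $10$-dimensional and can be found by exact rational elimination, weight by weight. It suffices to search in the weight-$10$ slice, the middle weight $\lfloor 3\cdot7/2\rfloor$, where the cokernel is supported with multiplicities from Theorem~\ref{thm:cohomology-dimension}. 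Here the weight of $u^{e}c_0^{e_0}c_1^{e_1}c_2^{e_2}$ is $e_0+2e_1+3e_2$. The coboundary maps weight $w$ into weights $>w$, so a functional supported on the top relevant layer and annihilating the images of weight-$9$ (and lower) monomials is the natural candidate.

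\textbf{Main obstacle.} The main obstacle is purely computational: expanding $(Tu^4-u^4)\widetilde F^4$ into the degree-seven monomial basis, and solving the linear system for $\mathfrak L$ exactly. I would do both with exact rational arithmetic, recording $\mathfrak L$'s coefficients as in Appendix~\ref{app:fourth-order-functional}. I would check $\mathfrak L\circ\delta_{2,7}=0$ on all $120$ monomials and then evaluate $\mathfrak L(\widetilde G^{(7)})$ to obtain a nonzero rational. If a chosen $\mathfrak L$ happens to give zero, I would try other basis elements of the $10$-dimensional kernel until one is nonzero.
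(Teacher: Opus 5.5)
Your outline matches the paper's proof. It checks convexity of $\eta_\star''$, uses homogeneity to isolate the $u^5/5$ piece, removes $\delta_{2,7}B^{(7)}$ with $B^{(7)}=-\frac13u^5c_0(c_0-c_1+c_2)$, and applies the dual test with one $\mathfrak L\in\ker\delta_{2,7}^{\vee}$. Your remark that $-\frac4{21}\bigl((Tu)^7-u^7\bigr)=-\frac4{21}\,\delta_{2,7}(u^7)$ can be dropped is correct; the paper simply evaluates $\mathfrak L$ on all of $G_{\mathrm{raw}}$. One slip: the $u^{10}$ coefficient of $\psi$ is $7/30$, not $9/40$, but this is irrelevant in degree seven.

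\textbf{The step that fails is your recipe for finding $\mathfrak L$.} The index $\lfloor nD/2\rfloor=10$ in Theorem~\ref{thm:cohomology-dimension} comes from telescoping the Cayley--Sylvester multiplicities $p_{3,7}(s)-p_{3,7}(s-1)$ over $0\le s\le10$. The cokernel is not concentrated at weight $10$.

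Use your grading $w=e_0+2e_1+3e_2$. The part of $\delta_{2,7}$ that raises $w$ by exactly one is $E_0=c_0\partial_u+c_1\partial_{c_0}+c_2\partial_{c_1}$. This is an $\mathfrak{sl}_2$ raising operator, and the weight-$w$ slice (of dimension $p_{3,7}(w)$) has $\mathfrak{sl}_2$-weight $2w-21$. The weight-$9$ slice has negative $\mathfrak{sl}_2$-weight $-3$, so $E_0$ is injective on it. Since $p_{3,7}(9)=p_{3,7}(10)=10$, $E_0$ maps it isomorphically onto the weight-$10$ slice; equivalently, $V_1$ does not occur in $\operatorname{Sym}^7(V_3)$. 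For a weight-$9$ monomial $M$, the weight-$10$ component of $\delta_{2,7}M$ is exactly $E_0M$. Hence a functional supported on the weight-$10$ slice that kills all such $\delta_{2,7}M$ must vanish, and your candidate space is $\{0\}$.

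A nonzero element of $\ker\delta_{2,7}^{\vee}$ must have its top slice at some $s\le9$ with $p_{3,7}(s)>p_{3,7}(s-1)$. Because $\delta_{2,7}$ is not homogeneous in $w$ (already $\delta_{2,7}(u^7)=(u+c_0)^7-u^7$ spreads over $w=1,\dots,7$), it generally also needs lower-weight correction terms. That is exactly the shape of the paper's $\mathfrak L$ in Appendix~\ref{app:fourth-order-functional}. Its top part occupies all five weight-$5$ monomials and annihilates $E_0$ of the weight-$4$ slice, which is possible because $p_{3,7}(5)-p_{3,7}(4)=1$. It carries corrections down to $u^6c_0$ at weight $1$, and it gives $\mathfrak L(G_{\mathrm{raw}})=-32$.

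Your fallback, computing the full $10$-dimensional left kernel over all $120$ monomials and testing each basis vector, would work. As written, however, the proposal leaves the decisive nonvanishing unverified, and that explicit pairing is the actual content of the proof.
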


\begin{proof}
	First,
	\begin{equation}\label{eq:fourth-order-entropy-convexity}
		\eta_\star''(u)
		=1+4u^3+7u^6
		=7\left(u^3+\frac27\right)^2+\frac37,
	\end{equation}
	so $\eta_\star''\ge3/7$ on $\mathbb R$.
	
	Because $f$ is homogeneous of degree three, both $F^*$ and the resulting
	FMT flux $\widetilde F^4$ are homogeneous of degree three.  For the entropy $\eta_p(u)=u^p/p$, the
	entropy-variable jump has degree $p-1$, while the entropy-potential jump has
	degree $p+2$.  Thus the corresponding mismatch has total degree $p+2$.
	Hence only
	\[
	\eta_5(u)=\frac{u^5}{5}
	\]
	contributes to total degree seven.  For this component,
	\begin{equation}\label{eq:degree-five-entropy-data}
		v_5(u)=u^4,
		\qquad
		q_5(u)=\frac17u^7,
		\qquad
		\psi_5(u)=v_5(u)f(u)-q_5(u)=\frac4{21}u^7.
	\end{equation}
	The two-point flux is
	\begin{equation}\label{eq:cubic-two-point-flux}
		F^*(u_L,u_R)
		=\frac{u_L^3+u_L^2u_R+u_Lu_R^2+u_R^3}{12},
	\end{equation}
	and the fourth-order coefficients are
	\begin{equation}\label{eq:fourth-order-FMT-coefficients}
		\alpha_1^{[2]}=\frac43,
		\qquad
		\alpha_2^{[2]}=-\frac16.
	\end{equation}
	On the profile coordinates, set
	\begin{equation}\label{eq:fourth-order-profile-flux}
		\mathcal F^{[2]}
		:=\frac43F^*(u,Tu)
		-\frac16\left\{
		F^*(u,T^2u)+F^*(T^{-1}u,Tu)
		\right\}.
	\end{equation}
	This is exactly the restriction of $\widetilde F^4_{i+1/2}$ in
	\eqref{eq:FMT-flux-profile} to the degree-two profile
	\eqref{eq:fourth-order-profile}.
	The corresponding raw degree-seven mismatch
	$G_{\mathrm{raw}}\in\mathscr P_{2,7}$ is
	\begin{equation}\label{eq:fourth-order-raw-mismatch}
		G_{\mathrm{raw}}
		:=\bigl((Tu)^4-u^4\bigr)\mathcal F^{[2]}
		-\frac4{21}\bigl((Tu)^7-u^7\bigr).
	\end{equation}
	
	For $\mu_k=2$, formula \eqref{eq:beta-definition} gives
	$\beta_1=-1/6$, and $K_i^{(1)}=a_{i-1}a_i$.  On a profile of degree at most
	two,
	\[
	a_{i-1}=c_0-c_1+c_2,
	\qquad a_i=c_0.
	\]
	It follows from \eqref{eq:quadratic-primitive} that the total-degree-seven
	part of the profile-coordinate expression
	$\mathscr H_{\eta_5,4}^{\langle2\rangle}$ is the polynomial
	$H_{\langle2\rangle}^{(7)}\in\mathscr P_{2,7}$ given by
	\begin{equation}\label{eq:fourth-order-H2}
		H_{\langle2\rangle}^{(7)}
		=-\frac13u^5c_0(c_0-c_1+c_2).
	\end{equation}
	Here $\langle2\rangle$ records jump degree; it does not refer to the
	quadratic entropy.
	Accordingly, define $\widetilde G^{(7)}\in\mathscr P_{2,7}$ by
	\begin{equation}\label{eq:fourth-order-reduced-mismatch}
		\widetilde G^{(7)}
		:=G_{\mathrm{raw}}-\delta_{2,7}H_{\langle2\rangle}^{(7)}.
	\end{equation}
	The contributions from $\eta_2$ and $\eta_8$, including their
	jump-degree-two coboundaries, have total degrees four and ten, respectively,
	and hence do not enter degree seven.  Thus
	$\widetilde G^{(7)}=\widetilde G_{\eta_\star,4}^{(7)}$.
	
	Appendix~\ref{app:fourth-order-functional} defines a rational linear
	functional $\mathfrak L\in\ker\delta_{2,7}^{\vee}$ for which
	\begin{equation}\label{eq:fourth-order-nonzero-pairing}
		\mathfrak L(G_{\mathrm{raw}})=-32.
	\end{equation}
	Since $\mathfrak L$ annihilates every coboundary,
	\[
	\mathfrak L(\widetilde G^{(7)})
	=\mathfrak L(G_{\mathrm{raw}})
	=-32\ne0.
	\]
	Proposition~\ref{prop:cohomological-test} proves
	\eqref{eq:fourth-order-nonzero-class}.  If a finite-stencil $C^7$ local
	primitive existed, the homogeneous degree-seven term of its Taylor
	expansion would represent $\widetilde G^{(7)}$ as a
	$\delta_{2,7}$-coboundary, contradicting this nonzero pairing. 
	By
	Lemma~\ref{lem:mismatch-entropy-flux-correction}, the same nonzero class
	also rules out a translation-invariant finite-stencil $C^7$
	entropy-flux correction at this Taylor degree.
\end{proof}

This completes the proof of Theorem~\ref{thm:C}.

\section{ENO--TV coercivity on nonuniform meshes}
\label{sec:mesh-geometry}

We now turn to nonuniform meshes and ask which parts of the uniform-grid
coercivity theory survive.  For odd reconstruction orders, the uniform-grid estimate persists on
every fixed class of quasi-uniform meshes, with a constant depending on the
quasi-uniformity ratio.
Without a global mesh-ratio bound, the estimate can fail on a single
fixed mesh, even if the signed ENO source is replaced by its absolute
counterpart.
We prove quasi-uniform coercivity for odd orders $k\ge3$.  The fixed-mesh
counterexample below also applies to $k=2$; we do not address the quasi-uniform
second-order case.

\subsection{ENO reconstruction and divided differences on a nonuniform mesh}
\label{sec:nonuniform-reconstruction}

Let $(x_{i+1/2})_{i\in\mathbb Z}$ be a strictly increasing sequence such
that $x_{i+1/2}\to\pm\infty$ as $i\to\pm\infty$, and set
\begin{equation}\label{eq:nonuniform-cells}
	I_i=(x_{i-1/2},x_{i+1/2}),
	\qquad
	h_i=x_{i+1/2}-x_{i-1/2}>0.
\end{equation}
The mesh is $\Lambda$-quasi-uniform if, for some $h_*>0$,
\begin{equation}\label{eq:nonuniform-quasi-uniformity}
	h_*\le h_i\le\Lambda h_*
	\qquad(i\in\mathbb Z).
\end{equation}
We first consider compactly supported cell-average data
$(\bar u_i)_{i\in\mathbb Z}$ and retain the notation
\begin{equation}\label{eq:nonuniform-jumps-amplitude}
	a_i:=\bar u_{i+1}-\bar u_i,
	\qquad
	U:=\|\bar u\|_{\ell^\infty}.
\end{equation}

Choose cumulative interface values $(V_{i+1/2})_{i\in\mathbb Z}$ satisfying
\begin{equation}\label{eq:nonuniform-primitive}
	V_{i+1/2}-V_{i-1/2}=h_i\bar u_i.
\end{equation}
They are the interface values of an antiderivative of the piecewise
constant function equal to $\bar u_i$ on $I_i$.  They are determined up to an
additive constant, which has no effect on any divided difference of positive
order.  For $r\ge0$, define the width
\begin{equation}\label{eq:nonuniform-window-width}
	H_i^{(r)}
	:=x_{i+r+1/2}-x_{i-1/2}
	=\sum_{q=0}^{r}h_{i+q}.
\end{equation}
The cell-average divided differences are given recursively by
\begin{equation}\label{eq:nonuniform-divided-differences}
	D_i^{(0)}:=\bar u_i,
	\qquad
	D_i^{(r)}
	:=\frac{D_{i+1}^{(r-1)}-D_i^{(r-1)}}{H_i^{(r)}},
	\qquad r\ge1.
\end{equation}
Throughout this section, $D_i^{(r)}$ denotes the divided differences
defined by \eqref{eq:nonuniform-divided-differences}, whereas $\mathrm D$
continues to denote the forward-difference operator on sequences.
Starting from
\[
V[x_{i-1/2},x_{i+1/2}]
=\frac{V_{i+1/2}-V_{i-1/2}}{h_i}
=\bar u_i,
\]
an induction in $r$ gives
\begin{equation}\label{eq:nonuniform-primitive-divided-difference}
	D_i^{(r)}
	=
	V[x_{i-1/2},x_{i+1/2},\ldots,x_{i+r+1/2}].
\end{equation}
Thus $D_i^{(r)}$ is the divided difference of order $r+1$ of the cumulative
interface data.  The superscript $r$ counts differences of the cell averages,
rather than the number of interface values.

We use the same ENO recursion from \cite{FMT} as in
Subsection~\ref{sec:eno-reconstruction}, now with the divided differences in
\eqref{eq:nonuniform-divided-differences}.  Let $r_i^{(\ell)}$ be the left
endpoint of the $\ell$-cell stencil for cell $i$, beginning with
$r_i^{(1)}=i$.  If $r_i^{(\ell)}=r$, define
\[
L_{i,h}^{(\ell)}:=D_{r-1}^{(\ell)},
\qquad
R_{i,h}^{(\ell)}:=D_r^{(\ell)},
\]
and, for $1\le\ell<k$, set
\begin{equation}\label{eq:nonuniform-endpoint-recursion}
	r_i^{(\ell+1)}
	=
	\begin{cases}
		r_i^{(\ell)}-1,
		& |L_{i,h}^{(\ell)}|<|R_{i,h}^{(\ell)}|,\\[2mm]
		r_i^{(\ell)},
		& |L_{i,h}^{(\ell)}|\ge |R_{i,h}^{(\ell)}|.
	\end{cases}
\end{equation}
The final stencil is
\[
\mathcal S_{i,h}^{(k)}
:=
\{r_i^{(k)},r_i^{(k)}+1,\ldots,r_i^{(k)}+k-1\},
\]
and adjacent endpoints determine the half-open index intervals
\begin{equation}\label{eq:nonuniform-selected-index-interval}
	B_i^{(\ell)}
	:=
	[r_i^{(\ell)},r_{i+1}^{(\ell)})\cap\mathbb Z.
\end{equation}

Let $\mathbb P_d$ denote the real polynomials of degree at most $d$.  The
reconstruction $p_i^{(k,h)}\in\mathbb P_{k-1}$ is the unique polynomial
satisfying
\begin{equation}\label{eq:nonuniform-reconstruction}
	\frac1{h_j}\int_{I_j}p_i^{(k,h)}(x)\,dx=\bar u_j,
	\qquad
	j\in\mathcal S_{i,h}^{(k)}.
\end{equation}
Uniqueness follows from the antiderivative argument used in Section~\ref{sec:eno-source}:
if a polynomial in $\mathbb P_{k-1}$ has zero averages on $k$ consecutive
cells, then a polynomial antiderivative takes the same value at $k+1$ distinct
interfaces and is therefore constant.  At $x_{i+1/2}$, define the
point-value traces
\[
p_{i+1/2}^{-,(k,h)}
:=p_i^{(k,h)}(x_{i+1/2}),
\qquad
p_{i+1/2}^{+,(k,h)}
:=p_{i+1}^{(k,h)}(x_{i+1/2}).
\]
We write
\begin{equation}\label{eq:nonuniform-source}
	\Delta p_{i+1/2}^{(k,h)}
	:=p_{i+1/2}^{+,(k,h)}-p_{i+1/2}^{-,(k,h)},
	\qquad
	Q_{k,h}(\bar u)
	:=\sum_{i\in\mathbb Z}
	a_i\Delta p_{i+1/2}^{(k,h)}.
\end{equation}

The scale-invariant form of the highest divided difference is
\begin{equation}\label{eq:nonuniform-normalized-difference}
	\widehat D_i^{(k)}
	:=
	\bigl(H_i^{(k)}\bigr)^k|D_i^{(k)}|.
\end{equation}
We shall also use the local jump amplitude
\begin{equation}\label{eq:nonuniform-local-amplitude}
	A_i^{(\ell)}
	:=
	\max_{0\le q\le\ell-1}|a_{i+q}|,
	\qquad \ell\ge1.
\end{equation}

\begin{lemma}[Affine scaling]
	\label{lem:nonuniform-affine-scaling}
	Under the change of variables $y=x/h_*$, the cell averages, the jumps
	$a_i$, the amplitude $U$, the ENO comparisons and selected stencils, the
	reconstructed interface values, the source $Q_{k,h}$, and the quantities
	$\widehat D_i^{(k)}$ are unchanged.  Consequently, every estimate on a
	$\Lambda$-quasi-uniform mesh may be proved under the normalization
	\begin{equation}\label{eq:normalized-quasi-uniformity}
		1\le h_i\le\Lambda.
	\end{equation}
\end{lemma}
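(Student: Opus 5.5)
The plan is to check invariance item by item under the dilation $y=x/h_*$. This sends interfaces $x_{i+1/2}$ to $y_{i+1/2}=x_{i+1/2}/h_*$ and widths $h_i$ to $h_i/h_*$. The cell averages are defined as normalized averages $\frac1{h_j}\int_{I_j}u\,dx$, so a function $u(x)=\tilde u(x/h_*)$ has the same normalized averages on the rescaled cells. The data are therefore unchanged: $\tilde{\bar u}_i=\bar u_i$. It follows at once that $a_i$, $U$ and $A_i^{(\ell)}$ are unchanged.

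Next I would track the divided differences in \eqref{eq:nonuniform-divided-differences}. Induction on $r$ gives $\tilde D_i^{(r)}=h_*^{r}D_i^{(r)}$, since each level divides by a width $H_i^{(r)}$, which scales by $1/h_*$. Every ENO comparison at level $\ell$ compares $|D_{r-1}^{(\ell)}|$ with $|D_r^{(\ell)}|$. Both sides are multiplied by the same positive factor $h_*^{\ell}$, so the strict inequality in \eqref{eq:nonuniform-endpoint-recursion} has the same truth value. Induction on $\ell$ from $r_i^{(1)}=i$ then shows that all endpoints $r_i^{(\ell)}$, the final stencils, and the intervals $B_i^{(\ell)}$ coincide. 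For $\widehat D_i^{(k)}$, the factor $(H_i^{(k)})^k$ picks up $h_*^{-k}$ and $|D_i^{(k)}|$ picks up $h_*^{k}$, so the product is invariant.

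For the reconstruction, I set $\tilde p_i(y):=p_i^{(k,h)}(h_*y)$. This is a polynomial of degree at most $k-1$, and by the change of variables its normalized averages on the rescaled selected cells equal $\bar u_j$. Uniqueness, via the antiderivative argument already recorded, identifies $\tilde p_i$ with the rescaled reconstruction. Evaluating at $y_{i+1/2}$ gives the same traces, so $\Delta p_{i+1/2}^{(k,h)}$ and hence $Q_{k,h}$ are unchanged. The same holds for $Q^{\mathrm{abs}}_{k,h}$.

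Finally, the rescaled mesh satisfies $1\le h_i/h_*\le\Lambda$. Every quantity entering the estimates of this section is invariant, so any inequality proved under \eqref{eq:normalized-quasi-uniformity} transfers verbatim, with the same constant, to a general $\Lambda$-quasi-uniform mesh. No step is a real obstacle. The only point needing care is the convention that cell averages are normalized by $h_j$, which is what makes the data, rather than only the primitive $V$, scale-free; $V$ itself scales by $1/h_*$, but it enters only through divided differences, which I have already tracked.
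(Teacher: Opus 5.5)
Your proposal is correct and follows the paper's proof essentially step for step: the scaling $D^{(r)}_{i,y}=h_*^{r}D^{(r)}_{i,x}$ leaves every ENO comparison unchanged, uniqueness identifies $p_i^{(k,h)}(h_*y)$ with the rescaled reconstruction, and $(H_i^{(k)})^k|D_i^{(k)}|$ is invariant. Your additional remarks on $A_i^{(\ell)}$, $B_i^{(\ell)}$, $Q^{\mathrm{abs}}_{k,h}$, and the $1/h_*$ scaling of $V$ are also correct, but the lemma does not require them.
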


\begin{proof}
	Let the subscripts $x$ and $y$ indicate the coordinate in which a quantity
	is computed.  Recursion \eqref{eq:nonuniform-divided-differences} gives
	\[
	D_{i,y}^{(r)}=h_*^rD_{i,x}^{(r)},
	\qquad
	H_{i,y}^{(r)}=\frac{H_{i,x}^{(r)}}{h_*}.
	\]
	At ENO level $r$, both candidates are multiplied by the same positive
	factor $h_*^r$, so every comparison and every selected endpoint is
	unchanged.  The polynomial $p_{i,x}^{(k,h)}(h_*y)$ has the prescribed
	averages on the rescaled cells; uniqueness in
	\eqref{eq:nonuniform-reconstruction} identifies it with the reconstruction
	in the $y$-coordinate.  Thus the corresponding interface values and
	$Q_{k,h}$ are unchanged.  Finally,
	\[
	\bigl(H_{i,y}^{(k)}\bigr)^k|D_{i,y}^{(k)}|
	=
	\bigl(H_{i,x}^{(k)}\bigr)^k|D_{i,x}^{(k)}|.
	\]
\end{proof}

The ordering of the selected endpoints and the exact source identity
below hold without a mesh-ratio bound.  Quasi-uniformity enters only when
these identities are converted into uniform estimates: first in the geometric
coefficient comparison and local amplitude bound below, and then in the
interpolation construction of
Subsection~\ref{sec:quasi-uniform-interpolation}.

\subsection{A localized source identity and its quasi-uniform form}
\label{sec:nonuniform-localized-source}

We begin with the part of the endpoint geometry that is independent of the
mesh lengths.

\begin{lemma}[Endpoint monotonicity and the induced index partition]
	\label{lem:nonuniform-endpoint-partition}
	For every strictly increasing mesh $(x_{i+1/2})_{i\in\mathbb Z}$, the map $i\mapsto r_i^{(\ell)}$ is nondecreasing for
	each $1\le\ell\le k$.  The intervals $B_i^{(\ell)}$ form a disjoint
	partition of $\mathbb Z$, with empty intervals allowed.  Moreover,
	\begin{equation}\label{eq:nonuniform-endpoint-bounds}
		i-\ell+1\le r_i^{(\ell)}\le i,
	\end{equation}
	and
	\begin{equation}\label{eq:nonuniform-interval-locality}
		j\in B_i^{(\ell)}
		\quad\Longrightarrow\quad
		0\le i-j\le\ell-1.
	\end{equation}
	For every $1\le\ell<k$,
	\begin{equation}\label{eq:nonuniform-interval-nesting}
		B_i^{(\ell+1)}
		\subset
		B_i^{(\ell)}\cup\{r_i^{(\ell)}-1\}.
	\end{equation}
	For the same range $1\le\ell<k$, if
	$B_i^{(\ell)}=\varnothing$, then
	$B_i^{(\ell+1)}=\varnothing$.
\end{lemma}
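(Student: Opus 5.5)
The plan is to repeat the proof of Lemma~\ref{lem:endpoint-partition} almost verbatim. That argument never used the mesh lengths except through one structural fact: the two candidate divided differences at level $\ell$ depend only on the current endpoint $r$. This fact continues to hold here. By \eqref{eq:nonuniform-endpoint-recursion}, the candidates are $D_{r-1}^{(\ell)}$ and $D_r^{(\ell)}$, which depend on $r$ and $\ell$ but not on the cell index $i$. Hence the update rule is a fixed map $r\mapsto r$ or $r\mapsto r-1$, decided by data attached to $(\ell,r)$ alone.

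The argument proceeds by induction on $\ell$.

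\emph{Base case $\ell=1$.} Here $r_i^{(1)}=i$, so $B_i^{(1)}=\{i\}$ and every assertion is immediate.

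\emph{Monotonicity.} Assume $i\mapsto r_i^{(\ell)}$ is nondecreasing. If $r_i^{(\ell)}<r_{i+1}^{(\ell)}$, each endpoint moves by $0$ or $-1$, so the strict gap of at least one unit is preserved as a weak inequality. If $r_i^{(\ell)}=r_{i+1}^{(\ell)}$, the two cells face the identical comparison and therefore make the identical move. Either way, $r^{(\ell+1)}$ is again nondecreasing.

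\emph{Endpoint bounds.} Each level moves an endpoint left by at most one unit, which gives \eqref{eq:nonuniform-endpoint-bounds}.

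\emph{Partition.} The bounds force $r_i^{(\ell)}\to\pm\infty$ as $i\to\pm\infty$. Combined with monotonicity, the consecutive half-open intervals $[r_i^{(\ell)},r_{i+1}^{(\ell)})$ therefore tile $\mathbb Z$, with empty intervals allowed.

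\emph{Locality.} If $j\in B_i^{(\ell)}$, then $i-\ell+1\le r_i^{(\ell)}\le j<r_{i+1}^{(\ell)}\le i+1$. Since all quantities are integers, this yields \eqref{eq:nonuniform-interval-locality}.

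\emph{Nesting and emptiness.} Put $L=r_i^{(\ell)}$ and $R=r_{i+1}^{(\ell)}$. The new endpoints lie in $\{L-1,L\}$ and $\{R-1,R\}$ respectively, so the new half-open interval is contained in $[L-1,R)$, that is, in $B_i^{(\ell)}\cup\{L-1\}$. This proves \eqref{eq:nonuniform-interval-nesting}. If $L=R$, both cells make the same move, so the new interval is again empty.

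I expect no real obstacle. The only point needing care is to confirm that nothing in the recursion depends on $i$ beyond the current endpoint. Strict monotonicity of the mesh makes every width $H_i^{(r)}$ positive, so every $D_i^{(r)}$ is well defined. The comparison is then a deterministic function of $(\ell,r)$ and the data, and mesh geometry enters only the values of the $D$'s, never the combinatorics of the recursion.
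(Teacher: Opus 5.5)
Your proposal is correct and matches the paper's proof. The paper likewise reduces everything to the argument of Lemma~\ref{lem:endpoint-partition}, noting that the only ingredients are that each endpoint stays fixed or moves one unit left, and that coinciding endpoints face the same candidates $D_{r-1}^{(\ell)},D_r^{(\ell)}$ and so make the same move; you simply write out the steps (monotonicity, bounds, tiling, locality, nesting, emptiness) that the paper cites as unchanged.
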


\begin{proof}
	The proof of Lemma~\ref{lem:endpoint-partition} uses only that each endpoint
	either stays fixed or moves one index to the left.  If two adjacent
	endpoints coincide, they have the same current left endpoint and therefore 
	face the same two divided differences in
	\eqref{eq:nonuniform-endpoint-recursion};
	the convention in \eqref{eq:nonuniform-endpoint-recursion} gives them
	the same update.  The arguments for monotonicity, the endpoint
	bounds, the partition, and the nesting relation are otherwise unchanged.
	Equal mesh spacing is not used.
\end{proof}

The following identity holds on every strictly increasing mesh.  It is obtained
by translating \cite[Lemmas~2.1 and~2.2]{FMT} into the cell-index notation of this
section and is the general-mesh counterpart of
Proposition~\ref{prop:localized-fmt-source}.

\begin{proposition}[\texorpdfstring{Localized FMT identity on a general mesh}{Localized FMT identity on a general mesh}]
	\label{prop:nonuniform-localized-fmt}
	For every $k\ge2$ and every interface $x_{i+1/2}$,
	\begin{equation}\label{eq:nonuniform-fmt-expansion}
		\Delta p_{i+1/2}^{(k,h)}
		=
		\sum_{j\in B_i^{(k)}}
		D_j^{(k)}X_{i,j}^{(k,h)},
	\end{equation}
	where
	\begin{equation}\label{eq:nonuniform-fmt-coefficient}
		X_{i,j}^{(k,h)}
		:=
		H_j^{(k)}
		\prod_{\substack{0\le q\le k-1\\j+q\ne i}}
		\bigl(x_{i+1/2}-x_{j+q+1/2}\bigr).
	\end{equation}
	The selected terms satisfy the termwise sign inequalities:
	\begin{equation}\label{eq:nonuniform-fmt-termwise-sign}
		a_iD_j^{(k)}X_{i,j}^{(k,h)}\ge0,
		\qquad j\in B_i^{(k)}.
	\end{equation}
	Consequently,
	\begin{equation}\label{eq:nonuniform-fmt-positive-identity}
		a_i\Delta p_{i+1/2}^{(k,h)}
		=
		|a_i|
		\sum_{j\in B_i^{(k)}}
		\bigl|D_j^{(k)}X_{i,j}^{(k,h)}\bigr|.
	\end{equation}
\end{proposition}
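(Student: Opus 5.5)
The plan is to follow the proof of Proposition~\ref{prop:localized-fmt-source} line by line, using \cite[Lemmas~2.1 and~2.2]{FMT} in their general-mesh form. Neither lemma assumes equal spacing. The first step is the telescoping expansion. By \eqref{eq:nonuniform-reconstruction} and the antiderivative argument, $p_i^{(k,h)}=(P_i^{(k,h)})'$, where $P_i^{(k,h)}\in\mathbb P_k$ interpolates $V$ at the $k+1$ interfaces $x_{r_i^{(k)}-1/2},\ldots,x_{r_i^{(k)}+k-1/2}$. By Lemma~\ref{lem:nonuniform-endpoint-partition}, $r_i^{(k)}\le r_{i+1}^{(k)}$. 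We therefore pass from the stencil of cell $i$ to that of cell $i+1$ through the consecutive interpolation stencils whose left endpoints are $j\in B_i^{(k)}$, each time moving one node on the right. Write $P^{[j]}$ for the interpolant of $V$ on $x_{j-1/2},\ldots,x_{j+k-1/2}$. Newton's formula gives
\[
P^{[j+1]}-P^{[j]}=\bigl(x_{j+k+1/2}-x_{j-1/2}\bigr)\,V[x_{j-1/2},\ldots,x_{j+k+1/2}]\prod_{q=0}^{k-1}\bigl(x-x_{j+q+1/2}\bigr).
\]
The prefactor is $H_j^{(k)}$, and the divided difference equals $D_j^{(k)}$ by \eqref{eq:nonuniform-primitive-divided-difference}.

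Next I differentiate and evaluate at $x_{i+1/2}$. By \eqref{eq:nonuniform-interval-locality}, $0\le i-j\le k-1$, so $x_{i+1/2}$ is exactly one of the nodes $x_{j+q+1/2}$, namely the one with $j+q=i$. When the product rule is applied, every term that keeps the vanishing factor drops out. Only the term in which that factor is differentiated survives, and it gives $\prod_{j+q\ne i}(x_{i+1/2}-x_{j+q+1/2})$. Summing over $j\in B_i^{(k)}$ then gives \eqref{eq:nonuniform-fmt-expansion} with coefficient \eqref{eq:nonuniform-fmt-coefficient}. I will write out the index bookkeeping that matches $r_p,s_p$ in \cite{FMT} to the present $B_i^{(k)}$, exactly as in the proof of Proposition~\ref{prop:localized-fmt-source}, with no change except that node positions stay general.

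The termwise sign \eqref{eq:nonuniform-fmt-termwise-sign} is the main obstacle. It is the content of \cite[Lemma~2.2]{FMT}, which rests on the ENO selection rule and holds on arbitrary meshes. The first thing I would try is to reprove it by induction on the level $\ell$, mirroring Lemma~\ref{lem:local-amplitude}. The sign of $X_{i,j}^{(k,h)}$ is $(-1)^{k-1-(i-j)}$, since exactly $k-1-(i-j)$ of the nodes lie to the right of $x_{i+1/2}$; this count is independent of the spacing. At level one, $\operatorname{sgn}D_i^{(1)}=\operatorname{sgn}a_i$. The ENO comparisons give $|D_{L-1}^{(\ell)}|<|D_L^{(\ell)}|$ when the left endpoint moves, and $|D_R^{(\ell)}|\le|D_{R-1}^{(\ell)}|$ when the right endpoint stays. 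From these one gets that the next divided difference $D^{(\ell+1)}$ at a newly added index has the sign of $\mp D^{(\ell)}$ at the adjacent kept index, because $H>0$. This propagates the alternating sign pattern $\operatorname{sgn}(a_iD_j^{(\ell)})=(-1)^{\ell-1-(i-j)}$ or zero along the nesting \eqref{eq:nonuniform-interval-nesting}. Interior indices of $B_i^{(\ell+1)}$ already lie in $B_i^{(\ell)}$, and I would handle them by the same difference-quotient sign argument. Since only positivity of $H_j^{(r)}$ enters, the uniform-grid argument transfers verbatim.

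Finally, multiplying \eqref{eq:nonuniform-fmt-expansion} by $a_i$ and using termwise nonnegativity gives \eqref{eq:nonuniform-fmt-positive-identity}.
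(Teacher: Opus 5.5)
Your proposal is correct and is essentially the paper's proof. The paper simply invokes \cite[Lemmas~2.1 and~2.2]{FMT} after translating the offsets $r_p,s_p$ into $j\in B_i^{(k)}$. Your Newton-form telescoping (span $H_j^{(k)}$, with only the vanishing node factor at $x_{i+1/2}$ differentiated) and your level-by-level sign induction are exactly the content of those two lemmas, written out in the present notation.

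One small precision in the induction: the moved left endpoint gives $D_{L-1}^{(\ell+1)}$ the sign of $+D_L^{(\ell)}$, while the retained right endpoint gives $D_{R-1}^{(\ell+1)}$ the sign of $-D_{R-1}^{(\ell)}$ (or zero). So your ``$\mp$'' should read $\pm$ in that order, and with these signs the pattern $\operatorname{sgn}(a_iD_j^{(\ell+1)})\in\{0,(-1)^{\ell-(i-j)}\}$ closes.
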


\begin{proof}
	Apply \cite[Lemma~2.1]{FMT} with $p=k$ at the interface $x_{i+1/2}$.
	To make the change of indices explicit, let $r_p$ and $s_p$ denote the
	final offsets of the interpolation stencils for $V$ (called primitive
	stencils in \cite{FMT}).  In the present
	notation,
	\[
		r_p=r_i^{(k)}-i-\frac12,
		\qquad
		s_p=r_{i+1}^{(k)}-i-\frac32,
		\qquad
		j=i+r+\frac12.
	\]
	Hence $r_p\le r\le s_p$ is equivalent to
	$r_i^{(k)}\le j\le r_{i+1}^{(k)}-1$, so the adjacent interpolation stencils for $V$
	appearing in that lemma are indexed exactly by $j\in B_i^{(k)}$.  The
	divided difference of the cumulative interface data in that lemma is
	$D_j^{(k)}$ by \eqref{eq:nonuniform-primitive-divided-difference}; the span
	of its nodes is $H_j^{(k)}$; and, after differentiation at
	$x_{i+1/2}$, the remaining nodal product is the product in
	\eqref{eq:nonuniform-fmt-coefficient}.  Thus \cite[Lemma~2.1]{FMT} gives exactly
	\eqref{eq:nonuniform-fmt-expansion}.

	Likewise, \cite[Lemma~2.2]{FMT} gives \eqref{eq:nonuniform-fmt-termwise-sign} for every term in the sum.
	Multiplying \eqref{eq:nonuniform-fmt-expansion} by $a_i$ and using this
	termwise sign proves \eqref{eq:nonuniform-fmt-positive-identity}.  Both
	lemmas hold on arbitrary strictly increasing nodes, so no quasi-uniformity
	is used here.
\end{proof}

Quasi-uniformity is first used to compare the geometric factors in
Proposition~\ref{prop:nonuniform-localized-fmt} with the normalized divided
differences $\widehat D_j^{(k)}$.

\begin{corollary}[\texorpdfstring{Localized source comparison on a quasi-uniform mesh}{Localized source comparison on a quasi-uniform mesh}]
	\label{cor:nonuniform-interval-source}
	On a $\Lambda$-quasi-uniform mesh,
	\begin{equation}\label{eq:nonuniform-interval-source-comparison}
		Q_{k,h}(\bar u)
		\asymp_{k,\Lambda}
		\sum_i|a_i|
		\sum_{j\in B_i^{(k)}}\widehat D_j^{(k)}.
	\end{equation}
\end{corollary}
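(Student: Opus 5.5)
Summing the identity \eqref{eq:nonuniform-fmt-positive-identity} of Proposition~\ref{prop:nonuniform-localized-fmt} over $i$ gives
\[
Q_{k,h}(\bar u)=\sum_i|a_i|\sum_{j\in B_i^{(k)}}|D_j^{(k)}|\,|X_{i,j}^{(k,h)}|.
\]
The comparison \eqref{eq:nonuniform-interval-source-comparison} therefore follows once we show that, for every $j\in B_i^{(k)}$,
\[
|X_{i,j}^{(k,h)}|\asymp_{k,\Lambda}\bigl(H_j^{(k)}\bigr)^k,
\]
since $|D_j^{(k)}|\,(H_j^{(k)})^k=\widehat D_j^{(k)}$. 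This is a purely geometric statement about the mesh near the interface, and it uses nothing about the data. By Lemma~\ref{lem:nonuniform-affine-scaling}, both sides are invariant under $x\mapsto x/h_*$. We may therefore assume the normalization \eqref{eq:normalized-quasi-uniformity}, namely $1\le h_q\le\Lambda$.

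Fix $j\in B_i^{(k)}$. By \eqref{eq:nonuniform-interval-locality} of Lemma~\ref{lem:nonuniform-endpoint-partition}, $0\le i-j\le k-1$, so $i=j+d$ with $0\le d\le k-1$.

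The factor $H_j^{(k)}$ is a sum of $k+1$ consecutive cell widths, so $k+1\le H_j^{(k)}\le(k+1)\Lambda$.

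The product in \eqref{eq:nonuniform-fmt-coefficient} has $k-1$ factors $x_{i+1/2}-x_{j+q+1/2}$, indexed by $q\ne d$ with $0\le q\le k-1$. Each such factor is a signed sum of $|d-q|\in\{1,\ldots,k-1\}$ consecutive cell widths. Hence every factor has absolute value between $1$ and $(k-1)\Lambda$, and it is never zero because $q\ne d$.

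Combining these, $|X_{i,j}^{(k,h)}|$ lies between $k+1$ and $(k+1)\Lambda\,((k-1)\Lambda)^{k-1}$. On the other hand, $(H_j^{(k)})^k$ lies between $(k+1)^k$ and $((k+1)\Lambda)^k$. The ratio $|X_{i,j}^{(k,h)}|/(H_j^{(k)})^k$ is therefore bounded above and below by positive constants depending only on $k$ and $\Lambda$.

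Multiplying this two-sided bound by $|a_i|\,|D_j^{(k)}|$ and summing over $j\in B_i^{(k)}$ and then over $i$ gives \eqref{eq:nonuniform-interval-source-comparison}. All terms are nonnegative, so the termwise comparison passes directly to the sums.

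The only point needing care is the bookkeeping of the product in \eqref{eq:nonuniform-fmt-coefficient}. We must confirm that the excluded index $q=d$ is exactly the node coinciding with $x_{i+1/2}$, so that no factor vanishes, and that every remaining node lies within $k-1$ cells of $x_{i+1/2}$. Both facts follow from the locality relation $0\le i-j\le k-1$. No ENO comparison enters this step, since the sign information is already contained in \eqref{eq:nonuniform-fmt-positive-identity}.
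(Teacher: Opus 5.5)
Your proposal is correct and follows essentially the same route as the paper. Like the paper, you sum the termwise positive identity, normalize to $1\le h_q\le\Lambda$ by affine scaling, and use the locality $0\le i-j\le k-1$ to bound $H_j^{(k)}$ and the $k-1$ nonvanishing distance factors above and below, which gives $|X_{i,j}^{(k,h)}|\asymp_{k,\Lambda}(H_j^{(k)})^k$.

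The only difference is cosmetic. The paper bounds each distance factor directly by $H_j^{(k)}$ and obtains $(H_j^{(k)})^k/((k+1)\Lambda)^{k-1}\le|X_{i,j}^{(k,h)}|\le(H_j^{(k)})^k$. You instead bound each factor by $(k-1)\Lambda$ and then compare ratios.
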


\begin{proof}
	By Lemma~\ref{lem:nonuniform-affine-scaling}, assume
	$1\le h_i\le\Lambda$.  If $j\in B_i^{(k)}$, then
	$0\le i-j\le k-1$.  The factor omitted from
	\eqref{eq:nonuniform-fmt-coefficient} is therefore the unique zero factor.
	Every remaining distance is a nonempty sum of consecutive cell widths
	contained in the interval from $x_{j-1/2}$ to $x_{j+k+1/2}$.  Hence
	\[
	1\le
	\bigl|x_{i+1/2}-x_{j+q+1/2}\bigr|
	\le H_j^{(k)}
	\quad(j+q\ne i),
	\]
	while
	$k+1\le H_j^{(k)}\le(k+1)\Lambda$.  Since the product contains $k-1$
	factors,
	\begin{equation}\label{eq:nonuniform-X-comparison}
		\frac{\bigl(H_j^{(k)}\bigr)^k}
		{((k+1)\Lambda)^{k-1}}
		\le
		|X_{i,j}^{(k,h)}|
		\le
		\bigl(H_j^{(k)}\bigr)^k.
	\end{equation}
	Insert this comparison into
	\eqref{eq:nonuniform-fmt-positive-identity}, sum in $i$, and use
	\eqref{eq:nonuniform-normalized-difference}.
\end{proof}

To replace the interface factor $|a_i|$ by the amplitude of the jump block
beginning at $j$, we need lower bounds for the coefficients of the first
and last jumps, together with upper bounds for all coefficients, in the
expansion of $D_j^{(\ell)}$.

\begin{lemma}[Endpoint coefficients in the divided-difference expansion]
	\label{lem:nonuniform-endpoint-coefficients}
	For every $\ell\ge1$ and every $j\in\mathbb Z$,
	\begin{equation}\label{eq:nonuniform-endpoint-expansion}
		D_j^{(\ell)}
		=
		\sum_{r=0}^{\ell-1}
		\beta_{j,r}^{(\ell)}a_{j+r},
	\end{equation}
	where the coefficients depend only on the local mesh widths.  If
	$1\le h_i\le\Lambda$, there are constants
	$c_{\ell,\Lambda},C_{\ell,\Lambda}>0$, independent of $j$, such that
	\begin{equation}\label{eq:nonuniform-endpoint-coefficient-bounds}
		\min\left\{
		|\beta_{j,0}^{(\ell)}|,
		|\beta_{j,\ell-1}^{(\ell)}|
		\right\}
		\ge c_{\ell,\Lambda},
		\qquad
		\max_{0\le r<\ell}|\beta_{j,r}^{(\ell)}|
		\le C_{\ell,\Lambda}.
	\end{equation}
\end{lemma}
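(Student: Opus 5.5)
We will prove the expansion \eqref{eq:nonuniform-endpoint-expansion} and the coefficient bounds together, by induction on $\ell$ using the recursion \eqref{eq:nonuniform-divided-differences}.

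\textbf{The expansion.} For $\ell=1$ we have $D_j^{(1)}=(\bar u_{j+1}-\bar u_j)/H_j^{(1)}=a_j/H_j^{(1)}$. Thus $\beta_{j,0}^{(1)}=1/H_j^{(1)}$. Under the normalization \eqref{eq:normalized-quasi-uniformity}, $2\le H_j^{(1)}\le2\Lambda$, so this coefficient lies in $[1/(2\Lambda),1/2]$. For the inductive step, suppose \eqref{eq:nonuniform-endpoint-expansion} holds at level $\ell$. Then
\[
D_j^{(\ell+1)}=\frac{1}{H_j^{(\ell+1)}}\Bigl(\sum_{r=0}^{\ell-1}\beta_{j+1,r}^{(\ell)}a_{j+1+r}-\sum_{r=0}^{\ell-1}\beta_{j,r}^{(\ell)}a_{j+r}\Bigr).
\]
This is a combination of $a_j,\ldots,a_{j+\ell}$, and the coefficients depend only on the widths $h_j,\ldots,h_{j+\ell+1}$. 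Reading off the coefficients gives
\[
\beta_{j,r}^{(\ell+1)}=\frac{\beta_{j+1,r-1}^{(\ell)}-\beta_{j,r}^{(\ell)}}{H_j^{(\ell+1)}},
\]
with the conventions $\beta_{j+1,-1}^{(\ell)}=0$ and $\beta_{j,\ell}^{(\ell)}=0$.

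\textbf{The endpoint coefficients.} At the endpoints only one term survives:
\[
\beta_{j,0}^{(\ell+1)}=-\frac{\beta_{j,0}^{(\ell)}}{H_j^{(\ell+1)}},\qquad \beta_{j,\ell}^{(\ell+1)}=\frac{\beta_{j+1,\ell-1}^{(\ell)}}{H_j^{(\ell+1)}}.
\]
The width $H_j^{(\ell+1)}$ lies in $[\ell+2,(\ell+2)\Lambda]$. Hence the endpoint lower bounds pass from level $\ell$ to level $\ell+1$ with
\[
c_{\ell+1,\Lambda}=\frac{c_{\ell,\Lambda}}{(\ell+2)\Lambda}.
\]
This uses the inductive hypothesis at both indices $j$ and $j+1$. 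The bounds are uniform in $j$, so this is legitimate. In fact the endpoint coefficients are, up to sign, products of the reciprocals $1/H^{(s)}$, so they never vanish.

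\textbf{The upper bound.} The same recursion and the lower bound $H_j^{(\ell+1)}\ge1$ give $C_{\ell+1,\Lambda}\le 2C_{\ell,\Lambda}$. In fact $H\ge\ell+2$, so even better constants are available, but this is not needed.

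There is no real obstacle here. The only point needing care is the index bookkeeping in the recursion: the shift of $r$ by one in the $j+1$ term, and the observation that the two endpoint coefficients each receive exactly one contribution. Without the normalization, the endpoint coefficients scale like $H^{-\ell}$, and the lower bound degenerates when the widths are unbalanced. This is consistent with the later irregular-mesh counterexample, and it is why quasi-uniformity is essential here.
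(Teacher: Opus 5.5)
Your proof is correct and follows essentially the same route as the paper. You derive the coefficient recursion $\beta_{j,r}^{(\ell+1)}=(\beta_{j+1,r-1}^{(\ell)}-\beta_{j,r}^{(\ell)})/H_j^{(\ell+1)}$, note that each endpoint coefficient receives exactly one contribution, and induct using $\ell+2\le H_j^{(\ell+1)}\le(\ell+2)\Lambda$; your constants $c_{\ell+1,\Lambda}=c_{\ell,\Lambda}/((\ell+2)\Lambda)$ and $C_{\ell+1,\Lambda}\le 2C_{\ell,\Lambda}$ are slightly cruder than the explicit product formulas in the paper's appendix but fully sufficient.
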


\begin{proof}
	The recursion \eqref{eq:nonuniform-divided-differences}, beginning with
	$D_j^{(1)}=a_j/H_j^{(1)}$, proves
	\eqref{eq:nonuniform-endpoint-expansion}.  Its endpoint coefficients satisfy
	\[
	\beta_{j,0}^{(\ell+1)}
	=-\frac{\beta_{j,0}^{(\ell)}}{H_j^{(\ell+1)}},
	\qquad
	\beta_{j,\ell}^{(\ell+1)}
	=\frac{\beta_{j+1,\ell-1}^{(\ell)}}{H_j^{(\ell+1)}},
	\]
	whereas each interior coefficient is the difference of two coefficients at
	level $\ell$, divided by $H_j^{(\ell+1)}$.  Under
	$1\le h_i\le\Lambda$,
	\[
	\ell+2\le H_j^{(\ell+1)}\le(\ell+2)\Lambda.
	\]
	Induction gives the endpoint lower bounds and the uniform upper bound in
	\eqref{eq:nonuniform-endpoint-coefficient-bounds}.  Explicit product
	formulas for the endpoint coefficients are given in 
	\eqref{eq:app-left-endpoint-coefficient}--\eqref{eq:app-right-endpoint-coefficient}.
\end{proof}

\begin{lemma}[Local amplitude bound on a quasi-uniform mesh]
	\label{lem:nonuniform-local-amplitude-bound}
	For every $1\le\ell\le k$,
	\begin{equation}\label{eq:nonuniform-two-sided-amplitude}
		j\in B_i^{(\ell)}
		\quad\Longrightarrow\quad
		|a_i|\le A_j^{(\ell)}
		\le C_{\ell,\Lambda}|a_i|.
	\end{equation}
\end{lemma}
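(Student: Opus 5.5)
I would transcribe the induction used for Lemma~\ref{lem:local-amplitude}. The unit-grid binomial coefficients are replaced by the divided-difference expansion of Lemma~\ref{lem:nonuniform-endpoint-coefficients}. By Lemma~\ref{lem:nonuniform-affine-scaling} we may assume $1\le h_i\le\Lambda$.

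The first inequality in \eqref{eq:nonuniform-two-sided-amplitude} is immediate from \eqref{eq:nonuniform-interval-locality}, since $j\le i\le j+\ell-1$. For the second inequality we induct on $\ell$, and the case $\ell=1$ is trivial with $C_{1,\Lambda}=1$. Assume the bound holds at level $\ell<k$. Put $L=r_i^{(\ell)}$ and $R=r_{i+1}^{(\ell)}$. If $L=R$, then $B_i^{(\ell+1)}=\varnothing$ by Lemma~\ref{lem:nonuniform-endpoint-partition}. Otherwise $L<R$, and by \eqref{eq:nonuniform-interval-nesting} each $j\in B_i^{(\ell+1)}$ falls into one of three cases.

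\emph{Case $L\le j\le R-2$.} Here $j,j+1\in B_i^{(\ell)}$, so the two overlapping level-$\ell$ blocks cover $a_j,\ldots,a_{j+\ell}$. The bound follows with the old constant.

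\emph{Case $j=L-1$.} The left endpoint moved, so \eqref{eq:nonuniform-endpoint-recursion} gives $|D_{L-1}^{(\ell)}|<|D_L^{(\ell)}|$. By \eqref{eq:nonuniform-endpoint-expansion} and the upper coefficient bound in \eqref{eq:nonuniform-endpoint-coefficient-bounds}, together with the induction hypothesis at $L\in B_i^{(\ell)}$, we get $|D_L^{(\ell)}|\le \ell C_{\ell,\Lambda}' C_{\ell,\Lambda}|a_i|$. In the expansion of $D_{L-1}^{(\ell)}$ the only uncontrolled jump is $a_{L-1}$, and its coefficient is $\beta_{L-1,0}^{(\ell)}$, which is bounded below by $c_{\ell,\Lambda}$. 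Solving for $a_{L-1}$ gives
\[
|a_{L-1}|\le c_{\ell,\Lambda}^{-1}\Bigl(|D_L^{(\ell)}|+\sum_{r=1}^{\ell-1}|\beta_{L-1,r}^{(\ell)}||a_{L-1+r}|\Bigr)\le C|a_i|,
\]
with $C$ depending only on $\ell$ and $\Lambda$.

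\emph{Case $j=R-1$.} Cell $i+1$ kept its endpoint, so $|D_R^{(\ell)}|\le|D_{R-1}^{(\ell)}|$. The block at $R-1\in B_i^{(\ell)}$ controls $D_{R-1}^{(\ell)}$ and the jumps $a_R,\ldots,a_{R+\ell-2}$. In the expansion of $D_R^{(\ell)}$ the only new jump is $a_{R+\ell-1}$, whose coefficient is $\beta_{R,\ell-1}^{(\ell)}$, and this coefficient is bounded below by \eqref{eq:nonuniform-endpoint-coefficient-bounds}. The same rearrangement as in the previous case bounds $|a_{R+\ell-1}|$.

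Together with the old block bound, each case controls the whole length-$(\ell+1)$ block, and this closes the induction with some $C_{\ell+1,\Lambda}$.

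The only delicate point is the endpoint-coefficient lower bound, which is where quasi-uniformity enters. It is supplied by Lemma~\ref{lem:nonuniform-endpoint-coefficients}, so I expect no real obstacle beyond tracking that every constant depends only on $\ell$ and $\Lambda$ and never on $j$.
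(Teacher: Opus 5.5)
Your proposal is correct and follows the paper's proof essentially step for step. You normalize to $1\le h_i\le\Lambda$ by affine scaling and rerun the induction of Lemma~\ref{lem:local-amplitude} over the three cases $L\le j\le R-2$, $j=L-1$, and $j=R-1$, using in each boundary case the ENO comparison together with the uniform coefficient upper bound and the endpoint-coefficient lower bound of Lemma~\ref{lem:nonuniform-endpoint-coefficients} to control the single new jump, with all constants depending only on $\ell$ and $\Lambda$.
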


\begin{proof}
	After affine scaling, assume $1\le h_i\le\Lambda$.  If
	$j\in B_i^{(\ell)}$, then $j\le i\le j+\ell-1$, which gives the first
	inequality.  For the reverse inequality we repeat the induction of
	Lemma~\ref{lem:local-amplitude}, using
	Lemma~\ref{lem:nonuniform-endpoint-coefficients} in place of the explicit
	unit-grid coefficients.  Write
	$L=r_i^{(\ell)}$ and $R=r_{i+1}^{(\ell)}$.  If $L=R$, then
	$B_i^{(\ell+1)}$ is empty by
	Lemma~\ref{lem:nonuniform-endpoint-partition}, and there is nothing to
	prove.  Assume $L<R$.  The nesting relation leaves
	three cases at level $\ell+1$.  For an old interior index, two overlapping
	length-$\ell$ blocks cover the new block.  If the new index is $L-1$, the
	ENO comparison
	$|D_{L-1}^{(\ell)}|<|D_L^{(\ell)}|$, together with the coefficient upper
	bound and the nonzero first endpoint coefficient, controls the only new
	jump $a_{L-1}$.  If the retained index is $R-1$, the comparison
	$|D_R^{(\ell)}|\le|D_{R-1}^{(\ell)}|$
	and the lower bound for the last endpoint coefficient controls the only
	new jump $a_{R+\ell-1}$.  This closes the
	induction.  The coefficient estimates used in the two boundary cases are
	proved explicitly in Appendix~\ref{app:quasi-uniform-estimates}.
\end{proof}

Combining the preceding results gives a source comparison in terms of
local jump amplitudes and normalized divided differences.

\begin{corollary}[Quasi-uniform source equivalence]
	\label{cor:nonuniform-source-equivalence}
	For every fixed $k\ge2$, on a $\Lambda$-quasi-uniform mesh,
	\begin{equation}\label{eq:nonuniform-source-equivalence}
		Q_{k,h}(\bar u)
		\asymp_{k,\Lambda}
		\sum_i|a_i|
		\sum_{j\in B_i^{(k)}}\widehat D_j^{(k)}
		\asymp_{k,\Lambda}
		\sum_jA_j^{(k)}\widehat D_j^{(k)}.
	\end{equation}
\end{corollary}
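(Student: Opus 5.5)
The first comparison in \eqref{eq:nonuniform-source-equivalence} is exactly Corollary~\ref{cor:nonuniform-interval-source}, so only the second comparison needs proof. I would copy the proof of Corollary~\ref{cor:source-equivalence}, with Lemma~\ref{lem:nonuniform-local-amplitude-bound} taking the place of Lemma~\ref{lem:local-amplitude}.

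By Lemma~\ref{lem:nonuniform-endpoint-partition}, the intervals $B_i^{(k)}$ partition $\mathbb Z$. Hence each index $j$ lies in exactly one nonempty interval $B_{i(j)}^{(k)}$, and
\[
\sum_i|a_i|\sum_{j\in B_i^{(k)}}\widehat D_j^{(k)}
=\sum_{j\in\mathbb Z}|a_{i(j)}|\,\widehat D_j^{(k)} .
\]
Lemma~\ref{lem:nonuniform-local-amplitude-bound} with $\ell=k$ gives $|a_{i(j)}|\le A_j^{(k)}\le C_{k,\Lambda}|a_{i(j)}|$ for every $j$. Multiplying these inequalities by $\widehat D_j^{(k)}\ge0$ and summing over $j$ yields
\[
C_{k,\Lambda}^{-1}\sum_jA_j^{(k)}\widehat D_j^{(k)}
\le\sum_j|a_{i(j)}|\,\widehat D_j^{(k)}
\le\sum_jA_j^{(k)}\widehat D_j^{(k)},
\]
which is the second comparison. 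All sums are finite, since the data are compactly supported; outside a bounded index range both $a$ and $\widehat D^{(k)}$ vanish.

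The constants depend on the mesh only through $\Lambda$. By Lemma~\ref{lem:nonuniform-affine-scaling}, the sums $\sum_i|a_i|\sum_{j\in B_i^{(k)}}\widehat D_j^{(k)}$ and $\sum_jA_j^{(k)}\widehat D_j^{(k)}$ and the ENO selections are all invariant under the rescaling $y=x/h_*$. We may therefore assume the normalization \eqref{eq:normalized-quasi-uniformity} throughout, and $C_{k,\Lambda}$ is the constant from Lemma~\ref{lem:nonuniform-local-amplitude-bound}.

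The only substantive input is the local amplitude bound, Lemma~\ref{lem:nonuniform-local-amplitude-bound}. Its proof depends on the endpoint-coefficient lower bounds in Lemma~\ref{lem:nonuniform-endpoint-coefficients}, which use quasi-uniformity. Once that lemma is granted, the corollary is bookkeeping with the partition, and I expect no further obstacle.
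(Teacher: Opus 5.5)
Your proof is correct and follows the paper's own argument. You cite Corollary~\ref{cor:nonuniform-interval-source} for the first comparison, and you get the second by multiplying the two-sided bound of Lemma~\ref{lem:nonuniform-local-amplitude-bound} by $\widehat D_j^{(k)}$ and summing over the partition $\{B_i^{(k)}\}$ from Lemma~\ref{lem:nonuniform-endpoint-partition}.
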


\begin{proof}
	The first comparison is
	Corollary~\ref{cor:nonuniform-interval-source}.  By
	Lemma~\ref{lem:nonuniform-endpoint-partition}, the intervals
	$B_i^{(k)}$ partition $\mathbb Z$.  For the unique $i$ such that
	$j\in B_i^{(k)}$, Lemma~\ref{lem:nonuniform-local-amplitude-bound} gives
	\[
	|a_i|\le A_j^{(k)}
	\le C_{k,\Lambda}|a_i|.
	\]
	Multiplication by $\widehat D_j^{(k)}$ and summation over the partition prove
	the second comparison.
\end{proof}

The source has now been reduced to the divided-difference functional on the
right-hand side of \eqref{eq:nonuniform-source-equivalence}.  It remains to
establish the nonuniform counterpart of the odd-order interpolation estimate
from Section~\ref{sec:odd-coercivity}.  Because the mesh points need not
be equally spaced, the uniform-grid cardinal B-spline lift is replaced by
locally patched cell-average interpolation polynomials.

\subsection{A local interpolation estimate on quasi-uniform meshes}
\label{sec:quasi-uniform-interpolation}

Throughout this subsection,
\begin{equation}\label{eq:quasi-odd-parameters}
	k=2m-1\ge3,
	\qquad m\ge2,
\end{equation}
and, by Lemma~\ref{lem:nonuniform-affine-scaling}, we work in the normalized
coordinates $1\le h_i\le\Lambda$.

\begin{theorem}[Local patching on a quasi-uniform mesh]
	\label{thm:quasi-local-patching}
	For every compactly supported sequence of cell averages
	$\bar u=(\bar u_i)_{i\in\mathbb Z}$, there is a function
	$S_h\in C_c^\infty(\mathbb R)$ such that
	\begin{equation}\label{eq:patching-linfty}
		\|S_h\|_{L^\infty(\mathbb R)}
		\le C_{k,\Lambda}\|\bar u\|_{\ell^\infty},
	\end{equation}
	\begin{equation}\label{eq:patching-first-derivative}
		\sum_i|a_i|^{2m}
		\le C_{k,\Lambda}
		\|S_h'\|_{L^{2m}(\mathbb R)}^{2m},
	\end{equation}
	and
	\begin{equation}\label{eq:patching-high-derivative}
		\int_{\mathbb R}|S_h'(x)|\,|S_h^{(k)}(x)|\,dx
		\le C_{k,\Lambda}
		\sum_j A_j^{(k)}\widehat D_j^{(k)}.
	\end{equation}
	Moreover, if $J:=\{i:\bar u_i\ne0\}$ is nonempty, then
	\begin{equation}\label{eq:patching-support}
		\operatorname{supp}S_h
		\subset
		\bigcup_{\operatorname{dist}(q,J)\le m}\overline I_q.
	\end{equation}
	Here
	$\operatorname{dist}(q,J):=\min_{\ell\in J}|q-\ell|$.
	If $J$ is empty, one may take $S_h=0$.
\end{theorem}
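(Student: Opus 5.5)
Construct $S_h$ as a smooth partition-of-unity patching of local cell-average interpolation polynomials. Fix a smooth partition of unity $\{\phi_q\}_{q\in\mathbb Z}$ subordinate to slightly enlarged cells: $\phi_q\ge0$, $\sum_q\phi_q=1$, $\operatorname{supp}\phi_q\subset(x_{q-1/2}-\tfrac13,\,x_{q+1/2}+\tfrac13)$, and $\|\phi_q^{(r)}\|_\infty\le C_{r}$. Such a partition exists in the normalized coordinates $1\le h_i\le\Lambda$, for example by mollifying $\mathbf 1_{I_q}$ at scale $1/3$. For each $q$, let $\pi_q\in\mathbb P_{k-1}$ be the unique polynomial whose averages on the $k=2m-1$ centered cells $q-m+1,\ldots,q+m-1$ equal the prescribed $\bar u_j$, and set
\[
S_h:=\sum_q\phi_q\pi_q.
\]
\textbf{Support and $L^\infty$.} If all $\bar u_j$ in the window of $q$ vanish, then $\pi_q=0$; this gives \eqref{eq:patching-support}, where the margin $m$ absorbs the window radius $m-1$ together with the overlap of $\phi_q$ into the neighboring cells. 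Since $\pi_q$ depends linearly on finitely many data with coefficients controlled on the window and its neighbors, quasi-uniformity gives $\|\pi_q\|_{L^\infty}\le C_{k,\Lambda}U$ there, and hence \eqref{eq:patching-linfty}.

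\textbf{The derivative bounds.} On a neighborhood of $I_q$, $S_h-\pi_q=\sum_{p}\phi_p(\pi_p-\pi_q)$, where only $|p-q|\le1$ contribute. Adjacent windows share $k-1$ cells, so $\pi_{q+1}-\pi_q$ is a polynomial of degree $\le k-1$ with zero averages on $k-1$ common cells. It is therefore a multiple of a fixed geometric polynomial, and the multiple is the $k$th divided difference $D_{q-m+1}^{(k)}$ over the $k+1$ union cells times a factor comparable to $(H^{(k)})^k$. Consequently, on $I_q$,
\[
|(S_h-\pi_q)^{(r)}|\le C_{k,\Lambda}\sum_{|j-(q-m+1)|\le1}\widehat D_j^{(k)},
\qquad
S_h^{(k)}=\bigl(S_h-\pi_q\bigr)^{(k)}.
\]
The second identity holds because $\pi_q^{(k)}=0$. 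Also, $|S_h'|\le C\max_{\text{window}}|a|$ on $I_q$, since $\pi_q'$ is a combination of jumps and the same holds for the correction terms (both are differences of data). This local jump amplitude is dominated by $A_j^{(k)}$ for the nearby $j$, once the window is enlarged; a fixed shift is harmless because bounded overlap absorbs it. Multiplying the two bounds, integrating over $I_q$ ($|I_q|\le\Lambda$), and summing with bounded overlap gives \eqref{eq:patching-high-derivative}. One must check that the window jumps are dominated by $A_j^{(k)}$ for $j$ within distance one of $q-m+1$. Since $A_j^{(k)}$ covers $k=2m-1$ jumps starting at $j$ while the window spans $2m-2$ jumps starting at $q-m+1$, this holds, and the shifted $j$'s stay inside it.

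For \eqref{eq:patching-first-derivative}, note that $\int_{I_q\cup I_{q+1}}$ of the data relations give $a_q=\bar u_{q+1}-\bar u_q$. The averages of $\pi_q$ on $I_q$ and $I_{q+1}$ reproduce $\bar u_q$ and $\bar u_{q+1}$, so $|a_q|\le C\|\pi_q'\|_{L^\infty(I_q\cup I_{q+1})}$. I would then replace $\pi_q'$ by $S_h'$ up to the patching error $C\sum\widehat D_j^{(k)}$, but that error is not obviously controlled by $\|S_h'\|_{L^{2m}}$.

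\textbf{Main obstacle.} The obstacle is this lower bound \eqref{eq:patching-first-derivative}. I would try to prove it directly: the average of $S_h$ over each cell $I_q$ should equal $\bar u_q$ up to the patching corrections. Better, I would choose $\phi_q$ so that the patching is exact on averages. For instance, take $S_h=\sum_q\phi_q\pi_q$ with $\phi_q$ symmetric near interfaces, and then correct by adding, on each cell, a smooth bump of mean one multiplied by the average defect. The defect is itself a combination of $\widehat D^{(k)}$ terms, whose $k$th derivative is again controlled in the same way. With exact averages, $\int_{I_q\cup I_{q+1}}$ and a Poincaré-type argument give $|a_q|\le \bar u_{q+1}-\bar u_q$ expressed as a difference of cell means of $S_h$. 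This is bounded by $C_\Lambda\int_{x_{q-1/2}}^{x_{q+3/2}}|S_h'|$, and Hölder gives $|a_q|^{2m}\le C\int_{I_q\cup I_{q+1}}|S_h'|^{2m}$. Summing proves \eqref{eq:patching-first-derivative}.
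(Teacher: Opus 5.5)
Your argument is correct once the bump correction is included, but it reaches \eqref{eq:patching-first-derivative} by a different route from the paper's.

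The paper indexes windows by their central interface $\zeta_j=x_{j+m-1/2}$ and chooses cutoffs so that $S_h=P_j$ exactly on a fixed-length interval $E_j\ni\zeta_j$. Since $a_{j+m-1}$ is a functional of $P_j$ that vanishes on constants, and $\|P'\|_{L^{2m}(E_j)}$ is a norm on $\mathbb P_{k-1}/\mathbb R$, compactness of the local geometry gives $|a_{j+m-1}|^{2m}\le C_{k,\Lambda}\int_{E_j}|S_h'|^{2m}$ directly. Windows interact only through the factorization \eqref{eq:adjacent-window-factorization} on the transition intervals.

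Your correction $\widetilde S_h=S_h+\sum_q\epsilon_qb_q$ uses instead
\[
\epsilon_q=-\frac1{h_q}\int_{I_q}(S_h-\pi_q)=O\bigl(\widehat D^{(k)}_{q-m}+\widehat D^{(k)}_{q-m+1}\bigr).
\]
It makes $\widetilde S_h$ reproduce every cell average. The lower bound then reduces to the elementary estimate $|a_q|\le\int_{I_q\cup I_{q+1}}|\widetilde S_h'|$ plus H\"older. The price is that you must carry the extra $O(\widehat D)$ terms through the support, $L^\infty$, and \eqref{eq:patching-high-derivative} bounds. This is harmless because $\widehat D_j^{(k)}\lesssim_{k,\Lambda}A_j^{(k)}$.

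The obstacle you identified is not actually there. Since $h_i\ge1$ and each $\phi_p$ extends only $1/3$ beyond $I_p$, every such partition has $\phi_q=1$ on $[x_{q-1/2}+\tfrac13,x_{q+1/2}-\tfrac13]$, an interval of length at least $1/3$. So $S_h=\pi_q$ there. Because $\pi_q$'s window contains $I_q$ and $I_{q+1}$ when $m\ge2$, the paper's norm-equivalence argument already gives \eqref{eq:patching-first-derivative} for your uncorrected $S_h$.

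One bookkeeping point in \eqref{eq:patching-high-derivative}. A ``fixed shift'' is not harmless in general, since $A_{j'}^{(k)}\widehat D_j^{(k)}$ with $j'\ne j$ is not controlled by $\sum_jA_j^{(k)}\widehat D_j^{(k)}$. The correct accounting is as follows.
\begin{itemize}
\item On $I_q$ only $j\in\{q-m,q-m+1\}$ occur.
\item Both $A^{(k)}_{q-m}$ and $A^{(k)}_{q-m+1}$ contain the jumps $a_{q-m+1},\ldots,a_{q+m-2}$ that control $\pi_q'$.
\item The products $\widehat D\,\widehat D$ coming from $(S_h-\pi_q)'$, and from $\epsilon_qb_q'$, are absorbed using $\widehat D_i\widehat D_j\le\frac12(\widehat D_i^2+\widehat D_j^2)$ together with $\widehat D_j^{(k)}\lesssim A_j^{(k)}$.
\end{itemize}
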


\begin{proof}
	For each $j\in\mathbb Z$, let
	\[
	\mathcal W_j:=(x_{j-1/2},x_{j+k-1/2})
	\]
	be the interval spanned by the $k$ cells
	$I_j,\ldots,I_{j+k-1}$, and let $P_j\in\mathbb P_{k-1}$ be determined by
	\begin{equation}\label{eq:moving-average-interpolant}
		\frac1{h_\ell}\int_{I_\ell}P_j(x)\,dx=\bar u_\ell,
		\qquad \ell=j,\ldots,j+k-1.
	\end{equation}
	To see uniqueness, suppose that $P\in\mathbb P_{k-1}$ has zero average on
	these $k$ cells and choose $F\in\mathbb P_k$ with $F'=P$.  Then $F$ takes
	the same value at the $k+1$ interfaces bounding the window.  The polynomial
	$F$ is therefore constant, and $P=0$.
	The domain and target of the cell-average map both have dimension $k$, so
	this injectivity also gives existence.
	
	After translating the left endpoint of the window to the origin, its
	geometry is described by a point of the compact set $[1,\Lambda]^k$.
	The inverse cell-average map depends continuously on this point.  Uniform
	finite-dimensional norm equivalence consequently gives
	\begin{equation}\label{eq:moving-interpolant-stability}
		\max_{0\le r\le k-1}
		\|P_j^{(r)}\|_{L^\infty(\mathcal W_j)}
		\le C_{k,\Lambda}
		\max_{j\le\ell\le j+k-1}|\bar u_\ell|.
	\end{equation}
	The derivative $P_j'$ vanishes when all the cell averages in the window are
	equal.  The same argument on the quotient by constant data gives
	\begin{equation}\label{eq:moving-interpolant-quotient}
		\|P_j'\|_{L^\infty(\mathcal W_j)}
		\le C_{k,\Lambda}
		\max_{j\le\ell\le j+k-1}
		|\bar u_\ell-\bar u_j|
		\le C_{k,\Lambda}A_j^{(k)}.
	\end{equation}
	Estimate~\eqref{eq:moving-interpolant-quotient} will control the first
	derivative of the patched function.
	
	We next compare the interpolants on two adjacent windows.  Set
	\[
	R_j:=P_{j+1}-P_j.
	\]
	We use the following kernel property:
	\begin{equation}\label{eq:adjacent-window-kernel}
		D_j^{(k)}=0
		\quad\Longrightarrow\quad
		P_{j+1}=P_j.
	\end{equation}
	Indeed, by \eqref{eq:nonuniform-primitive-divided-difference},
	$D_j^{(k)}$ is the divided difference of order $k+1$ of the cumulative
	interface values on the $k+2$ interfaces from $x_{j-1/2}$ through
	$x_{j+k+1/2}$.  If it vanishes, those values are interpolated by a
	polynomial $F\in\mathbb P_k$.  Then $F'\in\mathbb P_{k-1}$ has the
	prescribed averages on all $k+1$ cells $I_j,\ldots,I_{j+k}$, and uniqueness
	in \eqref{eq:moving-average-interpolant} gives
	$P_j=F'=P_{j+1}$.
	
	For fixed local geometry, let $b=(\bar u_j,\ldots,\bar u_{j+k})$ and regard
	$D_j^{(k)}$ as a linear functional $d_j(b)$.  Let
	$\mathfrak R_j(b):=P_{j+1}-P_j$.  Then
	\eqref{eq:adjacent-window-kernel} says that
	$\ker d_j\subseteq\ker\mathfrak R_j$.

	To obtain a factorization uniform over all normalized
	$\Lambda$-quasi-uniform local geometries, fix the additive constant in
	$V$ by setting $V_{j-1/2}=0$.  The coefficient of the last datum $\bar u_{j+k}$ in $d_j$ is
	\begin{equation}\label{eq:last-datum-coefficient}
		h_{j+k}
		\prod_{q=0}^{k}
		\bigl(x_{j+k+1/2}-x_{j+q-1/2}\bigr)^{-1}.
	\end{equation}
	It is bounded away from zero under $1\le h_i\le\Lambda$.  If $e_k$ is the
	last coordinate vector, put
	\[
		v_j:=\frac{e_k}{d_j(e_k)}.
	\]
	Then $d_j(v_j)=1$ and $\|v_j\|_{\ell^\infty}\le C_{k,\Lambda}$.  For every data vector
	$b$, the vector $b-d_j(b)v_j$ belongs to $\ker d_j$; hence
	\[
		\mathfrak R_j(b)=d_j(b)\mathfrak R_j(v_j).
	\]
	Thus, with $\mathcal R_j:=\mathfrak R_j(v_j)$,
	\begin{equation}\label{eq:adjacent-window-factorization}
		R_j=D_j^{(k)}\mathcal R_j,
		\qquad
		\mathcal R_j\in\mathbb P_{k-1}.
	\end{equation}
	Applying \eqref{eq:moving-interpolant-stability} to the two adjacent
	interpolation operators and the uniformly bounded vector $v_j$, and using
	the upper and lower bounds for $H_j^{(k)}$
	yields the following bound on the cell $I_{j+m}$, which is common to
	the two interpolation windows:
	\begin{equation}\label{eq:adjacent-window-bound}
		\|R_j^{(r)}\|_{L^\infty(I_{j+m})}
		\le C_{k,\Lambda}\widehat D_j^{(k)},
		\qquad 0\le r\le k-1.
	\end{equation}
	In addition, Lemma~\ref{lem:nonuniform-endpoint-coefficients} and the
	boundedness of $H_j^{(k)}$ in the normalized coordinates give
	\begin{equation}\label{eq:normalized-difference-by-amplitude}
		\widehat D_j^{(k)}
		\le C_{k,\Lambda}A_j^{(k)}.
	\end{equation}
	
	It remains to join the polynomials $P_j$.  Put
	\[
	\zeta_j:=x_{j+m-1/2}.
	\]
	This is the interface between $I_{j+m-1}$ and $I_{j+m}$, and
	$\zeta_{j+1}-\zeta_j=h_{j+m}\in[1,\Lambda]$.  Fix $0<\rho<1/4$.
	Around each $\zeta_j$, take the disjoint interval
	\[
	E_j:=(\zeta_j-\rho,\zeta_j+\rho),
	\]
	and let
	\[
	T_j:=(\zeta_j+\rho,\zeta_{j+1}-\rho)
	\]
	be the transition interval between $E_j$ and $E_{j+1}$.  Thus
	$T_j\subset I_{j+m}$ and
	$1-2\rho\le|T_j|\le\Lambda-2\rho$.
	
	Choose once and for all $\chi\in C^\infty(\mathbb R)$ such that
	$0\le\chi\le1$, $\chi(t)=0$ for $t\le0$, and $\chi(t)=1$ for $t\ge1$.
	Since $1-2\rho\le |T_j|\le\Lambda-2\rho$, the functions
	\[
		\theta_j(x)
		:=\chi\!\left(\frac{x-(\zeta_j+\rho)}{|T_j|}\right)
	\]
	vanish on $(-\infty,\zeta_j+\rho]$, are equal to one on
	$[\zeta_{j+1}-\rho,\infty)$, and have derivatives through order $k$
	bounded uniformly in $j$ by constants depending only on $k$ and $\Lambda$.
	Set
	\[
	\lambda_j:=\theta_{j-1}(1-\theta_j).
	\]
	Then $\sum_j\lambda_j=1$, $\lambda_j=1$ on $E_j$, and at most two of the
	$\lambda_j$ are nonzero at any point.
	Because $h_i\ge1$ and $0<\rho<1/4$, the two outer endpoints below lie
	strictly inside $I_{j+m-1}$ and $I_{j+m}$, respectively.  Hence
	\[
	\operatorname{supp}\lambda_j
	\subset[\zeta_{j-1}+\rho,\zeta_{j+1}-\rho]
	\subset I_{j+m-1}\cup\{\zeta_j\}\cup I_{j+m}
	\subset\mathcal W_j.
	\]
	Define
	\begin{equation}\label{eq:smooth-patched-function}
		S_h:=\sum_j\lambda_jP_j.
	\end{equation}
	The sum is locally finite.  For compactly supported data, only finitely many
	$P_j$ are nonzero, so $S_h\in C_c^\infty(\mathbb R)$.  If $P_j\ne0$, its
	interpolation window meets $J$; the two cells supporting $\lambda_j$ are
	therefore at index distance at most $m$ from $J$.  This proves
	\eqref{eq:patching-support}.
	The closures in \eqref{eq:patching-support} account for possible
	nonzero values of the patching functions at cell interfaces.  The bounded
	overlap, the containment
	$\operatorname{supp}\lambda_j\subset\mathcal W_j$, and
	\eqref{eq:moving-interpolant-stability} give
	\eqref{eq:patching-linfty}.
	
	On $E_j$ one has $S_h=P_j$.  The two central cells in the interpolation
	window give
	\begin{equation}\label{eq:central-cell-average-difference}
		a_{j+m-1}
		=
		\frac1{h_{j+m}}\int_{I_{j+m}}P_j
		-
		\frac1{h_{j+m-1}}\int_{I_{j+m-1}}P_j.
	\end{equation}
	The functional on the right vanishes on constants.  On
	$\mathbb P_{k-1}/\mathbb R$, the quantity
	$\|P'\|_{L^{2m}(E_j)}$ is a norm and therefore controls every linear
	functional on that quotient.  After translating $\zeta_j$ to zero, the
	average-difference functional depends continuously on
	$(h_{j+m-1},h_{j+m})\in[1,\Lambda]^2$.  Compactness of this parameter set
	therefore gives
	\begin{equation}\label{eq:central-functional-bound}
		|a_{j+m-1}|^{2m}
		\le C_{k,\Lambda}
		\int_{E_j}|P_j'(x)|^{2m}\,dx.
	\end{equation}
	The intervals $E_j$ are disjoint, and $j\mapsto j+m-1$ is a bijection of
	$\mathbb Z$.  Summing \eqref{eq:central-functional-bound} proves
	\eqref{eq:patching-first-derivative}.
	
	Finally, $S_h^{(k)}=0$ on every $E_j$, since $S_h=P_j$ there and
	$\deg P_j\le k-1$.  On $T_j$, only the two adjacent polynomials occur, and
	\begin{equation}\label{eq:patched-function-on-transition}
		S_h=P_j+\theta_j(P_{j+1}-P_j)
		=P_j+\theta_jR_j.
	\end{equation}
	Leibniz' rule, \eqref{eq:adjacent-window-bound}, and the cutoff bounds give
	\begin{equation}\label{eq:patched-high-derivative-pointwise}
		|S_h^{(k)}|
		\le C_{k,\Lambda}\widehat D_j^{(k)}
		\qquad\text{on }T_j.
	\end{equation}
	Similarly, \eqref{eq:moving-interpolant-quotient},
	\eqref{eq:adjacent-window-bound}, and
	\eqref{eq:normalized-difference-by-amplitude} yield
	\begin{equation}\label{eq:patched-first-derivative-pointwise}
		|S_h'|
		\le C_{k,\Lambda}A_j^{(k)}
		\qquad\text{on }T_j.
	\end{equation}
	The transition intervals are disjoint and have uniformly bounded length.
	Multiplying the last two estimates, integrating, and summing in $j$ proves
	\eqref{eq:patching-high-derivative}.
\end{proof}

The patching theorem gives the nonuniform replacement for the discrete
interpolation estimate of Section~\ref{sec:odd-coercivity}.

\begin{corollary}[Odd-order interpolation on a quasi-uniform mesh]
	\label{cor:quasi-odd-interpolation}
	Under \eqref{eq:quasi-odd-parameters} and $1\le h_i\le\Lambda$, every
	compactly supported sequence of cell averages satisfies
	\begin{equation}\label{eq:quasi-odd-interpolation}
		\sum_i|a_i|^{2m}
		\le C_{k,\Lambda}
		\|\bar u\|_{\ell^\infty}^{2m-2}
		\sum_j A_j^{(k)}\widehat D_j^{(k)}.
	\end{equation}
\end{corollary}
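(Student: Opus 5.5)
The plan is to combine the patching function $S_h$ of Theorem~\ref{thm:quasi-local-patching} with a continuous energy identity and a Gagliardo--Nirenberg inequality. This mirrors the uniform-grid argument of Section~\ref{sec:odd-coercivity}, but the summation by parts is now performed on the smooth function rather than on the sequence. Set $S=S_h\in C_c^\infty(\mathbb R)$ and write $k-1=2m-2$.

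\emph{Step 1 (hidden square).} Integrate by parts $m-1$ times. Compact support removes all boundary terms, so
\[
\int_{\mathbb R}|S^{(m)}|^2\,dx=(-1)^{m-1}\int_{\mathbb R}S'\,S^{(2m-1)}\,dx
=(-1)^{m-1}\int_{\mathbb R}S'\,S^{(k)}\,dx .
\]
Taking absolute values and applying \eqref{eq:patching-high-derivative} gives
\[
\|S^{(m)}\|_{L^2}^2\le C_{k,\Lambda}\sum_jA_j^{(k)}\widehat D_j^{(k)} .
\]
This is the continuous analogue of Lemma~\ref{lem:even-difference-energy}. The point of \eqref{eq:patching-high-derivative} is that it was stated precisely with the product $|S'||S^{(k)}|$, which is what this pairing requires.

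\emph{Step 2 (interpolation).} Apply the one-dimensional Gagliardo--Nirenberg inequality \eqref{eq:continuous-GN} to $S$ directly; no mollification is needed since $S$ is smooth. It gives
\[
\|S'\|_{L^{2m}}^{2m}\le C_m\|S\|_{L^\infty}^{2m-2}\|S^{(m)}\|_{L^2}^2 .
\]
Then use \eqref{eq:patching-first-derivative} to bound $\sum_i|a_i|^{2m}$ from above by $C_{k,\Lambda}\|S'\|_{L^{2m}}^{2m}$, use \eqref{eq:patching-linfty} to replace $\|S\|_{L^\infty}$ by $C_{k,\Lambda}\|\bar u\|_{\ell^\infty}$, and insert Step~1. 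This yields \eqref{eq:quasi-odd-interpolation}. If $J$ is empty, everything vanishes and there is nothing to prove.

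\emph{Main obstacle.} Essentially all the difficulty has already been absorbed into Theorem~\ref{thm:quasi-local-patching}. What remains is to check that the Step~1 pairing really lands on $S^{(k)}$ with $k=2m-1$ odd, that is, that the number of integrations by parts matches. It does, since $(m-1)$ derivatives moved from $S^{(m)}$ onto the other factor $S^{(m)}$ give $S^{(2m-1)}$ against $S'$. The remaining check is that the constants depend only on $k$ and $\Lambda$, which holds because $m=(k+1)/2$.
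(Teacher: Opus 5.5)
Your proposal is correct and follows essentially the same route as the paper's proof. Both apply the Gagliardo--Nirenberg inequality to $S_h$ and integrate by parts $m-1$ times to obtain $\|S_h^{(m)}\|_{L^2}^2=(-1)^{m-1}\int S_h'S_h^{(k)}$, then combine \eqref{eq:patching-linfty}, \eqref{eq:patching-first-derivative}, and \eqref{eq:patching-high-derivative} from Theorem~\ref{thm:quasi-local-patching}.
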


\begin{proof}
	Apply the one-dimensional Gagliardo--Nirenberg inequality used in
	\eqref{eq:continuous-GN} to $S_h$:
	\begin{equation}\label{eq:quasi-continuous-GN}
		\|S_h'\|_{L^{2m}}^{2m}
		\le C_m\|S_h\|_{L^\infty}^{2m-2}
		\|S_h^{(m)}\|_{L^2}^{2}.
	\end{equation}
	Since $S_h\in C_c^\infty(\mathbb R)$, integration by parts $m-1$ times
	gives
	\begin{equation}\label{eq:quasi-integration-by-parts}
		\|S_h^{(m)}\|_{L^2}^{2}
		=
		(-1)^{m-1}
		\int_{\mathbb R}S_h'(x)S_h^{(2m-1)}(x)\,dx.
	\end{equation}
	Because $2m-1=k$,
	\[
	\|S_h^{(m)}\|_{L^2}^{2}
	\le
	\int_{\mathbb R}|S_h'(x)|\,|S_h^{(k)}(x)|\,dx.
	\]
	Combining this inequality with
	\eqref{eq:patching-linfty},
	\eqref{eq:patching-first-derivative},
	\eqref{eq:patching-high-derivative}, and
	\eqref{eq:quasi-continuous-GN} proves
	\eqref{eq:quasi-odd-interpolation}.
\end{proof}

On each interval $E_j$ centered at $\zeta_j$, the construction gives
$S_h=P_j$, so $P_j'$ controls the corresponding cell-average jump.  On each transition interval $T_j$, the
adjacent-window factorization \eqref{eq:adjacent-window-factorization}
controls $S_h^{(k)}$.
Because $k=2m-1$, the highest derivative arising from integration by
parts is $S_h^{(k)}$, which is controlled on the transition intervals.

\subsection{Odd-order coercivity on quasi-uniform meshes}
\label{sec:quasi-uniform-coercivity}

\begin{proof}[Proof of the quasi-uniform assertion of
	Theorem~\ref{thm:D}]
	Let $k=2m-1\ge3$.  By affine scaling, it is enough to work under
	$1\le h_i\le\Lambda$.  Corollary~\ref{cor:quasi-odd-interpolation} and the
	source equivalence \eqref{eq:nonuniform-source-equivalence} give
	\begin{align*}
		\sum_i|a_i|^{k+1}
		&\le C_{k,\Lambda}U^{k-1}
		\sum_j A_j^{(k)}\widehat D_j^{(k)}\\
		&\le C_{k,\Lambda}U^{k-1}Q_{k,h}(\bar u).
	\end{align*}
	All the quantities in this estimate are unchanged by the affine
	normalization in Lemma~\ref{lem:nonuniform-affine-scaling}.  The result
	therefore holds on every mesh satisfying
	$0<h_*\le h_i\le\Lambda h_*$, as asserted.
\end{proof}

The proof uses quasi-uniformity twice: to control the geometric
coefficients in the localized source identity and to obtain uniform bounds
for the moving cell-average interpolation maps.  The next subsection shows
that some dependence on the mesh ratio is unavoidable.

\subsection{\texorpdfstring{Failure of coercivity on a fixed irregular
mesh}{Failure of coercivity on a fixed irregular mesh}}
\label{sec:arbitrary-mesh-counterexample}

Recall the absolute source
\begin{equation}\label{eq:absolute-nonuniform-source}
	Q_{k,h}^{\mathrm{abs}}(\bar u)
	:=
	\sum_{i\in\mathbb Z}|a_i|\,
	\bigl|\Delta p_{i+1/2}^{(k,h)}\bigr|.
\end{equation}
If $a_i\ne0$, Proposition~\ref{prop:nonuniform-localized-fmt} shows that all
terms in the expansion of $\Delta p_{i+1/2}^{(k,h)}$ have the sign of
$a_i$; if $a_i=0$, both the signed and absolute interface contributions
vanish.
Thus
\begin{equation}\label{eq:signed-equals-absolute-source}
	Q_{k,h}(\bar u)=Q_{k,h}^{\mathrm{abs}}(\bar u).
\end{equation}
We keep the absolute notation to emphasize that the counterexample uses
no cancellation between interfaces.
We first construct the counterexample at one microscopic scale and then
embed a sequence of such local patterns into a single fixed mesh.

\begin{proof}[Proof of the negative assertion of
	Theorem~\ref{thm:D}]
	Fix $k\ge2$.  We shall construct a single mesh on which no finite constant
	$K$ makes
	\begin{equation}\label{eq:arbitrary-mesh-false-estimate}
		\sum_{i\in\mathbb Z}|\bar u_{i+1}-\bar u_i|^{k+1}
		\le K\|\bar u\|_{\ell^\infty}^{k-1}
		Q_{k,h}^{\mathrm{abs}}(\bar u)
	\end{equation}
	valid for every compactly supported sequence of cell averages on that mesh.
	We begin with a local mesh pattern at scale $\varepsilon$; after deriving
	its estimates, we place patterns with scales tending to zero in disjoint
	parts of one mesh.
	
	Put $N:=10k$.  For $\varepsilon>0$, let
	\[
	\delta:=N\varepsilon,
	\]
	and assume $\delta<1/8$.  Consider the continuous piecewise affine function
	\begin{equation}\label{eq:arbitrary-mesh-tent}
		\phi(x)=
		\begin{cases}
			0,&x\le0,\\
			x,&0\le x\le\frac12,\\
			1-x,&\frac12\le x\le1,\\
			0,&x\ge1.
		\end{cases}
	\end{equation}
	Its affine formula changes only at
	\[
	s_0=0,
	\qquad
	s_1=\frac12,
	\qquad
	s_2=1.
	\]
	
	Choose the mesh as follows.  Use cells of length $\varepsilon$ on
	$(-\infty,\delta]$, place one cell on
	$(\delta,\frac12-\delta)$, use cells of length $\varepsilon$ on
	$[\frac12-\delta,\frac12+\delta]$, place one cell on
	$(\frac12+\delta,1-\delta)$, and again use cells of length
	$\varepsilon$ on $[1-\delta,\infty)$.  Thus each $s_\alpha$ is a cell
	interface and has at least $N$ microscopic cells on each side.  Choose the
	microscopic tilings so that $0$, $\frac12$, and $1$ are interfaces; this is
	possible because $\delta=N\varepsilon$.  The two
	large cells have length $\frac12-2\delta$, so the mesh ratio tends to
	infinity as $\varepsilon\downarrow0$.  Define
	\begin{equation}\label{eq:tent-cell-average-data}
		\bar u_i:=\frac1{h_i}\int_{I_i}\phi(x)\,dx.
	\end{equation}
	The sequence $\bar u$ is compactly supported, and
	\begin{equation}\label{eq:tent-linfty-bound}
		\|\bar u\|_{\ell^\infty}\le\frac12.
	\end{equation}
	
	Figure~\ref{fig:nonuniform-tent-mesh} displays the two scales in this
	construction.  The marked endpoints of the two macroscopic cells lie inside
	regions on which $\phi$ is affine; the neighborhoods in which the affine
	formula changes consist entirely of cells of length $\varepsilon$.
	
	\begin{figure}[htbp]
		\centering
		\resizebox{0.98\linewidth}{!}{%
			\begin{tikzpicture}[
				x=1.08cm,
				y=1.08cm,
				line cap=round,
				line join=round,
				every node/.style={font=\footnotesize}
				]
				\draw[->,black!55] (-1.20,1.35) -- (11.30,1.35)
				node[right,black] {$x$};
				\draw[very thick]
				(-1.10,1.35) -- (0,1.35) -- (5,3.15) -- (10,1.35) -- (11.10,1.35);
				\node[above left] at (2.70,2.42) {$\phi$};
				\fill (0,1.35) circle (1.7pt)
				node[above left=2pt] {$s_0=0$};
				\fill (5,3.15) circle (1.7pt)
				node[above=3pt] {$s_1=\frac12$};
				\fill (10,1.35) circle (1.7pt)
				node[above right=2pt] {$s_2=1$};
				
				\foreach \x in {0,5,10}
				\draw[densely dashed,black!45] (\x,1.27) -- (\x,0.36);
				
				\node[anchor=east] at (-1.02,0.43) {mesh};
				\draw[black!70] (-1.10,0) -- (11.10,0);
				
				\foreach \x in {-1.0,-0.8,-0.6,-0.4,-0.2,0,0.2,0.4,0.6}
				\draw[fill=black!8,draw=black!65] (\x,-0.28) rectangle +(0.2,0.56);
				\foreach \x in {4.2,4.4,4.6,4.8,5.0,5.2,5.4,5.6}
				\draw[fill=black!8,draw=black!65] (\x,-0.28) rectangle +(0.2,0.56);
				\foreach \x in {9.2,9.4,9.6,9.8,10.0,10.2,10.4,10.6,10.8}
				\draw[fill=black!8,draw=black!65] (\x,-0.28) rectangle +(0.2,0.56);
				
				\draw[fill=white,very thick] (0.8,-0.28) rectangle (4.2,0.28);
				\draw[fill=white,very thick] (5.8,-0.28) rectangle (9.2,0.28);
				\node[align=center,font=\scriptsize] at (2.5,0)
				{\scriptsize macroscopic cell\\[-3pt]$\bigl(\delta,\frac12-\delta\bigr)$};
				\node[align=center,font=\scriptsize] at (7.5,0)
				{\scriptsize macroscopic cell\\[-3pt]$\bigl(\frac12+\delta,1-\delta\bigr)$};
				
				\foreach \x in {0.8,4.2,5.8,9.2}
				\fill (\x,0) circle (2.5pt);
				
				\fill (-0.95,-0.86) circle (2.5pt);
				\node[anchor=west] at (-0.72,-0.86)
				{$|a_i|\ge\frac18$ at a macroscopic-cell endpoint, but
					$\Delta p_{i+1/2}^{(k,h)}=0$};
				\draw[fill=black!8,draw=black!65]
				(-0.95,-1.38) rectangle (-0.73,-1.16);
				\node[anchor=west] at (-0.57,-1.27)
				{$\varepsilon$-cell near a breakpoint; only these neighborhoods can
					contribute to the source};
				
				\node at (-1.18,0) {$\cdots$};
				\node at (11.18,0) {$\cdots$};
			\end{tikzpicture}%
		}
		\caption{The local two-scale pattern used in the fixed-mesh
			counterexample, shown schematically and not to scale.
			The filled dots mark four cell-average jumps bounded below
			independently of \(\varepsilon\).  Since each lies inside an affine region of \(\phi\), the
			corresponding reconstructed jump vanishes.  Only \(O_k(1)\) interfaces in the
			microscopic neighborhoods of the three breakpoints can contribute to the
			source.}
		\label{fig:nonuniform-tent-mesh}
	\end{figure}
	
	Both large cells have average $1/4$.  The microscopic cell immediately to
	the left of $(\delta,\frac12-\delta)$ has average at most $\delta$, whereas
	the one immediately to its right has average at least
	$\frac12-\delta$.  For the right large cell, the neighboring microscopic
	average is at least $\frac12-\delta$ on the left and at most $\delta$ on the
	right.  At each of these four interfaces, the corresponding jump
	$a_i$ therefore satisfies $|a_i|\ge\frac14-\delta\ge1/8$.  Hence
	\begin{equation}\label{eq:tent-jump-lower-bound}
		\sum_i|a_i|^{k+1}
		\ge4\left(\frac18\right)^{k+1}
		=:c_k>0.
	\end{equation}
	These four jumps occur where the cell size changes, not where the affine
	formula for $\phi$ changes.
	
	We next locate the interfaces at which the reconstructed jump can be
	nonzero.  Every possible order-$k$ reconstruction stencil for $I_q$ is a
	block of $k$ consecutive cells containing $I_q$.  Consequently, at
	$x_{i+1/2}$ every cell that can enter either one-sided reconstruction lies
	in
	\begin{equation}\label{eq:possible-stencil-hull}
		\mathcal K_i
		:=
		\{I_{i-k+1},I_{i-k+2},\ldots,I_{i+k}\}.
	\end{equation}
	Write $s_\alpha=x_{n_\alpha+1/2}$ and set
	\begin{equation}\label{eq:exceptional-interface-set}
		\mathcal A_\alpha
		:=
		\{i\in\mathbb Z:|i-n_\alpha|\le k\},
		\qquad
		\mathcal A
		:=
		\bigcup_{\alpha=0}^{2}\mathcal A_\alpha.
	\end{equation}
	In particular,
	\begin{equation}\label{eq:exceptional-set-cardinality}
		|\mathcal A|\le3(2k+1).
	\end{equation}
	
	If $i\notin\mathcal A$, then $\mathcal K_i$ contains no cells on opposite
	sides of any $s_\alpha$.  Indeed, if it crossed
	$s_\alpha=x_{n_\alpha+1/2}$, then
	\[
	i-k+1\le n_\alpha,
	\qquad
	i+k\ge n_\alpha+1,
	\]
	and hence $|i-n_\alpha|\le k-1$.  Thus all cells in $\mathcal K_i$ lie in a
	region on which $\phi$ is one affine function.
	
	Cell-average reconstruction reproduces affine functions when $k\ge2$.  To
	see this, suppose that $R\in\mathbb P_{k-1}$ and an affine function $\ell$
	have the same averages on $k$ consecutive cells.  A polynomial
	antiderivative of $R-\ell$ takes one value at the $k+1$ cell interfaces.
	After subtracting this common value, the antiderivative has $k+1$
	distinct zeros and degree at most $k$; hence
	it vanishes identically, and therefore $R=\ell$.  Both one-sided
	reconstructions at $x_{i+1/2}$ consequently agree with $\phi$ there, so
	\begin{equation}\label{eq:reconstructed-jump-off-exceptional-set}
		\Delta p_{i+1/2}^{(k,h)}=0,
		\qquad i\notin\mathcal A.
	\end{equation}
	In particular, the four large cell-average jumps in
	\eqref{eq:tent-jump-lower-bound} make no contribution to the source.
	
	It remains to estimate the interfaces in $\mathcal A$.  Fix
	$i\in\mathcal A_\alpha$.  If $I_q\in\mathcal K_i$, then
	\begin{equation}\label{eq:exceptional-hull-index-range}
		n_\alpha-2k+1\le q\le n_\alpha+2k.
	\end{equation}
	Because $N=10k$, every cell in this range has length $\varepsilon$ and lies
	in the microscopic neighborhood of the same point $s_\alpha$; it meets
	neither a large cell nor another point $s_\beta$.  If
	$c_\alpha:=\phi(s_\alpha)$, the $1$-Lipschitz continuity of $\phi$ gives
	\begin{equation}\label{eq:microscopic-average-bound}
		|\bar u_q-c_\alpha|
		\le C_k\varepsilon,
		\qquad I_q\in\mathcal K_i.
	\end{equation}
	Since $I_i,I_{i+1}\in\mathcal K_i$, this implies
	\begin{equation}\label{eq:small-jumps-near-breakpoints}
		|a_i|\le C_k\varepsilon,
		\qquad i\in\mathcal A.
	\end{equation}
	
	The same information controls both reconstructed traces.  Under the affine
	change of variables $x=s_\alpha+\varepsilon y$, each possible stencil in
	$\mathcal K_i$ becomes a block of $k$ unit cells.  For any fixed position of
	that block, evaluation of its cell-average reconstruction at the given
	interface is a linear functional of the $k$ cell averages.  After rescaling,
	the evaluation interface has only finitely many positions relative to such a
	block, depending on $k$ alone.  These finitely many linear functionals have a
	common norm bound.  Since constant data equal to $c_\alpha$ are
	reconstructed as the constant polynomial $c_\alpha$,
	\eqref{eq:microscopic-average-bound} yields
	\begin{equation}\label{eq:microscopic-trace-bound}
		|p_{i+1/2}^{-,(k,h)}-c_\alpha|
		+
		|p_{i+1/2}^{+,(k,h)}-c_\alpha|
		\le C_k\varepsilon,
		\qquad i\in\mathcal A_\alpha.
	\end{equation}
	This estimate holds for every possible reconstruction block and hence for
	the blocks selected by the ENO recursion.  Therefore
	\begin{equation}\label{eq:small-reconstructed-jumps}
		|\Delta p_{i+1/2}^{(k,h)}|
		\le C_k\varepsilon,
		\qquad i\in\mathcal A.
	\end{equation}
	Equations \eqref{eq:reconstructed-jump-off-exceptional-set},
	\eqref{eq:exceptional-set-cardinality},
	\eqref{eq:small-jumps-near-breakpoints}, and
	\eqref{eq:small-reconstructed-jumps} give
	\begin{equation}\label{eq:absolute-source-upper-bound}
		Q_{k,h}^{\mathrm{abs}}(\bar u)
		\le C_k\varepsilon^2.
	\end{equation}
	
	It remains to realize these estimates on one mesh.  For $n\ge1$, choose
	$\varepsilon_n\downarrow0$ so that
	$\delta_n:=N\varepsilon_n<1/8$, and put $y_n:=3n$.
	For each $n$, take the portion of the preceding local construction on
	$[-\delta_n,1+\delta_n]$ and translate it by $y_n$.  Thus the three
	intervals
	\[
	[y_n-\delta_n,y_n+\delta_n],\qquad
	[y_n+\tfrac12-\delta_n,y_n+\tfrac12+\delta_n],\qquad
	[y_n+1-\delta_n,y_n+1+\delta_n]
	\]
	are divided into cells of length $\varepsilon_n$, and the two intervening
	intervals are the macroscopic cells used above.  Join successive finite
	patterns by single cells and extend the mesh to the left, for instance by
	unit cells.  Since the patterns have length less than $5/4$ and the
	translation parameters $y_n$ are three units apart, all joining cells have
	positive length.
	The resulting set of interfaces is locally finite, and its increasing
	enumeration tends to $\pm\infty$.
	The joining-cell lengths are bounded below, whereas
	$\varepsilon_n\to0$; hence
	$\sup_{i,j\in\mathbb Z}h_i/h_j=\infty$.
	
	Let $\bar u^{(n)}$ be the cell averages on this fixed mesh of the
	translated tent $\phi_n(x):=\phi(x-y_n)$.  These data vanish outside the
	$n$th pattern.  Each breakpoint has $N=10k$ microscopic cells on either
	side.
	Hence, whenever the hull $\mathcal K_i$ in
	\eqref{eq:possible-stencil-hull} meets the support of $\phi_n$, every
	possible $k$-cell stencil at $x_{i+1/2}$ is contained in the $n$th pattern.
	Between the support of $\phi_n$ and either joining cell lie $N$ microscopic
	cells with zero averages, so every stencil meeting a joining cell
	reconstructs zero.  The locality argument above therefore applies without change and
	gives
	\[
	\sum_i|a_i^{(n)}|^{k+1}\ge c_k,\qquad
	\|\bar u^{(n)}\|_{\ell^\infty}\le\frac12,\qquad
	Q_{k,h}^{\mathrm{abs}}(\bar u^{(n)})
	\le C_k\varepsilon_n^2\longrightarrow0.
	\]
	For any finite $K$, these estimates contradict
	\eqref{eq:arbitrary-mesh-false-estimate} for all sufficiently large $n$.
	Thus no finite coercivity constant exists on this fixed mesh.
\end{proof}

This completes the proof of Theorem~\ref{thm:D}.  The fixed mesh just
constructed satisfies $\sup_{i,j\in\mathbb Z}h_i/h_j=\infty$; thus the
quasi-uniform estimate does not extend to arbitrary meshes without further
geometric assumptions.  Since the argument is uniform over all consecutive
$k$-cell stencils, it is independent of the stencil selected by ENO.

\appendix

\section{\texorpdfstring{An explicit functional for the fourth-order obstruction}{An explicit functional for the fourth-order obstruction}}
\label{app:fourth-order-functional}

This appendix defines the rational linear functional used in
Proposition~\ref{prop:fourth-order-obstruction} and proves the two identities
needed there.  The argument uses the nilpotent generator from
Lemma~\ref{lem:profile-exactness} and a direct comparison of coefficients.
We use the pullback endomorphism $T:=T_2^*$, determined on the coordinate
functions by
\begin{equation}\label{eq:appA-shift}
	Tu=u+c_0,\qquad
	Tc_0=c_0+c_1,\qquad
	Tc_1=c_1+c_2,\qquad
	Tc_2=c_2
\end{equation}
and, for $D\ge0$, the monomial basis
\begin{equation}\label{eq:appA-basis}
	\mathcal B_D
	:=\{u^ac_0^bc_1^cc_2^d:a,b,c,d\in\mathbb N_0,
	\ a+b+c+d=D\},
\end{equation}
ordered lexicographically with $u\succ c_0\succ c_1\succ c_2$.

On the polynomial algebra in $u,c_0,c_1,c_2$, let
$\mathcal N:=\log T$.  From \eqref{eq:appA-shift},
\begin{equation}\label{eq:appA-nilpotent-generator}
\mathcal N
=\left(c_0-\frac12c_1+\frac13c_2\right)\partial_u
\mathbin{+}\left(c_1-\frac12c_2\right)\partial_{c_0}
+c_2\partial_{c_1}.
\end{equation}
The operator on the right of
\eqref{eq:appA-nilpotent-generator} is a locally nilpotent derivation.  Its
exponential sends the four generators $u,c_0,c_1,c_2$ to the four expressions
in \eqref{eq:appA-shift}; hence its exponential is $T$, which proves
\eqref{eq:appA-nilpotent-generator}.  Let
$\mathcal N_7:=\mathcal N|_{\mathscr P_{2,7}}$.  Lemma~\ref{lem:profile-exactness}
gives
\begin{equation}\label{eq:appA-range-generator}
\operatorname{Range}\delta_{2,7}
=\operatorname{Range}\mathcal N_7.
\end{equation}

We recall the data defining the raw degree-seven profile mismatch and
its reduced form.  For
$f(u)=u^3/3$, the two-point flux and the fourth-order coefficients are
\begin{equation}\label{eq:appA-flux-data}
	F^*(u_L,u_R)
	=\frac{u_L^3+u_L^2u_R+u_Lu_R^2+u_R^3}{12},
	\qquad
	\alpha_1^{[2]}=\frac43,
	\qquad
	\alpha_2^{[2]}=-\frac16.
\end{equation}
Set
\begin{equation}\label{eq:appA-profile-flux}
	\mathcal F^{[2]}
	:=\frac43F^*(u,Tu)
	-\frac16\bigl(F^*(u,T^2u)+F^*(T^{-1}u,Tu)\bigr)
\end{equation}
and
\begin{equation}\label{eq:appA-Graw}
	G_{\mathrm{raw}}
	:=\bigl((Tu)^4-u^4\bigr)\mathcal F^{[2]}
	-\frac4{21}\bigl((Tu)^7-u^7\bigr).
\end{equation}
The coboundary removed in Section~\ref{sec:fourth-order-obstruction} is
generated by
\begin{equation}\label{eq:appA-H2}
	H_{\langle2\rangle}^{(7)}
	=-\frac13u^5c_0(c_0-c_1+c_2),
	\qquad
	\widetilde G^{(7)}
	=G_{\mathrm{raw}}-\delta_{2,7}H_{\langle2\rangle}^{(7)}.
\end{equation}

We now define a functional
$\mathfrak L\in\mathscr P_{2,7}^{*}$.  Its nonzero values on the monomial
basis $\mathcal B_7$ are listed below; it is zero on every monomial not
listed.
\begin{center}
	\small
	\begin{tabular}{@{}c r @{\qquad} c r@{}}
		\toprule
		$M$ & $\mathfrak L(M)$ & $M$ & $\mathfrak L(M)$\\
		\midrule
		$u^6c_0$ & $-180$ & $u^6c_1$ & $720$\\
		$u^6c_2$ & $1620$ & $u^5c_0c_1$ & $-90$\\
		$u^5c_0c_2$ & $-90$ & $u^5c_1^2$ & $240$\\
		$u^5c_1c_2$ & $-180$ & $u^4c_0^2c_1$ & $-12$\\
		$u^4c_0^2c_2$ & $36$ & $u^4c_0c_1^2$ & $72$\\
		$u^3c_0^3c_1$ & $-45$ & $u^2c_0^5$ & $60$\\
		\bottomrule
	\end{tabular}
\end{center}

For $a,b,c,d\in\mathbb N_0$ with $a+b+c+d=7$, put
$\lambda_{a,b,c,d}:=\mathfrak L(u^ac_0^bc_1^cc_2^d)$.  Extend this notation
to integer quadruples by setting $\lambda_{a,b,c,d}=0$ whenever any index is
negative.  Applying \eqref{eq:appA-nilpotent-generator} to a monomial of
total degree seven gives
\begin{align}
&\mathfrak L\!\left(\mathcal N_7
	u^ac_0^bc_1^cc_2^d\right)\notag\\
&\quad=a\left(
	\lambda_{a-1,b+1,c,d}
	-\frac12\lambda_{a-1,b,c+1,d}
	+\frac13\lambda_{a-1,b,c,d+1}\right)\notag\\
&\qquad+b\left(
	\lambda_{a,b-1,c+1,d}
	-\frac12\lambda_{a,b-1,c,d+1}\right)
	+c\lambda_{a,b,c-1,d+1}.
	\label{eq:appA-annihilation-recurrence}
\end{align}
Substitution of the values above into
\eqref{eq:appA-annihilation-recurrence} leaves eleven nontrivial cases, listed
below.
\begin{center}
	\small
	\begin{tabular}{@{}c l@{}}
		\toprule
		$(a,b,c,d)$ & terms remaining in \eqref{eq:appA-annihilation-recurrence}\\
		\midrule
		$(3,4,0,0)$ & $180-180$\\
		$(4,2,1,0)$ & $-180+144+36$\\
		$(4,3,0,0)$ & $90-36-54$\\
		$(5,0,2,0)$ & $360-360$\\
		$(5,1,0,1)$ & $180-180$\\
		$(5,1,1,0)$ & $-60-180+240+90-90$\\
		$(5,2,0,0)$ & $30+60-180+90$\\
		$(6,0,0,1)$ & $-540+540$\\
		$(6,0,1,0)$ & $-540-720-360+1620$\\
		$(6,1,0,0)$ & $270-180+720-810$\\
		$(7,0,0,0)$ & $-1260-2520+3780$\\
		\bottomrule
	\end{tabular}
\end{center}
Every displayed sum is zero.  Hence
$\mathfrak L\circ\mathcal N_7=0$, and
\eqref{eq:appA-range-generator} implies
\begin{equation}\label{eq:appA-annihilation}
	\mathfrak L\circ\delta_{2,7}=0.
\end{equation}

It remains to evaluate the functional on $G_{\mathrm{raw}}$.  If
$[M]P$ denotes the coefficient of a monomial $M$ in a polynomial $P$, direct
expansion of \eqref{eq:appA-Graw} gives
\begin{align}
\bigl(&[u^5c_0c_1]G_{\mathrm{raw}},
[u^5c_0c_2]G_{\mathrm{raw}},
[u^4c_0^2c_1]G_{\mathrm{raw}},
[u^4c_0^2c_2]G_{\mathrm{raw}},\notag\\[-2pt]
&[u^4c_0c_1^2]G_{\mathrm{raw}},
[u^3c_0^3c_1]G_{\mathrm{raw}},
[u^2c_0^5]G_{\mathrm{raw}}\bigr)\notag\\
&\qquad=
\left(-\frac23,\frac13,-\frac53,\frac5{18},-\frac49,
-\frac{22}{9},-\frac{17}{6}\right).
\label{eq:appA-Graw-coefficients}
\end{align}
The remaining five monomials in the support of $\mathfrak L$ have zero
coefficient in $G_{\mathrm{raw}}$.  Therefore
\begin{equation}\label{eq:appA-obstruction-pairing}
	\mathfrak L(G_{\mathrm{raw}})
	=60-30+20+10-32+110-170=-32.
\end{equation}
Equations \eqref{eq:appA-annihilation} and
\eqref{eq:appA-obstruction-pairing} are the two identities used in
Proposition~\ref{prop:fourth-order-obstruction}.  Since the term subtracted in
\eqref{eq:appA-H2} is a coboundary,
$\mathfrak L(\widetilde G^{(7)})=-32$.

\section{\texorpdfstring{Endpoint coefficients on quasi-uniform meshes}{Endpoint coefficients on quasi-uniform meshes}}
\label{app:quasi-uniform-estimates}

This appendix contains the endpoint-coefficient calculation used in
Lemmas~\ref{lem:nonuniform-endpoint-coefficients} and
\ref{lem:nonuniform-local-amplitude-bound}.  The localized source identity,
the moving-interpolant estimates, and the smooth-patching construction are
proved where they are used in Section~\ref{sec:mesh-geometry} and are not
repeated here.

Throughout the appendix, $k\ge2$ is fixed.  By
Lemma~\ref{lem:nonuniform-affine-scaling}, we work in normalized coordinates,
so that
\begin{equation}\label{eq:app-normalized-mesh}
	1\le h_i\le\Lambda
	\qquad(i\in\mathbb Z).
\end{equation}
All constants below are independent of the position of the local window and
of the data.  Their dependence on $k$ or on an intermediate difference order
$\ell$, and on $\Lambda$, is displayed explicitly.

\paragraph{Endpoint coefficients.}

We derive the endpoint coefficients used in
Lemmas~\ref{lem:nonuniform-endpoint-coefficients} and
\ref{lem:nonuniform-local-amplitude-bound}.  Write
\begin{equation}\label{eq:app-beta-expansion}
	D_j^{(\ell)}
	=\sum_{r=0}^{\ell-1}\beta_{j,r}^{(\ell)}a_{j+r},
	\qquad \ell\ge1.
\end{equation}
At the first level,
\[
\beta_{j,0}^{(1)}=\frac1{H_j^{(1)}}.
\]
The divided-difference recursion gives, with nonexistent terms understood to
be zero,
\begin{equation}\label{eq:app-beta-recursion}
	\beta_{j,r}^{(\ell+1)}
	=\frac{\beta_{j+1,r-1}^{(\ell)}-
		\beta_{j,r}^{(\ell)}}{H_j^{(\ell+1)}},
	\qquad 0\le r\le\ell.
\end{equation}
In particular, the two endpoint coefficients admit the explicit formulas
\begin{align}
	\beta_{j,0}^{(\ell)}
	&=\frac{(-1)^{\ell-1}}
	{\displaystyle\prod_{s=1}^{\ell}H_j^{(s)}},
	\label{eq:app-left-endpoint-coefficient}\\
	\beta_{j,\ell-1}^{(\ell)}
	&=\frac1{
		\displaystyle\prod_{s=1}^{\ell}H_{j+\ell-s}^{(s)}}.
	\label{eq:app-right-endpoint-coefficient}
\end{align}
In particular, both endpoint coefficients are nonzero.
Under \eqref{eq:app-normalized-mesh},
\begin{equation}\label{eq:app-endpoint-coefficient-lower-bound}
	|\beta_{j,0}^{(\ell)}|,
	|\beta_{j,\ell-1}^{(\ell)}|
	\ge \frac1{(\ell+1)!\Lambda^\ell}.
\end{equation}
All the coefficients are bounded uniformly as well.  For example, if
$C_{\ell,\Lambda}$ bounds the coefficients at level $\ell$, then
\eqref{eq:app-beta-recursion} and $H_j^{(\ell+1)}\ge\ell+2$ give
\[
\max_{0\le r\le\ell}
|\beta_{j,r}^{(\ell+1)}|
\le\frac{2C_{\ell,\Lambda}}{\ell+2}.
\]
Starting from $|\beta_{j,0}^{(1)}|\le1/2$ proves the required bound at every
fixed level.

Equations~\eqref{eq:app-left-endpoint-coefficient}--
\eqref{eq:app-endpoint-coefficient-lower-bound} give the endpoint
nondegeneracy used in Lemma~\ref{lem:nonuniform-local-amplitude-bound}. 
The induction itself, as well as all quasi-uniform interpolation and patching
estimates, is proved in Section~\ref{sec:mesh-geometry} at the point of use.


\begin{thebibliography}{99}

\bibitem{BLS}
Y. Brenier, C. De Lellis, and L. Sz\'ekelyhidi Jr.,
\emph{Weak--strong uniqueness for measure-valued solutions},
Comm. Math. Phys. 305 (2011), 351--361.

\bibitem{CayleySecondQuantics}
A. Cayley,
\emph{A second memoir upon quantics},
Philos. Trans. Roy. Soc. London 146 (1856), 101--126.

\bibitem{CFM2AN}
N. Chatterjee and U. S. Fjordholm,
\emph{Convergence of second-order, entropy stable methods for
multi-dimensional conservation laws},
ESAIM Math. Model. Numer. Anal. 54 (2020), 1415--1428.


\bibitem{DiPernaMV}
R. J. DiPerna,
\emph{Measure-valued solutions to conservation laws},
Arch. Rational Mech. Anal. 88 (1985), 223--270.

\bibitem{FjordholmThesis}
U. S. Fjordholm,
\emph{High-order accurate entropy stable numerical schemes for hyperbolic
conservation laws},
Ph.D. thesis, ETH Zurich, Diss. No.~21025, 2013.

\bibitem{FjordholmHandbook}
U. S. Fjordholm,
\emph{Stability properties of the ENO method},
in Handbook of Numerical Methods for Hyperbolic Problems: Basic and
Fundamental Issues, Handb. Numer. Anal. 17, Elsevier, 2016, pp.~123--145.

\bibitem{FKMT}
U. S. Fjordholm, R. K\"appeli, S. Mishra, and E. Tadmor,
\emph{Construction of approximate entropy measure-valued solutions for
hyperbolic systems of conservation laws},
Found. Comput. Math. 17 (2017), 763--827.

\bibitem{FMTtecno}
U. S. Fjordholm, S. Mishra, and E. Tadmor,
\emph{Arbitrarily high-order accurate entropy stable essentially
nonoscillatory schemes for systems of conservation laws},
SIAM J. Numer. Anal. 50 (2012), 544--573.

\bibitem{FMT}
U. S. Fjordholm, S. Mishra, and E. Tadmor,
\emph{ENO reconstruction and ENO interpolation are stable},
Found. Comput. Math. 13 (2013), no.~2, 139--159,
\href{https://doi.org/10.1007/s10208-012-9117-9}{doi:10.1007/s10208-012-9117-9}.

\bibitem{FultonHarris}
W. Fulton and J. Harris,
\emph{Representation Theory: A First Course},
Graduate Texts in Mathematics 129, Springer, 1991.


\bibitem{Harten}
A. Harten, B. Engquist, S. Osher, and S. R. Chakravarthy,
\emph{Uniformly high order accurate essentially non-oscillatory schemes, III},
J. Comput. Phys. 71 (1987), 231--303.

\bibitem{HiltebrandMishra}
A. Hiltebrand and S. Mishra,
\emph{Entropy stable shock capturing space--time discontinuous Galerkin
schemes for systems of conservation laws},
Numer. Math. 126 (2014), 103--151.

\bibitem{JiangShu}
G.-S. Jiang and C.-W. Shu,
\emph{Efficient implementation of weighted ENO schemes},
J. Comput. Phys. 126 (1996), 202--228.

\bibitem{Kruzkov}
S. N. Kruzhkov,
\emph{First order quasilinear equations in several independent variables},
Math. USSR-Sb. 10 (1970), 217--243.


\bibitem{LMR}
P. G. LeFloch, J.-M. Mercier, and C. Rohde,
\emph{Fully discrete, entropy conservative schemes of arbitrary order},
SIAM J. Numer. Anal. 40 (2002), 1968--1992.

\bibitem{LiWuCompanion}
Z. Li and K. Wu,
\emph{Compactness and convergence theory for entropy stable ENO schemes 
of arbitrarily high order},
preprint, 2026.

\bibitem{LPT}
P.-L. Lions, B. Perthame, and E. Tadmor,
\emph{A kinetic formulation of multidimensional scalar conservation laws and
related equations},
J. Amer. Math. Soc. 7 (1994), 169--191.

\bibitem{LiuOsherChan}
X.-D. Liu, S. Osher, and T. Chan,
\emph{Weighted essentially non-oscillatory schemes},
J. Comput. Phys. 115 (1994), 200--212.

\bibitem{MajidTomasic}
S. Majid and I. Toma\v{s}i\'c,
\emph{On braided zeta functions}, 
Bull. Math. Sci. 1 (2011), 379--396.

\bibitem{Nirenberg}
L. Nirenberg,
\emph{On elliptic partial differential equations},
Ann. Scuola Norm. Sup. Pisa Cl. Sci. (3) 13 (1959), no.~2, 115--162.


\bibitem{Schumaker}
L. L. Schumaker,
\emph{Spline Functions: Basic Theory}, 3rd ed.,
Cambridge Mathematical Library, Cambridge University Press, 2007.

\bibitem{Shu}
C.-W. Shu,
\emph{Essentially non-oscillatory and weighted essentially non-oscillatory
schemes for hyperbolic conservation laws},
in Advanced Numerical Approximation of Nonlinear Hyperbolic Equations,
Lecture Notes in Math. 1697, Springer, 1998, pp.~325--432.


\bibitem{StanleyEC1}
R. P. Stanley,
\emph{Enumerative Combinatorics, Vol. 1}, 2nd ed.,
Cambridge Studies in Advanced Mathematics 49, Cambridge University Press,
2012.

\bibitem{TadmorTao}
E. Tadmor and T. Tao,
\emph{Velocity averaging, kinetic formulations, and regularizing effects in
quasi-linear PDEs},
Comm. Pure Appl. Math. 60 (2007), 1488--1521.

\bibitem{ZakerzadehMay}
M. Zakerzadeh and G. May,
\emph{On the convergence of a shock capturing discontinuous Galerkin method
for nonlinear hyperbolic systems of conservation laws},
SIAM J. Numer. Anal. 54 (2016), 874--898.

\end{thebibliography}
\end{document}